\def \dis {\displaystyle}
\def \ecart {\noalign{\medskip}}
\theoremstyle{definition}
\newtheorem{theorem}{Theorem}[section]
\newtheorem{proposition}[theorem]{Proposition}
\newtheorem{lemma}[theorem]{Lemma}
\newtheorem{definition}[theorem]{Definition}
\newtheorem{remark}[theorem]{Remark}
\title{Elliptic differential-operators with an abstract Robin boundary condition and two spectral parameters}
\author{Angelo Favini, Rabah Labbas, St\'{e}phane Maingot \& Alexandre
Thorel \\
{\scriptsize A. F.: Universit\`{a} degli Studi di Bologna, Dipartimento di Matematica,}\\
{\scriptsize \  Piazza di Porta S. Donato, 5, 40126 Bologna, Italia} \\
{\scriptsize angelo.favini@unibo.it}\\
{\scriptsize R. L., S. M. \& A. T.: Université Le Havre Normandie, Normandie Univ, LMAH UR 3821,}\\
{\scriptsize 76600 Le Havre, France.}\\
{\scriptsize rabah.labbas@univ-lehavre.fr, stephane.maingot@univ-lehavre.fr, alexandre.thorel@orange.fr}}
\date{\empty }
\begin{document}

\maketitle

\begin{abstract}
We study the solvability of some boundary-value problems for differential-operator equations of the second order in $L^{p}(0,1;X)$, with $1<p<+\infty $, $X$ being a UMD complex Banach space. The originality of this work lies in the fact that we consider the case where two spectral complex parameters appear in the equation and in abstract Robin boundary conditions. Here, the unbounded linear operator in the equation is not commuting with the one appearing in the boundary conditions. This represents the strong novelty with respect to the existing literature. Existence, uniqueness, representation formula, maximal regularity of the solution, sharp estimates and generation of strongly continuous analytic semigroup are proved. Many concrete applications are given for which our theory applies. This paper improves, in some sense, results by the authors in \cite{Cheggag 1} and it can be viewed as a continuation of the results in \cite{Aliev and Kurbanova} studied only in Hilbert spaces. \\
\textbf{Key Words and Phrases}: Second order boundary value problems with two spectral parameters, Robin boundary conditions, spectral estimates, functional calculus, generation of analytic semigroups. \\
\textbf{2020 Mathematics Subject Classification}: 35B65, 35C15, 35J25, 47A60, 47D06. 
\end{abstract} 

\section{Introduction}

In this article, we consider a new spectral problem that is given by the equation
\begin{equation}
u^{\prime \prime }(x)+Au(x)-\lambda u(x)=f(x),\quad x\in (0,1),
\label{NewEquation}
\end{equation}%
together with the abstract Robin boundary conditions%
\begin{equation}
u^{\prime }(0)-Hu(0)-\mu u(0)=d_{0},\quad u(1)=u_{1}.
\label{RobinSpectralConditions}
\end{equation}%
Here, $\lambda $, $\mu $ are complex parameters, $A$, $H$ are closed linear
operators in a complex Banach space $X$, $f$ belongs to $L^{p}(0,1;X)$ with $%
1<p<+\infty $, $d_{0}$, $u_{1}$ are given elements of $X$. We develop a completely different approach from the ones used until now. It allows an easier verification of the assumptions and their application to concrete problems.

Many boundary value problems with a spectral parameter in the equation and
in the boundary conditions arise in different concrete problems. We shall cite some interesting papers related to this research. In one of these last works, see \cite{Bruk}, the article considers a class of boundary problems with a spectral parameter in the boundary conditions. In \cite{J. Behrndt}, the author considers some second order elliptic boundary value problems on bounded domains with boundary conditions depending nonlinearly on the spectral parameter. In \cite{Aliev}, we find a study, in a separable Hilbert space, of the following boundary-value second-order elliptic differential-operator equation: 
\begin{equation*}
\left\{ 
\begin{array}{l}
u^{\prime \prime }(x)+Au(x)-\lambda u(x)=f(x),\text{ \ }x\in (0,1) \vspace{0.1cm}\\ 
\lambda u^{\prime }(0)-\alpha u(1)=f_{1},\text{ }u(1)=f_{2},%
\end{array}%
\right.
\end{equation*}%
where $\alpha $ is a complex number with $\text{Re}(\alpha )\geqslant 0$ and $-A$ is a linear self-adjoint
operator garanteeing the ellipticity of the equation. Note that here, the
parameter $\lambda $ appears in the nonlocal boundary condition. Recently, in \cite{Aliev and Kurbanova}, the authors consider the following
boundary-value problem for an elliptic differential-operator equation of
second order 
\begin{equation*}
\left\{ 
\begin{array}{l}
\lambda ^{2}u(x)-u^{\prime \prime }(x)+Au(x)=f(x),\text{ \ }x\in (0,1)  \vspace{0.1cm}\\ 
u^{\prime }(0)+\lambda u(1)=f_{1},\text{ }\beta u^{\prime }(1)+\lambda u(0)=f_{2},
\end{array}\right.
\end{equation*}
where the same spectral parameter appears in the equation quadratically;
here $-A$ is a closed positive linear operator in a separable complex
Hilbert space. In \cite{Cheggag 1}, the authors consider Problem \eqref{NewEquation}-\eqref{RobinSpectralConditions} in a complex Banach space $X$, where $\lambda =\omega$ is a positive spectral parameter and $\mu = 0$. For $\omega $ large enough, under some geometrical assumptions on the space $X$
and hypotheses on operators $A-\omega I$ and $H$, including the fact that
they commute in the resolvent sense, the authors furnish necessary
and sufficient conditions on the data $d_{0},u_{1}$ to obtain the existence
and uniqueness of a solution $u$ of \eqref{NewEquation}-\eqref{RobinSpectralConditions} with maximal regularity. Recently, in \cite{Cheggag 4}, the authors develop an interesting new approach in a non commutative framework, concerning some general Sturm-Liouville problems with the same Robin boundary condition in $0$.

In our study of Problem \eqref{NewEquation}-\eqref{RobinSpectralConditions},
the ellipticity of the equation is guaranteed by hypothesis \eqref{NewH2}
below; this assumption allows us to consider, for suitable $\lambda ,\mu $,
the operators 
\begin{equation*}
\left\{ 
\begin{array}{l}
\Lambda _{\lambda ,\mu }:=\left( Q_{\lambda }-H_{\mu }\right)
+e^{2Q_{\lambda }}\left( Q_{\lambda }+H_{\mu }\right)  \vspace{0.1cm}\\ 
Q_{\lambda }=-\sqrt{-A+\lambda I},\text{ \ }H_{\mu }=H+\mu I.%
\end{array}\right.
\end{equation*}
In all the sequel, for any closed linear operator $T$ on $X$, $D(T)$ denotes the domain of $T$ and $\rho(T)$ the resolvent set of $T$. The key point will be to obtain the invertibility of the determinant $\Lambda _{\lambda ,\mu}$ of system  \eqref{NewEquation}-\eqref{RobinSpectralConditions} with estimates of $\left\Vert \Lambda _{\lambda ,\mu }^{-1}\right\Vert _{\mathcal{L}(X)}$, for appropriate $\lambda ,\mu $. To this end, we consider
two different situations:
\begin{enumerate}
\item $D(H)\subset D(A)$
\item $D(\sqrt{-A})\subset D(H)$,
\end{enumerate}
where in the first case, we say that operator $H$ is principal, while in the second case, it is operator $\sqrt{-A}$ which is principal. Concrete applications will illustrate these two cases at the end of this work; the first one is adapted to related problems concerning some heat equations with dynamical boundary conditions of reaction-diffusion type or with Wentzell boundary conditions, whereas the second one will concern, for instance, problems involving the Caputo derivative in the boundary conditions.

Four new and essential results sum up this work.
\begin{enumerate}
\item We solve the above equation by giving an explicit and simplified representation of the solution adapted to each case and we show that it verifies the optimal regularity, that is 
$$u\in W^{2,p}(0,1;X)\cap L^{p}(0,1;D(A)),$$ 
see Theorem~\ref{Main1} and Theorem~\ref{Main1Bis}.

\item We give sharp estimates of this solution in each case according to the complex spectral parameters $\lambda ,\mu $ belonging to some appropriate precised set,  see Theorem~\ref{Main2} and Theorem~\ref{Main2bis} . 

This part essentially uses the results of \cite{Dore Yakubov}, where some inequalities on resolvent operators are precised.

\item Thanks to these estimates, we obtain the generation of analytic semigroups corresponding to each case, see Theorem~\ref{gen sg first case} and Theorem~\ref{Gen second case}. 

\item Using the same tools, we study the Dirichlet case and obtain
similar results to those obtained with Robin boundary conditions, see Theorem~\ref{Main3}, Theorem~\ref{main Dirichlet} and Theorem~\ref{Gen Dirichlet}.
\end{enumerate} 

This article is organized as follows. Section 2 describes the assumptions,
including two spectral parameters $\lambda ,\mu $, and enunciates the main results of this paper. In Section 3, we deal with our model without spectral parameter so that we retrieve in a simple manner results of previous works, see \cite{Cheggag 4} and \cite{Kaid Ould}. Section 4 is devoted to some precise estimates of Dore-Yakubov type, which will be useful to analyze our model. Sections 5 and 6 concern the study of our model with spectral parameters $\lambda ,\mu $ under two different types of behaviour concerning operators with respect to their domains and to the parameters. Moreover sharp estimates in $\lambda ,\mu $ are furnished for the solution. In Section~7, we furnish results for (\ref{NewEquation}) together with Dirichlet boundary conditions. Then, in Section~8, we apply the results of Sections 5, 6, 7 to generation of semigroups. Finally, Section 9 deals with examples of applications.

\section{Assumptions and statement of main results}

In all this work, we will use the following notation: for $\varphi \in
\left( 0,\pi \right) $, we set 
\begin{equation}
S_{\varphi }:=\left\{ z\in \mathbb{C}\backslash \left\{ 0\right\}
:\left\vert \arg (z)\right\vert \leqslant \varphi \right\} \cup \left\{
0\right\} .  \label{EnsembleSpectral}
\end{equation}
Our goal is to seek for a classical solution to Problem \eqref{NewEquation}-\eqref{RobinSpectralConditions}, that is a function $u$ such that

\begin{equation*}
\begin{array}{cl}
i) & u\in W^{2,p}(0,1;X)\cap L^{p}(0,1;D(A)), \\ 
ii) & u(0)\in D(H), \\ 
iii)& u\text{ satisfies }\eqref{NewEquation} \text{ and }\eqref{RobinSpectralConditions}.
\end{array}
\end{equation*}
We suppose that 
\begin{equation}
X\text{ is a }UMD\text{ space}.  \label{NewH1}
\end{equation}%
Recall that $X$ is a $UMD$ space means that for all $q>1$ the Hilbert transform is
continuous from $L^{q}(\mathbb{R};X)$ into itself, see \cite{D. L. Burkholder}; we also assume that

\begin{equation}
\left \{ 
\begin{array}{l}
\exists \text{ }\varphi _{0}\in \left( 0,\pi \right) :\text{ }S_{\varphi
_{0}}\subset \rho \left( A\right) \text{ and }\exists \,C_{A}>0:  \vspace{0.1cm}\\ 
\forall \lambda \in S_{\varphi _{0}},\quad \left \Vert \left( A-\lambda
I\right) ^{-1}\right \Vert _{\mathcal{L}(X)}\leqslant \dfrac{C_{A}}{1+\left
\vert \lambda \right \vert },%
\end{array}%
\right.  \label{NewH2}
\end{equation}
and
\begin{equation} \label{MoinsABip}
\left \{ \begin{array}{l}
\forall s\in \mathbb{R},\text{ }\left( -A\right) ^{is}\in \mathcal{L}\left( X\right) ,\text{ }%
\exists \, \theta _{A}\in \left( 0,\pi \right) \text{:} \vspace{0.1cm} \\ 
\underset{s\in \mathbb{R}}{\sup }\left \Vert e^{-\theta _{A}\left \vert s\right \vert
}(-A)^{is}\right \Vert _{\mathcal{L}\left( X\right) }<+\infty .%
\end{array}\right. 
\end{equation}%
We now set for $\lambda \in S_{\varphi _{0}},\mu \in \mathbb{C}$%
\begin{equation*}
H_{\mu }=H+\mu I,\quad Q_{\lambda }=-\sqrt{-A+\lambda I}\quad \text{and} \quad Q=-\sqrt{-A}.
\end{equation*}%
The existence of the previous square roots is ensured by subsection \ref{spect 1}
below and for operator $H$ we consider the two following types of
hypotheses:

\subsection{First case} 
\begin{equation}
D\left( H\right) \subset D(A),  \label{D(H)Principal}
\end{equation}%
and%
\begin{equation}
\left \{ 
\begin{array}{l}
\exists \, \varphi _{1}\in \left( 0,\pi \right) ,\exists \,C_{H}>0: \vspace{0.1cm} \\ 
S_{\varphi _{1}}\text{$\subset \rho $}\left( \text{$-H$}\right) \text{ and}%
\underset{\mu \in S_{\varphi _{1}}}{\sup }\left( 1+\left \vert \mu \right
\vert \right) \left \Vert H_{\mu }^{-1}\right \Vert _{\mathcal{L}%
(X)}\leqslant C_{H}.%
\end{array}%
\right.  \label{NewH3}
\end{equation}%
For $r > 0$, we set
\begin{equation*}
\Omega _{\varphi _{0},\varphi _{1},r}=\left \{ \left( \lambda ,\mu \right)
\in S_{\varphi _{0}}\times S_{\varphi _{1}}:\left \vert \lambda \right \vert
\geqslant r\text{ and }\frac{\left \vert \mu \right \vert ^{2}}{\left \vert
\lambda \right \vert }\geqslant r\right \}.
\end{equation*}
Then, we have the following main results:
\begin{theorem}\label{Main1}
Assume (\ref{NewH1})$\sim $(\ref{NewH3}). Let $d_{0},u_{1}\in X$ and $f\in L^{p}\left( 0,1;X\right) $ with $p \in (1,+\infty)$. Then, there exists $r_0 > 0$ such that for all $\left( \lambda ,\mu \right) \in \Omega _{\varphi _{0},\varphi_{1},r_{0}}$, the two following statements are equivalent:

\begin{enumerate}
\item Problem (\ref{NewEquation})-(\ref{RobinSpectralConditions}) has a
classical solution $u$, that is, 
\begin{equation*}
u\in W^{2,p}\left( 0,1;X\right) \cap L^{p}\left( 0,1;D\left( A\right)
\right) ,\text{ }u(0)\in D(H),
\end{equation*}

and $u$ satisfies (\ref{NewEquation})-(\ref{RobinSpectralConditions}).

\item $u_{1}\in (D(A),X)_{\frac{1}{2p},p}$.

Moreover in this case $u$ is unique and given by (\ref{repre}) where $%
Q,\Lambda $ are replaced by $Q_{\lambda },\Lambda _{\lambda ,\mu }.$
\end{enumerate}
\end{theorem}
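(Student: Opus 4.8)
The plan is to reduce Problem \eqref{NewEquation}-\eqref{RobinSpectralConditions} to the explicit construction of a candidate solution built from the operators $Q_\lambda$ and $\Lambda_{\lambda,\mu}$, and then to identify exactly which regularity property of the data $u_1$ is forced by the requirement that this candidate be a classical solution. First I would write down, for fixed $(\lambda,\mu)$, the general solution of the second-order equation $u''+Au-\lambda u = f$ in $L^p(0,1;X)$: since hypotheses \eqref{NewH2} and \eqref{MoinsABip} guarantee that $-A+\lambda I$ is a positive operator with bounded imaginary powers, the square root $Q_\lambda = -\sqrt{-A+\lambda I}$ generates an analytic semigroup $(e^{xQ_\lambda})_{x\geq 0}$, and the standard Krein-type representation gives
\begin{equation*}
u(x) = e^{xQ_\lambda}\xi_0 + e^{(1-x)Q_\lambda}\xi_1 + (\text{a particular term built from } Q_\lambda \text{ and } f),
\end{equation*}
with $\xi_0,\xi_1\in X$ two free parameters. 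The maximal regularity $u\in W^{2,p}(0,1;X)\cap L^p(0,1;D(A))$ of the particular term, and the characterization of the boundary traces of $e^{xQ_\lambda}\xi$, both rely on the $UMD$ hypothesis \eqref{NewH1} together with the Dore--Venni / Prüss--Sohr machinery; I would quote these as known (they are the content of the cited background and of Section~4).

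Next I would impose the two boundary conditions \eqref{RobinSpectralConditions}. Computing $u'(0)$, $u(0)$ and $u(1)$ from the representation and substituting yields a $2\times2$ block linear system for $(\xi_0,\xi_1)$ whose "determinant" is precisely the operator
\begin{equation*}
\Lambda_{\lambda,\mu} = (Q_\lambda - H_\mu) + e^{2Q_\lambda}(Q_\lambda + H_\mu).
\end{equation*}
The crucial algebraic step is to solve this system; here one must be careful because $A$ (hence $Q_\lambda$) does not commute with $H$, so the inversion cannot be done by naive scalar manipulation. The key ingredient, which I would invoke from the earlier sections, is that for $(\lambda,\mu)\in\Omega_{\varphi_0,\varphi_1,r_0}$ with $r_0$ large enough, $\Lambda_{\lambda,\mu}$ is boundedly invertible on $X$ with a uniform control of $\|\Lambda_{\lambda,\mu}^{-1}\|_{\mathcal L(X)}$. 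Granting this, the system can be solved explicitly, producing formula \eqref{repre} (with $Q,\Lambda$ replaced by $Q_\lambda,\Lambda_{\lambda,\mu}$) for a candidate $u$.

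It then remains to prove the equivalence $(1)\Leftrightarrow(2)$. For $(2)\Rightarrow(1)$: assuming $u_1\in(D(A),X)_{\frac{1}{2p},p}$, I would check, term by term in the explicit formula, that $u$ indeed lies in $W^{2,p}(0,1;X)\cap L^p(0,1;D(A))$ with $u(0)\in D(H)$ and that it satisfies the equation and both boundary conditions; the only term whose regularity is not automatic is the one carrying $u_1$, and this is exactly where the trace-space condition $u_1\in(D(A),X)_{\frac{1}{2p},p}$ is used, via the classical characterization $\{e^{\cdot\, Q_\lambda}v : v\in(D(Q_\lambda),X)_{1/p,p}\}\subset W^{1,p}$ together with $D(Q_\lambda^2)=D(A)$. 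For $(1)\Rightarrow(2)$: conversely, if a classical solution exists, then $u\in W^{2,p}(0,1;D(A))\hookrightarrow C([0,1];(D(A),X)_{\frac1{2p},p})$ by the trace theorem, so evaluating at $x=1$ and using $u(1)=u_1$ gives $u_1\in(D(A),X)_{\frac1{2p},p}$ directly. Uniqueness follows since the homogeneous problem forces $(\xi_0,\xi_1)=(0,0)$ through the injectivity of $\Lambda_{\lambda,\mu}$ and of $Q_\lambda$.

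The main obstacle is the non-commutativity of $A$ and $H$: establishing the bounded invertibility of $\Lambda_{\lambda,\mu}$ with uniform estimates on $\Omega_{\varphi_0,\varphi_1,r_0}$ is delicate, since one cannot simply factor $\Lambda_{\lambda,\mu}$ or diagonalize it. The argument must instead treat $e^{2Q_\lambda}$ as a small perturbation (it is exponentially small in a sense controlled by $\mathrm{Re}(Q_\lambda)$, which grows like $\sqrt{|\lambda|}$) and extract the leading behaviour from $Q_\lambda - H_\mu$, balancing the sizes of $Q_\lambda\sim|\lambda|^{1/2}$ and $H_\mu\sim|\mu|$ — which is precisely why the set $\Omega_{\varphi_0,\varphi_1,r_0}$ is defined by $|\mu|^2/|\lambda|\geq r$. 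This is handled in the sections following this statement; for the proof of the theorem itself one invokes it as a black box.
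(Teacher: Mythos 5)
Your overall strategy is the same as the paper's: derive the representation formula with free parameters, identify $\Lambda_{\lambda,\mu}$ as the determinant of the boundary system, invoke its bounded invertibility on $\Omega_{\varphi_0,\varphi_1,r_0}$ (proved separately via the smallness of $Q_\lambda H_\mu^{-1}$ on that set), and then characterize when the resulting candidate is classical through interpolation spaces, with the necessity direction coming from the trace theorem. However, there is a genuine gap at the decisive point of this particular theorem: you assert that \emph{the only term whose regularity is not automatic is the one carrying $u_1$}, but you never justify why no condition on $d_0$ appears, and nowhere in your argument do you use the hypothesis \eqref{D(H)Principal}, i.e.\ $D(H)\subset D(A)$. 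In formula \eqref{repre} the coefficient $\mu_0=\Lambda_{\lambda,\mu}^{-1}\bigl[(I-e^{2Q_\lambda})d_0+2Q_\lambda e^{Q_\lambda}\mu_1+2Q_\lambda J_{\lambda,f}(0)\bigr]-J_{\lambda,f}(0)$ must belong to $(D(A),X)_{\frac{1}{2p},p}$ for $Q_\lambda^2 S_\lambda(\cdot)\mu_0$ to be in $L^p$, and because $A$ and $H$ do not commute this is not automatic: one needs either an interpolation-stability property of $\Lambda_{\lambda,\mu}^{-1}$ (the paper's hypothesis $(H_5)$) or, as here, the structural fact that $\Lambda_{\lambda,\mu}^{-1}(X)=D(\Lambda_{\lambda,\mu})=D(H)\subset D(A)\subset (D(A),X)_{\frac{1}{2p},p}$. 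This same fact is what gives $u(0)=\mu_0+J_{\lambda,f}(0)=\Lambda_{\lambda,\mu}^{-1}[\cdots]\in D(H)$, a requirement of statement 1 that your term-by-term check never verifies. Without this ingredient your argument only yields the equivalence with the extra condition $\Lambda_{\lambda,\mu}^{-1}d_0\in(D(A),X)_{\frac{1}{2p},p}$ (which is exactly what survives in the second case, Theorem \ref{Main1Bis}), not the clean statement 2 of Theorem \ref{Main1}. The paper handles all of this by reducing to Proposition \ref{Prop without lambda} with $A-\lambda I$, $H+\mu I$, $Q_\lambda$, $\Lambda_{\lambda,\mu}$ and then observing that $(H_4)$, $(H_5)$ and the $d_0$-condition are automatic precisely because $D(H)\subset D(A)$.

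A minor further point: in the necessity direction you write $u\in W^{2,p}(0,1;D(A))\hookrightarrow C\bigl([0,1];(D(A),X)_{\frac{1}{2p},p}\bigr)$; a classical solution is only in $W^{2,p}(0,1;X)\cap L^{p}(0,1;D(A))$, and the correct trace statement is Grisvard's theorem for that intersection space, which is what the paper uses. The conclusion $u_1=u(1)\in(D(A),X)_{\frac{1}{2p},p}$ is right, but the space you quote is not the one the solution lives in.
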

Here, $(D(A),X)_{\frac{1}{2p},p}$ denotes the classical real interpolation space equipped with the norm
\begin{equation*}
\left\| w \right\|_{(D(A),X)_{\frac{1}{2p},p}} = \left\| w \right\| + \left( \int_{0}^{+\infty }\left\|t^{1-1/2p}A\left( A-tI\right)^{-1} w \right\|^p \frac{dt}{t}\right)^{1/p}.  
\end{equation*}

\begin{theorem}
\label{Main2}Assume (\ref{NewH1})$\sim $(\ref{NewH3}), $d_{0}\in X$ and $%
u_{1}\in (D(A),X)_{\frac{1}{2p},p}$. Then, there exists a constant $M > 0$ such that, for $\left( \lambda ,\mu \right) \in \Omega
_{\varphi _{0},\varphi _{1},r_{0}}$ and $f\in L^{p}\left( 0,1;X\right) $
with $1<p<+\infty $, the unique classical solution $u$ of (\ref{NewEquation}%
)-(\ref{RobinSpectralConditions}) satisfies%
\begin{equation*}
\max \left\{ \left(1+|\lambda|\right)\left\|u\right\|_{L^p(0,1;X)}, \left \|u''\right\|_{L^p(0,1;X)}, \left\|Q_{\lambda }^{2}u\right\|_{L^p(0,1;X)}\right \} \leqslant
M\alpha \left( d_{0},u_{1},\lambda ,\mu ,f\right) ,
\end{equation*}%
where%
$$
\alpha ( d_{0},u_{1},\lambda ,\mu ,f) = \dfrac{1+\left \vert \lambda \right \vert +\left \vert \mu \right \vert }{1+\left \vert \mu \right \vert }\left( \left \Vert d_{0}\right \Vert +\left \Vert f\right
\Vert _{L^{p}(0,1;X)}\right) + \left \Vert u_{1}\right \Vert _{(D(A),X)_{\frac{1}{2p},p}}+\left \vert \lambda \right \vert ^{1-\frac{1}{2p}}\left \Vert u_{1}\right \Vert .$$
\end{theorem}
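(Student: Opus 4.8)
The proof of Theorem~\ref{Main2} is a quantitative refinement of the existence part of Theorem~\ref{Main1}: we already know the unique classical solution $u$ is given by the representation formula \eqref{repre} with $Q,\Lambda$ replaced by $Q_\lambda,\Lambda_{\lambda,\mu}$, so everything reduces to estimating, term by term, the norms appearing in that formula uniformly in $(\lambda,\mu)\in\Omega_{\varphi_0,\varphi_1,r_0}$. First I would write $u$ as the sum of a particular solution (a convolution term of the form $\tfrac12 Q_\lambda^{-1}\int_0^1 e^{-|x-s|\,(-Q_\lambda)}f(s)\,ds$, or whatever precise shape \eqref{repre} takes) plus two boundary terms carrying the data $d_0$ and $u_1$ through $\Lambda_{\lambda,\mu}^{-1}$ and the semigroups $e^{xQ_\lambda}$, $e^{(1-x)Q_\lambda}$. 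The three quantities to be bounded on the left-hand side are $(1+|\lambda|)\|u\|_{L^p}$, $\|u''\|_{L^p}$ and $\|Q_\lambda^2 u\|_{L^p}$; since $u''=-Q_\lambda^2 u+\lambda u+f$ (from \eqref{NewEquation}, rewritten with $Q_\lambda^2=-A+\lambda I$, hence $Au = Q_\lambda^2 u - \lambda u$ and $u''= f - Au + \lambda u$), it suffices to control $\|u\|_{L^p}$, $\|\lambda u\|_{L^p}$ and $\|Q_\lambda^2 u\|_{L^p}=\|Au-\lambda u\|_{L^p}$, and the last reduces further to $\|Au\|_{L^p}$ together with the already-listed $\|\lambda u\|_{L^p}$.

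\smallskip
\textbf{Key steps.} (i) For the convolution/particular-solution term I would invoke the maximal-regularity machinery coming from \eqref{NewH1}--\eqref{NewH2} and \eqref{MoinsABip}: since $X$ is UMD and $-A+\lambda I$ is a positive operator with bounded imaginary powers, the operators $A(A-\lambda I)^{-1}$ acting in $L^p(0,1;X)$ have the Dore--Venni/Prüss--Sohr type bounds that give $\|Q_\lambda^2 v\|_{L^p}\lesssim \|f\|_{L^p}$ and $(1+|\lambda|)\|v\|_{L^p}\lesssim \|f\|_{L^p}$ for the convolution term $v$, with constant independent of $\lambda\in S_{\varphi_0}$ — this is exactly the Dore--Yakubov-type input the introduction advertises (Section~4 of the paper). (ii) For the boundary terms I would use the elementary semigroup estimates $\|e^{xQ_\lambda}\|_{\mathcal L(X)}\le C$, $\|Q_\lambda e^{xQ_\lambda}\|\le C/x$, and, crucially, the mapping property of $e^{xQ_\lambda}$ out of the real interpolation space: $\big\|\,x\mapsto Q_\lambda^2 e^{(1-x)Q_\lambda}w\big\|_{L^p(0,1;X)}\lesssim \|w\|_{(D(A),X)_{1/2p,p}} + |\lambda|^{1-1/2p}\|w\|$ for $w\in(D(A),X)_{1/2p,p}$ — this is the precise origin of the two terms $\|u_1\|_{(D(A),X)_{1/2p,p}}+|\lambda|^{1-1/2p}\|u_1\|$ on the right-hand side, and it is the place where the interpolation norm and the scaling in $\lambda$ must be tracked carefully (one rescales $x\rightsquigarrow |\lambda|^{1/2}x$ or compares $Q_\lambda$ with $Q$). (iii) The factor $\tfrac{1+|\lambda|+|\mu|}{1+|\mu|}$ multiplying $\|d_0\|+\|f\|_{L^p}$ comes from the norm of $\Lambda_{\lambda,\mu}^{-1}$ composed with $Q_\lambda$ (or $Q_\lambda^2$): the invertibility estimate for $\Lambda_{\lambda,\mu}$ on $\Omega_{\varphi_0,\varphi_1,r_0}$ — established already in the course of proving Theorem~\ref{Main1} — yields $\|Q_\lambda^k\Lambda_{\lambda,\mu}^{-1}\|_{\mathcal L(X)}\lesssim (1+|\lambda|+|\mu|)/\big((1+|\mu|)(1+|\lambda|)^{1-k/2}\big)$ for $k=0,1,2$, and I would feed this into the boundary contributions and collect powers of $|\lambda|$.

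\smallskip
\textbf{Main obstacle.} The delicate point is not the convolution term — that is a black-box application of UMD maximal regularity with parameter, uniform over the sector $S_{\varphi_0}$ — but the bookkeeping of the $(\lambda,\mu)$-dependence in the boundary terms, specifically getting the \emph{sharp} prefactor $\tfrac{1+|\lambda|+|\mu|}{1+|\mu|}$ rather than a cruder bound, and the emergence of exactly the interpolation-plus-$|\lambda|^{1-1/2p}$ combination for the $u_1$ contribution. This requires: the trace/interpolation characterization $\big\{w:\;x\mapsto Q_\lambda^2e^{(1-x)Q_\lambda}w\in L^p\big\}=(D(A),X)_{1/2p,p}$ with norm equivalence whose constants are controlled as $\lambda$ varies in $S_{\varphi_0}$ (proved by the standard change of variable reducing $Q_\lambda$ to $Q$ modulo lower-order terms absorbed using $|\lambda|\ge r_0$), together with the constraint $|\mu|^2/|\lambda|\ge r_0$ defining $\Omega_{\varphi_0,\varphi_1,r_0}$, which is precisely what makes $\Lambda_{\lambda,\mu}^{-1}$ behave like $(Q_\lambda-H_\mu)^{-1}$ and produces the stated quotient. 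Once these two ingredients are in place, assembling the final inequality with the constant $M$ independent of $(\lambda,\mu,f,d_0,u_1)$ is a routine triangle-inequality computation, and $\|u''\|_{L^p}$ is handled last, a posteriori, via $u''=f-Au+\lambda u$.
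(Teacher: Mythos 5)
Your proposal follows essentially the same route as the paper: the representation formula split into a convolution part controlled by the Dore--Venni/Dore--Yakubov maximal-regularity estimates uniform in $\lambda\in S_{\varphi_0}$, boundary terms controlled by the Dore--Yakubov interpolation estimate $\left\Vert Q_\lambda^2 e^{(1-\cdot)Q_\lambda}u_1\right\Vert_{L^p}\lesssim \left\Vert u_1\right\Vert_{(D(A),X)_{\frac{1}{2p},p}}+|\lambda|^{1-\frac{1}{2p}}\left\Vert u_1\right\Vert$ together with the bound $\left\Vert Q_\lambda^2\Lambda_{\lambda,\mu}^{-1}\right\Vert_{\mathcal{L}(X)}\lesssim \frac{1+|\lambda|+|\mu|}{1+|\mu|}$ on $\Omega_{\varphi_0,\varphi_1,r_0}$ (the paper's Lemma on $\Lambda_{\lambda,\mu}$), and $u''$ recovered a posteriori from the equation. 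This is the paper's proof in substance; the only slips are cosmetic (the correct identities are $Au=-Q_\lambda^2u+\lambda u$ and $u''=Q_\lambda^2u+f$, and the paper gets $(1+|\lambda|)\left\Vert u\right\Vert_{L^p}$ simply by writing $u=Q_\lambda^{-2}Q_\lambda^2u$), and they do not affect the argument.
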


Now, define in $Y=L^{p}(0,1;X)$ with $p\in \left( 1,+\infty \right) 
$, the following operator 
\begin{equation*}
\begin{array}{llll}
\mathcal{A}: & D\left( \mathcal{A}\right) \subset Y & \longrightarrow & Y \\ 
& u & \longmapsto & A\left( u\left( \cdot \right) \right) ,%
\end{array}%
\end{equation*}%
with domain
\begin{equation*}
D\left( \mathcal{A}\right) =\left\{ u\in Y:u\left( x\right) \in D(A)\text{
a.e. }x\in (0,1)\text{ and }A\left( u\left( \cdot \right) \right) \in Y\right\}.
\end{equation*}
Here, we consider the Banach space $Z:=Y\times X$. For $\mu \in \mathbb{C}$, we build a linear operator $\mathcal{P}_{A,H,\mu }$ on $Z$, by setting%
\begin{equation*}
\begin{array}{llll}
\mathcal{P}_{A,H,\mu }: & D\left( \mathcal{P}_{A,H,\mu }\right) \subset Z & 
\longrightarrow & Z \\ 
& \left( u,v\right) & \longmapsto & \left( u^{\prime \prime }+\mathcal{A}%
u,u^{\prime }(0\right) -Hv-\mu v),%
\end{array}%
\end{equation*}%
where $D\left( \mathcal{P}_{A,H,\mu }\right) =\left\{ \left( u,v\right) \in
W\times D(H):u\left( 1\right) =0,u(0)=v\right\} $ with%
\begin{equation*}
W=W^{2,p}(0,1;X)\cap L^{p}(0,1;D(A))\subset Y.
\end{equation*}
We then obtain:

\begin{theorem}\label{gen sg first case}
Assume \eqref{NewH1}$\sim $\eqref{NewH3}. Set $\varphi _{2}:=\min \left\{ \varphi _{0},\varphi _{1}\right\}$. Then, for each $\mu \in \mathbb{C}$ with $\left\vert \arg \left( \mu \right) \right\vert <\pi -\varphi_{2}$, we have

\begin{enumerate}
\item $\mathcal{P}_{A,H,\mu }$ is the infinitesimal generator of a $C_{0}$-semigroup.

\item If $\varphi_2 \in [\pi /2,\pi )$, then $\mathcal{P}_{A,H,\mu }$ is the infinitesimal generator of an analytic semigroup.
\end{enumerate}
\end{theorem}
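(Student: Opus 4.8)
The plan is to realise the resolvent of $\mathcal{P}_{A,H,\mu}$ as the solution operator of Problem \eqref{NewEquation}-\eqref{RobinSpectralConditions} taken with $u_{1}=0$, and to deduce generation from the quantitative statements of Theorems \ref{Main1} and \ref{Main2}. I would first record two facts: $D(\mathcal{P}_{A,H,\mu})$ is dense in $Z=Y\times X$ (an easy consequence of $A$ and $H$ being densely defined — they are sectorial and $X$ is reflexive, being $UMD$ — and of $D(H)\subset D(A)$), and $\mathcal{P}_{A,H,\mu}$ is closed (this also follows a posteriori from the resolvent estimate obtained below). The route is then the standard one: exhibit a subset of $\rho(\mathcal{P}_{A,H,\mu})$ on which $\|R(\lambda,\mathcal{P}_{A,H,\mu})\|_{\mathcal{L}(Z)}=O(1/|\lambda|)$, and conclude by the Hille--Yosida theorem for $(1)$ and by the sectorial characterisation of generators of analytic semigroups for $(2)$.

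For the reduction, fix $(f,d_{0})\in Z$ and $\lambda\in\mathbb{C}$. A pair $(u,v)\in D(\mathcal{P}_{A,H,\mu})$ solves $(\lambda I-\mathcal{P}_{A,H,\mu})(u,v)=(f,d_{0})$ if and only if, after eliminating $v=u(0)$,
\begin{equation*}
u''+Au-\lambda u=-f,\qquad u'(0)-Hu(0)-(\mu+\lambda)u(0)=-d_{0},\qquad u(1)=0,
\end{equation*}
that is, Problem \eqref{NewEquation}-\eqref{RobinSpectralConditions} with data $(-f,-d_{0},0)$ and \emph{spectral parameters $\lambda$ and $\mu+\lambda$}. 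Since $0\in(D(A),X)_{\frac{1}{2p},p}$, Theorem \ref{Main1} gives, for every $(\lambda,\mu+\lambda)\in\Omega_{\varphi_{0},\varphi_{1},r_{0}}$, a unique classical solution $u$, whence $\lambda\in\rho(\mathcal{P}_{A,H,\mu})$ and $R(\lambda,\mathcal{P}_{A,H,\mu})(f,d_{0})=(u,u(0))$. The assumption $|\arg\mu|<\pi-\varphi_{2}$ is exactly what makes the set $\{\lambda:(\lambda,\mu+\lambda)\in\Omega_{\varphi_{0},\varphi_{1},r_{0}}\}$ large: it forces $-\mu\notin S_{\varphi_{2}}$, and from this one checks that, for $r_{\mu}$ large enough, every $\lambda$ with $\mu+\lambda\in S_{\varphi_{2}}$ (slightly narrowed if $\varphi_{0}=\varphi_{2}$) and $|\mu+\lambda|\geqslant r_{\mu}$ lies in that set and moreover satisfies $|\mu+\lambda|\geqslant c_{\mu}|\lambda|$ for some $c_{\mu}>0$ (one uses $S_{\varphi_{2}}\subset S_{\varphi_{0}}\cap S_{\varphi_{1}}$, the fact that $\arg\lambda$ is close to $\arg(\mu+\lambda)$ once $|\mu+\lambda|$ dominates $|\mu|$, and $|\lambda|\leqslant|\mu+\lambda|+|\mu|$). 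Thus $\rho(\mathcal{P}_{A,H,\mu})$ contains a shifted region $-\mu+(S_{\varphi_{2}}\cap\{|z|\geqslant r_{\mu}\})$, which contains a half-plane $\{\mathrm{Re}\,\lambda>\omega_{\mu}\}$ as soon as $\varphi_{2}\geqslant\pi/2$, and a sector of half-angle $>\pi/2$ about $\omega_{\mu}+\mathbb{R}^{+}$ when $\varphi_{2}>\pi/2$.

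Next I would turn Theorem \ref{Main2} into a resolvent bound on this region. With $u_{1}=0$ the functional $\alpha$ there equals $\frac{1+|\lambda|+|\mu+\lambda|}{1+|\mu+\lambda|}\bigl(\|d_{0}\|+\|f\|_{L^{p}}\bigr)$, which is $\leqslant C\bigl(\|d_{0}\|+\|f\|_{L^{p}}\bigr)$ because $|\mu+\lambda|\geqslant c_{\mu}|\lambda|$; hence $\|u\|_{L^{p}}\leqslant\frac{C}{1+|\lambda|}\bigl(\|d_{0}\|+\|f\|_{L^{p}}\bigr)$ and $\|u''\|_{L^{p}}\leqslant C\bigl(\|d_{0}\|+\|f\|_{L^{p}}\bigr)$. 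For the $X$-component, $v=u(0)$ and the identity $H_{\mu+\lambda}u(0)=u'(0)+d_{0}$ give, by \eqref{NewH3}, $\|u(0)\|\leqslant\frac{C_{H}}{1+|\mu+\lambda|}\bigl(\|u'(0)\|+\|d_{0}\|\bigr)$, and a trace inequality on $(0,1)$ bounds $\|u'(0)\|$ by an interpolate of $\|u\|_{L^{p}}$ and $\|u''\|_{L^{p}}$, hence by $C\bigl(\|d_{0}\|+\|f\|_{L^{p}}\bigr)$; so $\|u(0)\|\leqslant\frac{C}{1+|\lambda|}\bigl(\|d_{0}\|+\|f\|_{L^{p}}\bigr)$ as well. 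Combining, $\|R(\lambda,\mathcal{P}_{A,H,\mu})\|_{\mathcal{L}(Z)}\leqslant\frac{M}{1+|\lambda|}$ on the region above. Together with density of $D(\mathcal{P}_{A,H,\mu})$, this yields $(1)$ (the region contains a half-line $(\omega_{\mu},+\infty)$ with a complex neighbourhood carrying the $O(1/|\lambda|)$ bound), and, when $\varphi_{2}\in[\pi/2,\pi)$, it yields $(2)$ by the sectorial criterion, the region then containing a sector of half-angle $\geqslant\pi/2$ about $\omega_{\mu}+\mathbb{R}^{+}$.

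The step I expect to be the main obstacle is enlarging the admissible region when $\varphi_{2}<\pi/2$: what Theorems \ref{Main1}-\ref{Main2} hand over directly is only the sector $S_{\varphi_{2}}$ shifted by $-\mu$, which contains no half-plane and so is not by itself enough for $C_{0}$-generation. One must re-run the construction of the resolvent of Theorem \ref{Main1} — the square root $Q_{\lambda}$, the operator $\Lambda_{\lambda,\mu+\lambda}$ and its inverse, and the representation formula — for $\lambda$ ranging over a genuine half-plane $\{\mathrm{Re}\,\lambda>\omega_{\mu}\}$, together with the corresponding uniform bounds. This is legitimate, since $\sigma(-A+\lambda I)=\sigma(-A)+\lambda$ then stays off $\mathbb{R}^{-}$ and $Q_{\lambda}=-\sqrt{-A+\lambda I}$ keeps its spectrum in a half-plane once $\mathrm{Re}\,\lambda$ is large, but it amounts to redoing the estimates of Sections 4 and 5 for these $\lambda$. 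A secondary, but delicate, point is the trace estimate for $u'(0)$ with a $\lambda$-dependence weak enough to be absorbed by the factor $1/(1+|\mu+\lambda|)$, so that the boundary component of the resolvent also decays like $1/|\lambda|$; once both points are settled, the remainder is routine.
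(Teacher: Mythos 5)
You have essentially reconstructed the paper's own route: its proof of this theorem is Proposition \ref{Est L first case}, which identifies $(\mathcal{P}_{A,H,\mu }-\lambda I)(u,v)=(f,\tau )$ with problem (\ref{Pb l=m}), i.e.\ with (\ref{NewEquation})--(\ref{RobinSpectralConditions}) for the pair of parameters $(\lambda ,\lambda +\mu )$ and $u_{1}=0$, and then invokes Theorem \ref{Main1} and Theorem \ref{Main2} to obtain $S_{\varphi _{2}}\backslash B(0,r_{\varphi _{3}})\subset \rho (\mathcal{P}_{A,H,\mu })$ together with $\Vert (\mathcal{P}_{A,H,\mu }-\lambda I)^{-1}\Vert _{\mathcal{L}(Z)}\leqslant M/(1+|\lambda |)$. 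Your two deviations are legitimate but worth comparing. First, the paper keeps $\lambda \in S_{\varphi _{2}}$, $|\lambda |\geqslant r_{\varphi _{3}}$, and checks both $(\lambda ,\lambda +\mu )\in \Omega _{\varphi _{0},\varphi _{1},r_{0}}$ and the boundedness of $\tfrac{1+|\lambda |+|\lambda +\mu |}{1+|\lambda +\mu |}$ via the inequality $|\lambda +\mu |\geqslant \cos \bigl(\tfrac{\varphi _{2}+\varphi _{3}}{2}\bigr)\,(|\lambda |+|\mu |)$ from \cite{Dore Yakubov}; your shifted region $-\mu +S_{\varphi _{2}}$ with the narrowing when $\varphi _{0}=\varphi _{2}$ also works, but in the borderline case $\varphi _{0}=\varphi _{2}=\pi /2$ the narrowing leaves only a sector of half-angle strictly less than $\pi /2$, so the paper's bookkeeping is the safer choice for statement 2. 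Second, for the $X$-component the paper does not use a trace inequality: it bounds $u(0)$ directly from the representation formula, using $\Vert \Lambda _{\lambda ,\lambda +\mu }^{-1}\Vert _{\mathcal{L}(X)}\leqslant M/(1+|\lambda +\mu |)$, which yields (\ref{estim u(0)}); your alternative, $u(0)=H_{\lambda +\mu }^{-1}(u^{\prime }(0)+d_{0})$ combined with (\ref{NewH3}) and $\Vert u^{\prime }(0)\Vert \leqslant C(\Vert u^{\prime }\Vert _{L^{p}}+\Vert u^{\prime \prime }\Vert _{L^{p}})\leqslant C(\Vert u\Vert _{L^{p}}+\Vert u^{\prime \prime }\Vert _{L^{p}})$, is a correct and arguably simpler substitute, and the point you call delicate is harmless because the embedding constant of $W^{1,p}(0,1;X)\hookrightarrow C([0,1];X)$ is independent of $\lambda $. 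Finally, the ``main obstacle'' you defer is not part of the paper's argument at all: the paper performs no extension of the estimates to a half-plane when $\varphi _{2}<\pi /2$, and deduces both statements of the theorem directly from the sector estimate of Proposition \ref{Est L first case}, statement 3 (it likewise never discusses density of $D(\mathcal{P}_{A,H,\mu })$, which you rightly supply). So your proposal already contains everything the paper's proof actually uses; your reservation about concluding $C_{0}$-generation from a resolvent bound on a sector of half-angle $\varphi _{2}<\pi /2$ is a fair comment on the paper's brevity rather than a step missing relative to its proof, and for statement 2, as well as for statement 1 when $\varphi _{2}\geqslant \pi /2$, your argument is complete as written.
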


\subsection{Second case} 

\begin{equation} \label{D(Q)Principal}
D(Q)\subset D\left( H\right) , 
\end{equation}

\begin{equation}\label{NewH3bis}
\exists \,\varepsilon \in (0,1/2],~\exists\, C_{H,Q}>0,\text{ }\underset{t\in
\lbrack 0,+\infty )}{\sup }\left( 1+t\right) ^{\varepsilon }\parallel
HQ_{t}^{-1}\parallel _{\mathcal{L}\left( X\right) }\leqslant C_{H,Q},
\end{equation}

\begin{equation} \label{inv Lamb and interp}
\left( Q-H\right) ^{-1}\left( \left( D\left( Q\right) ,X\right)
_{1/p,p}\right) \subset Q^{-1}\left( \left( D\left( Q\right) ,X\right)
_{1/p,p}\right) ;
\end{equation}
here, we have not supposed that $\left(Q-H\right)^{-1}$ is an operator but we have used the set-theory notation: 
\begin{equation*}
\left( Q-H\right) ^{-1}\left( \left( D\left( Q\right) ,X\right)
_{1/p,p}\right) =\left \{ \xi \in D\left( Q\right) :\left( Q-H\right) \xi
\in \left( D\left( Q\right) ,X\right) _{1/p,p}\right \} .
\end{equation*}

In order to obtain spectral estimates for the solution of \eqref{NewEquation}-\eqref{RobinSpectralConditions}, we will replace \eqref{inv Lamb and interp} by the new assumption:
\begin{equation}\label{Q-H stability}
\left( Q-H\right) ^{-1}\left( D\left( Q\right) \right) \subset D\left(
Q^{2}\right) ,  
\end{equation}%
where, as above
$$\left( Q-H\right) ^{-1}\left( D\left( Q\right) \right) =\left \{ \xi
\in D\left( Q\right) :\left( Q-H\right) \xi \in D\left( Q\right) \right \}.$$
Now, for $\rho >0$, we set
\begin{equation*}
\Pi _{\varphi _{0},\rho }=\left\{ \left( \lambda ,\mu \right) \in S_{\varphi
_{0}}\times \mathbb{C}:\left\vert \lambda \right\vert \geqslant \rho \text{ \ and \ }\frac{\left\vert \lambda \right\vert }{\left\vert \mu \right\vert ^{1/\varepsilon }%
}\geqslant \rho \right\}.
\end{equation*}
Then, we have the following main results:
\begin{theorem}\label{Main1Bis}
Assume (\ref{NewH1})$\sim $(\ref{MoinsABip}) and (\ref%
{D(Q)Principal})$\sim $(\ref{inv Lamb and interp}). Let $d_{0},u_{1}\in X$ and $f\in L^{p}\left( 0,1;X\right) $ with $1<p<+\infty$. Then, there exists $\rho_0>0$, such that for all $\left(\lambda ,\mu \right) \in \Pi_{\varphi _{0},\rho _{0}}$, the two following statements are equivalent:

\begin{enumerate}
\item Problem (\ref{NewEquation})-(\ref{RobinSpectralConditions}) has a
classical solution $u$, that is, 
\begin{equation*}
u\in W^{2,p}\left( 0,1;X\right) \cap L^{p}\left( 0,1;D\left( A\right)
\right) ,\text{ }u(0)\in D(H),
\end{equation*}

and $u$ satisfies (\ref{NewEquation}), (\ref{RobinSpectralConditions}).

\item $u_{1}\in (D(A),X)_{\frac{1}{2p},p}$ and $\left( Q_{\lambda }-H_{\mu
}\right) ^{-1}d_{0}\in \left( D\left( A\right) ,X\right) _{\frac{1}{2p},p}.$

Moreover in this case $u$ is unique and given by (\ref{repre}), where $%
Q,\Lambda $ are replaced by $Q_{\lambda },\Lambda _{\lambda ,\mu }.$
\end{enumerate}
\end{theorem}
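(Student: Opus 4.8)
The plan is to mimic the (already-proved) structure of Theorem~\ref{Main1}, the essential difference being that now $\sqrt{-A}$ is principal and $H$ is a lower-order perturbation in the sense of \eqref{NewH3bis}, so the square-root operator $Q_\lambda$ is built directly from hypotheses \eqref{NewH2}--\eqref{MoinsABip} via the functional calculus of Section~\ref{spect 1}, and $H_\mu=H+\mu I$ acts only through the boundary. First I would write down the candidate solution: since $-A+\lambda I$ is sectorial with good resolvent bounds by \eqref{NewH2}, $Q_\lambda=-\sqrt{-A+\lambda I}$ generates an analytic semigroup $(e^{xQ_\lambda})_{x\geqslant 0}$ on $X$, and the general solution of $u''+Au-\lambda u=f$ that is bounded and has the right structure is the standard one,
\begin{equation*}
u(x)=e^{xQ_\lambda}\xi_0+e^{(1-x)Q_\lambda}\xi_1+\tfrac12 Q_\lambda^{-1}\int_0^1 e^{|x-s|Q_\lambda}f(s)\,ds - \tfrac12 Q_\lambda^{-1}\!\int_0^1 e^{(2-x-s)Q_\lambda}f(s)\,ds,
\end{equation*}
or whatever normalization matches the paper's formula \eqref{repre}; here $\xi_0,\xi_1\in X$ are free parameters. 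Imposing the two boundary conditions in \eqref{RobinSpectralConditions} turns the determination of $(\xi_0,\xi_1)$ into a $2\times2$ block system whose ``determinant'' is exactly $\Lambda_{\lambda,\mu}=(Q_\lambda-H_\mu)+e^{2Q_\lambda}(Q_\lambda+H_\mu)$, as announced in the introduction. The first real task is therefore to establish the invertibility of $\Lambda_{\lambda,\mu}$ on $X$ for $(\lambda,\mu)\in\Pi_{\varphi_0,\rho_0}$ with $\rho_0$ large: writing $\Lambda_{\lambda,\mu}=(I+e^{2Q_\lambda})Q_\lambda + (e^{2Q_\lambda}-I)H_\mu$ and factoring out $Q_\lambda$ on the left, one reduces to inverting $I + (\text{a bounded operator})$, where the bounded operator is small thanks to \eqref{NewH3bis} (which makes $HQ_\lambda^{-1}$ of size $|\lambda|^{-\varepsilon/2}$, while $\mu Q_\lambda^{-1}$ is of size $|\mu|/|\lambda|^{1/2}\leqslant \rho^{-1/2}\cdot(\text{something})$ precisely on the set $\Pi_{\varphi_0,\rho}$) and thanks to the decay of $e^{2Q_\lambda}$ on the sector; a Neumann-series argument then gives $\Lambda_{\lambda,\mu}^{-1}\in\mathcal L(X)$ together with the bound $\|\Lambda_{\lambda,\mu}^{-1}\|_{\mathcal L(X)}\lesssim \|Q_\lambda^{-1}\|\lesssim (1+|\lambda|)^{-1/2}$.

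Next, for the implication $(1)\Rightarrow(2)$, I would argue exactly as in the classical trace theory for the Dirichlet problem: if $u\in W^{2,p}(0,1;X)\cap L^p(0,1;D(A))$ then $u(1)\in(D(A),X)_{\frac1{2p},p}$ by the standard trace characterization (this is where \eqref{NewH1}, i.e. the UMD property, and the Dore--Yakubov/Da Prato--Grisvard machinery enter, as in Section~3), giving the first condition on $u_1$. For the second condition, evaluate the representation formula and the boundary condition at $0$: $u'(0)-H_\mu u(0)=d_0$ forces $(Q_\lambda-H_\mu)\xi_0$ to differ from $d_0$ by explicitly computable terms that, once $u$ is known to have maximal regularity, all lie in $(D(A),X)_{\frac1{2p},p}$ (the contributions of $e^{2Q_\lambda}$, of the data $u_1$, and of the convolution with $f$ all land in that interpolation space by the same trace results and by $Q_\lambda^{-1}$ mapping $X$ into $(D(A),X)_{\frac1{2p},p}$ appropriately). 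Hence $(Q_\lambda-H_\mu)^{-1}d_0=\xi_0 + (\text{good terms})\in(D(A),X)_{\frac1{2p},p}$ — note that $(Q_\lambda-H_\mu)^{-1}$ makes sense here because $\Lambda_{\lambda,\mu}^{-1}$ exists and one checks $Q_\lambda-H_\mu$ is itself boundedly invertible on $\Pi_{\varphi_0,\rho_0}$ by the same perturbation argument. Conversely, for $(2)\Rightarrow(1)$, assume $u_1$ and $(Q_\lambda-H_\mu)^{-1}d_0$ lie in the interpolation space; define $u$ by \eqref{repre} with $Q,\Lambda$ replaced by $Q_\lambda,\Lambda_{\lambda,\mu}$, and verify term by term that $u\in W^{2,p}\cap L^p(D(A))$, that $u(0)\in D(H)$ (using \eqref{D(Q)Principal}: $D(Q_\lambda)=D(\sqrt{-A})\subset D(H)$, so once $\xi_0\in D(Q_\lambda)$ one automatically has $u(0)=\xi_0\in D(H)$, and assumption \eqref{inv Lamb and interp} is exactly what guarantees $\xi_0$ has the extra regularity needed so that $u''=Au-\lambda u+f\in L^p$), and that the two boundary conditions hold by construction. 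Maximal $L^p$-regularity of each of the four terms is the technical core and is handled by the operator-valued Fourier multiplier / UMD argument already invoked for Theorem~\ref{Main1} and in Section~3, applied to $Q_\lambda$ (which, by \eqref{MoinsABip}, has bounded imaginary powers with angle $<\pi$, hence bounded $H^\infty$-calculus on UMD spaces in the sense needed).

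Uniqueness follows because any classical solution of the homogeneous problem ($f=0,d_0=0,u_1=0$) is forced into the same representation with $\xi_0,\xi_1$ solving the homogeneous $2\times2$ system, whose only solution is $\xi_0=\xi_1=0$ since $\Lambda_{\lambda,\mu}$ is invertible; one also needs that $u'(0)$, $u(0)$, $u(1)$ recover $\xi_0,\xi_1$ unambiguously, which is immediate from the analytic-semigroup representation. I expect the main obstacle to be the careful bookkeeping in the step ``$(Q_\lambda-H_\mu)^{-1}d_0\in(D(A),X)_{\frac1{2p},p}$ is equivalent to $\xi_0$ having maximal regularity'': because $H$ does not commute with $A$ (the whole point of the paper), one cannot simply move $H_\mu$ through $Q_\lambda$ or through the interpolation functor, so the equivalence has to be extracted from the factorization $\Lambda_{\lambda,\mu}=Q_\lambda\big(I+e^{2Q_\lambda}+(Q_\lambda^{-1}e^{2Q_\lambda}-Q_\lambda^{-1})H_\mu\big)$ — writing $\xi_0=\Lambda_{\lambda,\mu}^{-1}(d_0+\cdots)$ and then pushing the correction terms across using \eqref{NewH3bis} to control $HQ_\lambda^{-1}$ and \eqref{inv Lamb and interp} to keep everything inside $Q_\lambda^{-1}\big((D(Q),X)_{1/p,p}\big)$, which coincides with $(D(A),X)_{\frac1{2p},p}$ by the reiteration/square-root identity $(D(Q),X)_{1/p,p}=(D(A),X)_{\frac1{2p},p}$. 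Getting that identification and the non-commutative transfer exactly right, uniformly for $(\lambda,\mu)\in\Pi_{\varphi_0,\rho_0}$, is the delicate part; everything else is a parametrized rerun of the arguments already established for the no-parameter model in Section~3 and for the first case in Theorem~\ref{Main1}.
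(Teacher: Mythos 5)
Your outline follows the paper's route: reduce to the parameter-free machinery of Section~\ref{sans parametre} (Proposition~\ref{Prop without lambda} applied with $A-\lambda I$, $H+\mu I$, $Q_{\lambda}$, $\Lambda_{\lambda,\mu}$), invert $\Lambda_{\lambda,\mu}$ on $\Pi_{\varphi_{0},\rho_{0}}$ by a Neumann-series argument based on the smallness of $H_{\mu}Q_{\lambda}^{-1}$ coming from \eqref{NewH3bis} (this is Lemma~\ref{Grand Lamba inversible 2}), and replace the condition $\Lambda_{\lambda,\mu}^{-1}d_{0}\in (D(A),X)_{\frac{1}{2p},p}$ by $(Q_{\lambda}-H_{\mu})^{-1}d_{0}\in (D(A),X)_{\frac{1}{2p},p}$ using the bounded invertibility of $Q_{\lambda}-H_{\mu}$ (Remark~\ref{simplif}). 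Up to that point your proposal is consistent with what is actually done, and the trace/uniqueness parts are standard reruns, as you say.

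The genuine gap is at the step you yourself flag as delicate, which is precisely the new content of this theorem relative to Section~\ref{sans parametre}: the verification of $(H_{5})$ (equivalently $(H_{5}')$ in \eqref{Equiv Interp}) for the shifted operators, i.e. that $Q_{\lambda}\left( Q_{\lambda}-H_{\mu}\right)^{-1}$ leaves $\left( D(Q),X\right)_{1/p,p}$ invariant, which is what makes the equivalence ``$\mu_{0}\in (D(A),X)_{\frac{1}{2p},p}\iff (Q_{\lambda}-H_{\mu})^{-1}d_{0}\in (D(A),X)_{\frac{1}{2p},p}$'' (Lemma~\ref{mu}) work. You invoke \eqref{inv Lamb and interp} for this, but that hypothesis concerns $Q-H$, not $Q_{\lambda}-H_{\mu}$, and you give no mechanism to transfer it; moreover the identity you cite to close the argument, $(D(Q),X)_{1/p,p}=(D(A),X)_{\frac{1}{2p},p}$, is false: the correct statement (Grisvard, used throughout the paper) is $(D(A),X)_{\frac{1}{2p},p}=(D(Q),X)_{1+1/p,p}=\left\{ \varphi\in D(Q):Q\varphi\in (D(Q),X)_{1/p,p}\right\}$, which is exactly why one must prove the stability of $(D(Q),X)_{1/p,p}$ under $Q_{\lambda}(Q_{\lambda}-H_{\mu})^{-1}$ rather than appeal to a reiteration identity. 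The paper fills this hole with the comparison $Q_{\lambda}=Q+\lambda\left( Q_{\lambda}+Q\right)^{-1}$ (a consequence of Lemma~\ref{Compar racine}): setting $\eta=Q_{\lambda}(Q_{\lambda}-H_{\mu})^{-1}\xi$ for $\xi\in (D(Q),X)_{1/p,p}$, one gets
\begin{equation*}
\left( Q-H\right) Q_{\lambda}^{-1}\eta=\xi-\lambda\left( Q_{\lambda}+Q\right)^{-1}Q_{\lambda}^{-1}\eta+\mu Q_{\lambda}^{-1}\eta\in \left( D(Q),X\right)_{1/p,p},
\end{equation*}
so \eqref{inv Lamb and interp} applies to $Q_{\lambda}^{-1}\eta$ and yields $\eta\in (D(Q),X)_{1/p,p}$. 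Without this transfer (or an equivalent one), your argument cannot connect the maximal regularity of the term $S_{\lambda}(\cdot)\mu_{0}$ in \eqref{repre} with the stated condition on $(Q_{\lambda}-H_{\mu})^{-1}d_{0}$, so both implications of the equivalence remain incomplete at their decisive point.
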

\begin{theorem}
\label{Main2bis}Assume (\ref{NewH1})$\sim $(\ref{MoinsABip}) and (\ref%
{D(Q)Principal}), (\ref{NewH3bis}), (\ref{Q-H stability}). Let $\left(
\lambda ,\mu \right) \in \Pi _{\varphi _{0},\rho _{0}},d_{0}\in X$ with 
$$\left( Q_{\lambda }-H_{\mu }\right) ^{-1}d_{0}\in \left( D\left( A\right)
,X\right) _{\frac{1}{2p},p},\quad u_{1}\in (D(A),X)_{\frac{1}{2p},p} \quad \text{and} \quad f\in L^{p}\left( 0,1;X\right),$$ 
with $1<p<+\infty $. Then, there exists a
constant $M > 0$, which does not depend on $d_{0},u_{1},\left(
\lambda ,\mu \right) $ and $f$, such that the unique classical solution $u$
of (\ref{NewEquation})-(\ref{RobinSpectralConditions}) satisfies%
\begin{equation*}
\max \left\{ \left(1+|\lambda|\right)\left\|u\right\|_{L^p(0,1;X)}, \left\| u''\right\|_{L^p(0,1;X)}, \left\| Q_{\lambda }^{2}u\right\|_{L^p(0,1;X)}\right\} \leqslant M \beta \left( d_{0},u_{1},\lambda ,\mu ,f\right) ,
\end{equation*}
where 
\begin{eqnarray*}
\beta \left( d_{0},u_{1},\lambda ,\mu ,f\right) &=&\left \Vert d_{0}\right \Vert +\left \Vert f\right \Vert
_{L^{p}(0,1;X)}+\left \Vert \left( Q_{\lambda }-H_{\mu }\right)
^{-1}d_{0}\right \Vert _{(D(A),X)_{\frac{1}{2p},p}} \\
&&+\left \vert \lambda \right \vert ^{1-\frac{1}{2p}}\left \Vert \left(
Q_{\lambda }-H_{\mu }\right) ^{-1}d_{0}\right \Vert +\left \Vert
u_{1}\right \Vert _{(D(A),X)_{\frac{1}{2p},p}}+\left \vert \lambda \right
\vert ^{1-\frac{1}{2p}}\left \Vert u_{1}\right \Vert .
\end{eqnarray*}
\end{theorem}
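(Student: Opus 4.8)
The plan is to start from the explicit representation formula \eqref{repre} for the unique classical solution $u$ — valid here by Theorem~\ref{Main1Bis}, since the hypotheses \eqref{D(Q)Principal}, \eqref{NewH3bis}, \eqref{Q-H stability} are stronger than those required there — written with $Q,\Lambda$ replaced by $Q_\lambda,\Lambda_{\lambda,\mu}$. The representation decomposes $u$ into a particular part built from $f$ (convolution with the semigroup generated by $Q_\lambda$, of the type $\int_0^1 e^{(\cdot+s)Q_\lambda}f(s)\,ds$, plus the term $\tfrac12 Q_\lambda^{-1}\!\int e^{|\cdot-s|Q_\lambda}f(s)\,ds$) and a part carrying the boundary data $d_0,u_1$, in which the operator $\Lambda_{\lambda,\mu}^{-1}$ appears sandwiched between exponentials $e^{xQ_\lambda}$ and the relevant combinations $Q_\lambda\pm H_\mu$. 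The strategy is then to estimate each of the three quantities $(1+|\lambda|)\|u\|_{L^p}$, $\|u''\|_{L^p}$, $\|Q_\lambda^2 u\|_{L^p}$ term by term, and recombine. Note $u'' = Q_\lambda^2 u - \lambda u - f$ (from \eqref{NewEquation} rewritten, since $Q_\lambda^2 = -A+\lambda I$, hence $A u = \lambda u - Q_\lambda^2 u + \cdots$; care with signs), so once $\|Q_\lambda^2 u\|_{L^p}$ and $(1+|\lambda|)\|u\|_{L^p}$ are controlled, the bound on $\|u''\|_{L^p}$ follows for free; thus the real work is the first and third maxima.

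The key tools are: (i) the bounded imaginary powers assumption \eqref{MoinsABip}, which via the Dore–Venni / Prüss–Sohr machinery gives $L^p$-maximal regularity for the operator $Q_\lambda^2$ uniformly in $\lambda\in S_{\varphi_0}$, so that the $f$-dependent convolution terms obey $\|Q_\lambda^2(\text{conv }f)\|_{L^p}\le M\|f\|_{L^p}$ with $M$ independent of $\lambda$; (ii) the Dore–Yakubov-type resolvent estimates from Section~4 of the paper, which supply the scale-sharp bounds such as $\|Q_\lambda^k e^{xQ_\lambda}\xi\|\lesssim$ (negative powers of $|\lambda|$ and $x$) needed to absorb the trace terms and to convert $u_1\in(D(A),X)_{1/2p,p}$ into the $L^p$-in-$x$ bound with the factor $|\lambda|^{1-1/2p}$; (iii) the invertibility of $\Lambda_{\lambda,\mu}$ with a bound on $\|\Lambda_{\lambda,\mu}^{-1}\|_{\mathcal L(X)}$ on $\Pi_{\varphi_0,\rho_0}$ — this is exactly the analysis that precedes Theorem~\ref{Main1Bis}, and \eqref{NewH3bis} is what makes $e^{2Q_\lambda}(Q_\lambda+H_\mu)(Q_\lambda-H_\mu)^{-1}$ a small perturbation in the right region. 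The crucial extra ingredient of \emph{this} theorem, compared with the mere existence result, is assumption \eqref{Q-H stability}: it guarantees that $(Q_\lambda-H_\mu)^{-1}d_0$ lies in $D(Q_\lambda^2)$, so that one is allowed to apply $Q_\lambda^2$ to the $d_0$-branch of the representation formula and to measure it in $(D(A),X)_{1/2p,p}$ — which is precisely why that interpolation norm of $(Q_\lambda-H_\mu)^{-1}d_0$, together with $|\lambda|^{1-1/2p}\|(Q_\lambda-H_\mu)^{-1}d_0\|$, appears in $\beta(d_0,u_1,\lambda,\mu,f)$ rather than a norm of $d_0$ itself.

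Concretely I would proceed as follows. First, reduce to $u_1=0$ and $d_0=0$ handled separately by linearity: treat the $f$-part with maximal regularity of $Q_\lambda^2$ (uniform in $\lambda$) to get the $\|f\|_{L^p}$ contributions, and for $(1+|\lambda|)\|u\|_{L^p}$ use in addition that $(1+|\lambda|)\|Q_\lambda^{-2}\|_{\mathcal L(X)}$ is bounded on $S_{\varphi_0}$ (direct from \eqref{NewH2}). Second, the $u_1$-part: plug $u_1$ into its branch, use that the trace map and the exponential semigroup interact with $(D(A),X)_{1/2p,p}$ exactly so as to produce $\|u_1\|_{(D(A),X)_{1/2p,p}} + |\lambda|^{1-1/2p}\|u_1\|$ after applying $Q_\lambda^2$ and taking $L^p$ in $x$ — this is a standard but delicate computation using the integral characterisation of the interpolation norm given right after Theorem~\ref{Main1} and the Section-4 estimates. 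Third, the $d_0$-part: here set $\xi:=(Q_\lambda-H_\mu)^{-1}d_0$, observe via \eqref{Q-H stability} that $Q_\lambda^2\xi$ makes sense, and run the same interpolation-plus-resolvent argument on $\xi$ in place of $u_1$, picking up $\|\xi\|_{(D(A),X)_{1/2p,p}}+|\lambda|^{1-1/2p}\|\xi\|$ plus a leftover $\|d_0\|$ from the lower-order pieces. Finally, sum the three contributions and take the max. The main obstacle I anticipate is bookkeeping the $\lambda,\mu$-powers so that everything genuinely collapses into the stated $\beta$ with an $M$ \emph{independent} of all parameters: one must verify that on $\Pi_{\varphi_0,\rho_0}$ (where $|\lambda|\ge\rho_0$ and $|\lambda|/|\mu|^{1/\varepsilon}\ge\rho_0$) the norm $\|\Lambda_{\lambda,\mu}^{-1}\|_{\mathcal L(X)}$ and the cross-terms coming from the non-commutativity of $H$ and $A$ (the genuinely new difficulty of the paper) do not reintroduce hidden growth — this is where the $\varepsilon$-decay in \eqref{NewH3bis} and the region $\Pi_{\varphi_0,\rho_0}$ are used in a balanced way, and it is the step I would write out most carefully.
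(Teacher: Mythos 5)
Your outline coincides with the paper's: invoke Theorem \ref{Main1Bis} to get the representation formula with $Q_{\lambda},\Lambda _{\lambda ,\mu }$, estimate $\Vert Q_{\lambda }^{2}u\Vert _{L^{p}}$ term by term with the Section~4 Dore--Yakubov estimates and Dore--Venni maximal regularity for the $f$-terms, then read off $\Vert u''\Vert _{L^{p}}$ from the equation (note $u''=Q_{\lambda }^{2}u+f$, not $Q_{\lambda }^{2}u-\lambda u-f$; harmless) and $(1+|\lambda |)\Vert u\Vert _{L^{p}}$ from $u=Q_{\lambda }^{-2}Q_{\lambda }^{2}u$. The $d_{0}$-branch is also treated as in the paper, by working with $\xi =(Q_{\lambda }-H_{\mu })^{-1}d_{0}$.

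There is, however, a genuine gap at exactly the step you defer. First, your stated mechanism is off: assumption \eqref{Q-H stability} does \emph{not} put $(Q_{\lambda }-H_{\mu })^{-1}d_{0}$ in $D(Q_{\lambda }^{2})$ for $d_{0}\in X$ (the membership of $(Q_{\lambda }-H_{\mu })^{-1}d_{0}$ in $(D(A),X)_{\frac{1}{2p},p}$ is a hypothesis of the theorem, and that alone feeds the Dore--Yakubov Theorem~2.1 estimate on that branch). The actual role of \eqref{Q-H stability} is, via the closed graph theorem, to make $Q_{\lambda }^{2}(Q_{\lambda }-H_{\mu })^{-1}Q_{\lambda }^{-1}$ bounded, and the point of the whole proof is that its norm is bounded \emph{uniformly} on $\Pi _{\varphi _{0},\rho _{0}}$: this is Lemma \ref{Stab Op}, obtained by comparison with a fixed pair $(\lambda _{1},\mu _{1})$, writing $(Q_{\lambda }-H_{\mu })^{-1}Q_{\lambda }^{-1}=(Q_{\lambda _{1}}-H_{\mu _{1}})^{-1}Q^{-1}P_{\lambda ,\mu }$ with $\Vert P_{\lambda ,\mu }\Vert _{\mathcal{L}(X)}\leqslant M$ (using Lemma \ref{Compar racine} and \eqref{est Q-H}), so that the constant comes from the single bounded operator $A(Q_{\lambda _{1}}-H_{\mu _{1}})^{-1}Q^{-1}$; this gives \eqref{est Q2Q-HQmoins1}. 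Second, one needs the factorization \eqref{Rel Lambda Q-H}, $\Lambda _{\lambda ,\mu }^{-1}=(Q_{\lambda }-H_{\mu })^{-1}(I-e^{2Q_{\lambda }}W_{\lambda ,\mu })$ with $\Vert W_{\lambda ,\mu }\Vert \leqslant M$, for two purposes: to transfer \eqref{est Q2Q-HQmoins1} into $\Vert Q_{\lambda }^{2}\Lambda _{\lambda ,\mu }^{-1}Q_{\lambda }^{-1}\Vert \leqslant M$ (estimate \eqref{est Lambda}), which is needed not only on the $d_{0}$-branch but on the $f$- and $u_{1}$-carrying terms $k_{1},k_{2}$ where $\Lambda _{\lambda ,\mu }^{-1}$ hits non-smoothed data; and to replace $\Lambda _{\lambda ,\mu }^{-1}d_{0}$ --- which is what actually occurs in the representation formula --- by $(Q_{\lambda }-H_{\mu })^{-1}d_{0}$ up to a remainder smoothed by $e^{2Q_{\lambda }}$, which is where your ``leftover $\Vert d_{0}\Vert $'' really comes from. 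Without these two ingredients (fixed-$(\lambda _{1},\mu _{1})$ comparison plus closed graph, and the factorization through $Q_{\lambda }-H_{\mu }$), the uniform-in-$(\lambda ,\mu )$ bookkeeping you yourself flag as the critical point cannot be closed, so the proposal as written does not yet prove the theorem.
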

Now, we define for $\mu \in \mathbb{C}$, operators
\begin{equation}
\begin{array}{llll}
\mathcal{L}_{A,H,\mu }: & D\left( \mathcal{L}_{A,H,\mu }\right) \subset Y & 
\longrightarrow & Y \\ 
& u & \longmapsto & u^{\prime \prime }+\mathcal{A}u,%
\end{array}
\label{Def L A H Mu}
\end{equation}%
where $\mathcal{A}$, $Y$ are defined above and $D\left( \mathcal{L}_{A,H,\mu }\right) $ is the space of functions $u$ satisfying%
\begin{equation*}
\left\{ 
\begin{array}{l}
u\in W^{2,p}(0,1;X)\cap L^{p}(0,1;D(A)) \vspace{0.1cm}\\ 
u\left( 0\right) \in D\left( H\right) \vspace{0.1cm}\\ 
u^{\prime }(0)-Hu(0)-\mu u(0)=u(1)=0.
\end{array}%
\right.
\end{equation*}
We then obtain:
\begin{theorem}\label{Gen second case}
Assume (\ref{NewH1})$\sim $(\ref{MoinsABip}), (\ref%
{D(Q)Principal}), (\ref{NewH3bis}) and (\ref{Q-H stability}). Then, for any $%
\mu \in \mathbb{C}$, we have

\begin{enumerate}
\item $\mathcal{L}_{A,H,\mu }$ is the infinitesimal generator of a $C_{0}$%
-semigroup.

\item If $\varphi _{0}\in \lbrack \pi /2,\pi )$, then $\mathcal{L}%
_{A,H,\mu }$ is the infinitesimal generator of an analytic semigroup.
\end{enumerate}
\end{theorem}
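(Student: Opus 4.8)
The plan is to read off the resolvent of $\mathcal{L}_{A,H,\mu}$ from Theorems~\ref{Main1Bis} and~\ref{Main2bis}, and then feed the resulting estimate into the classical generation theorems. Fix $\mu\in\mathbb{C}$. For $z\in\mathbb{C}$, the resolvent equation $(zI-\mathcal{L}_{A,H,\mu})u=f$ is exactly Problem~\eqref{NewEquation}--\eqref{RobinSpectralConditions} with $\lambda=z$, the same $\mu$, homogeneous boundary data $d_{0}=u_{1}=0$, and right-hand side $-f$. Taking $d_{0}=u_{1}=0$ makes both $u_{1}$ and $(Q_{z}-H_{\mu})^{-1}d_{0}$ equal to $0\in(D(A),X)_{\frac{1}{2p},p}$, so the compatibility conditions in statement~2 of Theorem~\ref{Main1Bis} hold automatically; hence, for every $(z,\mu)\in\Pi_{\varphi_{0},\rho_{0}}$, that problem has a unique classical solution $u\in W$. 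This means $z\in\rho(\mathcal{L}_{A,H,\mu})$ --- in particular $\mathcal{L}_{A,H,\mu}$ is closed --- and, since for the fixed $\mu$ the condition $(z,\mu)\in\Pi_{\varphi_{0},\rho_{0}}$ just reads $z\in S_{\varphi_{0}}$, $|z|\geqslant\rho_{\mu}$ with $\rho_{\mu}:=\max\{\rho_{0},\rho_{0}|\mu|^{1/\varepsilon}\}$, we obtain
\[
\Gamma_{\mu}:=\{z\in S_{\varphi_{0}}:\ |z|\geqslant\rho_{\mu}\}\subset\rho(\mathcal{L}_{A,H,\mu}).
\]

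Applying Theorem~\ref{Main2bis} with $d_{0}=u_{1}=0$ gives $\beta(0,0,z,\mu,-f)=\|f\|_{L^{p}(0,1;X)}$, so that
\[
\left\|(zI-\mathcal{L}_{A,H,\mu})^{-1}\right\|_{\mathcal{L}(Y)}\leqslant\frac{M}{1+|z|}\qquad\text{for all }z\in\Gamma_{\mu}.
\]
I would then check that $\mathcal{L}_{A,H,\mu}$ is densely defined: the functions $x\mapsto\phi(x)\xi$ with $\phi\in C_{c}^{\infty}(0,1)$ and $\xi\in D(A)$ belong to $D(\mathcal{L}_{A,H,\mu})$ (every boundary term vanishes), $D(A)$ is dense in $X$ (a consequence of \eqref{NewH2} and the reflexivity of the $UMD$ space $X$), and finite sums of such functions are dense in $Y=L^{p}(0,1;X)$.

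It remains to convert this into a generation statement. For a sufficiently large $\omega>0$ put $\mathcal{B}_{\mu}:=\mathcal{L}_{A,H,\mu}-\omega I$. Since $S_{\varphi_{0}}$ is stable under adding nonnegative reals and $|z+\omega|$ is comparable to $|z|$ on $S_{\varphi_{0}}$, one gets $S_{\varphi_{0}}\setminus\{0\}\subset\rho(\mathcal{B}_{\mu})$ with $\|(zI-\mathcal{B}_{\mu})^{-1}\|_{\mathcal{L}(Y)}\leqslant M'/|z|$ on $S_{\varphi_{0}}\setminus\{0\}$. From this sectorial resolvent bound, together with the closedness and dense domain of $\mathcal{B}_{\mu}$, the standard generation theorems give that $\mathcal{B}_{\mu}$, hence $\mathcal{L}_{A,H,\mu}=\mathcal{B}_{\mu}+\omega I$, is the infinitesimal generator of a $C_{0}$-semigroup; this is part~1. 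For part~2, assume $\varphi_{0}\in[\pi/2,\pi)$. If $\varphi_{0}>\pi/2$ then $S_{\varphi_{0}}$ is already an open sector of aperture strictly larger than $\pi/2$; if $\varphi_{0}=\pi/2$, a Neumann-series perturbation --- legitimate precisely because the resolvent decays like $1/|z|$ --- widens the region to a sector $S_{\pi/2+\delta}$ with some $\delta>0$ while preserving $\|(zI-\mathcal{B}_{\mu})^{-1}\|\leqslant M''/|z|$. In either case the characterization of generators of bounded analytic semigroups applies to $\mathcal{B}_{\mu}$, and therefore $\mathcal{L}_{A,H,\mu}$ generates an analytic semigroup.

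I expect the main obstacle to lie in the last paragraph: turning the qualitative fact ``$\rho(\mathcal{L}_{A,H,\mu})$ contains a sector minus a ball, and the resolvent decays there like $1/(1+|z|)$'' into the precise hypotheses of the generation theorems --- the translation and sector-opening arguments needed to produce an open sector of aperture exceeding $\pi/2$ in the analytic case, and the care required in the $C_{0}$ case, where only a sectorial (rather than a half-plane) resolvent estimate is at hand. By contrast, the identification with Problem~\eqref{NewEquation}--\eqref{RobinSpectralConditions}, the two applications of Theorems~\ref{Main1Bis} and~\ref{Main2bis}, the closedness of $\mathcal{L}_{A,H,\mu}$, and the density of its domain are all routine.
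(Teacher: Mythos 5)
Your proposal is correct and follows essentially the same route as the paper: the resolvent equation is identified with Problem (\ref{NewEquation})--(\ref{RobinSpectralConditions}) with $d_{0}=u_{1}=0$, Theorems \ref{Main1Bis} and \ref{Main2bis} then give $S_{\varphi_{0}}\backslash B(0,\rho_{\mu})\subset\rho(\mathcal{L}_{A,H,\mu})$ with $\left\Vert (\mathcal{L}_{A,H,\mu}-\lambda I)^{-1}\right\Vert _{\mathcal{L}(Y)}\leqslant M/(1+|\lambda|)$, and generation is read off from this estimate, exactly as in the paper's proof. You merely make explicit two points the paper leaves implicit (density of $D(\mathcal{L}_{A,H,\mu})$ and the translation/sector-widening needed to invoke the generation theorems); your appeal to ``standard generation theorems'' for the $C_{0}$ statement when $\varphi_{0}<\pi/2$ is no more detailed than the paper's own one-line conclusion from its resolvent estimate, so it is not a deviation from the paper's argument.
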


\subsection{Dirichlet case}

Now we consider the spectral problem 
\begin{equation}
\left \{ 
\begin{array}{l}
u^{\prime \prime }(x)+Au(x)-\lambda u(x)=f(x),\text{ \ }x\in (0,1)  \vspace{0.1cm} \\ 
u(0)=u_{0},\text{ }u(1)=u_{1}.%
\end{array}%
\right.  \label{Dirichlet spectral1}
\end{equation}
We first state the following result on existence and uniqueness of the solution.
\begin{theorem}\label{Main3}
Assume \eqref{NewH1}$\sim $\eqref{MoinsABip}. Let $u_{0}$, $u_{1}\in X$, $\lambda \in S_{\varphi _{0}}$ and $f\in L^{p}\left( 0,1;X\right) $ with $1<p<+\infty 
$. Then the two following statements are equivalent:

\begin{enumerate}
\item Problem (\ref{Dirichlet spectral1}) has a classical solution $u$.

\item $u_{0},u_{1}\in (D(A),X)_{\frac{1}{2p},p}$.

Moreover in this case $u$ is unique and given by (\ref{repre Dirichlet})
where $Q$ is replaced by $Q_{\lambda }.$
\end{enumerate}
\end{theorem}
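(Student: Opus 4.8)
The plan is to deduce Theorem~\ref{Main3} from the Dirichlet model \emph{without} spectral parameter treated in Section~3, applied to the shifted operator $\widetilde A:=A-\lambda I$ in place of $A$. Using $Q_{\lambda}^{2}=-A+\lambda I$, equation \eqref{Dirichlet spectral1} reads $u''-Q_{\lambda}^{2}u=f$, that is $u''+\widetilde Au=f$ with $-\sqrt{-\widetilde A}=Q_{\lambda}$ and $D(\widetilde A)=D(A)$; accordingly the representation will be \eqref{repre Dirichlet} with $Q$ replaced by $Q_{\lambda}$, and $(D(\widetilde A),X)_{1/(2p),p}$ coincides with $(D(A),X)_{1/(2p),p}$. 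The one point requiring attention is that $\widetilde A$ still fulfils \eqref{NewH2}--\eqref{MoinsABip} for every $\lambda\in S_{\varphi_{0}}$: the sectorial resolvent estimate persists (in a possibly smaller sector, with a new constant) and $-A+\lambda I$ retains bounded imaginary powers with a controllable angle — this is what subsection~\ref{spect 1} provides, and it guarantees in addition that $Q_{\lambda}$ generates a bounded analytic $C_{0}$-semigroup $(e^{xQ_{\lambda}})_{x\geqslant0}$ on $X$, that $0\in\rho(Q_{\lambda}^{2})$, $D(Q_{\lambda}^{2})=D(A)$, $D(Q_{\lambda})=D(Q)$, and that $I-e^{2Q_{\lambda}}$ is boundedly invertible. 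I expect this transfer of hypotheses to be the only genuinely non-routine ingredient; everything else is the $\lambda=0$ Dirichlet analysis transcribed with $Q$ replaced by $Q_{\lambda}$.

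For the equivalence, the direction (1)$\Rightarrow$(2) is the trace theory for $W^{2,p}(0,1;X)\cap L^{p}(0,1;D(A))$: by the mixed-derivative theorem $u'\in L^{p}(0,1;(D(A),X)_{1/2,p})$, so the Lions--Peetre trace theorem together with the reiteration identity $\bigl(D(A),(D(A),X)_{1/2,p}\bigr)_{1/p,p}=(D(A),X)_{1/(2p),p}$ gives $u(0)=u_{0}$ and $u(1)=u_{1}$ in $(D(A),X)_{1/(2p),p}$. For (2)$\Rightarrow$(1) I would build the solution explicitly: take the particular solution
\begin{equation*}
v_{0}(x)=\tfrac12\,Q_{\lambda}^{-1}\!\int_{0}^{1}e^{|x-s|Q_{\lambda}}f(s)\,ds,
\end{equation*}
which satisfies $v_{0}''-Q_{\lambda}^{2}v_{0}=f$ and, by the $L^{p}$-maximal regularity attached to $Q_{\lambda}$ (a consequence of \eqref{NewH1} and the bounded imaginary powers of $Q_{\lambda}$, via the Dore--Venni / operator-valued multiplier machinery, just as in Section~3), lies in $W^{2,p}(0,1;X)\cap L^{p}(0,1;D(A))$; hence $v_{0}(0),v_{0}(1)\in(D(A),X)_{1/(2p),p}$. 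Then I would seek $u=v_{0}+e^{\cdot\,Q_{\lambda}}\xi_{0}+e^{(1-\cdot)Q_{\lambda}}\xi_{1}$; the boundary conditions produce the $2\times2$ operator system with matrix $\bigl(\begin{smallmatrix}I&e^{Q_{\lambda}}\\e^{Q_{\lambda}}&I\end{smallmatrix}\bigr)$, which is solvable because $I-e^{2Q_{\lambda}}$ is invertible, so
\begin{equation*}
\xi_{0}=(I-e^{2Q_{\lambda}})^{-1}\bigl[(u_{0}-v_{0}(0))-e^{Q_{\lambda}}(u_{1}-v_{0}(1))\bigr]
\end{equation*}
and symmetrically for $\xi_{1}$. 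Since $e^{Q_{\lambda}}$ and $(I-e^{2Q_{\lambda}})^{-1}$ are functions of $Q_{\lambda}$, they commute with $A$ and are bounded on $D(A)$ as well as on $X$, hence on $(D(A),X)_{1/(2p),p}$; therefore $\xi_{0},\xi_{1}\in(D(A),X)_{1/(2p),p}$. Combined with the real-interpolation characterization $\{\xi:\ x\mapsto Q_{\lambda}^{2}e^{xQ_{\lambda}}\xi\in L^{p}(0,1;X)\}=(D(Q_{\lambda}^{2}),X)_{1/(2p),p}$, this puts each term $e^{\cdot\,Q_{\lambda}}\xi_{i}$ in $W^{2,p}(0,1;X)\cap L^{p}(0,1;D(A))$, so $u$ lies in that space, solves \eqref{Dirichlet spectral1}, and is \eqref{repre Dirichlet} with $Q$ replaced by $Q_{\lambda}$.

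Uniqueness I would obtain by showing a classical solution $u$ with $f=0$, $u_{0}=u_{1}=0$ is zero. Here $u(x)\in D(A)\subset D(Q_{\lambda})$ a.e.\ and, by the mixed-derivative theorem, $u'(x)\in D(Q_{\lambda})$ a.e., so $\phi:=u'+Q_{\lambda}u$ and $\psi:=u'-Q_{\lambda}u$ are well defined with $\phi'=Q_{\lambda}\phi$ and $\psi'=-Q_{\lambda}\psi$; hence $\phi(x)=e^{xQ_{\lambda}}\phi(0)$ and $e^{xQ_{\lambda}}\psi(x)\equiv\psi(0)$. Evaluating at $x=0,1$ and using $u(0)=u(1)=0$ — so $\phi(0)=u'(0)=-\psi(0)$ and $\phi(1)=u'(1)=-\psi(1)$ — yields $u'(0)=e^{2Q_{\lambda}}u'(0)$, whence $u'(0)=0$ by invertibility of $I-e^{2Q_{\lambda}}$; then $\phi\equiv0$, $\psi\equiv0$, so $Q_{\lambda}u\equiv0$, and $Q_{\lambda}$ being injective (as $0\in\rho(Q_{\lambda}^{2})$) forces $u\equiv0$.
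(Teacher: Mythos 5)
Your top-level reduction is exactly the paper's: Theorem~\ref{Main3} is obtained by applying the Dirichlet result for the problem without spectral parameter to the shifted operator $A-\lambda I$, so that $Q$ becomes $Q_{\lambda}$, $D(A-\lambda I)=D(A)$ and the interpolation space is unchanged. The difference is how the model problem is handled: the paper simply quotes the known theorem of \cite{Favini et al1} and \cite{Favin et al2} (stated as the Proposition at the start of Section~7, with representation \eqref{repre Dirichlet}) and replaces $A$ by $A-\lambda I$, whereas you re-prove it — Grisvard/Lions--Peetre traces for $(1)\Rightarrow(2)$, the particular solution $I_{\lambda,f}+J_{\lambda,f}$ plus the $2\times2$ system in $\xi_{0},\xi_{1}$ (solvable thanks to Lemma~\ref{Inv (I-exp2Q)}) for $(2)\Rightarrow(1)$, and a factorization into two first-order problems for uniqueness; after the change of unknowns $\mu_{0}=\xi_{0}+e^{Q_{\lambda}}\xi_{1}$, $\mu_{1}=e^{Q_{\lambda}}\xi_{0}+\xi_{1}$ your formula is precisely \eqref{repre Dirichlet} with $Q_{\lambda}$, so the two routes agree in substance. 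What the citation route buys is brevity; what your self-contained route costs is that you must justify maximal regularity of $I_{\lambda,f}+J_{\lambda,f}$ for the shifted operator: be aware that subsection~\ref{spect 1} gives sectoriality of $Q_{\lambda}$ but nowhere establishes bounded imaginary powers of $-A+\lambda I$ for complex $\lambda$ — the paper's actual mechanism is Lemma~\ref{Reg max} (transfer of the $L^{p}$ regularity property from $Q$ to $Q_{\lambda}$ via the bounded perturbation $T_{\lambda}$) together with Lemma~\ref{estim omega}, and you should invoke that instead of a BIP statement the paper never proves (the citation route carries the same implicit burden, so this is presentational, not a gap). Two small slips in your uniqueness argument: with $u(0)=u(1)=0$ one has $\psi(0)=u'(0)$ and $\psi(1)=u'(1)$ (no minus signs), but the chain $u'(1)=e^{Q_{\lambda}}u'(0)$, $u'(0)=e^{Q_{\lambda}}u'(1)$ still gives $u'(0)=e^{2Q_{\lambda}}u'(0)$ and hence $u'(0)=0$; and passing from $e^{xQ_{\lambda}}\psi(x)=\psi(0)=0$ to $\psi\equiv0$ uses the injectivity of $e^{xQ_{\lambda}}$, which you should note holds because the semigroup is analytic.
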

Now, we state our new results on sharp estimates and generation of semigroup. 
\begin{theorem}\label{main Dirichlet}
Assume (\ref{NewH1})$\sim $(\ref{MoinsABip}) and $u_{0},u_{1}\in (D(A),X)_{\frac{1}{2p},p}$. Then, there exists a constant $M > 0$ such that, for $\lambda \in S_{\varphi _{0}}$ and $f\in L^{p}\left( 0,1;X\right) $ with $1<p<+\infty $, the unique classical solution $u$ of problem (\ref{Dirichlet spectral1}) satisfies
\begin{equation*}
\begin{array}{l}
\max \left\{ \left(1+|\lambda|\right) \left\| u\right\|_{L^{p}(0,1;X)}, \left\| u''\right\|_{L^{p}(0,1;X)}, \left\|Q_{\lambda }^2 u\right\|_{L^{p}(0,1;X)}\right\} \\ \ecart
\leqslant M \left( \left\Vert f\right\Vert_{L^{p}(0,1;X)}+\left\Vert u_{0}\right\Vert _{(D(A),X)_{\frac{1}{2p},p}} + \left\Vert u_{1}\right\Vert _{(D(A),X)_{\frac{1}{2p},p}} + |\lambda|^{1-\frac{1}{2p}} \left(\left\| u_{0}\right\| + \left\| u_{1}\right\| \right)\right).
\end{array}
\end{equation*}
\end{theorem}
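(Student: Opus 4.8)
The plan is to start from the explicit representation \eqref{repre Dirichlet} furnished by Theorem~\ref{Main3} (with $Q$ replaced by $Q_{\lambda}=-\sqrt{-A+\lambda I}$) and to estimate each of its terms in $L^{p}(0,1;X)$, keeping every constant independent of $\lambda\in S_{\varphi_{0}}$; since no operator $H$ is involved, this follows the same template as the Robin cases but without any commutator difficulty. Concretely, I would write $u=w+v$, where $w$ is the particular solution built from $f$ through $Q_{\lambda}^{-1}$ and the analytic semigroup $(e^{xQ_{\lambda}})_{x\geqslant 0}$ (with homogeneous Dirichlet data), and $v(x)=e^{xQ_{\lambda}}\zeta_{0}+e^{(1-x)Q_{\lambda}}\zeta_{1}$ is the boundary part, the vectors $\zeta_{0},\zeta_{1}\in X$ being the explicit combinations of $u_{0},u_{1}$ and of the traces $w(0),w(1)$ obtained by inverting the $2\times2$ operator matrix whose ``determinant'' is $I-e^{2Q_{\lambda}}$. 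I would first record that, since $0\in S_{\varphi_{0}}\subset\rho(A)$, the operator $-A+\lambda I$ is, for every $\lambda\in S_{\varphi_{0}}$, invertible and sectorial with a uniform constant, so $Q_{\lambda}$ is well defined, boundedly invertible with $\|Q_{\lambda}^{-1}\|_{\mathcal{L}(X)}\lesssim(1+|\lambda|)^{-1/2}$, generates a bounded analytic semigroup, and $\sup_{\lambda\in S_{\varphi_{0}}}\|(I-e^{2Q_{\lambda}})^{-1}\|_{\mathcal{L}(X)}<+\infty$ (for $|\lambda|$ large because $e^{2Q_{\lambda}}\to 0$, for $|\lambda|$ bounded by continuity and the fact that the spectral radius of $e^{2Q_{\lambda}}$ is $<1$). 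This makes $\zeta_{0},\zeta_{1}$ well defined and bounded combinations of $u_{0},u_{1},w(0),w(1)$.

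The second step is the $f$-part: I would prove
\[
(1+|\lambda|)\|w\|_{L^{p}(0,1;X)}+\|w''\|_{L^{p}(0,1;X)}+\|Q_{\lambda}^{2}w\|_{L^{p}(0,1;X)}\leqslant M\|f\|_{L^{p}(0,1;X)},
\]
uniformly in $\lambda\in S_{\varphi_{0}}$; this is uniform maximal $L^{p}$-regularity for $\partial_{x}^{2}-Q_{\lambda}^{2}$ on $(0,1)$. Since $X$ is $UMD$ by \eqref{NewH1} and $-A$ has bounded imaginary powers by \eqref{MoinsABip}, the operator $Q_{\lambda}^{2}=-A+\lambda I$ has bounded imaginary powers with angle controlled uniformly in $\lambda\in S_{\varphi_{0}}$, so the Dore--Venni machinery yields closedness and invertibility of $\partial_{x}^{2}+\mathcal{A}-\lambda I$ with $\lambda$-independent bounds; equivalently — and this is where the Dore--Yakubov estimates of Section~4 (following \cite{Dore Yakubov}) enter — one estimates directly the convolution operators $f\mapsto\int_{0}^{x}Q_{\lambda}^{2}e^{(x-s)Q_{\lambda}}f(s)\,ds$ and their companions by operator-valued Fourier multipliers, getting $\mathcal{L}(L^{p})$-bounds independent of $\lambda$. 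The weight $1+|\lambda|$ in front of $\|w\|$ then comes from $\|Q_{\lambda}^{-2}\|_{\mathcal{L}(X)}\lesssim(1+|\lambda|)^{-1}$, a consequence of \eqref{NewH2}.

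The third step, which is the technical heart, is the boundary part. The key estimate is, for $v(x)=e^{xQ_{\lambda}}\zeta$,
\[
\|Q_{\lambda}^{2}v\|_{L^{p}(0,1;X)}\leqslant M\Big(\|\zeta\|_{(D(A),X)_{\frac{1}{2p},p}}+|\lambda|^{1-\frac{1}{2p}}\|\zeta\|\Big),
\]
with $(1+|\lambda|)\|v\|_{L^{p}}$ and $\|v''\|_{L^{p}}$ bounded by the same right-hand side (the latter since $v''=Q_{\lambda}^{2}v$). Using $D(A)=D(Q_{\lambda}^{2})$, I would obtain this from the integral representation of the real-interpolation norm together with a Dore--Yakubov-type bound on $\|Q_{\lambda}^{2}e^{xQ_{\lambda}}(tI+Q_{\lambda}^{2})^{-1}\|_{\mathcal{L}(X)}$, splitting $\zeta$ into high and low frequencies of $Q_{\lambda}^{2}$; the low-frequency piece is what produces exactly the $|\lambda|^{1-\frac{1}{2p}}\|\zeta\|$ term. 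It then remains to insert $\zeta=\zeta_{0}$ and $\zeta=\zeta_{1}$: by the first step and the trace theorem for $W=W^{2,p}(0,1;X)\cap L^{p}(0,1;D(A))$ one has $\|\zeta_{i}\|_{(D(A),X)_{\frac{1}{2p},p}}\lesssim\|u_{0}\|_{(D(A),X)_{\frac{1}{2p},p}}+\|u_{1}\|_{(D(A),X)_{\frac{1}{2p},p}}+\|f\|_{L^{p}}$, and, using $\|Q_{\lambda}^{-1}\|\lesssim|\lambda|^{-1/2}$ to absorb powers of $|\lambda|$ against $w(0),w(1)$, $|\lambda|^{1-\frac{1}{2p}}\|\zeta_{i}\|\lesssim|\lambda|^{1-\frac{1}{2p}}(\|u_{0}\|+\|u_{1}\|)+\|f\|_{L^{p}}$. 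Adding the two steps and bounding $\|u''\|_{L^{p}}\leqslant\|Q_{\lambda}^{2}u\|_{L^{p}}+\|f\|_{L^{p}}$ directly from \eqref{NewEquation} then yields the asserted inequality. I expect the main obstacle to be precisely this sharp boundary estimate: obtaining the exact exponent $1-\frac{1}{2p}$ on $|\lambda|$ with a constant uniform over the whole sector $S_{\varphi_{0}}$ (small $|\lambda|$ included), which rests entirely on the precise resolvent inequalities collected in Section~4.
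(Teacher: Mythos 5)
Your overall skeleton is the same as the paper's: start from the representation formula (\ref{repre Dirichlet}) with $Q$ replaced by $Q_{\lambda}$, estimate the $u_{0},u_{1}$ terms through the Dore--Yakubov estimate for $\left\Vert Q_{\lambda }^{2}e^{\cdot Q_{\lambda }}\zeta \right\Vert _{L^{p}}$ (which is exactly Theorem 2.1 of \cite{Dore Yakubov}, giving the $\left\Vert \zeta \right\Vert _{(D(A),X)_{\frac{1}{2p},p}}+\left\vert \lambda \right\vert ^{1-\frac{1}{2p}}\left\Vert \zeta \right\Vert$ contribution), handle the $f$-part by uniform-in-$\lambda$ maximal regularity, and recover $\left\Vert u''\right\Vert$ and $(1+\left\vert \lambda \right\vert )\left\Vert u\right\Vert$ from the equation and $\left\Vert Q_{\lambda }^{-2}\right\Vert \lesssim (1+\left\vert \lambda \right\vert )^{-1}$. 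But there is a genuine gap in your treatment of the $f$-dependent boundary corrections. Your $\zeta _{0},\zeta _{1}$ contain the traces $w(0),w(1)$, i.e.\ essentially $J_{\lambda ,f}(0)$ and $I_{\lambda ,f}(1)$, and you feed them into the interpolation estimate, claiming $\left\vert \lambda \right\vert ^{1-\frac{1}{2p}}\left\Vert \zeta _{i}\right\Vert \lesssim \left\vert \lambda \right\vert ^{1-\frac{1}{2p}}\left( \left\Vert u_{0}\right\Vert +\left\Vert u_{1}\right\Vert \right) +\left\Vert f\right\Vert _{L^{p}}$ ``using $\left\Vert Q_{\lambda }^{-1}\right\Vert \lesssim \left\vert \lambda \right\vert ^{-1/2}$ to absorb powers of $\left\vert \lambda \right\vert$''. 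The arithmetic does not close: with only (\ref{EstimationQmoins1}) and the uniform bound (\ref{estim exp xqlambda}) one gets $\left\Vert J_{\lambda ,f}(0)\right\Vert \lesssim (1+\left\vert \lambda \right\vert )^{-1/2}\left\Vert f\right\Vert _{L^{p}}$, hence $\left\vert \lambda \right\vert ^{1-\frac{1}{2p}}\left\Vert J_{\lambda ,f}(0)\right\Vert \lesssim \left\vert \lambda \right\vert ^{\frac{1}{2}-\frac{1}{2p}}\left\Vert f\right\Vert _{L^{p}}$, which is unbounded for every $p>1$. So this step, as justified, fails. (Your opening decomposition is also internally inconsistent: if $w$ really has homogeneous Dirichlet data then $w(0)=w(1)=0$ and the whole burden moves to the asserted uniform maximal regularity of the two-point boundary value problem, whose justification via ``BIP of $-A+\lambda I$ uniformly in complex $\lambda \in S_{\varphi _{0}}$'' is stated, not proved; the paper instead obtains uniformity through the square-root reduction and the bounded perturbation $T_{\lambda }$ in Lemma~\ref{Reg max}.)

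The paper avoids this problem altogether: the terms $S_{\lambda }(x)J_{\lambda ,f}(0)+S_{\lambda }(1-x)I_{\lambda ,f}(1)$ are never measured in the interpolation norm nor multiplied by $\left\vert \lambda \right\vert ^{1-\frac{1}{2p}}$; in Lemma~\ref{estim vj} one rewrites $Q_{\lambda }^{2}e^{xQ_{\lambda }}J_{\lambda ,f}(0)=2Q_{\lambda }^{2}I_{\lambda ,g}(x)+2e^{2xQ_{\lambda }}Q_{\lambda }^{2}J_{\lambda ,f}(x)$ with $g=e^{2\cdot Q_{\lambda }}f$, and then the maximal-regularity bounds of Lemma~\ref{estim omega} give $\left\Vert Q_{\lambda }^{2}h_{2}\right\Vert _{L^{p}}\leqslant M\left\Vert f\right\Vert _{L^{p}}$ directly, so only $u_{0}$ and $u_{1}$ ever produce the $\left\vert \lambda \right\vert ^{1-\frac{1}{2p}}$ terms. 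Your route could be repaired without changing its architecture by invoking the sharper decay $\left\Vert e^{sQ_{\lambda }}\right\Vert _{\mathcal{L}(X)}\leqslant Me^{-cs\sqrt{1+\left\vert \lambda \right\vert }}$ (available from the resolvent estimates of Section~\ref{Dore Yaku}), which yields the extra factor $(1+\left\vert \lambda \right\vert )^{-\frac{1}{2q}}$ in $\left\Vert J_{\lambda ,f}(0)\right\Vert$ and makes the exponent $1-\frac{1}{2p}-\frac{1}{2}-\frac{1}{2q}=0$ balance exactly; but as written, that ingredient is missing and the estimate you assert is false.
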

Now, we define operator 
\begin{equation*}
\begin{array}{llll}
\mathcal{L}_{A}: & D\left( \mathcal{L}_{A}\right) \subset Y & \longrightarrow
& Y \\ 
& u & \longmapsto & u^{\prime \prime }+\mathcal{A}u.%
\end{array}%
\end{equation*}%
where $D\left( \mathcal{L}_{A}\right) =\left\{ u\in W^{2,p}(0,1;X)\cap
L^{p}(0,1;D(A)):u(0)=u(1)=0\right\} $. We then obtain
\begin{theorem}\label{Gen Dirichlet}
Assume \eqref{NewH1}$\sim$\eqref{MoinsABip}. Then, we have

\begin{enumerate}
\item $\mathcal{L}_{A}$ is the infinitesimal generator of a $C_{0}$%
-semigroup.

\item If $\varphi _{0}\in \lbrack \pi /2,\pi )$, then $\mathcal{L}%
_{A}$ is the infinitesimal generator of an analytic semigroup.
\end{enumerate}
\end{theorem}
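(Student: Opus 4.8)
The plan is to read Theorems~\ref{Main3} and~\ref{main Dirichlet} as resolvent information for $\mathcal{L}_{A}$ and then to feed it into the classical semigroup generation theorems; the statement is in fact essentially a corollary of those two results. Fix $\lambda\in S_{\varphi_{0}}$ and $f\in Y=L^{p}(0,1;X)$. The resolvent equation $(\lambda I-\mathcal{L}_{A})u=f$ with $u\in D(\mathcal{L}_{A})$ is precisely Problem~\eqref{Dirichlet spectral1} with $u_{0}=u_{1}=0$. Since $0\in(D(A),X)_{\frac{1}{2p},p}$, Theorem~\ref{Main3} provides a unique $u\in W^{2,p}(0,1;X)\cap L^{p}(0,1;D(A))$ with $u(0)=u(1)=0$, that is, a unique $u\in D(\mathcal{L}_{A})$, solving it. Hence $\lambda I-\mathcal{L}_{A}:D(\mathcal{L}_{A})\to Y$ is a bijection for every $\lambda\in S_{\varphi_{0}}$, so $S_{\varphi_{0}}\subset\rho(\mathcal{L}_{A})$; and Theorem~\ref{main Dirichlet} applied with $u_{0}=u_{1}=0$ gives $(1+|\lambda|)\left\|u\right\|_{L^{p}(0,1;X)}\leqslant M\left\|f\right\|_{L^{p}(0,1;X)}$ (it also yields $\left\|u''\right\|_{L^{p}(0,1;X)}+\left\|\mathcal{A}u\right\|_{L^{p}(0,1;X)}\leqslant M'\left\|f\right\|_{L^{p}(0,1;X)}$, i.e.\ the maximal regularity, although only the first inequality is used here), that is
\begin{equation*}
\left\|(\lambda I-\mathcal{L}_{A})^{-1}\right\|_{\mathcal{L}(Y)}\leqslant\frac{M}{1+|\lambda|},\qquad\lambda\in S_{\varphi_{0}}.
\end{equation*}
Choosing any $\lambda_{0}\in S_{\varphi_{0}}$ shows that $\lambda_{0}I-\mathcal{L}_{A}$ has a bounded inverse on $Y$, so $\mathcal{L}_{A}$ is closed.

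Next I would verify that $D(\mathcal{L}_{A})$ is dense in $Y$. By \eqref{NewH2}, $A$ is boundedly invertible and sectorial; since $X$ is reflexive (being $UMD$, by \eqref{NewH1}), $A$ is densely defined, hence $D(A)$ is dense in $X$. Consequently the linear span of the functions $x\mapsto\varphi(x)\xi$, with $\varphi\in C_{c}^{\infty}(0,1)$ and $\xi\in D(A)$, is dense in $Y$; each such function belongs to $W^{2,p}(0,1;X)\cap L^{p}(0,1;D(A))$ and vanishes near $0$ and $1$, so it lies in $D(\mathcal{L}_{A})$, and therefore $D(\mathcal{L}_{A})$ is dense in $Y$. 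At this stage $\mathcal{L}_{A}$ is a densely defined closed operator whose resolvent set contains the sector $S_{\varphi_{0}}$, with the uniform estimate above.

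It then remains to invoke the generation theorems. For assertion (1): a densely defined closed operator on a Banach space whose resolvent set contains a sector $S_{\varphi_{0}}$ with $\varphi_{0}\in(0,\pi)$, and which satisfies there $\left\|(\lambda I-\mathcal{L}_{A})^{-1}\right\|_{\mathcal{L}(Y)}\leqslant M/(1+|\lambda|)$, generates a $C_{0}$-semigroup on $Y$. For assertion (2): if moreover $\varphi_{0}\geqslant\pi/2$, then $S_{\varphi_{0}}$ contains the closed right half-plane $\{\lambda:\operatorname{Re}\lambda\geqslant0\}$, and expanding $(\lambda I-\mathcal{L}_{A})^{-1}$ in a Neumann series about the points of the imaginary axis (where the resolvent is bounded by $M/(1+|\lambda|)$) extends it, with a bound of the same form $M''/(1+|\lambda|)$, to a sector $\{\lambda:|\arg\lambda|<\pi/2+\delta\}$ for some $\delta>0$; the classical characterization of generators of bounded analytic semigroups then shows that $\mathcal{L}_{A}$ generates an analytic semigroup. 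The substantive work — the invertibility of the associated determinant operator and the sharp $\lambda$-uniform resolvent estimate — has already been carried out in Theorems~\ref{Main3} and~\ref{main Dirichlet}, so for the present statement the only delicate points are the density of $D(\mathcal{L}_{A})$ and, in assertion (1), the passage from a merely sectorial resolvent estimate (when $\varphi_{0}<\pi/2$) to $C_{0}$-generation.
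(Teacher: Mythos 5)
Your proposal is correct and follows essentially the same route as the paper: the paper's proof likewise consists of reading Theorems \ref{Main3} and \ref{main Dirichlet} with $u_{0}=u_{1}=0$ as saying that $\mathcal{L}_{A}$ is closed, that $S_{\varphi_{0}}\subset\rho(\mathcal{L}_{A})$ with an explicit resolvent formula, and that $\left\Vert(\mathcal{L}_{A}-\lambda I)^{-1}\right\Vert_{\mathcal{L}(Y)}\leqslant M/(1+|\lambda|)$ on $S_{\varphi_{0}}$, and then concluding generation from this estimate. You merely make explicit two points the paper leaves implicit — the density of $D(\mathcal{L}_{A})$ in $Y$ (via density of $D(A)$, which the paper only records when noting that $-A$ is densely defined) and the Neumann-series extension of the resolvent past the imaginary axis in the case $\varphi_{0}\geqslant\pi/2$ — so the substance and the delicate points you flag coincide with those of the paper's argument.
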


\subsection{Remarks}

\begin{remark}\label{Rem equivalence hyp}
Assume (\ref{NewH2}) and (\ref{D(Q)Principal}). Let $\left( \lambda ,\mu \right) \in S_{\varphi _{0}}\times \mathbb{C}$. Then assumption \eqref{Q-H stability} is equivalent to
\begin{equation} \label{Q-H stability bis}
\left( Q_{\lambda }-H_{\mu }\right) ^{-1}\left( D\left( Q\right) \right)
\subset D\left( Q^{2}\right) . 
\end{equation}
In fact, if $\xi \in D\left( Q\right) $, there exists $\zeta \in X$ such that $\xi=Q^{-1}\zeta $, so 
\begin{eqnarray*}
\left( Q-H\right) \xi &=&\left[ Q_{\lambda }-H_{\mu }+\mu I+\left(
Q-Q_{\lambda }\right) \right] \xi \\ \ecart
&=&\left( Q_{\lambda }-H_{\mu }\right) \xi +\mu \xi +\left( Q-Q_{\lambda
}\right) Q^{-1}\zeta ,
\end{eqnarray*}
and it will be seen in Lemma \ref{Compar racine} below that there exists $T_{\lambda }\in \mathcal{L}%
(X)$ such that 
\begin{equation*}
Q=Q_{\lambda }+T_{_{\lambda }}\text{ \ and \ }Q^{-1}T_{_{\lambda}}=T_{_{\lambda }}Q^{-1}.
\end{equation*}
Therefore
\begin{equation*}
\left( Q-H\right) \xi -\left( Q_{\lambda }-H_{\mu }\right) \xi =\mu \xi
+Q^{-1}T_{_{\lambda }}\zeta \in D\left( Q\right) .
\end{equation*}
This proves that 
$$\left( Q-H\right) ^{-1}\left( D\left( Q\right) \right)
=\left( Q_{\lambda }-H_{\mu }\right) ^{-1}\left( D\left( Q\right) \right) .$$
\end{remark}

\begin{remark}\label{second case}
Assume (\ref{NewH2}). If there exists $\omega \in [0,1/2)$ such that $D(\left(-A\right)^{\omega})\subset D\left( H\right) $ then, in virtue of Lemma 2.6 statement a) in \cite{Dore Yakubov}, there exists $C_{\omega }>0$ such that, for $%
t\geqslant 0$%
\begin{equation*}
\begin{array}{lll}
\parallel HQ_{t}^{-1}\parallel _{\mathcal{L}\left( X\right) } & \leqslant&
\parallel H\left( -A\right) ^{-\omega }\parallel _{\mathcal{L}\left(
X\right) }\parallel \left( -A\right) ^{\omega }\left( -A+tI\right)
^{-1/2}\parallel _{\mathcal{L}\left( X\right) } \vspace{0.1cm} \\ 
& \leqslant &\dfrac{C_{\omega }}{\left( 1+t\right) ^{1/2-\omega }};
\end{array}
\end{equation*}
so we have (\ref{D(Q)Principal}) and (\ref{NewH3bis}) with $\varepsilon
=1/2-\omega \in (0,1/2].$
\end{remark}

\begin{remark}
In this paper we have supposed that $0 \in \rho(A)$, but in the theorems above written for $\left\vert \lambda \right\vert $ large enough, and those
concerning generation of semigroups, we can drop this invertibility
assumption; more precisely all the previous Theorems remain true if we replace (\ref{NewH2}) by 
\begin{equation*}
\left\{ \begin{array}{l}
\exists \varphi _{0}\in \left( 0,\pi \right),~\exists \omega _{0}>0:~
S_{\varphi _{0}}\subset \rho \left( A-\omega _{0}I\right) \text{ and } \exists C_{A}>0: \vspace{0.1cm}\\ 
\forall \lambda \in S_{\varphi _{0}},\quad\left\Vert \left( A-\omega
_{0}I-\lambda I\right) ^{-1}\right\Vert _{\mathcal{L}(X)}\leqslant \dfrac{%
C_{A}}{1+\left\vert \lambda \right\vert }.
\end{array}\right. 
\end{equation*}
\end{remark}

\section{Problem without parameters\label{sans parametre}}

In this section we study a problem similar to \eqref{NewEquation}-\eqref{RobinSpectralConditions}, but without the parameters $\lambda $ and $\mu $:
\begin{equation}
\left \{ 
\begin{array}{l}
u^{\prime \prime }(x)+Au(x)=f(x),\quad x\in (0,1)  \vspace{0.1cm}\\ 
u^{\prime }(0)-Hu(0)=d_{0},\text{ }u(1)=u_{1}.
\end{array}%
\right.  \label{Problem}
\end{equation}

\subsection{Hypotheses}

Here our hypotheses are

\begin{description}
\item[$(H_{1})$] $X$ is a $UMD$ space,

\item[$(H_{2})$] $[0,+\infty )\subset \rho \left( A\right) \quad$and$\quad\underset{%
t\in \lbrack 0,+\infty )}{\sup }\left \Vert \left( A-tI\right)
^{-1}\right
\Vert _{\mathcal{L}(X)}\leqslant \dfrac{C}{1+t}$,

\item[$(H_{3})$] $\forall s\in \mathbb{R},$ $\left( -A\right) ^{is}\in \mathcal{L}\left( X\right) $ and
\begin{equation*}
\exists \theta _{A}\in \left( 0,\pi \right) : \quad \underset{
s\in \mathbb{R}}{\sup }\left \Vert e^{-\theta _{A}\left \vert s\right \vert
}(-A)^{is}\right \Vert _{\mathcal{L}\left( X\right) }<+\infty ,
\end{equation*}

\item[$(H_{4})$] $\Lambda :=\left( Q-H\right) +e^{2Q}\left( Q+H\right) $ is
closed and boundedly invertible, where $Q:=-\sqrt{-A}$.

\item[$(H_{5})$] $Q\Lambda ^{-1}\left( \left( D\left( Q\right) ,X\right)
_{1/p,p}\right) \subset \left( D\left( Q\right) ,X\right) _{1/p,p}.$
\end{description}

Note that here we are neither in case 1, nor in case 2. In the following remark we discuss about assumption $(H_{5})$.

\begin{remark}
\label{discuss H5}~

\begin{enumerate}
\item Assume $(H_{2})$ and $(H_{4})$. If we suppose moreover 
\begin{equation}
A^{-1}\Lambda ^{-1}=\Lambda ^{-1}A^{-1},  \label{Commut A inv Lambda}
\end{equation}%
then $(H_{5})$ is satisfied. In fact, the following assertions are
equivalent.

\begin{enumerate}
\item $A^{-1}\Lambda ^{-1}=\Lambda ^{-1}A^{-1},$

\item $\forall \lambda \in \rho \left( A\right) ,\quad \left( A-\lambda I\right)
^{-1}\Lambda ^{-1}=\Lambda ^{-1}\left( A-\lambda I\right) ^{-1},$

\item $Q^{-1}\Lambda ^{-1}=\Lambda ^{-1}Q^{-1}.$
\end{enumerate}

Then, under (\ref{Commut A inv Lambda}), we have $Q\left( Q-tI\right) ^{-1}Q\Lambda ^{-1}=Q\Lambda ^{-1}Q\left( Q-tI\right)^{-1}$ and since 
\begin{equation*}
\left( D\left( Q\right) ,X\right) _{1/p,p}=\left \{ x\in X:t^{1-1/p}Q\left(
Q-tI\right) ^{-1}x\in L_{\ast }^{p}(\mathbb{R}_{+};X)\right \} ,
\end{equation*}%
we get $\left( H_{5}\right)$. Finally we remark that if%
\begin{equation}
\forall \zeta \in D\left( H\right) ,\quad A^{-1}\zeta \in D\left( H\right) \quad\text{and}\quad A^{-1}H\zeta =HA^{-1}\zeta ,  \label{commut A H}
\end{equation}%
then (\ref{Commut A inv Lambda}) and $\left( H_{5}\right) $ are satisfied.

\item Assume $(H_{2})$ and $(H_{4})$; if we use the classical notation
\begin{equation*}
(D(Q),X)_{1+1/p,p}\;:=\;\left\{\psi\in D(Q) : Q \psi\in (D(Q),X)_{1/p,p}\right\},
\end{equation*}
then $(H_{5})$ writes 
\begin{equation*}
\Lambda ^{-1}\left( \left( D\left( Q\right) ,X\right) _{1/p,p}\right)
\subset \left( D\left( Q\right) ,X\right) _{1+1/p,p}.
\end{equation*}

\item In \cite{Cheggag 2}, problem (\ref{Problem}) has been studied under
more restrictive assumptions, that are $(H_{1})\sim (H_{4})$ and the
commutativity hypothesis%
\begin{equation}
\exists \,\lambda _{0}\in \rho (H):\quad A^{-1}\left( H-\lambda _{0}I\right)
^{-1}=\left( H-\lambda _{0}I\right) ^{-1}A^{-1},  \label{K2}
\end{equation}

which, from statement 1., implies $(H_{5})$.

\item Assume $(H_{2})$ and $(H_{4})$. If $Q-H$ is boundedly invertible\
then, due\ to $\Lambda \Lambda ^{-1}=I$ and $\Lambda ^{-1}\Lambda =I$, we
get that 
\begin{equation}
\left \{ 
\begin{array}{l}
\Lambda ^{-1}=\left( Q-H\right) ^{-1}-\left( Q-H\right) ^{-1}e^{2Q}\left(
Q+H\right) \Lambda ^{-1}  \vspace{0.1cm} \\ 
\left( Q-H\right) ^{-1}=\Lambda ^{-1}+\Lambda ^{-1}e^{2Q}\left( Q+H\right)
\left( Q-H\right) ^{-1},%
\end{array}%
\right.  \label{compar Lambda et Q-H}
\end{equation}%
from which we deduce that for any $\xi \in \left( D\left( Q\right) ,X\right)
_{1/p,p}$%
\begin{equation*}
\left[ Q\Lambda ^{-1}\xi \in \left( D\left( Q\right) ,X\right) _{1/p,p}%
\right] \Longleftrightarrow \left[ Q\left( Q-H\right) ^{-1}\xi \in \left(
D\left( Q\right) ,X\right) _{1/p,p}\right];
\end{equation*}
so we can replace in the previous proposition assumption $(H_{5})$ by the
equivalent one%
\begin{equation}\label{Equiv Interp}
(H'_5) \qquad Q\left( Q-H\right) ^{-1}\left( \left( D\left(
Q\right) ,X\right) _{1/p,p}\right) \subset \left( D\left( Q\right) ,X\right)
_{1/p,p},  
\end{equation}

\item Assume $(H_{2})$ and $(H_{4})$. If we suppose that $Q\Lambda ^{-1}\xi \in D(Q)$, then we have $(H_{5})$, see Lemma 5 p. 76 in \cite{Cheggag 4}. Similarly, when $Q-H$ is boundedly invertible, using \eqref{compar Lambda et
Q-H} we have that%
\begin{equation*}
\forall \xi \in D(Q), \quad Q\left( Q-H\right) ^{-1}\xi \in D(Q),
\end{equation*}%
implies $(H_{5}^{\prime })$ and then $(H_{5})$.
\end{enumerate}
\end{remark}

\subsection{Interpolation spaces}

Let us give now some necessary conditions to obtain a classical solution for
our problem (\ref{Problem}) using known properties of interpolation spaces.

\begin{lemma}
Suppose that Problem (\ref{Problem}) has a classical solution $u$. Then:

\begin{enumerate}
\item $u\left( 0\right) ,u\left( 1\right) \in \left( D(Q^{2}),X\right) _{%
\frac{1}{2p},p}=\left( X,D(Q^{2})\right) _{1-\frac{1}{2p},p}$, which implies
that%
\begin{equation*}
u\left( 0\right),~u\left( 1\right) \in D(Q)\quad \text{and}\quad Qu\left( 0\right),~ Qu\left( 1\right) \in \left( D\left( Q\right) ,X\right) _{1/p,p}.
\end{equation*}

\item $u^{\prime }\left( 0\right),~u^{\prime }\left( 1\right) \in \left(
D(Q^{2}),X\right) _{\frac{1}{2}+\frac{1}{2p},p}=\left( D(Q),X\right) _{\frac{%
1}{p},p}.$
\end{enumerate}
\end{lemma}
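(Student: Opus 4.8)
The plan is to obtain both items purely from the trace theory of the maximal-regularity space $W^{2,p}(0,1;X)\cap L^{p}(0,1;D(A))$, together with classical reiteration identities for the real interpolation spaces of the powers of a sectorial operator. The point of departure is that, under $(H_{2})$--$(H_{3})$, $-A$ has bounded imaginary powers, hence is sectorial, so that $-Q=\sqrt{-A}$ (whose existence is guaranteed by the subsection quoted above) is again sectorial and boundedly invertible, with $D(Q^{2})=D(-A)=D(A)$. Thus the whole statement can be rephrased through the operator $Q$ alone, and nothing beyond the assumption that a classical solution exists is needed.

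First I would invoke the trace theorem: since $u$ is a classical solution, $u\in W^{2,p}(0,1;X)\cap L^{p}(0,1;D(A))$, and the traces at $x=0,1$ satisfy $u(0),u(1)\in(D(A),X)_{\frac{1}{2p},p}$ and $u'(0),u'(1)\in(D(A),X)_{\frac{1}{2}+\frac{1}{2p},p}$ (classical; see e.g. Grisvard, Lions--Peetre, or, in the differential-operator framework, Yakubov--Yakubov). Substituting $D(A)=D(Q^{2})$ gives the first membership in each of the two items, and the equality $(D(Q^{2}),X)_{\frac{1}{2p},p}=(X,D(Q^{2}))_{1-\frac{1}{2p},p}$ is merely the symmetry of the real interpolation functor. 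It then remains to move from the $D(Q^{2})$-scale to the $D(Q)$-scale. Here I would use the two reiteration identities valid for the sectorial operator $-Q$: $(X,D(Q^{2}))_{\eta,p}=(X,D(Q))_{2\eta,p}$ when $0<\eta<\frac{1}{2}$, and $(X,D(Q^{2}))_{\eta,p}=\{x\in D(Q):Qx\in(X,D(Q))_{2\eta-1,p}\}$ when $\frac{1}{2}<\eta<1$. Taking $\eta=1-\frac{1}{2p}\in(\frac{1}{2},1)$ in the second identity yields
\[(D(Q^{2}),X)_{\frac{1}{2p},p}=(X,D(Q^{2}))_{1-\frac{1}{2p},p}=\{x\in D(Q):Qx\in(D(Q),X)_{\frac{1}{p},p}\},\]
which is exactly item 1 (and, with the notation introduced above, this set is $(D(Q),X)_{1+\frac{1}{p},p}$). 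Taking $\eta=\frac{1}{2}-\frac{1}{2p}\in(0,\frac{1}{2})$ in the first identity yields
\[(D(Q^{2}),X)_{\frac{1}{2}+\frac{1}{2p},p}=(X,D(Q^{2}))_{\frac{1}{2}-\frac{1}{2p},p}=(X,D(Q))_{1-\frac{1}{p},p}=(D(Q),X)_{\frac{1}{p},p},\]
which is item 2.

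\textbf{Expected main obstacle.} There is no deep step here: the proof is essentially a careful accounting of interpolation exponents. The only genuine care is needed in (i) reading off the correct trace indices --- $\frac{1}{2p}$ for $u(j)$ and $\frac{1}{2}+\frac{1}{2p}$ for $u'(j)$ --- and (ii) checking that $1-\frac{1}{2p}$ lies in $(\frac{1}{2},1)$ and $\frac{1}{2}-\frac{1}{2p}$ lies in $(0,\frac{1}{2})$, which both hold precisely because $p\in(1,+\infty)$, so that the right branch of the reiteration identity applies and all exponents avoid the integers. An off-by-$\tfrac{1}{2}$ slip at this bookkeeping level is the likeliest source of error.
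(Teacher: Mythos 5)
Your proposal is correct and follows essentially the same route as the paper: the trace theorem for $W^{2,p}(0,1;X)\cap L^{p}(0,1;D(Q^{2}))$ (Grisvard, Teorema 2') gives the memberships with indices $\tfrac{1}{2p}$ and $\tfrac{1}{2}+\tfrac{1}{2p}$, and the passage from the $D(Q^{2})$-scale to the $D(Q)$-scale is exactly the identity $(D(Q^{2}),X)_{1/2p,p}=\{\varphi\in D(Q):Q\varphi\in(D(Q),X)_{1/p,p}\}$ together with $(D(Q^{2}),X)_{\frac12+\frac{1}{2p},p}=(D(Q),X)_{\frac1p,p}$, which the paper takes from Grisvard (Teorema 6 and Teorema 2') and you recover via the standard reiteration formulas for the sectorial operator $-Q$. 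Your exponent bookkeeping ($1-\tfrac{1}{2p}\in(\tfrac12,1)$, $\tfrac12-\tfrac{1}{2p}\in(0,\tfrac12)$) is accurate, so no gap.
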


\begin{proof}
Suppose that Problem (\ref{Problem}) has a classical solution $u$. Then,
from 
\begin{equation*}
u\in W^{2,p}\left( 0,1;X\right) \cap L^{p}\left( 0,1;D(Q^{2})\right)
,\quad 1<p<+\infty ,
\end{equation*}%
we have $u\left( 0\right) ,u\left( 1\right) \in \left( D(Q^{2}),X\right) _{\frac{1}{2p},p}=\left( X,D(Q^{2})\right) _{1-\frac{1}{2p},p}$, see \cite{P. Grisvard}, Teorema 2', p. 678. But, due to \cite{P. Grisvard}, Teorema 6, p. 676, we obtain
$$\left( D\left( Q^{2}\right) ,X\right) _{1/2p,p} = \left( D\left( Q\right) ,X\right) _{1+1/p,p} = \left \{ \varphi \in D\left( Q\right) :Q\varphi \in \left( D\left(Q\right) ,X\right) _{1/p,p}\right \} \subset D\left( Q\right) ,$$
from which it follows that%
\begin{equation*}
u\left( 0\right) ,u\left( 1\right) \in D(Q)\quad \text{and} \quad Qu\left( 0\right)
,Qu\left( 1\right) \in \left( D\left( Q\right) ,X\right) _{1/p,p}.
\end{equation*}%
Similarly, by using Teorema 2', in \cite{P. Grisvard}, p. 678, we have%
\begin{equation*}
u^{\prime }\left( 0\right) ,u^{\prime }\left( 1\right) \in \left(
D(Q^{2}),X\right) _{\frac{1}{2}+\frac{1}{2p},p}=\left( D(Q),X\right) _{\frac{%
1}{p},p}.
\end{equation*}
\end{proof}

\subsection{Representation formula\label{Sect Repr}}

Under $(H_{2})$, if $u$ is a classical solution of (\ref{Problem}) then
there exist $\xi _{0},\xi _{1}\in X$ \ such that 
\begin{equation}
u(x)=e^{xQ}\xi _{0}+e^{\left( 1-x\right) Q}\xi _{1}+I\left( x\right)
+J\left( x\right) ,\quad x\in \left[ 0,1\right] ,  \label{repr initiale}
\end{equation}%
where%
\begin{equation}
I\left( x\right) =\dfrac{1}{2}Q^{-1}\int_{0}^{x}e^{(x-s)Q}f(s)ds\quad \text{and}\quad J\left( x\right) =\dfrac{1}{2}Q^{-1}\int_{x}^{1}e^{(s-x)Q}f(s)ds,
\label{def I et J sans parametre}
\end{equation}%
see \cite{Cheggag 1}, p. 989. Note that here, unlike \cite{Cheggag 1}, we do
not suppose that $A$ and $H$ commute. Now, taking into account the fact
that $I-e^{2Q}$ is invertible, we set $T=\left( I-e^{2Q}\right) ^{-1}\in 
\mathcal{L}\left( X\right) $ and 
\begin{equation*}
S\left( x\right) =T\left( e^{xQ}-e^{\left( 1-x\right) Q}e^{Q}\right) \in 
\mathcal{L}\left( X\right) ,\quad x\in \left[ 0,1\right];
\end{equation*}
then formula (\ref{repr initiale}) takes the following form 
\begin{equation*}
u(x)=S\left( x\right) \mu _{0}+S\left( 1-x\right) \mu _{1}+I\left( x\right)
+J\left( x\right) ,\quad x\in \left[ 0,1\right] ,
\end{equation*}
with $\mu _{0}=\xi _{0}+e^{Q}\xi _{1},\mu _{1}=e^{Q}\xi _{0}+\xi _{1}$ and
we deal with this new writing. We note that%
\begin{equation*}
\left \{ 
\begin{array}{l}
u_{0}=u(0)=\mu _{0}+J\left( 0\right) \\ 
u_{1}=u(1)=\mu _{1}+I\left( 1\right) \\ 
u^{\prime }\left( 0\right) =TQ\left( I+e^{2Q}\right) \mu _{0}-2TQe^{Q}\mu_{1}-QJ\left( 0\right) ,
\end{array}\right.
\end{equation*}%
and we determine $\mu _{0},\mu _{1}$ by using the boundary conditions $u(1)=u_{1}$ and $u^{\prime }(0)-Hu(0)=d_{0}$. So $\mu _{1}=u_{1}-I\left( 1\right) $ and
\begin{equation*}
TQ\left( I+e^{2Q}\right) \mu _{0}-2TQe^{Q}\mu _{1}-QJ\left( 0\right)
-H\left( \mu _{0}+J\left( 0\right) \right) =d_{0};
\end{equation*}%
hence
$$\begin{array}{lll}
TQ\left( I+e^{2Q}\right) \left( \mu _{0}+J\left( 0\right) \right) -H\left(
\mu _{0}+J\left( 0\right) \right) &=& d_{0}+2TQe^{Q}\mu _{1} \\
&& +QJ\left( 0\right) +TQ\left( I+e^{2Q}\right)J\left( 0\right),
\end{array}$$
thus
\begin{eqnarray*}
\left[ Q\left( I+e^{2Q}\right) -\left( I-e^{2Q}\right) H\right] \left( \mu
_{0}+J\left( 0\right) \right) &=& \left( I-e^{2Q}\right) d_{0}+2Qe^{Q}\mu _{1}\\
&& +\left( I-e^{2Q}\right)QJ\left( 0\right) +Q\left( I+e^{2Q}\right) J\left( 0\right);
\end{eqnarray*}%
but $\Lambda =Q\left( I+e^{2Q}\right) -\left( I-e^{2Q}\right) H$, so
\begin{equation*}
\left \{ 
\begin{array}{l}
\mu _{1}=u_{1}-I\left( 1\right) \vspace{0.1cm} \\ 
\mu _{0}=\Lambda ^{-1}\left[ \left( I-e^{2Q}\right) d_{0}+2Qe^{Q}\mu
_{1}+2QJ\left( 0\right) \right] -J\left( 0\right) .%
\end{array}%
\right.
\end{equation*}%
Finally, if $u$ is a classical solution to \eqref{Problem}, then 
\begin{equation}
u(x)=S\left( x\right) \mu _{0}+S\left( 1-x\right) \mu _{1}+I\left( x\right)
+J\left( x\right) ,\text{ \ }x\in \left[ 0,1\right] ,  \label{repre}
\end{equation}%
where%
\begin{equation}
\left \{ 
\begin{array}{l}
\mu _{1}=u_{1}-I\left( 1\right)\vspace{0.1cm} \\ 
\mu _{0}=\Lambda ^{-1}\left[ \left( I-e^{2Q}\right) d_{0}+2Qe^{Q}\mu
_{1}+2QJ\left( 0\right) \right] -J\left( 0\right)\vspace{0.1cm} \\ 
S\left( x\right) =\left( I-e^{2Q}\right) ^{-1}\left( e^{xQ}-e^{\left(
1-x\right) Q}e^{Q}\right) .%
\end{array}%
\right.  \label{repre suite}
\end{equation}

When (\ref{K2}) is satisfied, we can check that this representation formula
coincides with the one given in \cite{Cheggag 2} p. 528. We can also, after
computations, verify that (\ref{repre}) is the same formula as the one in \cite{Cheggag 4}, p. 92 (with $L=M=Q$) and also compare it with (34)$\sim $(38)
pp. 54-55, in \cite{Kaid Ould}.

\subsection{Regularity results}

The following results will be useful to study the regularity of the solution
of (\ref{Problem}).

\begin{lemma}
\label{Trieb}Let $p\in \left( 1,+\infty \right) ,\psi \in X$ and $n\in \mathbb{N} \backslash \left \{ 0\right \} $. Then, under $(H_{2})$, we have

\begin{enumerate}
\item $x\mapsto e^{xQ}\psi \in L^{p}\left( 0,1,X\right) .$

\item $x\mapsto Q^{n}e^{xQ}\psi \in L^{p}\left( 0,1,X\right) $ if and only
if $\psi \in \left( D\left( Q^{n}\right) ,X\right) _{\frac{1}{np},p}.$
\end{enumerate}
\end{lemma}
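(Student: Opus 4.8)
The plan is to reduce both statements to the classical characterization of real interpolation spaces by trace/heat-semigroup methods, which is available here because $-A$ is a sectorial (indeed, BIP) operator of positive type, so $-Q = \sqrt{-A}$ generates a bounded analytic semigroup $(e^{xQ})_{x\geqslant 0}$ on $X$; under $(H_2)$ we have $0\in\rho(A)$, hence $0\in\rho(Q)$, so the semigroup is exponentially decaying and all the integrals below converge at $+\infty$ as well as at $0$. First I would recall the standard abstract result (see e.g. Grisvard, or Lunardi's book on interpolation): for a generator $-B$ of a bounded analytic semigroup on $X$ with $0\in\rho(B)$, and for $0<\sigma<1$, $q\in(1,\infty)$, one has $\psi\in(D(B),X)_{\sigma,q}$ if and only if $x\mapsto x^{\sigma}B e^{-xB}\psi\in L^q_*(0,\infty;X)$ (equivalently $x\mapsto x^{1-\sigma+k}B^{k}e^{-xB}\psi\in L^q_*$ for any integer $k\geqslant 1$), and that this is in turn equivalent to $x\mapsto e^{-xB}\psi$ belonging to the real-interpolation trace space; in particular $x\mapsto e^{-xB}\psi\in L^q(0,T;X)$ for every $\psi\in X$ since the semigroup is bounded. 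Applying this with $B=-Q$ gives statement 1 immediately: $x\mapsto e^{xQ}\psi$ is continuous on $[0,1]$ with values in $X$, hence in $L^p(0,1;X)$, for every $\psi\in X$.

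For statement 2, the key identity is $Q^n e^{xQ}\psi = Q^n e^{-x(-Q)}\psi$, and I would split $[0,1]$ into a neighbourhood of $0$ and its complement. On $[\delta,1]$ analyticity gives $\|Q^n e^{xQ}\psi\|\leqslant C_\delta\|\psi\|$, so the only issue is integrability near $x=0$. Near $0$, the condition $x\mapsto Q^n e^{xQ}\psi\in L^p(0,\delta;X)$ is, by the characterization quoted above applied with $\sigma$ chosen so that $1-\sigma+(n-1)=n-\tfrac1p$, i.e. $\sigma=\tfrac1p$ and then rescaling to the $n$-th power of $Q$, exactly the statement $\psi\in (D(Q^n),X)_{\frac1{np},p}$. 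Concretely: $x\mapsto x^{n-1/p}Q^n e^{xQ}\psi\in L^p_*(0,\infty;X)$ characterizes $(D(Q^n),X)_{\frac1{np},p}$ (write $t=x^n$ to see the exponent: $(D(Q^n),X)_{\theta,p}$ is characterized by $x\mapsto x^{n(1-\theta)}Q^n e^{xQ}\psi\in L^p_*$, and $n(1-\theta)=n-\tfrac1p$ forces $\theta=\tfrac1{np}$), and since on $(0,1)$ the extra weight $x^{n-1/p}$ only helps near $0$ and is harmless near $1$, the $L^p_*(0,\infty)$-condition is equivalent to the plain $L^p(0,1;X)$-condition for $x\mapsto Q^n e^{xQ}\psi$ once one also uses that $Q^n e^{xQ}\psi\in L^p(\delta,\infty;X)$ automatically by exponential decay. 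This gives both implications.

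I expect the main (really the only) obstacle to be bookkeeping with the exponents and the precise reference for the semigroup characterization of $(D(Q^n),X)_{\frac1{np},p}$ — in particular making sure the power $n$ of the operator is matched to the interpolation index $\tfrac1{np}$ rather than $\tfrac1p$, and handling the reiteration $(D(Q^n),X)_{\theta,p}=(D(Q),X)_{n\theta,p}$ when $n\theta<1$ if one prefers to reduce everything to $Q$ itself. Everything else (convergence at infinity, the splitting at $\delta$, boundedness of the analytic semigroup) is routine given $(H_2)$. I would therefore structure the write-up as: (i) recall $-Q$ generates a bounded, exponentially stable analytic semigroup; (ii) quote the real-interpolation characterization via the analytic semigroup; (iii) deduce 1; (iv) for 2, split at $\delta$, treat $[\delta,1]$ by analyticity, and identify the behaviour near $0$ with membership in $(D(Q^n),X)_{\frac1{np},p}$ via the weighted $L^p_*$ characterization, checking the exponent as above.
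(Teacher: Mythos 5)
Your overall strategy --- reduce everything to the semigroup characterization of real interpolation spaces for the bounded, exponentially decaying analytic semigroup $\left( e^{xQ}\right)_{x\geqslant 0}$, treat $[\delta ,1]$ by analyticity and cut off at infinity by exponential decay --- is exactly the route behind the paper's proof, which is simply a citation of the Theorem at p.~96 of \cite{H. Triebel} (Theorem 1.14.5), and your argument for statement 1 is fine. However, the key exponent identification in statement 2 is wrong, and as written the argument would establish a false equivalence. The characterization you quote is already internally inconsistent: if $\psi \in \left( D(B),X\right)_{\sigma ,q}$ is characterized by $x^{\sigma }Be^{-xB}\psi \in L_{\ast }^{q}$, then the higher-order version must carry the weight $x^{\sigma +k-1}$ in front of $B^{k}e^{-xB}\psi $, not $x^{1-\sigma +k}$; the weight $x^{m(1-\theta )}$ in front of $B^{m}e^{-xB}\psi $ characterizes $\left( X,D(B^{m})\right)_{\theta ,q}$, that is $\left( D(B^{m}),X\right)_{1-\theta ,q}$. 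Consequently your ``concrete'' claim that $x\mapsto x^{\,n-1/p}Q^{n}e^{xQ}\psi \in L_{\ast }^{p}(0,\infty ;X)$ characterizes $\left( D(Q^{n}),X\right)_{\frac{1}{np},p}$ is off by the swap $\theta \leftrightarrow 1-\theta $: that weighted condition characterizes $\left( D(Q^{n}),X\right)_{1-\frac{1}{np},p}$. The correct weight for $\left( D(Q^{n}),X\right)_{\theta ,p}$ is $x^{n\theta }$, so for $\theta =\frac{1}{np}$ the weight is $x^{1/p}$, and the quantity $\left( \int_{0}^{\infty }\left\Vert x^{1/p}Q^{n}e^{xQ}\psi \right\Vert^{p}\frac{dx}{x}\right)^{1/p}$ is exactly the plain $L^{p}(0,\infty ;X)$ norm of $Q^{n}e^{\cdot Q}\psi $ --- no weight ever needs to be ``dropped''.

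This matters because of the second, related, gap: the sentence asserting that the extra weight $x^{n-1/p}$ ``only helps near $0$ and is harmless near $1$'', so that the weighted $L_{\ast }^{p}$ condition is equivalent to the plain $L^{p}(0,1;X)$ condition, is false. A power weight in $L_{\ast }^{p}$ cannot be discarded unless it is precisely the power $x^{1/p}$ that turns $dx/x$ into $dx$; for $n-\frac{1}{p}\neq \frac{1}{p}$ (i.e. $np\neq 2$) the two conditions define different interpolation spaces, namely $\left( D(Q^{n}),X\right)_{1-\frac{1}{np},p}$ versus $\left( D(Q^{n}),X\right)_{\frac{1}{np},p}$, so your scheme as written would ``prove'' the lemma with the wrong interpolation index. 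The repair is short: invoke the characterization in the correct orientation, $\left( D(Q^{n}),X\right)_{\frac{1}{np},p}=\left( X,D(Q^{n})\right)_{1-\frac{1}{np},p}=\left\{ \psi :x^{1/p}Q^{n}e^{xQ}\psi \in L_{\ast }^{p}(0,\infty ;X)\right\} $, note that this is exactly $Q^{n}e^{\cdot Q}\psi \in L^{p}(0,\infty ;X)$, and then use $\left\Vert Q^{n}e^{xQ}\right\Vert_{\mathcal{L}(X)}\leqslant Ce^{-\delta x}$ for $x\geqslant 1$ (analyticity plus $0\in \rho (Q)$) to replace $(0,\infty )$ by $(0,1)$; the rest of your plan (splitting at $\delta $, statement 1) stands.
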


See for instance \cite{H. Triebel}, Theorem at p. 96.

\begin{lemma}
\label{Dore Venni}For $f\in L^{p}\left( 0,1,X\right) $ with $1<p<+\infty $,
under $(H_{1})\sim (H_{3})$, we have

\begin{enumerate}
\item $\dis x\mapsto Q \int \nolimits_{0}^{x}e^{\left( x-s\right) Q}f\left(
s\right) ds\in L^{p}\left( 0,1,X\right) ~~ \text{and} ~~\dis x\mapsto Q\int \nolimits_{x}^{1}e^{\left( s-x\right) Q}f\left( s\right)
ds\in L^{p}\left( 0,1,X\right) .$

\item $\dis x\mapsto Q\int \nolimits_{0}^{1}e^{\left( x+s\right) Q}f\left(
s\right) ds\in L^{p}\left( 0,1,X\right) .$
\end{enumerate}
\end{lemma}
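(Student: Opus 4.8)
The plan is to recognise the three integral operators as images, under resolvents, of a sum of two commuting sectorial operators on $Y=L^{p}(0,1;X)$, and then to apply the Dore--Venni theorem. Since $X$ is $UMD$ and $1<p<+\infty$, the space $Y$ is again $UMD$. On $Y$ introduce the closed operator $\mathcal{D}_{0}u=u^{\prime}$ with domain $\{u\in W^{1,p}(0,1;X):u(0)=0\}$: it is boundedly invertible (its inverse is $f\mapsto\int_{0}^{\cdot}f(s)\,ds$), $-\mathcal{D}_{0}$ generates the bounded nilpotent translation semigroup, and $\mathcal{D}_{0}$ admits bounded imaginary powers with angle $\pi/2$; similarly $\mathcal{D}_{1}u=-u^{\prime}$ with domain $\{u\in W^{1,p}(0,1;X):u(1)=0\}$ has the same properties. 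On the other hand, by $(H_{2})$ the operator $-A$ is sectorial and boundedly invertible, hence so is $-Q=\sqrt{-A}=(-A)^{1/2}$, and $(H_{3})$ yields that $-Q$ has bounded imaginary powers with angle $\theta_{A}/2<\pi/2$; the operator $\mathcal{Q}$ on $Y$ defined by $(\mathcal{Q}v)(x)=-Qv(x)$, $D(\mathcal{Q})=L^{p}(0,1;D(Q))$, inherits these properties, in particular bounded imaginary powers with angle $\theta_{A}/2$. Since $\mathcal{D}_{j}$ (acting in $x$) and $\mathcal{Q}$ (acting pointwise in $X$) commute in the resolvent sense, and the angles satisfy $\pi/2+\theta_{A}/2<\pi$, the Dore--Venni theorem applies to each sum $\mathcal{D}_{j}+\mathcal{Q}$: these operators are closed on $D(\mathcal{D}_{j})\cap D(\mathcal{Q})$ and boundedly invertible on $Y$, so in particular $\mathcal{Q}(\mathcal{D}_{j}+\mathcal{Q})^{-1}\in\mathcal{L}(Y)$.

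Next I would identify the convolutions with these resolvents. For $f\in Y$ put $v(x)=\int_{0}^{x}e^{(x-s)Q}f(s)\,ds$. Differentiating gives $v^{\prime}(x)=f(x)+Q\int_{0}^{x}e^{(x-s)Q}f(s)\,ds$, i.e. $v^{\prime}-Qv=f$ with $v(0)=0$; as the homogeneous problem $z^{\prime}=Qz$, $z(0)=0$ has only the trivial solution ($Q$ generating a bounded analytic semigroup), $v$ coincides with $(\mathcal{D}_{0}+\mathcal{Q})^{-1}f$, whence $v\in D(\mathcal{Q})$, that is $x\mapsto Q\int_{0}^{x}e^{(x-s)Q}f(s)\,ds\in L^{p}(0,1;X)$, together with the estimate $\left\Vert Qv\right\Vert_{L^{p}(0,1;X)}\leqslant C\left\Vert f\right\Vert_{L^{p}(0,1;X)}$. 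The same argument applied to $w(x)=\int_{x}^{1}e^{(s-x)Q}f(s)\,ds$, which satisfies $-w^{\prime}-Qw=f$ with $w(1)=0$, i.e. $(\mathcal{D}_{1}+\mathcal{Q})w=f$, gives the second assertion of statement~1.

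For statement~2, I would factor $\int_{0}^{1}e^{(x+s)Q}f(s)\,ds=e^{xQ}\xi$ with $\xi:=\int_{0}^{1}e^{sQ}f(s)\,ds=w(0)\in X$, and show $\xi\in(D(Q),X)_{1/p,p}$. Indeed, by statement~1 one has $Qw\in L^{p}(0,1;X)$, hence, using $w^{\prime}=-f-Qw$, $w\in W^{1,p}(0,1;X)\cap L^{p}(0,1;D(Q))$, and the classical trace theorem for such anisotropic Sobolev spaces (see \cite{P. Grisvard}) yields $w(0)=\xi\in(D(Q),X)_{1/p,p}$. Then Lemma~\ref{Trieb}, statement~2 with $n=1$, gives $x\mapsto Qe^{xQ}\xi\in L^{p}(0,1;X)$, which is precisely statement~2.

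The main obstacle is the careful verification that the Dore--Venni machinery applies: one must check that $\mathcal{D}_{0},\mathcal{D}_{1}$ and $\mathcal{Q}$ are sectorial operators with bounded imaginary powers whose angles add up strictly below $\pi$, that they commute in the resolvent sense, and that at least one of the two operators in each pair is boundedly invertible, so that the sum is itself closed and boundedly invertible with domain $D(\mathcal{D}_{j})\cap D(\mathcal{Q})$; and then the explicit convolution $v$ must be correctly identified with the abstract resolvent $(\mathcal{D}_{0}+\mathcal{Q})^{-1}f$ through the uniqueness argument for the first-order problem. Once these points are settled, both statements follow as above.
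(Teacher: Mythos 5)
Your argument is correct and is essentially the same route the paper takes: the paper disposes of this lemma by invoking the Dore--Venni theorem through the cited works of Favini et al., and your operator-sum construction on $Y=L^{p}(0,1;X)$ with $\mathcal{D}_{0},\mathcal{D}_{1}$ (derivatives with a vanishing trace at $0$, resp. $1$, having BIP of angle $\pi/2$ since $X$ is UMD) and $\mathcal{Q}$ (pointwise action of $-Q$, BIP of angle $\theta_{A}/2$), together with the trace argument and Lemma \ref{Trieb} for statement 2, is precisely the argument behind that citation. The only step to phrase more carefully is the identification $v=(\mathcal{D}_{0}+\mathcal{Q})^{-1}f$: instead of differentiating the convolution $v$ directly (which presupposes the regularity you are proving), take the strict solution $u=(\mathcal{D}_{0}+\mathcal{Q})^{-1}f$ furnished by Dore--Venni and recover the representation $u(x)=\int_{0}^{x}e^{(x-s)Q}f(s)\,ds$ by integrating $\tfrac{d}{ds}\left( e^{(x-s)Q}u(s)\right) =e^{(x-s)Q}f(s)$ over $(0,x)$, which is the rigorous form of the uniqueness remark you make.
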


For statements 1 and 2 which are consequences of the Dore-Venni Theorem, see 
\cite{Favin et al2}, p.~167-168 and also (24), (25) and (26) in \cite{Favini
et al1}.

\begin{lemma}
\label{REG S}Let $\psi ,\chi \in X$ and $1<p<\mathbb{+}\infty $. Then, under 
$(H_{2})$, we have

\begin{enumerate}
\item $x\mapsto Q^{2}S\left( x\right) \psi \in L^{p}(0,1;X)$ $%
\Longleftrightarrow $ $\psi \in \left( D\left( Q^{2}\right) ,X\right) _{%
\frac{1}{2p},p},$

$x\mapsto Q^{2}S\left( 1-x\right) \chi \in L^{p}(0,1;X)$ $%
\Longleftrightarrow $ $\chi \in \left( D\left( Q^{2}\right) ,X\right) _{%
\frac{1}{2p},p}.$

\item $x\mapsto Q^{2}S\left( x\right) \psi +Q^{2}S\left( 1-x\right) \chi \in
L^{p}(0,1;X)\Longleftrightarrow \psi ,\chi \in \left( D\left( Q^{2}\right)
,X\right) _{\frac{1}{2p},p}.$
\end{enumerate}
\end{lemma}

\begin{proof}
~

\begin{enumerate}
\item Since $T=\left( I-e^{2Q}\right) ^{-1}=I+e^{2Q}\left( I-e^{2Q}\right) ^{-1}$,
we have
\begin{equation*}
S\left( x\right) =e^{xQ}+\left( I-e^{2Q}\right) ^{-1}e^{xQ}e^{2Q}-\left(
I+e^{2Q}\left( I-e^{2Q}\right) ^{-1}\right) e^{\left( 1-x\right) Q}e^{Q};
\end{equation*}%
then, by Lemma \ref{Trieb}
\begin{equation*}
\begin{array}{lll}
Q^{2}S\left( \cdot \right) \psi \in L^{p}(0,1;X) & \iff & Q^{2}e^{\cdot
Q}\psi \in L^{p}(0,1;X)\vspace{0.1cm} \\ 
& \iff & \psi \in \left( D\left( Q^{2}\right) ,X\right) _{\frac{1}{2p},p}.%
\end{array}%
\end{equation*}

\item For any $\psi ,\chi \in X,$ we have 
\begin{equation}\label{Q^2S + Q^2S(1-)}
Q^{2}S\left( \cdot \right) \psi +Q^{2}S\left( 1-\cdot \right) \chi \in
L^{p}(0,1;X),
\end{equation}
if and only if $Q^{2}S\left( \cdot \right) \psi \in L^{p}(0,1;X)$ and $Q^{2}S\left( 1-\cdot \right) \chi \in L^{p}(0,1;X)$. 

In fact, assume \eqref{Q^2S + Q^2S(1-)}, then
$$Q^{2}S\left( \cdot \right) \psi +Q^{2}S\left( 1-\cdot \right) \chi \in
L^{p}(1/2,1;X),$$
but $Q^{2}S\left( \cdot \right) \psi \in L^{p}(1/2,1;X)$, therefore $Q^{2}S\left( 1-\cdot \right) \chi \in L^{p}(1/2,1;X)$, which gives 
$$Q^{2}S\left( 1-\cdot \right) \chi \in L^{p}(0,1;X).$$
\end{enumerate}
\end{proof}

\begin{lemma}
\label{mu}Consider $\mu _{0},\mu _{1}$ defined in (\ref{repre suite}). Then%
\begin{equation*}
\left \{ 
\begin{array}{l}
\mu _{0}\in \left( D\left( A\right) ,X\right) _{\frac{1}{2p},p}\iff \Lambda
^{-1}d_{0}\in \left( D\left( A\right) ,X\right) _{\frac{1}{2p},p} \\ 
\mu _{1}\in \left( D\left( A\right) ,X\right) _{\frac{1}{2p},p}\iff u_{1}\in
\left( D\left( A\right) ,X\right) _{\frac{1}{2p},p}.%
\end{array}%
\right.
\end{equation*}
\end{lemma}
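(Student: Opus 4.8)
The plan is to reduce both equivalences to the single nontrivial fact that the ``data terms'' $I(1)$ and $J(0)$, together with every summand of $\mu_0$ other than $\Lambda^{-1}d_0$, already lie in $(D(A),X)_{\frac{1}{2p},p}$; the formulas in \eqref{repre suite} then give the two claims at once. Throughout I use that $Q^2=-A$, hence $D(A)=D(Q^2)$, and the interpolation identities recalled in Subsection~3.2, namely $(D(A),X)_{\frac{1}{2p},p}=(D(Q^2),X)_{\frac{1}{2p},p}=(D(Q),X)_{1+1/p,p}=\{\varphi\in D(Q):Q\varphi\in(D(Q),X)_{1/p,p}\}$, together with the standing hypotheses $(H_1)\sim(H_5)$.

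First I would prove that $I(1),J(0)\in(D(A),X)_{\frac{1}{2p},p}$. By Lemma~\ref{Trieb} with $n=2$, this is equivalent to $x\mapsto Q^2e^{xQ}J(0)\in L^p(0,1;X)$, and similarly for $I(1)$. From \eqref{def I et J sans parametre} one computes $Q^2e^{xQ}J(0)=\tfrac12\,Q\!\int_0^1 e^{(x+s)Q}f(s)\,ds$, which belongs to $L^p(0,1;X)$ by Lemma~\ref{Dore Venni}, statement~2 (this is where $(H_1)\sim(H_3)$ enter). Since $I(1)=\tfrac12 Q^{-1}\!\int_0^1 e^{(1-s)Q}f(s)\,ds$ is exactly the analogue of $J(0)$ for the reflected datum $\tilde f(s):=f(1-s)$, which is again in $L^p(0,1;X)$ (change of variable $\sigma=1-s$), the same estimate yields $x\mapsto Q^2e^{xQ}I(1)\in L^p(0,1;X)$. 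This is the one genuinely non-routine step, resting on the Dore--Venni-type Lemma~\ref{Dore Venni} rather than on plain semigroup bounds.

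Next I would record two elementary facts. (i) For $\psi\in X$, $a>0$ and $k\in\mathbb{N}$ one has $Q^ke^{aQ}\psi\in(D(A),X)_{\frac{1}{2p},p}$: analyticity of $e^{tQ}$ gives $\|Q^{2+k}e^{(x+a)Q}\psi\|\le C\,a^{-(2+k)}\|\psi\|$ for $x\in[0,1]$, so $x\mapsto Q^{2+k}e^{(x+a)Q}\psi\in L^p(0,1;X)$ and Lemma~\ref{Trieb} applies; in particular $e^{2Q}d_0,\,e^{Q}\mu_1\in(D(A),X)_{\frac{1}{2p},p}$. (ii) By the interpolation identities above, every $\varphi\in(D(A),X)_{\frac{1}{2p},p}$ satisfies $\varphi\in D(Q)\subset(D(Q),X)_{1/p,p}$ and $Q\varphi\in(D(Q),X)_{1/p,p}$; hence $e^{2Q}d_0\in(D(Q),X)_{1/p,p}$ and, using (i), $Qe^{Q}\mu_1=Q(e^Q\mu_1)\in(D(Q),X)_{1/p,p}$, while $QJ(0)\in(D(Q),X)_{1/p,p}$ follows from $J(0)\in(D(A),X)_{\frac{1}{2p},p}$.

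Finally I would apply $(H_5)$ in the equivalent form of Remark~\ref{discuss H5}(2), $\Lambda^{-1}\big((D(Q),X)_{1/p,p}\big)\subset(D(Q),X)_{1+1/p,p}=(D(A),X)_{\frac{1}{2p},p}$, to conclude that $\Lambda^{-1}e^{2Q}d_0,\ \Lambda^{-1}Qe^{Q}\mu_1,\ \Lambda^{-1}QJ(0)$ all lie in $(D(A),X)_{\frac{1}{2p},p}$. Writing $\mu_0$ from \eqref{repre suite} as
\begin{equation*}
\mu_0=\Lambda^{-1}d_0+\Big(-\Lambda^{-1}e^{2Q}d_0+2\Lambda^{-1}Qe^{Q}\mu_1+2\Lambda^{-1}QJ(0)-J(0)\Big),
\end{equation*}
the bracket lies in $(D(A),X)_{\frac{1}{2p},p}$, which gives $\mu_0\in(D(A),X)_{\frac{1}{2p},p}\iff\Lambda^{-1}d_0\in(D(A),X)_{\frac{1}{2p},p}$; and $\mu_1=u_1-I(1)$ with $I(1)\in(D(A),X)_{\frac{1}{2p},p}$ gives $\mu_1\in(D(A),X)_{\frac{1}{2p},p}\iff u_1\in(D(A),X)_{\frac{1}{2p},p}$. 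Everything after the first step is routine bookkeeping with interpolation spaces and $(H_5)$.
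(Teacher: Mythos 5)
Your proof is correct and follows essentially the same route as the paper: the same decomposition of $\mu_0$ isolating $\Lambda^{-1}d_0$, the identity $\mu_1=u_1-I(1)$, and the same use of $(H_5)$ (in the equivalent form of Remark~\ref{discuss H5}, statement 2) to absorb $\Lambda^{-1}\left[e^{2Q}d_0+2Qe^{Q}\mu_1+2QJ(0)\right]$ into $(D(A),X)_{\frac{1}{2p},p}$. The only difference is that where the paper simply cites \cite{Hammou}, Proposition 3.5, for $I(1),J(0)\in (D(A),X)_{\frac{1}{2p},p}$ and asserts the membership $e^{2Q}d_0+2Qe^{Q}\mu_1+2QJ(0)\in (D(Q),X)_{1/p,p}$ without detail, you derive these facts directly from Lemma~\ref{Trieb}, Lemma~\ref{Dore Venni} (statement 2) and semigroup smoothing, which is a correct and self-contained substitute.
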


\begin{proof}
From \cite{Hammou}, Proposition 3.5, p. 1676, we have $J\left( 0\right)
,I\left( 1\right) \in \left( D\left( A\right) ,X\right) _{\frac{1}{2p},p}$,
thus: 
$$\mu _{1}\in \left( D\left( A\right) ,X\right) _{\frac{1}{2p},p}\iff
u_{1}\in \left( D\left( A\right) ,X\right) _{\frac{1}{2p},p}.$$
Moreover $\mu _{0}=\Lambda ^{-1}d_{0}-\Lambda ^{-1}\left[
e^{2Q}d_{0}+2Qe^{Q}\mu _{1}+2QJ\left( 0\right) \right] -J\left( 0\right) $,
with 
\begin{equation*}
e^{2Q}d_{0}+2Qe^{Q}\mu _{1}+2QJ\left( 0\right) \in \left( D\left( Q\right)
,X\right) _{1/p,p},
\end{equation*}%
and from $(H_{5})$%
\begin{equation*}
Q\Lambda ^{-1}\left[ e^{2Q}d_{0}+2Qe^{Q}\mu _{1}+2QJ\left( 0\right) \right]
\in \left( D\left( Q\right) ,X\right) _{1/p,p},
\end{equation*}%
which means that 
\begin{equation*}
\Lambda ^{-1}\left[ e^{2Q}d_{0}+2Qe^{Q}\mu _{1}+2QJ\left( 0\right) \right]
\in \left( D\left( Q\right) ,X\right) _{1+1/p,p}=\left( D\left( A\right)
,X\right) _{\frac{1}{2p},p}.
\end{equation*}%
Finally: $\mu _{0}\in \left( D\left( A\right) ,X\right) _{\frac{1}{2p}%
,p}\iff \Lambda ^{-1}d_{0}\in \left( D\left( A\right) ,X\right) _{\frac{1}{2p%
},p}.$
\end{proof}

\subsection{Resolution of Problem (\protect\ref{Problem})}

\begin{proposition}
\label{Prop without lambda}Let $f\in L^{p}(0,1;X)$ with $1<p<\mathbb{+}%
\infty $ and assume that $(H_{1})\sim (H_{5})$ are satisfied. Then the
following assertions are equivalent:

\begin{enumerate}
\item Problem (\ref{Problem}) admits a classical solution $u$.

\item $u_{1},\Lambda ^{-1}d_{0}\in \left( D\left( A\right) ,X\right) _{\frac{%
1}{2p},p}$

Moreover in this case $u$ is unique and it is given by (\ref{repre}).
\end{enumerate}
\end{proposition}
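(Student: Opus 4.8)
The plan is to establish the equivalence $1\Longleftrightarrow 2$ in Proposition~\ref{Prop without lambda} by splitting into the two implications, and then to prove uniqueness.

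For the implication $1\Longrightarrow 2$, suppose $u$ is a classical solution of (\ref{Problem}). By the representation argument of Section~\ref{Sect Repr} (which uses only $(H_2)$), $u$ is given by (\ref{repre})--(\ref{repre suite}), so in particular $u(1)=u_1=\mu_1+I(1)$ and $u(0)=u_0=\mu_0+J(0)$. Since $u$ is a classical solution, $u(0),u(1)\in(D(A),X)_{\frac1{2p},p}$ (this follows from the trace result of the first Lemma of the section, combined with $\left(D(Q^2),X\right)_{\frac1{2p},p}=\left(D(A),X\right)_{\frac1{2p},p}$). By \cite{Hammou}, Proposition 3.5, $J(0),I(1)\in(D(A),X)_{\frac1{2p},p}$, hence $\mu_1,\mu_0\in(D(A),X)_{\frac1{2p},p}$. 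Now Lemma~\ref{mu} converts this into $u_1\in(D(A),X)_{\frac1{2p},p}$ and $\Lambda^{-1}d_0\in(D(A),X)_{\frac1{2p},p}$, which is statement 2.

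For the implication $2\Longrightarrow 1$, assume $u_1,\Lambda^{-1}d_0\in(D(A),X)_{\frac1{2p},p}$ and define $u$ by (\ref{repre})--(\ref{repre suite}); one must check $u$ is a classical solution. By Lemma~\ref{mu} the hypothesis gives $\mu_0,\mu_1\in(D(A),X)_{\frac1{2p},p}=\left(D(Q^2),X\right)_{\frac1{2p},p}$. The terms $I(x),J(x)$ are handled by Lemma~\ref{Dore Venni} together with the fact $I(1),J(0)\in(D(A),X)_{\frac1{2p},p}$: this yields $I,J\in W^{2,p}(0,1;X)\cap L^p(0,1;D(A))$ (differentiating (\ref{def I et J sans parametre}), $Q^2 I(x)=\tfrac12 Q\int_0^x e^{(x-s)Q}f(s)\,ds+\tfrac12 f(x)$-type identities reduce everything to statements 1--2 of Lemma~\ref{Dore Venni}). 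The terms $S(x)\mu_0$ and $S(1-x)\mu_1$ are handled by Lemma~\ref{REG S} (for the $Q^2$-regularity) and Lemma~\ref{Trieb} (for intermediate derivatives, noting $S$ solves the homogeneous equation so $S''=-Q^2 S$). Adding these, $u\in W^{2,p}(0,1;X)\cap L^p(0,1;D(A))$. The boundary conditions $u(1)=u_1$ and $u'(0)-Hu(0)=d_0$ hold by the very construction of $\mu_0,\mu_1$ in Section~\ref{Sect Repr} (this is where one must verify $u(0)\in D(H)$: since $\mu_0+J(0)=\Lambda^{-1}[\cdots]$ and $\Lambda^{-1}$ maps into $D(\Lambda)\subset D(Q)\cap D(H)$, we get $u(0)\in D(H)$). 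Finally a direct computation shows $u''+Au=f$ on $(0,1)$.

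For uniqueness, if $u,v$ are two classical solutions then $w=u-v$ solves the homogeneous problem ($f=0$, $d_0=0$, $u_1=0$). By the representation (\ref{repre}) with these data, $\mu_1=0$, $I=J=0$, and $\mu_0=\Lambda^{-1}[0]=0$, so $w\equiv 0$.

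The main obstacle is the $2\Longrightarrow 1$ direction, specifically assembling the optimal regularity $u\in W^{2,p}(0,1;X)\cap L^p(0,1;D(A))$ from the pieces: one must carefully match the interpolation exponent $\tfrac1{2p}$ appearing in Lemma~\ref{REG S} and Lemma~\ref{mu} with the Grisvard identity $\left(D(Q^2),X\right)_{\frac1{2p},p}=\left(D(Q),X\right)_{1+\frac1p,p}$, and must use hypothesis $(H_5)$ at exactly the right place (inside Lemma~\ref{mu}) to ensure $\mu_0$ lands in the correct space despite $A$ and $H$ not commuting. The verification that $u(0)\in D(H)$ and that the Robin condition holds — rather than being automatic — relies on the mapping properties of $\Lambda^{-1}$ from $(H_4)$, and this is the delicate bookkeeping point since, unlike in \cite{Cheggag 2}, no commutativity between $A$ and $H$ is available.
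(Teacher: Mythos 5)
Your proposal is correct, and the backward implication, the uniqueness argument, the observation that $u(0)=\mu_{0}+J(0)=\Lambda^{-1}[\cdots]\in D(H)$ automatically, and the use of Lemma \ref{mu} (hence of $(H_{5})$) all coincide with the paper's proof. Where you genuinely diverge is in the implication $1\Rightarrow 2$: the paper runs a single chain of equivalences, noting that $u$ is classical iff $Q^{2}u\in L^{p}(0,1;X)$, removing $Q^{2}I,Q^{2}J$ via Lemma \ref{Dore Venni}, and then invoking statement 2 of Lemma \ref{REG S} — the point of that statement being precisely that membership of the \emph{sum} $Q^{2}S(\cdot)\mu_{0}+Q^{2}S(1-\cdot)\mu_{1}$ in $L^{p}$ forces each of $\mu_{0},\mu_{1}$ into $(D(A),X)_{\frac{1}{2p},p}$ — before finishing with Lemma \ref{mu}. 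You instead extract $\mu_{0},\mu_{1}\in (D(A),X)_{\frac{1}{2p},p}$ from the Grisvard trace lemma of the section ($u(0),u(1)\in (D(Q^{2}),X)_{\frac{1}{2p},p}$) together with $J(0),I(1)\in (D(A),X)_{\frac{1}{2p},p}$ from \cite{Hammou}, which bypasses the sum-separation lemma entirely; the paper's route has the advantage of proving both implications simultaneously through one string of equivalences, while yours isolates the forward direction with a more classical trace argument. Two cosmetic slips to fix: $Q^{2}I(x)=\tfrac12 Q\int_{0}^{x}e^{(x-s)Q}f(s)\,ds$ with no extra $\tfrac12 f(x)$ term (that term belongs to $I''$), and since $Q^{2}=-A$ the homogeneous equation gives $S''=Q^{2}S$, not $S''=-Q^{2}S$; neither affects the argument.
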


\begin{proof}
From the previous study we know that if Problem (\ref{Problem}) admits a
classical solution $u$ then $u$ is unique and given by (\ref{repre}).
Moreover $u$ defined by (\ref{repre}) satisfies%
\begin{equation*}
u(0)=\mu _{0}+J\left( 0\right) =\Lambda ^{-1}\left[ \left( I-e^{2Q}\right)
d_{0}+2Qe^{Q}\mu _{1}+2QJ\left( 0\right) \right] \in D\left( H\right) ,
\end{equation*}%
and then $u$ is a classical solution of\ (\ref{Problem}) if and only if $%
Q^{2}u\left( \cdot \right) \in L^{p}(0,1;X)$.

But from Lemma \ref{Dore Venni}, $Q^{2}I\left( \cdot \right) ,Q^{2}J\left(
\cdot \right) \in L^{p}(0,1;X)$, so Lemma \ref{REG S} and Lemma \ref{mu}
imply that%
\begin{equation*}
\begin{array}{lll}
Q^{2}u\left( \cdot \right) \in L^{p}(0,1;X) & \iff & Q^{2}S\left( x\right)
\mu _{0}+Q^{2}S\left( 1-x\right) \mu _{1}\in L^{p}(0,1;X) \vspace{0.1cm}\\ 
& \iff & \mu _{0},\mu _{1}\in \left( D\left( A\right) ,X\right) _{\frac{1}{2p%
},p} \vspace{0.1cm}\\ 
& \iff & u_{1},\Lambda ^{-1}d_{0}\in \left( D\left( A\right) ,X\right) _{%
\frac{1}{2p},p};
\end{array}
\end{equation*}
this proves that statement 1. is equivalent to statement 2.
\end{proof}

\begin{remark}
\label{simplif}In the previous Proposition, if moreover, $Q-H$ is boundedly
invertible, then using (\ref{compar Lambda et Q-H}) we can replace the
condition $\Lambda ^{-1}d_{0}\in \left( D\left( A\right) ,X\right) _{\frac{1%
}{2p},p}$ by the simpler one $\left( Q-H\right) ^{-1}d_{0}\in \left(
D\left( A\right) ,X\right) _{\frac{1}{2p},p}$.
\end{remark}

\section{Dore-Yakubov estimates\label{Dore Yaku}}
This section is devoted to Dore-Yakubov Estimates and applications. The
results are based on those given in \cite{Dore Yakubov} and we have used the
definitions and notations from this paper. We consider here a complex Banach
space $E$, which is not necessarily a UMD space.

\begin{definition}
$W$ is an operator of type $\phi \in (0,\pi )$ with bound $C$, if $W:D\left(
W\right) \subset E\longrightarrow E$ is a closed linear operator such that $%
S_{\phi }\subset \rho \left( -W\right) $ and%
\begin{equation*}
\forall \lambda \in S_{\phi },\text{ }\left\Vert \left( W+\lambda I\right)
^{-1}\right\Vert _{\mathcal{L}(E)}\leqslant \dfrac{C}{1+\left\vert \lambda
\right\vert },
\end{equation*}%
where $S_{\phi }$ is defined by (\ref{EnsembleSpectral}).
\end{definition}

In all this section,\ we fix $\varphi $\ in $(0,\pi )$, $\lambda $ in $%
S_{\varphi }$ and $L:D\left( L\right) \subset E\longrightarrow E$ an
operator of type $\varphi $ with bound $C_{L}$, satisfying $\overline{%
D\left( L\right) }=E$.

We set%
\begin{equation*}
D_{\lambda }:=L+\lambda I\text{ and }\varepsilon \left( \varphi \right)
:=\min \left\{ \varphi ,\pi -\varphi \right\} \in (0,\pi /2).
\end{equation*}%
Note that $\varepsilon \left( \varphi \right) =\varphi $ if $\varphi \in
(0,\pi /2]$, and $\varepsilon \left( \varphi \right) =\pi -\varphi $ if $%
\varphi \in \lbrack \pi /2,\pi )$.

\begin{lemma}
\label{estim racine L}~

\begin{enumerate}
\item Let $\theta \in \left( 0,\varepsilon \left( \varphi \right) \right) $;
then $D_{\lambda }$ is an operator of type $\theta $ with bound $\dis C_{\theta
}:=C_{L}/\cos \left( \dfrac{\varphi +\theta }{2}\right)$.

Moreover for $\nu \in S_{\theta }$ we have : $\left\Vert \left( D_{\lambda
}+\nu I\right) ^{-1}\right\Vert _{\mathcal{L}(E)}\leqslant \dfrac{C_{\theta }%
}{\left\vert \lambda \right\vert +\left\vert \nu \right\vert +1}.$

\item Let $\overline{\theta }\in \mathbb{R}$ and $\nu \in \mathbb{C}$ such that $\theta =\left\vert \overline{\theta }\right\vert \in \left(0,\varepsilon \left( \varphi \right) \right) $ and $\text{Re}\left( \nu e^{-i\overline{\theta }/2}\right) >0$. Then $D_{\lambda }^{1/2}+\nu I$ is
boundedly invertible and $\left\Vert \left( D_{\lambda }^{1/2}+\nu I\right) ^{-1}\right\Vert _{\mathcal{L}(E)}\leqslant \dfrac{C_{\nu ,\overline{\theta }}}{\left\vert \nu \right\vert +\sqrt{\left\vert \lambda \right\vert +1}},$ with $C_{\nu ,\overline{\theta }}:=C_{L}/\left[ \cos \left( \arg \left( \nu
\right) -\frac{\overline{\theta }}{2}\right) \cos \left( \dfrac{\varphi
+\theta }{2}\right) \right] .$

\item Let $\psi \in \left( \dfrac{\pi }{2},\dfrac{\pi }{2}+\dfrac{%
\varepsilon \left( \varphi \right) }{2}\right) $; then $D_{\lambda }^{1/2}$
is a linear operator of type $\psi$ with bound \\
$K_{\psi }:=C_{L}/\cos ^{2}\left(\beta _{\psi }\right)$ where $\beta _{\psi }=\dfrac{\pi }{4}+\dfrac{\psi
-\varepsilon \left( \varphi \right) }{2}\in \left( 0,\pi /2\right) .$

Moreover for $\nu \in S_{\psi }$ we have%
\begin{equation}
\left\Vert \left( D_{\lambda }^{1/2}+\nu I\right) ^{-1}\right\Vert _{%
\mathcal{L}(E)}\leqslant \dfrac{K_{\psi }}{\left\vert \nu \right\vert +\sqrt{%
\left\vert \lambda \right\vert +1}}.  \label{est gen}
\end{equation}
\end{enumerate}
\end{lemma}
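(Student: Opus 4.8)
The plan is to establish the three statements in sequence, each building on the previous one, relying essentially on standard resolvent estimates for sectorial operators together with the elementary trigonometry of rotating sectors. For statement~1, I would start from the hypothesis that $L$ is of type $\varphi$ with bound $C_L$, so that $S_\varphi \subset \rho(-L)$ with the quantitative bound, and observe that $D_\lambda = L + \lambda I$ with $\lambda \in S_\varphi$. Given $\theta \in (0,\varepsilon(\varphi))$ and $\nu \in S_\theta$, one writes $D_\lambda + \nu I = L + (\lambda + \nu) I$. The geometric point is that $\lambda + \nu$ lies in a region where $-(\lambda+\nu) \in \rho(L)$ with a good bound: since $|\arg \lambda| \le \varphi$ and $|\arg \nu| \le \theta < \varepsilon(\varphi) \le \pi - \varphi$, the sum $\lambda + \nu$ stays in $S_{\varphi'}$ for some $\varphi' < \pi$, and moreover $|\lambda + \nu| \ge c(|\lambda| + |\nu|)$ with $c = \cos\!\big(\frac{\varphi+\theta}{2}\big) > 0$ — this last inequality is exactly the standard fact that two vectors whose arguments differ by at most $\varphi + \theta < \pi$ have a sum of length at least $\cos\!\big(\frac{\varphi+\theta}{2}\big)$ times the sum of their lengths. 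Combining this with the resolvent bound for $L$ gives $\|(D_\lambda+\nu I)^{-1}\| \le C_L/(1 + |\lambda+\nu|) \le C_\theta/(|\lambda|+|\nu|+1)$ with $C_\theta = C_L/\cos\!\big(\frac{\varphi+\theta}{2}\big)$, which is precisely the claim; taking $\nu = 0$ shows $D_\lambda$ is of type $\theta$ with bound $C_\theta$.

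For statement~3, I would invoke the construction of the square root of a sectorial operator: since $D_\lambda$ is (by statement~1, applied with $\theta$ close to $\varepsilon(\varphi)$) of type arbitrarily close to $\varepsilon(\varphi)$, its square root $D_\lambda^{1/2}$ is of type arbitrarily close to $\pi/2 + \varepsilon(\varphi)/2$ — more precisely, if $D_\lambda$ is of type $\theta$ then $D_\lambda^{1/2}$ is of type $\pi/2 + \theta/2$. Given $\psi \in \big(\frac{\pi}{2}, \frac{\pi}{2} + \frac{\varepsilon(\varphi)}{2}\big)$, set $\theta = 2\psi - \pi < \varepsilon(\varphi)$ — wait, one wants $\beta_\psi = \frac{\pi}{4} + \frac{\psi - \varepsilon(\varphi)}{2}$, which corresponds to choosing the angle of sectoriality of $D_\lambda$ appropriately and tracking the constant through the Dunford integral defining $(D_\lambda^{1/2}+\nu I)^{-1}$. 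The bound $\|(D_\lambda^{1/2}+\nu I)^{-1}\| \le K_\psi/(|\nu| + \sqrt{|\lambda|+1})$ then follows by the change of variable $z = w^2$ in the resolvent integral, using statement~1 to estimate $(D_\lambda + w^2 I)^{-1}$ along the contour and the fact that $|\lambda| + 1 \le |z|$-type lower bounds transfer to $\sqrt{|\lambda|+1} \lesssim |w|$; here the denominator $1$ rather than $0$ comes from the $1+|\lambda+\nu|$ in statement~1, i.e. from $0 \in \rho(L)$ built into "type $\varphi$". I expect the bookkeeping of the constant $K_\psi = C_L/\cos^2(\beta_\psi)$ and the verification that $\beta_\psi \in (0,\pi/2)$ to be routine but slightly delicate.

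Statement~2 is then essentially a corollary of statement~3 in the regime where $\nu$ is not constrained to a symmetric sector but only to a half-plane rotated by $\overline\theta/2$. The idea is: for $\theta = |\overline\theta| \in (0,\varepsilon(\varphi))$, $D_\lambda^{1/2}$ is of type $\pi/2 + \theta/2$ by statements~1 and~3, so its spectrum sits in $S_{\pi/2+\theta/2}$; the condition $\mathrm{Re}(\nu e^{-i\overline\theta/2}) > 0$ places $-\nu$ outside that sector and in fact at angular distance bounded below from it by a quantity controlled by $\cos(\arg\nu - \overline\theta/2)$, which yields bounded invertibility of $D_\lambda^{1/2} + \nu I$ together with the stated bound $C_{\nu,\overline\theta}/(|\nu| + \sqrt{|\lambda|+1})$ with $C_{\nu,\overline\theta} = C_L/[\cos(\arg\nu - \frac{\overline\theta}{2})\cos(\frac{\varphi+\theta}{2})]$. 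The main obstacle throughout is not conceptual — it is making the trigonometric constants come out exactly in the claimed form, in particular verifying that the angles $\frac{\varphi+\theta}{2}$, $\beta_\psi$, and $\arg\nu - \frac{\overline\theta}{2}$ stay in $(0,\pi/2)$ so that their cosines are positive, and carefully tracking how the "$+1$" and the square-root scaling propagate from the resolvent of $L$ through $D_\lambda$ to $D_\lambda^{1/2}$; for this I would lean on Lemma~2.6 and the surrounding estimates of \cite{Dore Yakubov}, from which these inequalities are adapted.
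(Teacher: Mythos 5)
Your treatment of statement 1 is fine and is exactly the standard argument (the paper simply quotes this part from \cite{Dore Yakubov}, Lemma 2.4). The problem lies in how you handle statements 2 and 3, which are the real content of the lemma. You invert the logical order: you propose to obtain statement 2 as a corollary of statement 3, but statement 3 only covers $\nu$ in a fixed sector $S_{\psi}$ with $\psi<\frac{\pi}{2}+\frac{\varepsilon(\varphi)}{2}$, with a constant $K_{\psi}$ that degenerates as $\psi$ approaches that endpoint, whereas in statement 2 the hypothesis $\mathrm{Re}\left(\nu e^{-i\overline{\theta}/2}\right)>0$ only places $\nu$ in an open half-plane, so $\left\vert\arg\nu\right\vert$ may approach $\frac{\pi}{2}+\frac{\theta}{2}$ and no single admissible $\psi$ works uniformly; letting $\psi$ depend on $\nu$ you lose control of the constant and you do not recover the claimed $C_{\nu,\overline{\theta}}=C_{L}/\left[\cos\left(\arg\nu-\frac{\overline{\theta}}{2}\right)\cos\left(\frac{\varphi+\theta}{2}\right)\right]$. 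The paper (following Dore--Yakubov) argues in the opposite direction: statement 2 is proved first, directly from the explicit representation $\left(D_{\lambda}^{1/2}+\nu I\right)^{-1}=\frac{1}{\pi}\int_{0}^{\infty}\frac{r^{1/2}e^{i\overline{\theta}/2}}{re^{i\overline{\theta}}+\nu^{2}}\left(D_{\lambda}+re^{i\overline{\theta}}I\right)^{-1}e^{i\overline{\theta}}\,dr$, an integral over the ray rotated by $\overline{\theta}$ which is well defined precisely because $\mathrm{Re}\left(\nu e^{-i\overline{\theta}/2}\right)>0$ keeps $\nu^{2}$ off the ray $\{-re^{i\overline{\theta}}\}$; statement 3 is then deduced from statement 2 (as in \cite{Dore Yakubov}) by choosing $\overline{\theta}$ suitably for each $\nu\in S_{\psi}$.

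Moreover, your plan never identifies the mechanism that produces the denominator $\left\vert\nu\right\vert+\sqrt{\left\vert\lambda\right\vert+1}$. In the paper it comes from inserting the statement-1 bound $\left\Vert\left(D_{\lambda}+re^{i\overline{\theta}}I\right)^{-1}\right\Vert\leqslant C_{\theta}/\left(\left\vert\lambda\right\vert+r+1\right)$ into the ray integral, bounding $\left\vert re^{i\overline{\theta}}+\nu^{2}\right\vert\geqslant\cos\left(\arg\nu-\frac{\overline{\theta}}{2}\right)\left(r+\left\vert\nu\right\vert^{2}\right)$, and then evaluating exactly $\int_{0}^{\infty}\frac{r^{1/2}}{(r+a)(r+b)}\,dr=\frac{\pi}{\sqrt{a}+\sqrt{b}}$ with $a=\left\vert\nu\right\vert^{2}$ and $b=\left\vert\lambda\right\vert+1$; this identity is what yields both the additive form of the denominator and the precise constants. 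Your remark that ``$\left\vert\lambda\right\vert+1\leqslant\left\vert z\right\vert$-type lower bounds transfer to $\sqrt{\left\vert\lambda\right\vert+1}\lesssim\left\vert w\right\vert$'' suggests you expect a pointwise estimate along the contour to do this job; that would at best give the correct order of magnitude with a worse constant, and in the 3-implies-2 direction it does not reproduce the stated $C_{\nu,\overline{\theta}}$ at all. So while the sectorial-calculus framework you describe is the right one, the proof as planned is missing its two key devices --- the rotated-ray representation of the resolvent of the square root and the exact integral evaluation --- and the step deducing statement 2 from statement 3 would fail as stated.
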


\begin{proof}
This Lemma is essentially based on Lemmas 2.3 and 2.4 in \cite{Dore Yakubov}%
. The novelty is in some precisions given on the estimates in statement 2.
and 3., which integrate the behaviour with respect to\ the complex
parameters $\lambda $ and $\nu .$

\begin{enumerate}
\item See \cite{Dore Yakubov}, (2.1) in Lemma 2.4, p. 99.

\item The idea is to use the calculus given in \cite{Dore Yakubov}, in Lemma
2.4, at the end of p. 99:%
\begin{equation*}
\left\Vert \left( D_{\lambda }^{1/2}+\nu I\right) ^{-1}\right\Vert _{\mathcal{L}(E)}=\left\Vert \frac{1}{\pi }\int\nolimits_{0}^{\infty }\dfrac{
r^{1/2}e^{i\overline{\theta }/2}}{re^{i\overline{\theta }}+\nu ^{2}}\left(
D_{\lambda }+re^{i\overline{\theta }}I\right) ^{-1}e^{i\overline{\theta }%
}dr\right\Vert _{\mathcal{L}(E)},
\end{equation*}%
but using $\left\Vert \left( D_{\lambda }+re^{i\overline{\theta }}I\right)
^{-1}\right\Vert _{\mathcal{L}(E)}\leqslant \dfrac{C_{\theta }}{\left\vert
\lambda \right\vert +r+1}$, we get that%
\begin{eqnarray*}
\left\Vert \left( D_{\lambda }^{1/2}+\nu I\right) ^{-1}\right\Vert _{\mathcal{L}(E)} &\leqslant &\frac{C_{\theta }}{\pi }\int_{0}^{+\infty }\frac{r^{1/2}}{\left\vert re^{i\overline{\theta }}+\nu
^{2}\right\vert }\dfrac{1}{\left\vert \lambda \right\vert +r+1}dr \\
&\leqslant &\frac{C_{\theta }}{\pi }\int_{0}^{+\infty }\frac{r^{1/2}}{\left\vert \cos \left( \frac{\arg \left( \nu ^{2}\right) -\overline{\theta }}{2}\right) \right\vert \left( r+\left\vert \nu \right\vert^{2}\right) }\frac{1}{\left\vert \lambda \right\vert +r+1}dr \\
&\leqslant &\dfrac{C_{\theta }}{\pi \cos \left( \arg \left( \nu \right) - \frac{\overline{\theta }}{2}\right) }\int_{0}^{+\infty }\frac{r^{1/2}}{\left( r+\left\vert \nu \right\vert ^{2}\right) \left( r+\left\vert
\lambda \right\vert +1\right) }dr \\
&=&\dfrac{C_{L}}{\cos \left( \arg \left( \nu \right) -\frac{\overline{\theta 
}}{2}\right) \cos \left( \left( \varphi +\theta \right) /2\right) }\frac{1}{%
\left\vert \nu \right\vert +\sqrt{\left\vert \lambda \right\vert +1}}.
\end{eqnarray*}
The last equality, follows from
$$
\int \nolimits_{0}^{+\infty }\dfrac{r^{1/2}}{\left( r+a\right) \left(
r+b\right)}~dr = \frac{\pi }{\sqrt{b}+\sqrt{a}}, \quad a,b > 0.
$$

\item Estimate (\ref{est gen}) is deduced from Statement 3. as in \cite{Dore
Yakubov}, Lemma 2.4, p. 100.
\end{enumerate}
\end{proof}
From the previous Lemma, we deduce that
\begin{equation}\label{estim C0}
\forall\nu \geqslant 0, \quad \left\Vert \left( D_{\lambda }+\nu I\right) ^{-1}\right\Vert _{\mathcal{L}(E)}\leqslant \dfrac{C_{0}}{\left\vert \lambda \right\vert +\nu +1},
\end{equation}
and
\begin{equation}\label{est part}
\forall \nu \in S_{\pi /2}, \quad \left\Vert \left( D_{\lambda }^{1/2}+\nu I\right) ^{-1}\right\Vert _{\mathcal{L}(E)}\leqslant \dfrac{K_{L}}{\left\vert \nu \right\vert +\sqrt{\left\vert \lambda \right\vert +1}}, 
\end{equation}
where $C_{0}=C_{L}/\cos \left( \varphi /2\right) $ and $K_{L}:=C_{L}/\cos ^{2}\left( \varphi /2\right)$.

Here, since $D_{\lambda }~$is boundedly invertible, we have also that $%
D_{\lambda }^{1/2}$ is boundedly invertible and then $\rho \left( D_{\lambda
}^{1/2}\right) $ contains a ball centered in $0$. The following Lemma
specifies the size of this ball with respect to $\lambda \in S_{\varphi }$.

\begin{lemma}\label{Lemme boule}
We have

\begin{enumerate}
\item $\left\Vert D_{\lambda }^{-1/2}\right\Vert _{\mathcal{L}(E)}\leqslant 
\dfrac{K_{L}}{\sqrt{\left\vert \lambda \right\vert +1}}.$

\item For $z\in \overline{B\left( 0,\tfrac{\sqrt{\left\vert \lambda
\right\vert +1}}{2K_{L}}\right) }:z\in \rho (D_{\lambda }^{1/2})$ and $\Vert
(D_{\lambda }^{1/2}-zI)^{-1}\Vert _{\mathcal{L}(E)}\leqslant \dfrac{2K_{L}}{%
\sqrt{\left\vert \lambda \right\vert +1}}.$
\end{enumerate}
\end{lemma}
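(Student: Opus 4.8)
The plan is to prove both statements of Lemma~\ref{Lemme boule} directly from the estimates already established, in particular from \eqref{est part} and statement~3 of Lemma~\ref{estim racine L}.

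\medskip

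\emph{Statement 1.} Take $\nu = 0$ in \eqref{est part}. Since $0 \in S_{\pi/2}$ (the sector $S_{\pi/2}$ contains $0$ by definition \eqref{EnsembleSpectral}), we immediately get
$$\left\Vert D_{\lambda}^{-1/2}\right\Vert_{\mathcal{L}(E)} = \left\Vert \left(D_{\lambda}^{1/2} + 0\cdot I\right)^{-1}\right\Vert_{\mathcal{L}(E)} \leqslant \dfrac{K_L}{0 + \sqrt{\left\vert \lambda\right\vert + 1}} = \dfrac{K_L}{\sqrt{\left\vert \lambda\right\vert + 1}}.$$
(One should double-check that \eqref{est part} is genuinely valid at $\nu=0$, or else simply invoke statement~1 of Lemma~\ref{Lemme boule} — no, that is circular; instead use the explicit formula for $D_\lambda^{-1/2}$ via the standard integral representation and the bound $\Vert (D_\lambda + rI)^{-1}\Vert \leqslant C_0/(\vert\lambda\vert + r + 1)$ from \eqref{estim C0}, giving $\Vert D_\lambda^{-1/2}\Vert \leqslant \frac{C_0}{\pi}\int_0^\infty \frac{r^{-1/2}}{\vert\lambda\vert+r+1}\,dr = \frac{C_0}{\sqrt{\vert\lambda\vert+1}}$; since $C_0 \leqslant K_L$ the stated bound follows.)

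\medskip

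\emph{Statement 2.} This is a Neumann-series perturbation argument. Fix $\lambda \in S_\varphi$ and set $R := \tfrac{\sqrt{\vert\lambda\vert+1}}{2K_L}$. For $z \in \overline{B(0,R)}$ write
$$D_\lambda^{1/2} - zI = D_\lambda^{1/2}\left(I - z\,D_\lambda^{-1/2}\right).$$
By statement~1, $\Vert z\,D_\lambda^{-1/2}\Vert_{\mathcal{L}(E)} \leqslant \vert z\vert \cdot \dfrac{K_L}{\sqrt{\vert\lambda\vert+1}} \leqslant R\cdot\dfrac{K_L}{\sqrt{\vert\lambda\vert+1}} = \tfrac12 < 1$, so $I - z\,D_\lambda^{-1/2}$ is boundedly invertible with $\Vert (I - z\,D_\lambda^{-1/2})^{-1}\Vert_{\mathcal{L}(E)} \leqslant \dfrac{1}{1 - 1/2} = 2$. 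Hence $D_\lambda^{1/2} - zI$ is boundedly invertible, i.e.\ $z \in \rho(D_\lambda^{1/2})$, and
$$\left\Vert (D_\lambda^{1/2} - zI)^{-1}\right\Vert_{\mathcal{L}(E)} \leqslant \left\Vert (I - z\,D_\lambda^{-1/2})^{-1}\right\Vert_{\mathcal{L}(E)}\left\Vert D_\lambda^{-1/2}\right\Vert_{\mathcal{L}(E)} \leqslant 2 \cdot \dfrac{K_L}{\sqrt{\vert\lambda\vert+1}},$$
which is the claimed bound.

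\medskip

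There is essentially no obstacle here: the only point requiring a little care is the justification that the resolvent bound at $\nu = 0$ in statement~1 holds with constant $K_L$ (rather than some other constant), which is why I would prefer to derive it from the integral representation of $D_\lambda^{-1/2}$ and \eqref{estim C0} directly rather than naively plug $\nu=0$ into \eqref{est part}; after that, statement~2 is a textbook Neumann-series estimate using the factorization $D_\lambda^{1/2} - zI = D_\lambda^{1/2}(I - zD_\lambda^{-1/2})$ and the smallness $\vert z\vert \Vert D_\lambda^{-1/2}\Vert \leqslant 1/2$ on the ball of radius $R$.
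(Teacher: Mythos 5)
Your proof is correct and follows essentially the same route as the paper: statement~1 is obtained by taking $\nu=0$ in \eqref{est part} (legitimate, since $0\in S_{\pi/2}$ by the definition \eqref{EnsembleSpectral}, so your parenthetical detour through the integral representation, while valid and even giving the slightly better constant $C_0\leqslant K_L$, is not needed), and statement~2 is exactly the paper's Neumann-series argument based on the factorization $D_\lambda^{1/2}-zI=D_\lambda^{1/2}\left(I-zD_\lambda^{-1/2}\right)$.
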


\begin{proof}
For statement 1., it is enough to consider (\ref{est part}) with $\nu =0$.

For statement 2, we consider $z\in \overline{B\left( 0,\tfrac{\sqrt{%
\left\vert \lambda \right\vert +1}}{2K_L}\right) }$, then%
\begin{equation*}
0\leqslant \Vert zD_{\lambda }^{-1/2}\Vert _{\mathcal{L}(E)}=\left\vert
z\right\vert \Vert D_{\lambda }^{-1/2}\Vert _{\mathcal{L}(E)}\leqslant 
\dfrac{\sqrt{\left\vert \lambda \right\vert +1}}{2K_{L}}\dfrac{K_{L}}{\sqrt{%
\left\vert \lambda \right\vert +1}}=1/2<1,
\end{equation*}
so $D_{\lambda }^{1/2}-zI=D_{\lambda }^{1/2}\left( I-zD_{\lambda
}^{-1/2}\right)$ is boundedly invertible with 
$$\Vert (D_{\lambda }^{1/2}-zI)^{-1}\Vert _{\mathcal{L}(E)} \leqslant \left\Vert D_{\lambda }^{-1/2}\right\Vert _{\mathcal{L}(E)}\times \left\Vert \left( I-zD_{\lambda }^{-1/2}\right) ^{-1}\right\Vert _{\mathcal{L}(E)} \leqslant \frac{%
2K_{L}}{\sqrt{\left\vert \lambda \right\vert +1}}.$$
\end{proof}

Now we will compare $D_{\lambda }^{1/2}$ and $D_{0}^{1/2}$. This has been
already done for $\lambda >0$ in \cite{Haase}, Proposition 3.1.7 p. 65. Here 
$\lambda $ is a complex parameter: we furnish a precise estimate for the
bounded operator $T_{\lambda }$ which extends $D_{\lambda
}^{1/2}-D_{0}^{1/2} $.

\begin{lemma}
\label{Compar racine}~

\begin{enumerate}
\item There exists a unique $T_{\lambda }\in \mathcal{L}(E)$ such that 
\begin{equation}
D_{\lambda }^{1/2}=D_{0}^{1/2}+T_{\lambda },  \label{Square diff}
\end{equation}

\item $T_{\lambda }D_{\lambda ^{\prime }}^{-1/2}=D_{\lambda ^{\prime
}}^{-1/2}T_{\lambda }$ for any $\lambda ^{\prime }\in S_{\varphi }.$

\item $\left\Vert T_{\lambda }\right\Vert _{\mathcal{L}(E)}\leqslant
C_{0}C_{L}\sqrt{\left\vert \lambda \right\vert }$.
\end{enumerate}
\end{lemma}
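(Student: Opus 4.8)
The plan is to construct $T_{\lambda}$ explicitly as a Bochner integral of resolvents, following the classical difference-of-square-roots computation (carried out for $\lambda>0$ in \cite{Haase}, Proposition~3.1.7) while keeping track of the complex parameter $\lambda$. I would set
\begin{equation*}
T_{\lambda}:=\frac{\lambda}{\pi}\int_{0}^{+\infty}t^{1/2}\,(D_{\lambda}+tI)^{-1}(D_{0}+tI)^{-1}\,dt,
\end{equation*}
and first verify that this integral converges absolutely in $\mathcal{L}(E)$: near $t=0$ the integrand behaves like $t^{1/2}$ times bounded factors, and near $t=+\infty$ like $t^{-3/2}$, using $\|(D_{\lambda}+tI)^{-1}\|_{\mathcal{L}(E)}\leqslant C_{0}/(|\lambda|+t+1)$ from \eqref{estim C0} and $\|(D_{0}+tI)^{-1}\|_{\mathcal{L}(E)}=\|(L+tI)^{-1}\|_{\mathcal{L}(E)}\leqslant C_{L}/(1+t)$ since $L$ is of type $\varphi$ with bound $C_{L}$. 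Running this estimate through and invoking the integral $\int_{0}^{+\infty}t^{1/2}/((t+a)(t+b))\,dt=\pi/(\sqrt{a}+\sqrt{b})$ with $a=|\lambda|+1$ and $b=1$, one obtains $\|T_{\lambda}\|_{\mathcal{L}(E)}\leqslant |\lambda|\,C_{0}C_{L}/(\sqrt{|\lambda|+1}+1)\leqslant C_{0}C_{L}\sqrt{|\lambda|}$; thus statement~3 falls out of the very computation that shows $T_{\lambda}$ is well defined.

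Next I would identify this $T_{\lambda}$ with the difference of the square roots on the natural core. For $x\in D(L)=D(D_{\lambda})=D(D_{0})$ I would write the Balakrishnan representation $D_{\mu}^{1/2}x=\frac{1}{\pi}\int_{0}^{+\infty}t^{-1/2}D_{\mu}(tI+D_{\mu})^{-1}x\,dt$ for $\mu\in\{0,\lambda\}$, subtract the two, rewrite $D_{\mu}(tI+D_{\mu})^{-1}=I-t(tI+D_{\mu})^{-1}$, and apply the resolvent identity together with the trivial relation $D_{\lambda}-D_{0}=\lambda I$; this yields $D_{\lambda}^{1/2}x-D_{0}^{1/2}x=T_{\lambda}x$ for every $x\in D(L)$. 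To upgrade this from the core $D(L)$ to an identity of operators, I would observe that $D_{\lambda}^{1/2}-T_{\lambda}$ (closed, as $T_{\lambda}\in\mathcal{L}(E)$) and $D_{0}^{1/2}$ are closed operators agreeing on $D(L)$, and that $D(L)$ is a core for each of $D_{\lambda}^{1/2}$ and $D_{0}^{1/2}$ (the domain of a sectorial operator is a core for every fractional power of exponent in $[0,1]$); hence the two operators coincide, giving $D(D_{\lambda}^{1/2})=D(D_{0}^{1/2})$ and \eqref{Square diff}. Uniqueness is then immediate: if $T_{\lambda}'\in\mathcal{L}(E)$ also satisfies \eqref{Square diff}, then $T_{\lambda}-T_{\lambda}'$ vanishes on $D(L)$, which is dense since $\overline{D(L)}=E$.

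For statement~2 I would note that, $\lambda'\in S_{\varphi}$ being such that $D_{\lambda'}$ is boundedly invertible, so is $D_{\lambda'}^{1/2}$, hence $D_{\lambda'}^{-1/2}\in\mathcal{L}(E)$; this operator commutes with every resolvent of $L$, in particular with the two families $(D_{\lambda}+tI)^{-1}=(L+(\lambda+t)I)^{-1}$ and $(D_{0}+tI)^{-1}=(L+tI)^{-1}$, so with the integrand of $T_{\lambda}$, and being bounded it passes through the Bochner integral; this gives $T_{\lambda}D_{\lambda'}^{-1/2}=D_{\lambda'}^{-1/2}T_{\lambda}$.

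The two estimates and the resolvent-identity manipulation are routine; the one genuinely delicate point is the core argument in statement~1 — confirming that the bounded integral operator $T_{\lambda}$ really realizes the difference of the two unbounded square roots, i.e. that $D_{\lambda}^{1/2}$ and $D_{0}^{1/2}$ share the same domain and differ by $T_{\lambda}$ on it, rather than merely agreeing on the core $D(L)$. Everything else is bookkeeping with the Dore--Yakubov-type bounds already established above.
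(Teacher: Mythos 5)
Your proposal is correct, and it uses exactly the paper's operator $T_{\lambda}=\frac{\lambda}{\pi}\int_{0}^{+\infty}\sqrt{t}\,(D_{\lambda}+tI)^{-1}(D_{0}+tI)^{-1}\,dt$, the same bound $\|T_{\lambda}\|_{\mathcal{L}(E)}\leqslant |\lambda|\,C_{0}C_{L}/(\sqrt{|\lambda|+1}+1)\leqslant C_{0}C_{L}\sqrt{|\lambda|}$ via (\ref{estim C0}) and the elementary integral, and the same commutation and uniqueness arguments. The only genuine divergence is in how statement 1 is upgraded from a formal identity to an equality of operators: you prove $D_{\lambda}^{1/2}x-D_{0}^{1/2}x=T_{\lambda}x$ on $D(L)$ via the Balakrishnan representation and the resolvent identity, and then invoke the fact that $D(L)$ is a core for both square roots (legitimate here because $\overline{D(L)}=E$; bounded perturbation by $T_{\lambda}$ preserves the core, so the two closed operators $D_{\lambda}^{1/2}-T_{\lambda}$ and $D_{0}^{1/2}$ coincide with the closure of their common restriction). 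The paper instead computes $D_{\lambda}^{-1/2}-D_{0}^{-1/2}=-\lambda L^{-1}D_{\lambda}^{-1/2}+L^{-1}T_{\lambda}$, which exhibits the difference of the inverse square roots as $L^{-1}$ composed with a bounded operator; this gives $D(D_{\lambda}^{1/2})=D(D_{0}^{1/2})=D(L^{1/2})$ directly and then yields (\ref{Square diff}) by writing $D_{\lambda}^{1/2}\zeta-D_{0}^{1/2}\zeta=(L+\lambda I)D_{\lambda}^{-1/2}\zeta-LD_{0}^{-1/2}\zeta$ for $\zeta$ in the common domain. Your route buys a shorter computation at the price of importing the core property of $D(L)$ for fractional powers (a standard fact, available for instance in the reference \cite{Haase} already used for the case $\lambda>0$); the paper's route is self-contained in that it extracts the domain equality from the same integral manipulation that produces $T_{\lambda}$. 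Both are complete proofs of all three statements.
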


\begin{proof}
First, notice that $L=D_{0}$ and $D\left( L\right) \subset D\left(
D_{\lambda }^{1/2}\right) \cap D\left( D_{0}^{1/2}\right) $. Thus, if $%
T_{\lambda }\in \mathcal{L}(E)$ satisfies (\ref{Square diff}) then $%
T_{\lambda }$ is unique since $\overline{D\left( L\right) }=E$.

We have (see for example \cite{Dore Yakubov}, p. 100) 
\begin{eqnarray*}
D_{\lambda }^{-1/2}-D_{0}^{-1/2} &=&\frac{1}{\pi }\int_{0}^{+\infty }\frac{1}{
\sqrt{t}}\left( D_{\lambda }+tI\right) ^{-1}dt-\frac{1}{\pi }\int_{0}^{+\infty }\frac{1}{\sqrt{t}}\left( D_{0}+tI\right) ^{-1}dt \\
&=&\frac{1}{\pi }\int_{0}^{+\infty }\frac{1}{\sqrt{t}}\left[ \left(
D_{\lambda }+tI\right) ^{-1}-\left( D_{0}+tI\right) ^{-1}\right] dt \\
&=&\frac{-\lambda }{\pi }L^{-1}\int_{0}^{+\infty }\frac{1}{\sqrt{t}}\left(
L+tI-tI\right) \left( D_{\lambda }+tI\right) ^{-1}\left( D_{0}+tI\right)
^{-1}dt \\
&=&\frac{-\lambda }{\pi }L^{-1}\int_{0}^{+\infty }\frac{1}{\sqrt{t}}\left(
D_{\lambda }+tI\right) ^{-1}dt \\
&&+\frac{\lambda }{\pi }L^{-1}\int_{0}^{+\infty }%
\sqrt{t}\left( D_{\lambda }+tI\right) ^{-1}\left( D_{0}+tI\right) ^{-1}dt \\
&=&-\lambda L^{-1}D_{\lambda }^{-1/2}+L^{-1}T_{\lambda }
\end{eqnarray*}%
where $\dis T_{\lambda }:=\frac{\lambda }{\pi }\int_{0}^{+\infty }\sqrt{t}\left(
D_{\lambda }+tI\right) ^{-1}\left( D_{0}+tI\right) ^{-1}dt\in \mathcal{L}(E).
$

This proves that $D\left( D_{\lambda }^{1/2}\right) =D\left(
D_{0}^{1/2}\right) =D\left( L^{1/2}\right) $. We then deduce (\ref{Square
diff}) by writing, for $\zeta \in D\left( L^{1/2}\right) $%
\begin{equation*}
D_{\lambda }^{1/2}\zeta -D_{0}^{1/2}\zeta =\left( L+\lambda I\right)
D_{\lambda }^{-1/2}\zeta -LD_{0}^{-1/2}\zeta =T_{\lambda }\zeta .
\end{equation*}

Statement 2., is an easy consequence of the definition of $T_{\lambda }$.
Statement 3. follows from (\ref{estim C0}), since 
\begin{equation*}
\left\Vert T_{\lambda }\right\Vert _{\mathcal{L}(E)}\leqslant \frac{\left\vert \lambda \right\vert }{\pi }\int_{0}^{+\infty }\sqrt{t}\frac{C_{0}}{\left\vert \lambda \right\vert +t+1}\frac{C_{L}}{t+1}dt = \frac{\left\vert \lambda \right\vert C_{0}C_{L}}{\sqrt{\left\vert \lambda \right\vert +1}+1}.
\end{equation*}
\end{proof}
From Lemma 2.6, p. 103, in \cite{Dore Yakubov}, for $\lambda \in S_{\varphi }$, we have that $G_{\lambda }:=-D_{\lambda }^{1/2}$ generates a semigroup $\left( e^{tG_{\lambda }}\right) _{t\geqslant 0}$ bounded, analytic for $t>0$ and strongly continuous for $t\geqslant 0$. Moreover, it satisfies
\begin{equation*}
\left \{ 
\begin{array}{l}
\exists K_{0}>0,~\exists c_{0}>0,~\forall t \geqslant 1/2,~\forall \lambda \in
S_{\varphi }: \vspace{0.1cm}\\ 
\max \left \{ \parallel e^{tG_{\lambda }}\parallel _{\mathcal{L}\left(
E\right) },\parallel G_{\lambda }e^{tG_{\lambda }}\parallel _{\mathcal{L}%
\left( E\right) }\right \} \leqslant K_{0}e^{-tc_{0}\left \vert \lambda
\right \vert ^{1/2}}.%
\end{array}%
\right.
\end{equation*}

\begin{lemma}\label{estim U V}
Let $-\infty <a<b<+\infty $. For $x\in \lbrack a,b]$, $\lambda \in S_{\varphi }$ and $f\in
L^{p}(a,b;E)$, we set
\begin{equation}\label{def U V lambda f}
U_{\lambda ,f}\left( x\right) =\int_{a}^{x}e^{(x-s)G_{\lambda }}f(s)ds \quad\text{and} \quad V_{\lambda ,f}\left( x\right) =\int_{x}^{b}e^{(s-x)G_{\lambda }}f(s)ds.
\end{equation}%
There exists $M_{L}>0$ such that for any $f\in L^{p}(a,b;E)$ and any $%
\lambda \in S_{\varphi }$ 
\begin{equation*}
\left \Vert U_{\lambda ,f}\right \Vert _{L^{p}(a,b;E)}\leqslant \dfrac{M_{L}%
}{\sqrt{|\lambda |+1}}\left \Vert f\right \Vert _{L^{p}(a,b;E)} \quad\text{and} \quad \left \Vert V_{\lambda ,f}\right \Vert _{L^{p}(a,b;E)}\leqslant \dfrac{M_{L}
}{\sqrt{|\lambda |+1}}\left \Vert f\right \Vert _{L^{p}(a,b;E)}.
\end{equation*}
\end{lemma}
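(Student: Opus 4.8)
The plan is to reduce both bounds to a single estimate on the $L^{1}$-in-time norm of the operator-valued kernel $t\mapsto e^{tG_{\lambda}}$ and then invoke Young's convolution inequality. Indeed, for $x\in[a,b]$ one has $U_{\lambda,f}(x)=\int_{0}^{x-a}e^{tG_{\lambda}}f(x-t)\,dt$ and $V_{\lambda,f}(x)=\int_{0}^{b-x}e^{tG_{\lambda}}f(x+t)\,dt$, so that $\|U_{\lambda,f}(x)\|_{E}$ and $\|V_{\lambda,f}(x)\|_{E}$ are both pointwise bounded by $(g_{\lambda}\ast\|f(\cdot)\|_{E})(x)$, where $g_{\lambda}(t):=\|e^{tG_{\lambda}}\|_{\mathcal{L}(E)}\mathbf{1}_{(0,b-a)}(t)$ and $\|f(\cdot)\|_{E}$ is extended by $0$ outside $[a,b]$. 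Scalar Young's inequality then gives
\[
\left\Vert U_{\lambda,f}\right\Vert_{L^{p}(a,b;E)}+\left\Vert V_{\lambda,f}\right\Vert_{L^{p}(a,b;E)}\leqslant 2\left(\int_{0}^{+\infty}\left\Vert e^{tG_{\lambda}}\right\Vert_{\mathcal{L}(E)}\,dt\right)\left\Vert f\right\Vert_{L^{p}(a,b;E)},
\]
so it suffices to prove that $\int_{0}^{+\infty}\|e^{tG_{\lambda}}\|_{\mathcal{L}(E)}\,dt\leqslant M_{L}/\sqrt{|\lambda|+1}$ for some constant $M_{L}$ depending only on $L$ and $\varphi$.

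The key technical point is a uniform exponential decay of the semigroup valid for \emph{all} $t>0$: there exist $K,c>0$, depending only on $L$ and $\varphi$, such that for every $t>0$ and every $\lambda\in S_{\varphi}$,
\[
\left\Vert e^{tG_{\lambda}}\right\Vert_{\mathcal{L}(E)}\leqslant K\,e^{-c\,t\sqrt{|\lambda|+1}}\left(1+\frac{1}{t\sqrt{|\lambda|+1}}\right).
\]
The decay estimate quoted from \cite{Dore Yakubov} covers only $t\geqslant 1/2$, which is not enough when $b-a$ is small, so I would reprove it here. Writing $e^{tG_{\lambda}}=e^{-tD_{\lambda}^{1/2}}$, I would represent it as the Cauchy integral $\frac{1}{2\pi i}\int_{\Gamma}e^{-tw}(w-D_{\lambda}^{1/2})^{-1}\,dw$ over the boundary $\Gamma$ of the region $\{w:|\arg w|<\beta,\ |w|>\delta_{\lambda}\}$, where $\beta\in(\pi-\psi,\pi/2)$ is fixed (this interval is nonempty since $\psi>\pi/2$), $\psi$ is as in Lemma~\ref{estim racine L}(3), and $\delta_{\lambda}:=\sqrt{|\lambda|+1}/(2K_{L})$. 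This region contains $\sigma(D_{\lambda}^{1/2})$: on the one hand $\sigma(D_{\lambda}^{1/2})\subset S_{\pi-\psi}$, on the other hand $\overline{B(0,\delta_{\lambda})}\subset\rho(D_{\lambda}^{1/2})$ by Lemma~\ref{Lemme boule}(2). On the two rays $w=re^{\pm i\beta}$ ($r\geqslant\delta_{\lambda}$) one has $\operatorname{Re}(w)=r\cos\beta>0$, and the substitution $\nu=-w$ lies in $S_{\psi}$, so \eqref{est gen} yields $\|(w-D_{\lambda}^{1/2})^{-1}\|_{\mathcal{L}(E)}\leqslant K_{\psi}/(r+\sqrt{|\lambda|+1})$; on the arc $|w|=\delta_{\lambda}$ one has $\operatorname{Re}(w)\geqslant\delta_{\lambda}\cos\beta$ and, by Lemma~\ref{Lemme boule}(2), $\|(w-D_{\lambda}^{1/2})^{-1}\|_{\mathcal{L}(E)}\leqslant 2K_{L}/\sqrt{|\lambda|+1}$. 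Estimating the three contributions (using $\int_{\delta_{\lambda}}^{+\infty}e^{-tr\cos\beta}\,dr=e^{-t\delta_{\lambda}\cos\beta}/(t\cos\beta)$ for the rays and the arclength $2\beta\delta_{\lambda}$ for the cap) would produce the displayed bound with $c=\cos\beta/(2K_{L})$.

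Finally, I would combine this with the uniform bound $\|e^{tG_{\lambda}}\|_{\mathcal{L}(E)}\leqslant M_{0}$ valid for all $t\geqslant 0$ and $\lambda\in S_{\varphi}$ (the semigroups $e^{tG_{\lambda}}$ are uniformly bounded, since by \eqref{est gen} the operators $D_{\lambda}^{1/2}$ are sectorial of a fixed angle $<\pi/2$ with a sectoriality constant independent of $\lambda$). Splitting $\int_{0}^{+\infty}\|e^{tG_{\lambda}}\|_{\mathcal{L}(E)}\,dt=\int_{0}^{t^{\ast}}+\int_{t^{\ast}}^{+\infty}$ at $t^{\ast}:=1/\sqrt{|\lambda|+1}$, the first piece is at most $M_{0}t^{\ast}=M_{0}/\sqrt{|\lambda|+1}$, while on $[t^{\ast},+\infty)$ one has $1/(t\sqrt{|\lambda|+1})\leqslant 1$, so the integrand is at most $2K\,e^{-ct\sqrt{|\lambda|+1}}$ and the second piece is at most $2K/(c\sqrt{|\lambda|+1})$. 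Hence $\int_{0}^{+\infty}\|e^{tG_{\lambda}}\|_{\mathcal{L}(E)}\,dt\leqslant M_{L}/\sqrt{|\lambda|+1}$ with $M_{L}:=M_{0}+2K/c$, and the two estimates of the Lemma follow from the Young inequality above. The main obstacle is the contour-integral step: the contour $\Gamma$ must simultaneously stay in the region where \eqref{est gen} and Lemma~\ref{Lemme boule}(2) apply and pass at distance of order $\sqrt{|\lambda|+1}$ from $0$; once the all-$t$ decay estimate is secured, everything else is Young's inequality and elementary one-variable integrals.
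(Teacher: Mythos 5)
Your proposal is correct, and it runs on the same engine as the paper's proof: a Dunford--Riesz contour representation with the contour pushed out to distance $\sqrt{|\lambda|+1}/(2K_{L})$ from the origin (Lemma~\ref{Lemme boule}), the uniform resolvent bound \eqref{est gen} on the rays, and Young's convolution inequality at the end. The difference is organizational. The paper inserts the contour representation directly into $U_{\lambda,f}(x)$, splits the resulting estimate into two scalar convolution kernels (one per piece of the contour), computes their $L^{1}$-norms exactly by Fubini, and treats $V_{\lambda,f}$ via the reflection \eqref{change}; it never needs a pointwise bound on $\Vert e^{tG_{\lambda}}\Vert_{\mathcal{L}(E)}$ for small $t$, because the logarithmic singularity of its kernel at $t=0$ is integrable. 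You instead factor the argument through an intermediate, reusable estimate: the all-$t$ decay $\Vert e^{tG_{\lambda}}\Vert_{\mathcal{L}(E)}\leqslant K e^{-ct\sqrt{|\lambda|+1}}\bigl(1+(t\sqrt{|\lambda|+1})^{-1}\bigr)$, proved by the same resolvent bounds on an equivalent contour, and then a single Young step that handles $U_{\lambda,f}$ and $V_{\lambda,f}$ symmetrically (for $V_{\lambda,f}$ the kernel is reflected, which leaves its $L^{1}$-norm unchanged, a point worth stating). Because your pointwise bound has a non-integrable $1/t$ singularity at $t=0$, you genuinely need the supplementary $\lambda$-uniform bound $\Vert e^{tG_{\lambda}}\Vert_{\mathcal{L}(E)}\leqslant M_{0}$, and your justification via uniform sectoriality from \eqref{est gen} is the right one, since the Dore--Yakubov estimate quoted in Section~\ref{Dore Yaku} only covers $t\geqslant 1/2$. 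What your route buys is a clean quantitative semigroup estimate valid for all $t>0$, strengthening that quoted bound and usable elsewhere; what the paper's route buys is a slightly leaner argument with explicit constants that dispenses with the uniform-boundedness input.
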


\begin{proof}
We fix $\psi \in \left( \dfrac{\pi }{2},\dfrac{\pi }{2}+\dfrac{\varepsilon
\left( \varphi \right) }{2}\right) $ and use notations and estimates of
Lemma \ref{estim racine L}. We first focus on $U_{\lambda ,f}$. Let $x\in \lbrack a,b]$. We apply
the Dunford-Riesz Calculus to define $e^{\cdot G_{\lambda }}$, and obtain 
\begin{eqnarray*}
U_{\lambda ,f}(x) &=&\frac{1}{2i\pi }\int_{a}^{x}\int_{\gamma
}e^{(x-s)z}\left( zI-G_{\lambda }\right) ^{-1}f(s)~dz~ds \\ \ecart
&=&\frac{1}{2i\pi }\int_{a}^{x}\int_{\gamma }e^{(x-s)z}(zI+D_{\lambda
}^{1/2})^{-1}f(s)~dz~ds,
\end{eqnarray*}%
where the path $\gamma $ is the boundary positively oriented of $S_{\psi
}\cup B(0,\varepsilon )$ with $\varepsilon =\dfrac{\sqrt{\left \vert \lambda
\right \vert +1}}{2K_{L}}$.

Then%
\begin{eqnarray*}
U_{\lambda ,f}(x) &=&\frac{1}{2i\pi }\int_{a}^{x}\int_{\varepsilon
}^{+\infty }e^{(x-s)re^{i\psi }}(re^{i\psi }I+D_{\lambda
}^{1/2})^{-1}f(s)e^{i\psi }drds \\ \ecart
&&+\frac{1}{2i\pi }\int_{a}^{x}\int_{2\pi -\psi }^{\psi }e^{(x-s)\varepsilon
e^{i\theta }}(\varepsilon e^{i\theta }I+D_{\lambda
}^{1/2})^{-1}f(s)\varepsilon ie^{i\theta }d\theta ds \\ \ecart
&&-\frac{1}{2i\pi }\int_{a}^{x}\int_{\varepsilon }^{+\infty}e^{(x-s)re^{-i\psi }}(re^{-i\psi }I+D_{\lambda }^{1/2})^{-1}f(s)e^{-i\psi}drds,
\end{eqnarray*}%
hence
\begin{eqnarray*}
\Vert U_{\lambda ,f}(x)\Vert &\leqslant &\dfrac{1}{2\pi }\int_{a}^{x}\int_{\varepsilon }^{+\infty }\left
\Vert e^{(x-s)re^{i\psi }}f(s)\right \Vert \left \Vert (re^{i\psi
}I+D_{\lambda }^{1/2})^{-1}\right \Vert _{\mathcal{L}(E)}drds \\ \ecart
&&+\dfrac{\varepsilon }{2\pi }\int_{a}^{x}\int_{\psi }^{2\pi -\psi }\left
\Vert e^{(x-s)\varepsilon e^{i\theta }}f(s)\right \Vert \left \Vert
(\varepsilon e^{i\theta }I+D_{\lambda }^{1/2})^{-1}\right \Vert _{\mathcal{L}%
(E)}d\theta ds \\ \ecart
&&+\dfrac{1}{2\pi }\int_{a}^{x}\int_{\varepsilon }^{+\infty }\left \Vert
e^{(x-s)re^{-i\psi }}f(s)\right \Vert \left \Vert (re^{-i\psi }I+D_{\lambda
}^{1/2})^{-1}\right \Vert _{\mathcal{L}(E)}drds.
\end{eqnarray*}
We deduce, from Lemma \ref{estim racine L}, statement 4. and Lemma \ref%
{Lemme boule}, statement 3., that 
\begin{eqnarray*}
\Vert U_{\lambda ,f}(x)\Vert &\leqslant &\frac{1}{2\pi }\int_{a}^{x}\int_{%
\varepsilon }^{+\infty }\left \Vert e^{(x-s)re^{i\psi }}f(s)\right \Vert 
\frac{K_{\psi }}{r+\sqrt{|\lambda |+1}}drds \\
&&+\frac{\varepsilon }{2\pi }\int_{a}^{x}\int_{\psi }^{2\pi -\psi }\left
\Vert e^{(x-s)\varepsilon e^{i\theta }}f(s)\right \Vert \frac{2K_{L}}{\sqrt{%
|\lambda |+1}}d\theta ds \\
&&+\frac{1}{2\pi }\int_{a}^{x}\int_{\varepsilon }^{+\infty }\left \Vert
e^{(x-s)re^{-i\psi }}f(s)\right \Vert \frac{K_{\psi }}{r+\sqrt{|\lambda |+1}}%
drds,
\end{eqnarray*}
hence
\begin{eqnarray*}
\Vert U_{\lambda ,f}(x)\Vert &\leqslant &\frac{K_{\psi }}{\pi }\int_{a}^{x}\int_{\varepsilon }^{+\infty
}e^{(x-s)r\cos \left( \psi \right) }\left \Vert f(s)\right \Vert \frac{1}{r+%
\sqrt{|\lambda |+1}}drds \\
&&+\frac{\varepsilon K_{L}}{\pi }\frac{1}{\sqrt{|\lambda |+1}}%
\int_{a}^{x}\int_{\psi }^{2\pi -\psi }e^{(x-s)\varepsilon \cos \left( \theta
\right) }\left \Vert f(s)\right \Vert d\theta ds \\
&\leqslant &\frac{K_{\psi }}{\pi }%
\int_{a}^{x}\left( \int_{\varepsilon }^{+\infty }\frac{e^{(x-s)r\cos \left(
\psi \right) }}{r+\sqrt{|\lambda |+1}}dr\right) \left \Vert f(s)\right \Vert
ds \\
&&+\frac{\varepsilon K_{L}}{\pi }\frac{1}{\sqrt{|\lambda |+1}}\int_{\psi
}^{2\pi -\psi }\int_{a}^{x}e^{(x-s)\varepsilon \cos \left( \psi \right)
}\left \Vert f(s)\right \Vert dsd\theta \\
&\leqslant &\frac{K_{\psi }}{\pi }\int_{a}^{x}\left( \int_{\varepsilon
}^{+\infty }\frac{e^{(x-s)r\cos \left( \psi \right) }}{r+\sqrt{|\lambda |+1}}%
dr\right) \left \Vert f(s)\right \Vert ds \\
&&+\frac{2\varepsilon K_{L}}{\sqrt{|\lambda |+1}}\int_{a}^{x}e^{(x-s)\varepsilon \cos \left( \psi \right) }\left \Vert f(s)\right \Vert ds.
\end{eqnarray*}
So, setting%
\begin{equation*}
\left \{ 
\begin{array}{l}
\dis U_{\lambda ,f}^{1}(x)=\dfrac{K_{\psi }}{\pi }\int_{a}^{x}\left(\int_{\varepsilon }^{+\infty }\dfrac{e^{(x-s)r\cos \left( \psi \right) }}{r+\sqrt{|\lambda |+1}}dr\right) \left \Vert f(s)\right \Vert ds \\ \ecart
\dis U_{\lambda ,f}^{2}(x)=\dfrac{2\varepsilon K_{L}}{\sqrt{|\lambda |+1}}\int_{a}^{x}e^{(x-s)\varepsilon \cos \left( \psi \right) }\left \Vert
f(s)\right \Vert ds,%
\end{array}%
\right.
\end{equation*}%
we have 
\begin{equation}
\left \Vert U_{\lambda ,f}\right \Vert _{L^{p}(a,b;E)}\leqslant \left \Vert
U_{\lambda ,f}^{1}\right \Vert _{L^{p}(a,b)}+\left \Vert U_{\lambda
,f}^{2}\right \Vert _{L^{p}(a,b)}.  \label{decomp}
\end{equation}

\begin{description}
\item[Estimate of $\left \Vert U_{\protect\lambda ,f}^{1}\right \Vert
_{L^{p}(a,b)}.$] Define $g\in L^{1}\left( \mathbb{R}\right) ,F\in L^{p}\left( \mathbb{R}\right) $ by%
\begin{equation*}
g(t):=\left \{ 
\begin{array}{ll}
\dis \int_{\varepsilon }^{+\infty }\dfrac{e^{tr\cos \left( \psi \right) }}{r+%
\sqrt{|\lambda |+1}}dr & \text{if }t>0 \vspace{0.1cm}\\ 
0 & \text{elsewhere,}%
\end{array}%
\right. ~~ \text{and} ~~ F(t):=\left \{ 
\begin{array}{ll}
\Vert f(t)\Vert & \text{if }t\in (a,b)\vspace{0.1cm} \\ 
0 & \text{elsewhere,}%
\end{array}%
\right.
\end{equation*}%
and thus
\begin{eqnarray*}
\dis U_{\lambda ,f}^{1}(x) &=&\dis \dfrac{K_{\psi }}{\pi }\int_{a}^{x}g(x-s)\left \Vert f(s)\right \Vert ds+\dfrac{K_{\psi }}{\pi }\int_{x}^{b}g(x-s)\left \Vert f(s)\right \Vert ds \\
&=&\dfrac{K_{\psi }}{\pi }\int_{-\infty }^{+\infty }g(x-s)F(s)~ds = \dfrac{K_{\psi }}{\pi }\left( g\ast F\right) \left( x\right) .
\end{eqnarray*}%
Then, from Young's inequality, we obtain%
$$
\left \Vert U_{\lambda ,f}^{1}\right \Vert _{L^{p}(a,b)} \leqslant \frac{K_{\psi }}{\pi }\left \Vert g\ast F\right \Vert _{L^{p}\left( \mathbb{R}\right) } \leqslant \frac{K_{\psi }}{\pi }\left \Vert g\right \Vert _{L^{1}\left( \mathbb{R}\right) }\left \Vert F\right \Vert _{L^{p}\left( \mathbb{R}\right) }.$$
Setting $\ell =\sqrt{|\lambda |+1}$ and noting that $\varepsilon /\ell =1/2K$, we have
\begin{eqnarray*}
\left \Vert g\right \Vert _{L^{1}\left( \mathbb{R}\right) } &=&\int_{0}^{+\infty }\int_{\varepsilon }^{+\infty }\dfrac{e^{tr\cos \left( \psi \right) }}{r+\ell }drdt\\ \ecart
&=&\int_{0}^{+\infty }\left( \int_{\varepsilon /\ell }^{+\infty }\dfrac{e^{t\rho \ell \cos \left( \psi \right) }}{\rho +1}d\rho \right) dt \\ \ecart
&=&\frac{1}{\ell \cos \left( \psi \right) }\int_{1/2K_{L}}^{+\infty }\frac{d\rho }{\rho \left( \rho +1\right) } \\ \ecart
&=&\frac{\ln \left( 2K_{L}+1\right) /\cos \left( \psi \right) }{\sqrt{|\lambda |+1}},
\end{eqnarray*}
and finally
\begin{equation*}
\left \Vert U_{\lambda ,f}^{1}\right \Vert _{L^{p}(a,b)}\leqslant \frac{%
K_{\psi }\ln \left( 2K_{L}+1\right) /\pi \cos \left( \psi \right) }{\sqrt{%
|\lambda |+1}}\left \Vert f\right \Vert _{L^{p}(a,b;E)}.
\end{equation*}

\item[Estimate of $\left \Vert U_{\protect\lambda ,f}^{2}\right \Vert
_{L^{p}(a,b)}.$] Define $F\in L^{p}\left( \mathbb{R}\right) $ as above and $h \in L^{1}\left( \mathbb{R}\right)$ as follows
\begin{equation*}
h(t):=\left \{ 
\begin{array}{ll}
e^{t\varepsilon \cos \left( \psi \right) } & \text{if }t>0 \\ 
0 & \text{elsewhere};
\end{array}\right.
\end{equation*}%
then, as previously
$$U_{\lambda ,f}^{2}(x) = \frac{2\varepsilon K_{L}}{\sqrt{|\lambda |+1}}\left( h\ast F\right)\left( x\right);$$
therefore from Young's inequality, we get
$$\left \Vert U_{\lambda ,f}^{2}\right \Vert _{L^{p}(a,b)} \leqslant \frac{2K_{L}/\left \vert \cos \left( \psi \right) \right \vert }{\sqrt{
|\lambda |+1}}\left \Vert f\right \Vert _{L^{p}(a,b;E)}.$$
\end{description}

From (\ref{decomp}) and the two previous estimates, we obtain the expected result on $U_\lambda$.

Setting $\widetilde{f}(\cdot ):=f(\cdot -a-b)$, we note that
\begin{equation}\label{change}
V_{\lambda ,f}\left( x\right) =U_{\lambda ,\widetilde{f}}\left( b+a-x\right); 
\end{equation}
then, there exists $M_L > 0$ such that
\begin{equation*}
\left \Vert V_{\lambda ,f}\right \Vert _{L^{p}(a,b;E)}\leqslant \dfrac{M_{L}%
}{\sqrt{|\lambda |+1}}\left \Vert \widetilde{f}\right \Vert _{L^{p}(a,b;E)}=%
\dfrac{M_{L}}{\sqrt{|\lambda |+1}}\left \Vert f\right \Vert _{L^{p}(a,b;E)}.
\end{equation*}
\end{proof}

\begin{definition}
\label{Def Lp Reg}We say that a closed linear operator $\mathcal{A}$ on $E$,
has the $L^{p}$ regularity property on $[a,b]$, if the Cauchy problem%
\begin{equation*}
\left \{ 
\begin{array}{l}
u^{\prime }(t)=\mathcal{A}u(t)+f(t),\quad t\in \left( a,b\right) \vspace{0.1cm}\\ 
u(a)=0,%
\end{array}%
\right.
\end{equation*}%
admits, for any $f\in L^{p}(a,b;E)$, a unique solution $u_{f}\in W^{1,p}\left( a,b;E\right) \cap L^{p}\left( a,b;D(\mathcal{A}%
)\right)$. 

In this case, there exists $K>0$ such that for any $f\in L^{p}\left(
a,b;E\right) $%
\begin{equation*}
\left \Vert u_{f}\right \Vert _{L^{p}(a,b;E)}+\left \Vert u_{f}^{\prime
}\right \Vert _{L^{p}(a,b;E)}+\left \Vert \mathcal{A}u_{f}\right \Vert
_{L^{p}(a,b;E)}\leqslant K\left \Vert f\right \Vert _{L^{p}(a,b;E)}.
\end{equation*}
\end{definition}

For details on the $L^{p}$ regularity property we refer to \cite{Dore1} and \cite{Dore2}.

\begin{lemma}
\label{Reg max}Assume that $G=-L^{1/2}$ has the $L^{p}$ regularity property
on $[a,b]$, and consider $U_{\lambda ,f},V_{\lambda ,f}$ defined in (\ref%
{def U V lambda f}). Let $\lambda \in S_{\varphi }$, then:

\begin{enumerate}
\item The linear operator $G_{\lambda }=-\left( -L+\lambda I\right) ^{1/2}$
has the $L^{p}$ regularity property on $[a,b].$

\item For any $f\in L^{p}(a,b;E)$, $U_{\lambda ,f},V_{\lambda ,f}\in
W^{1,p}\left( a,b;E\right) \cap L^{p}\left( a,b;D(G)\right) $, $U_{\lambda
,f}$ is the unique solution to 
\begin{equation}
\left \{ 
\begin{array}{l}
v^{\prime }(t)=G_{\lambda }v(t)+f(t),\text{ \ }t\in \left( a,b\right) \vspace{0.1cm}\\ 
v(a)=0,%
\end{array}%
\right.  \label{Cauchy Lambda}
\end{equation}%
and $V_{\lambda ,f}$~is the unique solution to
\begin{equation*}
\left \{ 
\begin{array}{l}
v^{\prime }(t)=-G_{\lambda }v(t)+f(t),\text{ \ }t\in \left( a,b\right) \vspace{0.1cm}\\ 
v(b)=0.%
\end{array}%
\right.
\end{equation*}

\item There exists $\widetilde{M}_{L}>0$ (which does not depend on $\lambda $) such that for any $f\in L^{p}(a,b;E)$ we have
\begin{equation*}
\left \{ \begin{array}{l}
\sqrt{|\lambda |+1}\left \Vert U_{\lambda ,f}\right \Vert
_{L^{p}(a,b;E)}+\left \Vert U_{\lambda ,f}^{\prime }\right \Vert
_{L^{p}(a,b;E)}+\left \Vert G_{\lambda }U_{\lambda ,f}\right \Vert
_{L^{p}(a,b;E)} \leqslant \widetilde{M}_{L}\left \Vert f\right \Vert _{L^{p}(a,b;E)} \\ 
\text{~} \vspace{0.1cm}\\ 
\sqrt{|\lambda |+1}\left \Vert V_{\lambda ,f}\right \Vert
_{L^{p}(a,b;E)}+\left \Vert V_{\lambda ,f}^{\prime }\right \Vert
_{L^{p}(a,b;E)}+\left \Vert G_{\lambda }V_{\lambda ,f}\right \Vert
_{L^{p}(a,b;E)} \leqslant \widetilde{M}_{L}\left \Vert f\right \Vert _{L^{p}(a,b;E)}.%
\end{array}
\right.
\end{equation*}
\end{enumerate}
\end{lemma}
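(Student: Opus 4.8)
The plan is to reduce everything to the hypothesis that $G=-L^{1/2}$ has the $L^p$ regularity property on $[a,b]$, using the comparison result of Lemma \ref{Compar racine}. First I would establish statement 1. Recall from Lemma \ref{Compar racine} that $D_\lambda^{1/2} = D_0^{1/2} + T_\lambda$ with $T_\lambda\in\mathcal L(E)$ and $D_0 = L$, hence $G_\lambda = G - T_\lambda$, a bounded perturbation of $G$. Since the $L^p$ maximal regularity property is stable under bounded additive perturbations of the generator (the perturbed Cauchy problem $v' = G_\lambda v + f$, $v(a)=0$, is solved by a fixed-point/Neumann-series argument on $L^p(a,b;E)$: writing $v' = Gv + T_\lambda v + f$ and noting $w\mapsto T_\lambda w$ is bounded on $L^p(a,b;E)$, one inverts $I - (\text{convolution with }e^{\cdot G})\circ T_\lambda$ on a small subinterval and iterates over $[a,b]$), $G_\lambda$ inherits the $L^p$ regularity property on $[a,b]$.

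Next, statement 2: from Lemma \ref{Dore Yakubov}'s discussion, $G_\lambda$ generates a bounded analytic strongly continuous semigroup, so the variation-of-constants formula gives that $U_{\lambda,f}(x)=\int_a^x e^{(x-s)G_\lambda}f(s)\,ds$ is the mild solution of \eqref{Cauchy Lambda}; by statement 1 this mild solution is in fact the strong solution in $W^{1,p}(a,b;E)\cap L^p(a,b;D(G))$ (noting $D(G_\lambda)=D(L^{1/2})=D(G)$ by Lemma \ref{Compar racine}), and uniqueness is standard. For $V_{\lambda,f}$ I would use the substitution \eqref{change}, $V_{\lambda,f}(x)=U_{\lambda,\widetilde f}(b+a-x)$ with $\widetilde f(\cdot)=f(\cdot-a-b)$, which turns the terminal-value problem into an initial-value problem of the same type, so the conclusion transfers verbatim.

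Finally, statement 3 is where the main work lies, since the point is uniformity in $\lambda$. For the terms $\|U_{\lambda,f}'\|$ and $\|G_\lambda U_{\lambda,f}\|$ I would first note $U_{\lambda,f}'= G_\lambda U_{\lambda,f}+f$, so it suffices to bound $\|G_\lambda U_{\lambda,f}\|_{L^p(a,b;E)}$. Applying the perturbation identity $G_\lambda = G - T_\lambda$ we get $G_\lambda U_{\lambda,f} = G U_{\lambda,f} - T_\lambda U_{\lambda,f}$; the second term is controlled by $\|T_\lambda\|_{\mathcal L(E)}\|U_{\lambda,f}\|_{L^p}\leqslant C_0C_L\sqrt{|\lambda|}\cdot\frac{M_L}{\sqrt{|\lambda|+1}}\|f\|_{L^p}\leqslant C\|f\|_{L^p}$ by Lemma \ref{Compar racine}(3) and Lemma \ref{estim U V}, which is the crucial cancellation making the estimate $\lambda$-uniform. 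The first term $GU_{\lambda,f}$ requires care: one cannot directly invoke the $L^p$ regularity constant $K$ for $G$ because $U_{\lambda,f}$ solves the $G_\lambda$-problem, not the $G$-problem; instead I would write $U_{\lambda,f}$ as the solution of $w' = Gw + (T_\lambda U_{\lambda,f}+f)$, $w(a)=0$, so that $\|GU_{\lambda,f}\|_{L^p}\leqslant K\|T_\lambda U_{\lambda,f}+f\|_{L^p}\leqslant K(C+1)\|f\|_{L^p}$ by the bound just obtained. The factor $\sqrt{|\lambda|+1}\|U_{\lambda,f}\|_{L^p}\leqslant M_L\|f\|_{L^p}$ is exactly Lemma \ref{estim U V}. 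Combining the three pieces with a single constant $\widetilde M_L$ depending only on $K$, $C_0$, $C_L$, $M_L$ gives the first inequality; the second follows from the first via \eqref{change} as before. The main obstacle is precisely this last point — keeping the $G$-regularity constant $K$ independent of $\lambda$ while the operator actually being inverted is $G_\lambda$ — and the resolution is the bounded-perturbation rewriting together with the $\sqrt{|\lambda|}$-versus-$1/\sqrt{|\lambda|+1}$ cancellation for $T_\lambda U_{\lambda,f}$.
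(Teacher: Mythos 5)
Your proposal is correct in substance, and for the quantitative heart of the lemma (statement 3) it is essentially the paper's own argument: both rewrite the $G_{\lambda}$-problem as a $G$-problem with right-hand side $h_{\lambda}=-T_{\lambda}U_{\lambda ,f}+f$ (note your sign slip, $+T_{\lambda}U_{\lambda ,f}$ instead of $-T_{\lambda}U_{\lambda ,f}$, which is harmless for the norm bound), invoke the $\lambda$-independent regularity constant $K$ of $G$, and exploit the cancellation $\left\Vert T_{\lambda}\right\Vert _{\mathcal{L}(E)}\left\Vert U_{\lambda ,f}\right\Vert _{L^{p}}\leqslant C_{0}C_{L}\sqrt{\left\vert \lambda \right\vert }\,\dfrac{M_{L}}{\sqrt{\left\vert \lambda \right\vert +1}}\left\Vert f\right\Vert _{L^{p}}$ from Lemma \ref{Compar racine} and Lemma \ref{estim U V}, then transfer to $V_{\lambda ,f}$ via \eqref{change}. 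Where you genuinely diverge is statement 1: the paper does not use a bounded-perturbation fixed-point argument but an explicit conjugation, setting $g=e^{(\cdot -a)T_{\lambda}}f$ and checking that $v=e^{-(\cdot -a)T_{\lambda}}U_{0,g}$ solves \eqref{Cauchy Lambda}, which crucially uses the commutation $T_{\lambda}G=GT_{\lambda}$ on $D(G)$ (Lemma \ref{Compar racine}, statement 2) and simultaneously yields the explicit identity $U_{\lambda ,f}=e^{-(\cdot -a)T_{\lambda}}U_{0,g}$ used in statement 2. Your route is more general (it needs only $T_{\lambda}\in \mathcal{L}(E)$, not commutation) but more abstract; to make it airtight you should either run the Neumann series for $w\mapsto S(T_{\lambda}w)$ on the whole interval, where the Volterra structure gives spectral radius zero, or, if you iterate over small subintervals, say how you handle the nonzero initial value at the interior nodes, and then recover the $W^{1,p}\cap L^{p}(D(G))$ regularity by feeding the fixed point back into the $G$-problem as you indicate (using $D(G_{\lambda})=D(G)$). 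Also, the semigroup facts you attribute to ``Lemma \ref{Dore Yakubov}'' are in the unnumbered paragraph following Lemma \ref{Compar racine}; with these small repairs your proof stands.
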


\begin{proof}
Let $\lambda \in S_{\varphi }$. We consider $T_{\lambda }$, defined in Lemma %
\ref{Compar racine}, statement 1. and due to (\ref{Square diff}), we have $%
G_{\lambda }=G-T_{\lambda }$.

\begin{enumerate}
\item Let $f\in L^{p}\left( a,b;E\right) $. Here, we want to show that (\ref{Cauchy Lambda}) admits a unique solution\ in $W^{1,p}\left( a,b;E\right)
\cap L^{p}\left( a,b;D(G)\right)$.

\begin{itemize}
\item First, we set $g(.) = e^{\left( .-a\right) T_{\lambda }}f(.)\in L^{p}\left( a,b;E\right)$.

\item Then we consider $U_{0,g}$ defined by (\ref{def U V lambda f}) which
is the solution to
\begin{equation}\label{Cauchy G}
\left\{ \begin{array}{l}
u^{\prime }(t)=Gu(t)+g(t),\quad t\in (a,b) \\ 
u(a)=0;
\end{array}\right.   
\end{equation}
but $G$ has the $L^{p}$ regularity property on $[a,b]$, so 
$$U_{0,g} \in W^{1,p}\left( a,b;E\right) \cap L^{p}\left( a,b;D(G)\right).$$

\item Since $T_{\lambda }\in \mathcal{L}(E)$ and $U_{0,g}\in W^{1,p}\left(
a,b;E\right) \cap L^{p}\left( a,b;D(G)\right) $ we get that%
\begin{equation}
v:=e^{-\left( \cdot -a\right) T_{\lambda }}U_{0,g},  \label{def v lambda}
\end{equation}%
is also in $W^{1,p}\left( a,b;E\right) \cap L^{p}\left( a,b;D(G)\right) $
with%
\begin{equation*}
v^{\prime }=-T_{\lambda }e^{-\left( \cdot -a\right) T_{\lambda
}}U_{0,g}+e^{-\left( \cdot -a\right) T_{\lambda }}U_{0,g}^{\prime }.
\end{equation*}%
So using (\ref{Cauchy G}) and the fact that $T_{\lambda }G=GT_{\lambda }$ on 
$D\left( G\right) $ (see Lemma \ref{Compar racine}, statement 2.) we deduce
that%
\begin{eqnarray*}
v^{\prime } &=&-T_{\lambda }e^{-\left( \cdot -a\right) T_{\lambda
}}U_{0,g}+e^{-\left( \cdot -a\right) T_{\lambda }}\left( GU_{0,g}+g\right) 
\\
&=&\left( G-T_{\lambda }\right) e^{-\left( \cdot -a\right) T_{\lambda
}}U_{0,g}+e^{-\left( \cdot -a\right) T_{\lambda }}g.
\end{eqnarray*}%
Finally $v$ satisfies%
\begin{equation*}
\left\{ 
\begin{array}{l}
v^{\prime }(t)=\left( G-T_{\lambda }\right) v(t)+f(t),\text{ \ }t\in \left(
a,b\right)  \\ 
v(a)=0.%
\end{array}%
\right. 
\end{equation*}

\item From Lemma \ref{estim racine L}, statement 5, we have $G_{\lambda
}=G-T_{\lambda }$ so $v=e^{-\left( \cdot -a\right) T_{\lambda }}U_{0,g}$ is
a solution of (\ref{Cauchy Lambda}) with the expected regularity. Moreover
if $w$ is another solution of (\ref{Cauchy Lambda}) then $e^{-\left( \cdot
-a\right) T_{\lambda }}w$ satisfies (\ref{Cauchy G}), so $e^{-\left( \cdot
-a\right) T_{\lambda }}w=U_{0,g}$ and $w=v$; this proves the uniqueness of
the solution of (\ref{Cauchy Lambda}).
\end{itemize}

\item From \eqref{def U V lambda f} we have that $U_{\lambda ,f}$ is a
formal solution of \eqref{Cauchy Lambda}; then $\dis U_{\lambda ,f}=e^{-\left( \cdot -a\right) T_{\lambda }}U_{0,g}$ and has the expected regularity. We use (\ref{change}) to study $V_{\lambda
,f}.$

\item Since $G$ has the $L^{p}$ regularity property on $[a,b]$, there exists 
$K > 0$ such for any $h\in L^{p}(a,b;E)$ 
\begin{equation*}
\left \Vert U_{0,h}^{\prime }\right \Vert _{L^{p}(a,b;E)}+\left \Vert
GU_{0,h}\right \Vert _{L^{p}(a,b;E)}\leqslant K\left \Vert h\right \Vert
_{L^{p}(a,b;E)}.
\end{equation*}

Now let $\lambda \in S_{\varphi }$. $U_{\lambda ,f}$ satisfies (\ref{Cauchy
Lambda}) so%
\begin{equation*}
\left \{ 
\begin{array}{l}
U_{\lambda ,f}^{\prime }(t)=\left( G-T_{\lambda }\right) U_{\lambda
,f}(t)+f(t),\text{ \ }t\in \left( a,b\right) \vspace{0.1cm}\\ 
U_{\lambda ,f}(a)=0;
\end{array}\right.
\end{equation*}%
thus setting $h_{\lambda }=-T_{\lambda }U_{\lambda ,f}+f$%
\begin{equation*}
\left \{ 
\begin{array}{l}
U_{\lambda ,f}^{\prime }(t)=GU_{\lambda ,f}(t)+h_{\lambda }\left( t\right) ,%
\text{ \ }t\in \left( a,b\right) \vspace{0.1cm}\\ 
U_{\lambda ,f}(a)=0,%
\end{array}%
\right.
\end{equation*}%
then $U_{\lambda ,f}=U_{0,h_{\lambda }}$ and
\begin{eqnarray*}
\left \Vert U_{\lambda ,f}^{\prime }\right \Vert _{L^{p}(a,b;E)}+\left
\Vert G_{\lambda }U_{\lambda ,f}\right \Vert _{L^{p}(a,b;E)} &=&\left \Vert U_{0,h_{\lambda }}^{\prime }\right \Vert_{L^{p}(a,b;E)}+\left \Vert GU_{0,h_{\lambda }}\right \Vert _{L^{p}(a,b;E)} \\ \ecart
&\leqslant &\left \Vert T_{\lambda }\right \Vert _{\mathcal{L}(E)}\left
\Vert U_{\lambda ,f}\right \Vert _{L^{p}(a,b;E)}+\left \Vert f\right \Vert
_{L^{p}(a,b;E)} \\ \ecart
&\leqslant & C_{0}C_{L}\sqrt{\left \vert \lambda \right \vert }\dfrac{M_{L}}{\sqrt{|\lambda |+1}}\left \Vert f\right \Vert _{L^{p}(a,b;E)} +\left \Vert f\right \Vert _{L^{p}(a,b;E)} \\ \ecart
&\leqslant &\widetilde{M}_{L}\left \Vert f\right \Vert _{L^{p}(a,b;E)}.
\end{eqnarray*}%
For the estimate of $T_\lambda$, we have used Lemma \ref{Compar racine}, statement 3. 

Moreover, using again (\ref{change}) to study $V_{\lambda ,f}$, we obtain the expected result.
\end{enumerate}
\end{proof}

\section{Spectral problem (\protect\ref{NewEquation})-(\protect\ref{RobinSpectralConditions}): first case\label{Section 1er Cas}}

\subsection{Preliminary estimates\label{spect 1}}

In this subsection we suppose that $X,A,H$ satisfy (\ref{NewH1})$\sim $(\ref{MoinsABip}). Note that the results of Section~\ref{Dore Yaku}, can be
applied to our operator $-A$, since due to (\ref{NewH1}), (\ref{NewH2}), $-A$
is densely defined\ and from (\ref{NewH2}) we have that $-A$ is an operator
of type $\varphi _{0}$ with bound $C_{A}$. For $\lambda \in S_{\varphi _{0}}$%
, $-A+\lambda I$ is an operator of type $\theta $ (for any $\theta \in
\left( 0,\varepsilon \left( \varphi _{0}\right) \right) $; in particular if
we set $Q_{\lambda }=-(-A+\lambda I)^{1/2}$, then from Lemma~\ref{estim racine L}, statement 2., $Q_{\lambda }$ generates a semigroup $\left( e^{-tQ_{\lambda }}\right) _{t\geqslant 0} $which is bounded, analytic for $t>0$ and strongly continuous for $t\geqslant 0$. Moreover, there exists $K>0$, such that
\begin{equation}
\forall \lambda \in S_{\varphi _{0}},\quad \parallel Q_{\lambda
}^{-1}\parallel _{\mathcal{L}\left( X\right) }\leqslant \dfrac{K}{\left(
1+\left \vert \lambda \right \vert \right) ^{1/2}};
\label{EstimationQmoins1}
\end{equation}%
furthermore, from Lemma \ref{Lemme boule}, statement 3., we have $\overline{B\left( 0,1/2K\right) }\subset \rho \left( Q_{\lambda }\right) $, so there exists $\delta >0$, which does not depend on $\lambda $ such that $%
Q_{\lambda }+\delta I$ generates a bounded analytic semigroup; thus, for some 
$K_{1}\geqslant 1$%
\begin{equation}
\forall \lambda \in S_{\varphi _{0}},~\forall t\geqslant 0, \quad \parallel
e^{tQ_{\lambda }}\parallel _{\mathcal{L}\left( X\right) }\leqslant
K_{1}e^{-\delta t}.  \label{estim exp xqlambda}
\end{equation}%
There exist also $K_{0}, c_{0}> 0$ such that%
\begin{equation}
\left \{ 
\begin{array}{l}
\forall \lambda \in S_{\varphi _{0}},~\forall t\geqslant 1/2,~\forall j\in
\left \{ 0,1,2\right \} : \vspace{0.1cm}\\ 
\parallel Q_{\lambda }^{j}e^{tQ_{\lambda }}\parallel _{\mathcal{L}\left(
X\right) }\leqslant K_{0}e^{-2c_{0}\left \vert \lambda \right \vert ^{1/2}}.%
\end{array}%
\right.  \label{estim exp Q}
\end{equation}

\begin{lemma}
\label{Inv (I-exp2Q)}There exists a constant $M\geqslant 0$ independent of $%
\lambda \in S_{\varphi _{0}},$ such that for any $\lambda \in S_{\varphi
_{0}}$, operators $I\pm e^{2Q_{\lambda }}$ are invertible in $\mathcal{L}%
\left( X\right) $ and 
\begin{equation}
\forall \lambda \in S_{\varphi _{0}},\quad \left \Vert \left( I\pm
e^{2Q_{\lambda }}\right) ^{-1}\right \Vert _{\mathcal{L}\left( X\right)
}\leqslant M,  \label{estim (I - expQ)}
\end{equation}
\end{lemma}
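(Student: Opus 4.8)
The plan is to construct the inverses of $I\pm e^{2Q_{\lambda}}$ by a Neumann-type series, but relying on the \emph{uniform exponential decay of the powers} of $e^{2Q_{\lambda}}$ rather than on any bound of the form $\|e^{2Q_{\lambda}}\|_{\mathcal L(X)}<1$, which need not hold.

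First I would record, from the semigroup property and estimate (\ref{estim exp xqlambda}), that for every $n\in\mathbb N$ and every $\lambda\in S_{\varphi_{0}}$,
\[
\bigl\|\bigl(e^{2Q_{\lambda}}\bigr)^{n}\bigr\|_{\mathcal L(X)}=\bigl\|e^{2nQ_{\lambda}}\bigr\|_{\mathcal L(X)}\leqslant K_{1}e^{-2\delta n},
\]
with $K_{1}\geqslant1$ and $\delta>0$ independent of $\lambda$. Since $\sum_{n\geqslant0}K_{1}e^{-2\delta n}=K_{1}/(1-e^{-2\delta})=:M<+\infty$, the partial sums of the series $\sum_{n\geqslant0}(\mp1)^{n}e^{2nQ_{\lambda}}=\sum_{n\geqslant0}\bigl(\mp e^{2Q_{\lambda}}\bigr)^{n}$ form a Cauchy sequence in $\mathcal L(X)$; I would denote by $R_{\lambda}^{\pm}\in\mathcal L(X)$ their limit, which then satisfies $\|R_{\lambda}^{\pm}\|_{\mathcal L(X)}\leqslant M$ for every $\lambda\in S_{\varphi_{0}}$.

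Next I would verify that $R_{\lambda}^{\pm}$ is the two-sided inverse of $I\pm e^{2Q_{\lambda}}$: multiplying the $N$-th partial sum by $I\pm e^{2Q_{\lambda}}$ telescopes to $I-(\mp1)^{N+1}e^{2(N+1)Q_{\lambda}}$, and as $N\to+\infty$ the remainder tends to $0$ in $\mathcal L(X)$ by the same estimate, so that $R_{\lambda}^{\pm}(I\pm e^{2Q_{\lambda}})=(I\pm e^{2Q_{\lambda}})R_{\lambda}^{\pm}=I$. This yields the invertibility of $I\pm e^{2Q_{\lambda}}$ in $\mathcal L(X)$ together with $\|(I\pm e^{2Q_{\lambda}})^{-1}\|_{\mathcal L(X)}=\|R_{\lambda}^{\pm}\|_{\mathcal L(X)}\leqslant M$ uniformly in $\lambda\in S_{\varphi_{0}}$, which is exactly (\ref{estim (I - expQ)}).

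The only point requiring care — and the reason the classical Neumann argument does not apply verbatim — is that (\ref{estim exp xqlambda}) does not give $\|e^{2Q_{\lambda}}\|_{\mathcal L(X)}<1$; what makes the argument work is that the constant $K_{1}$ multiplies a genuinely decaying exponential, so the powers $(e^{2Q_{\lambda}})^{n}=e^{2nQ_{\lambda}}$ are summable in $\mathcal L(X)$ with a bound independent of $\lambda$. Everything else — the semigroup law $e^{2nQ_{\lambda}}=(e^{2Q_{\lambda}})^{n}$ and the telescoping identity — is routine and uses only that $\bigl(e^{tQ_{\lambda}}\bigr)_{t\geqslant0}$ is a $C_{0}$-semigroup of bounded operators.
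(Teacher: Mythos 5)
Your proof is correct: the powers $(e^{2Q_{\lambda}})^{n}=e^{2nQ_{\lambda}}$ are uniformly summable in $\mathcal{L}(X)$ thanks to (\ref{estim exp xqlambda}), the telescoping identity is valid, and the resulting bound $K_{1}/(1-e^{-2\delta })$ is indeed independent of $\lambda \in S_{\varphi _{0}}$, which is all the lemma asks. The paper proves the same statement from the same single ingredient, estimate (\ref{estim exp xqlambda}), but organizes the Neumann argument differently: it first fixes an integer $k$ with $K_{1}e^{-2k\delta }\leqslant 1/2$, so that $e^{2kQ_{\lambda }}$ is a genuine contraction and $I-e^{2kQ_{\lambda }}$ is invertible with $\Vert (I-e^{2kQ_{\lambda }})^{-1}\Vert _{\mathcal{L}(X)}\leqslant 2$, and then transfers invertibility to $I\pm e^{2Q_{\lambda }}$ through the finite factorization $I-e^{2kQ_{\lambda }}=(I-e^{2Q_{\lambda }})(I+e^{2Q_{\lambda }}+\cdots +e^{2(k-1)Q_{\lambda }})$, obtaining the uniform bound $2K_{1}^{k}$. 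Your route sums the full series $\sum_{n\geqslant 0}(\mp e^{2Q_{\lambda }})^{n}$ directly, which is the same series the paper effectively resums in blocks of length $k$; what you gain is a slightly cleaner explicit constant and no need for the auxiliary integer $k$, while the paper's variant reduces everything to the textbook Neumann lemma for a strict contraction and a purely algebraic finite factorization. Both are equally rigorous, so your argument stands as a valid alternative proof.
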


\begin{proof}
Let $\lambda \in S_{\varphi _{0}}$. For $t\geqslant 0$, we have $\left \Vert e^{tQ_{\lambda }}\right \Vert _{\mathcal{L}\left( X\right)
}\leqslant K_{1}e^{-t\delta };$ we choose $k\in \mathbb{N}\backslash \{0\}$ such that $K_{1}e^{-2k\delta }\leqslant 1/2<1$. Then $I-e^{2kQ_{\lambda }}$ is invertible with%
\begin{equation*}
\left \Vert \left( I-e^{2kQ_{\lambda }}\right) ^{-1}\right \Vert _{\mathcal{L%
}\left( X\right) }\leqslant \frac{1}{1-1/2}=2,
\end{equation*}%
thus $0\in \rho (I-e^{2Q_{\lambda }})$ since%
\begin{eqnarray*}
I &=&(I-e^{2Q_{\lambda }})\left( I+e^{2Q_{\lambda }}+\cdots +e^{2\left(
k-1\right) Q_{\lambda }}\right) (I-e^{2kQ_{\lambda }})^{-1} \\
&=&(I-e^{2kQ_{\lambda }})^{-1}\left( I+e^{2Q_{\lambda }}+\cdots +e^{2\left(
k-1\right) Q_{\lambda }}\right) (I-e^{2Q_{\lambda }}).
\end{eqnarray*}%
Moreover%
\begin{eqnarray*}
\left \Vert \left( I-e^{2Q_{\lambda }}\right) ^{-1}\right \Vert _{%
\mathcal{L}\left( X\right) } &\leqslant &\left \Vert \left( I+e^{2Q_{\lambda }}+\cdots +e^{2\left(k-1\right) Q_{\lambda }}\right) (I-e^{2kQ_{\lambda }})^{-1}\right \Vert _{\mathcal{L}\left( X\right) } \\
&\leqslant &\left( 1+\left \Vert e^{2Q_{\lambda }}\right \Vert _{\mathcal{L}%
\left( X\right) }+\cdots +\left \Vert e^{2Q_{\lambda }}\right \Vert _{%
\mathcal{L}\left( X\right) }^{k-1}\right) \left \Vert (I-e^{2kQ_{\lambda
}})^{-1}\right \Vert _{\mathcal{L}\left( X\right) } \\
&\leqslant &2K_{1}^{k}.
\end{eqnarray*}

We obtain the same result for $I+e^{2Q_{\lambda }}$.
\end{proof}

\subsection{Spectral estimates}

In this subsection we assume that $X,A,H$ satisfy (\ref{NewH1})$\sim $(\ref{NewH3}).

Let $\lambda \in S_{\varphi _{0}},\mu \in S_{\varphi _{1}}$. We recall that $%
H_{\mu }=H+\mu I$ and furnish estimates concerning operators $Q_{\lambda
},H_{\mu }$ which are easy consequences of our assumptions.

In the following, $M$ denotes various constants, independent of $\lambda ,\mu 
$, which can vary from line to line.

\begin{lemma}
Let $\lambda \in S_{\varphi _{0}},\mu \in S_{\varphi _{1}}$. Then $\left(
-A+\lambda I\right) H_{\mu }^{-1}$ $\in \mathcal{L}\left( X\right)$;
moreover there exists a constant $M>0$ independent of $\lambda \in
S_{\varphi _{0}}$ and $\mu \in S_{\varphi _{1}}$ such that%
\begin{equation}
\max \left \{ \left \Vert HH_{\mu }^{-1}\right \Vert _{\mathcal{L}\left(
X\right) },\left \Vert AH_{\mu }^{-1}\right \Vert _{\mathcal{L}\left(
X\right) }\right \} \leqslant M,  \label{Consequence1}
\end{equation}%
\begin{equation}
\left \Vert Q_{\lambda }^{2}H_{\mu }^{-1}\right \Vert _{\mathcal{L}\left(
X\right) }\leqslant M\frac{1+\left \vert \lambda \right \vert +\left \vert
\mu \right \vert }{1+\left \vert \mu \right \vert },  \label{Consequence1bis}
\end{equation}%
and%
\begin{equation}
\left \Vert Q_{\lambda }H_{\mu }^{-1}\right \Vert _{\mathcal{L}(X)}\leqslant
M\frac{1+\left \vert \lambda \right \vert +\left \vert \mu \right \vert }{%
\left( 1+\left \vert \mu \right \vert \right) \left( 1+\left \vert \lambda
\right \vert \right) ^{1/2}}.  \label{Consequence2}
\end{equation}
\end{lemma}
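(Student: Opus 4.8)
The idea is to reduce everything to the basic estimates recalled in Subsection~\ref{spect 1} together with the Robin hypothesis \eqref{NewH3}. First I would treat $HH_\mu^{-1}$: writing $HH_\mu^{-1}=H_\mu H_\mu^{-1}-\mu H_\mu^{-1}=I-\mu H_\mu^{-1}$, and using \eqref{NewH3} in the form $\left\|H_\mu^{-1}\right\|_{\mathcal{L}(X)}\leqslant C_H/(1+|\mu|)$, one gets $\left\|HH_\mu^{-1}\right\|_{\mathcal{L}(X)}\leqslant 1+C_H$, hence a bound independent of $\lambda,\mu$. Next, to handle $AH_\mu^{-1}$ I would exploit the inclusion $D(H)\subset D(A)$ from \eqref{D(H)Principal}: since $H_\mu^{-1}$ maps $X$ into $D(H)\subset D(A)$, the operator $AH_\mu^{-1}$ is everywhere defined and closed, hence bounded by the closed graph theorem; to get the \emph{uniform} bound one writes $AH_\mu^{-1}=A H^{-1}\cdot HH_\mu^{-1}$ (using $0\in\rho(A)$ so $AH^{-1}\in\mathcal{L}(X)$ by the same closed-graph argument applied once, and $0\in\rho(H)$ which follows from \eqref{NewH3}), and then the bound on $HH_\mu^{-1}$ just obtained gives \eqref{Consequence1}. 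This also shows $\left(-A\right)H_\mu^{-1}\in\mathcal{L}(X)$, hence $\left(-A+\lambda I\right)H_\mu^{-1}=\left(-A\right)H_\mu^{-1}+\lambda H_\mu^{-1}\in\mathcal{L}(X)$.

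For \eqref{Consequence1bis}, I would use $Q_\lambda^2=-(-A+\lambda I)=A-\lambda I$, so that
\begin{equation*}
Q_\lambda^2 H_\mu^{-1}=AH_\mu^{-1}-\lambda H_\mu^{-1},
\end{equation*}
and estimate each term: $\left\|AH_\mu^{-1}\right\|_{\mathcal{L}(X)}\leqslant M$ by \eqref{Consequence1}, while $|\lambda|\,\left\|H_\mu^{-1}\right\|_{\mathcal{L}(X)}\leqslant C_H|\lambda|/(1+|\mu|)$. Adding, $\left\|Q_\lambda^2 H_\mu^{-1}\right\|_{\mathcal{L}(X)}\leqslant M\left(1+|\lambda|/(1+|\mu|)\right)=M(1+|\mu|+|\lambda|)/(1+|\mu|)$, which is \eqref{Consequence1bis} after absorbing constants.

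Finally, for \eqref{Consequence2} I would interpolate between the two previous bounds using the moment inequality for the sectorial operator $Q_\lambda$ (equivalently: $\left\|Q_\lambda\xi\right\|\leqslant M\left\|\xi\right\|^{1/2}\left\|Q_\lambda^2\xi\right\|^{1/2}$, valid uniformly in $\lambda\in S_{\varphi_0}$ because the sectoriality constants of $Q_\lambda$, its semigroup bound \eqref{estim exp xqlambda} and the resolvent bound \eqref{EstimationQmoins1} are uniform). Applying this with $\xi=H_\mu^{-1}x$ and taking the supremum over $\|x\|\leqslant 1$ yields
\begin{equation*}
\left\|Q_\lambda H_\mu^{-1}\right\|_{\mathcal{L}(X)}\leqslant M\left\|H_\mu^{-1}\right\|_{\mathcal{L}(X)}^{1/2}\left\|Q_\lambda^2 H_\mu^{-1}\right\|_{\mathcal{L}(X)}^{1/2}\leqslant M\left(\frac{1}{1+|\mu|}\right)^{1/2}\left(\frac{1+|\lambda|+|\mu|}{1+|\mu|}\right)^{1/2},
\end{equation*}
and since $(1+|\lambda|+|\mu|)^{1/2}\leqslant M(1+|\lambda|)^{1/2}(1+|\mu|)^{1/2}/(1+|\mu|)^{1/2}$... more simply $(1+|\lambda|+|\mu|)/(1+|\mu|)\leqslant (1+|\lambda|+|\mu|)/( (1+|\mu|)(1+|\lambda|))\cdot(1+|\lambda|)$ gives, after rearranging, the claimed bound $M(1+|\lambda|+|\mu|)/\left[(1+|\mu|)(1+|\lambda|)^{1/2}\right]$.

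\textbf{Main obstacle.} The only delicate point is making sure every estimate is \emph{uniform} in both parameters: the closed-graph arguments produce a bound for each fixed $\mu$, so the uniformity has to come from the explicit algebraic identities ($HH_\mu^{-1}=I-\mu H_\mu^{-1}$, $AH_\mu^{-1}=(AH^{-1})(HH_\mu^{-1})$) combined with \eqref{NewH3}, rather than from any compactness or abstract principle. Likewise the moment inequality used for \eqref{Consequence2} must be quoted with uniform constant, which is legitimate here since $Q_\lambda+\delta I$ is uniformly sectorial by Subsection~\ref{spect 1}; this is where a little care is needed, but no genuinely hard analysis.
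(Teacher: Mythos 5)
Your treatment of \eqref{Consequence1} and \eqref{Consequence1bis} is exactly the paper's: the identity $HH_{\mu }^{-1}=I-\mu H_{\mu }^{-1}$ plus \eqref{NewH3}, the factorization $AH_{\mu }^{-1}=\left( AH^{-1}\right) \left( HH_{\mu }^{-1}\right) $ (with $AH^{-1}\in \mathcal{L}(X)$ by closedness of $A$, \eqref{D(H)Principal} and the closed graph theorem, $0\in S_{\varphi _{1}}$ giving the invertibility of $H$), and the splitting $\left\Vert Q_{\lambda }^{2}H_{\mu }^{-1}\right\Vert \leqslant \left\Vert AH_{\mu }^{-1}\right\Vert +\left\vert \lambda \right\vert \left\Vert H_{\mu }^{-1}\right\Vert $. (Minor slip: $Q_{\lambda }^{2}=-A+\lambda I$, not $A-\lambda I$; harmless for norms.) Where you genuinely diverge is \eqref{Consequence2}: the paper simply writes $Q_{\lambda }H_{\mu }^{-1}=Q_{\lambda }^{-1}\left( Q_{\lambda }^{2}H_{\mu }^{-1}\right) $ and invokes the uniform resolvent bound \eqref{EstimationQmoins1}, a one-line argument needing nothing beyond what is already established; you instead use the moment inequality $\left\Vert Q_{\lambda }\xi \right\Vert \leqslant M\left\Vert \xi \right\Vert ^{1/2}\left\Vert Q_{\lambda }^{2}\xi \right\Vert ^{1/2}$ with a $\lambda $-uniform constant (legitimate here, since $-A+\lambda I$ is sectorial with bounds independent of $\lambda \in S_{\varphi _{0}}$ and $H_{\mu }^{-1}x\in D(H)\subset D(A)=D(Q_{\lambda }^{2})$). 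Your route is heavier machinery but actually yields the slightly sharper bound $M\left( 1+\left\vert \lambda \right\vert +\left\vert \mu \right\vert \right) ^{1/2}/\left( 1+\left\vert \mu \right\vert \right) $, which dominates the stated one because $\left( 1+\left\vert \lambda \right\vert \right) ^{1/2}\leqslant \left( 1+\left\vert \lambda \right\vert +\left\vert \mu \right\vert \right) ^{1/2}$; the paper's route is more economical and is the one reused later (the same trick $Q_{\lambda }u=Q_{\lambda }^{-1}Q_{\lambda }^{2}u$ appears in the proof of Theorem \ref{Main2}).

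One cleanup is needed in your last display: the intermediate claim ``$\left( 1+\left\vert \lambda \right\vert +\left\vert \mu \right\vert \right) ^{1/2}\leqslant M\left( 1+\left\vert \lambda \right\vert \right) ^{1/2}\left( 1+\left\vert \mu \right\vert \right) ^{1/2}/\left( 1+\left\vert \mu \right\vert \right) ^{1/2}$'' is false as written (let $\left\vert \mu \right\vert \rightarrow +\infty $ with $\lambda $ fixed), and the ``more simply'' line is a tautology that does not finish the computation. The correct and sufficient observation is precisely $\left( 1+\left\vert \lambda \right\vert \right) ^{1/2}\leqslant \left( 1+\left\vert \lambda \right\vert +\left\vert \mu \right\vert \right) ^{1/2}$, i.e.
\begin{equation*}
\frac{\left( 1+\left\vert \lambda \right\vert +\left\vert \mu \right\vert \right) ^{1/2}}{1+\left\vert \mu \right\vert }\leqslant \frac{1+\left\vert \lambda \right\vert +\left\vert \mu \right\vert }{\left( 1+\left\vert \mu \right\vert \right) \left( 1+\left\vert \lambda \right\vert \right) ^{1/2}};
\end{equation*}
with that replacement your argument is complete.
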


\begin{proof}
Note that $\left( -A+\lambda I\right) $ is closed, so due to (\ref%
{D(H)Principal}), $\left( -A+\lambda I\right) H_{\mu }^{-1}$ is bounded.
Then 
$$\left \Vert HH_{\mu }^{-1}\right \Vert _{\mathcal{L}\left( X\right) }
=\left \Vert \left( H+\mu I\right) H_{\mu }^{-1}-\mu H_{\mu }^{-1}\right
\Vert \leqslant \left \Vert I\right \Vert _{\mathcal{L}\left( X\right) }+\left
\Vert \mu \left( H+\mu I\right) ^{-1}\right \Vert _{\mathcal{L}\left(
X\right) }\leqslant M;$$
moreover%
$$\left \Vert AH_{\mu }^{-1}\right \Vert _{\mathcal{L}\left( X\right) } \leqslant \left \Vert AH^{-1}HH_{\mu }^{-1}\right \Vert _{\mathcal{L}%
\left( X\right) } \leqslant \left \Vert AH^{-1}\right \Vert _{\mathcal{L}\left( X\right)}\left \Vert HH_{\mu }^{-1}\right \Vert _{\mathcal{L}\left( X\right) } \leqslant M,$$
and
\begin{eqnarray*}
\left \Vert Q_{\lambda }^{2}H_{\mu }^{-1}\right \Vert _{\mathcal{L}\left(
X\right) } &=&\left \Vert \left( -A+\lambda I\right) H_{\mu }^{-1}\right
\Vert _{\mathcal{L}\left( X\right) } \\
&\leqslant &\left \Vert AH_{\mu }^{-1}\right \Vert _{\mathcal{L}\left(
X\right) }+\left \Vert \lambda \left( H+\mu I\right) ^{-1}\right \Vert _{%
\mathcal{L}\left( X\right) }\leqslant M\left(1 +\frac{\left \vert
\lambda \right \vert }{1+\left \vert \mu \right \vert }\right).
\end{eqnarray*}

Finally, since 
\begin{equation*}
\left \Vert Q_{\lambda }H_{\mu }^{-1}\right \Vert _{\mathcal{L}(X)}=\left
\Vert Q_{\lambda }^{-1}Q_{\lambda }^{2}H_{\mu }^{-1}\right \Vert _{\mathcal{L%
}(X)}\leqslant \left \Vert Q_{\lambda }^{-1}\right \Vert _{\mathcal{L}%
(X)}\left \Vert Q_{\lambda }^{2}H_{\mu }^{-1}\right \Vert _{\mathcal{L}(X)},
\end{equation*}%
we deduce (\ref{Consequence2}) from (\ref{Consequence1}) and (\ref%
{EstimationQmoins1}).
\end{proof}

For $\lambda \in S_{\varphi _{0}},\mu \in S_{\varphi _{1}}$, let us recall
that%
\begin{equation*}
\Lambda _{\lambda ,\mu }:=\left( Q_{\lambda }-H_{\mu }\right)
+e^{2Q_{\lambda }}\left( Q_{\lambda }+H_{\mu }\right) .
\end{equation*}%
Note that, since $D\left( H_{\mu }\right) \subset D(Q_{\lambda }^{2})$, we
have $D\left( \Lambda _{\lambda ,\mu }\right) =D\left( H_{\mu }\right) =D(H)$%
. We now introduce, for $r>0$, the notation

\begin{equation*}
\Omega _{\varphi _{0},\varphi _{1},r}=\left \{ \left( \lambda ,\mu \right)
\in S_{\varphi _{0}}\times S_{\varphi _{1}}:\left \vert \lambda \right \vert
\geqslant r\text{ and }\frac{\left \vert \mu \right \vert ^{2}}{\left \vert
\lambda \right \vert }\geqslant r\right \},
\end{equation*}%
and furnish results on $\Lambda _{\lambda ,\mu }$.

\begin{lemma}
\label{Grand Lamba inversible}There exist $r_{0}>0$ and $M>0$ such that for
all $\left( \lambda ,\mu \right) \in \Omega _{\varphi _{0},\varphi
_{1},r_{0}}$ we have%
\begin{equation}
\left \{ 
\begin{array}{l}
0 \in \rho\left(\left( I-e^{2Q_{\lambda }}\right) ^{-1}\left( I+e^{2Q_{\lambda }}\right)
Q_{\lambda }H_{\mu }^{-1}-I\right)\vspace{0.1cm} \\ 
\left \Vert \left[ \left( I-e^{2Q_{\lambda }}\right) ^{-1}\left(
I+e^{2Q_{\lambda }}\right) Q_{\lambda }H_{\mu }^{-1}-I\right] ^{-1}\right
\Vert _{\mathcal{L}(X)}\leqslant 2,%
\end{array}%
\right.  \label{first inv}
\end{equation}%
\begin{equation}
0 \in \rho\left(\Lambda _{\lambda ,\mu }\right) \quad \text{and} \quad \left \Vert
\Lambda _{\lambda ,\mu }^{-1}\right \Vert _{\mathcal{L}(X)}\leqslant \dfrac{M}{1+\left \vert \mu \right \vert },  \label{inversibilty det}
\end{equation}%
and%
\begin{equation}
\left \Vert Q_{\lambda }^{2}\Lambda _{\lambda ,\mu }^{-1}\right \Vert _{%
\mathcal{L}(X)}\leqslant M\frac{1+\left \vert \lambda \right \vert +\left
\vert \mu \right \vert }{1+\left \vert \mu \right \vert }.
\label{Q2InvLambda}
\end{equation}
\end{lemma}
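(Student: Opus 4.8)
The plan is to factor $\Lambda_{\lambda,\mu}$ so as to isolate the dominant term $H_\mu$, whose inverse is controlled by \eqref{NewH3}. First I would write, using $D(H_\mu)=D(\Lambda_{\lambda,\mu})$,
\begin{equation*}
\Lambda_{\lambda,\mu} = \left(I-e^{2Q_\lambda}\right)\left[\left(I-e^{2Q_\lambda}\right)^{-1}\left(I+e^{2Q_\lambda}\right)Q_\lambda H_\mu^{-1} - I\right]H_\mu ,
\end{equation*}
which is legitimate since $\left(Q_\lambda - H_\mu\right)+e^{2Q_\lambda}\left(Q_\lambda+H_\mu\right)=\left(I+e^{2Q_\lambda}\right)Q_\lambda - \left(I-e^{2Q_\lambda}\right)H_\mu$ and $\left(I\pm e^{2Q_\lambda}\right)^{-1}\in\mathcal{L}(X)$ by Lemma~\ref{Inv (I-exp2Q)}. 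The factor $\left(I-e^{2Q_\lambda}\right)$ is boundedly invertible with a bound $M$ independent of $\lambda$, again by Lemma~\ref{Inv (I-exp2Q)}, so the whole problem reduces to inverting the bracketed operator in \eqref{first inv} and then multiplying by $H_\mu^{-1}$, whose norm is bounded by $C_H/(1+|\mu|)$ from \eqref{NewH3}.

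To establish \eqref{first inv} I would show the middle term is a small perturbation of $-I$. By \eqref{estim (I - expQ)} the factor $\left(I-e^{2Q_\lambda}\right)^{-1}\left(I+e^{2Q_\lambda}\right)$ is bounded by some $M$ uniformly in $\lambda\in S_{\varphi_0}$, while \eqref{Consequence2} gives
\begin{equation*}
\left\|Q_\lambda H_\mu^{-1}\right\|_{\mathcal{L}(X)} \leqslant M\,\frac{1+|\lambda|+|\mu|}{(1+|\mu|)(1+|\lambda|)^{1/2}} .
\end{equation*}
On $\Omega_{\varphi_0,\varphi_1,r}$ the constraints $|\lambda|\geqslant r$ and $|\mu|^2/|\lambda|\geqslant r$ force $|\mu|\geqslant \sqrt{r|\lambda|}$, hence $(1+|\lambda|+|\mu|)/((1+|\mu|)(1+|\lambda|)^{1/2})\to 0$ as $r\to\infty$: indeed $|\lambda|/((1+|\mu|)(1+|\lambda|)^{1/2})\leqslant |\lambda|^{1/2}/(1+|\mu|)\leqslant 1/\sqrt r$, and $|\mu|/((1+|\mu|)(1+|\lambda|)^{1/2})\leqslant (1+|\lambda|)^{-1/2}\leqslant r^{-1/2}$. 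Therefore there is $r_0>0$ such that for all $(\lambda,\mu)\in\Omega_{\varphi_0,\varphi_1,r_0}$ one has $\left\|\left(I-e^{2Q_\lambda}\right)^{-1}\left(I+e^{2Q_\lambda}\right)Q_\lambda H_\mu^{-1}\right\|_{\mathcal{L}(X)}\leqslant 1/2$; a Neumann series then yields \eqref{first inv}, with the constant $2$.

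Combining these, $\Lambda_{\lambda,\mu}^{-1}=H_\mu^{-1}\left[\left(I-e^{2Q_\lambda}\right)^{-1}\left(I+e^{2Q_\lambda}\right)Q_\lambda H_\mu^{-1}-I\right]^{-1}\left(I-e^{2Q_\lambda}\right)^{-1}$, and taking norms gives $\left\|\Lambda_{\lambda,\mu}^{-1}\right\|_{\mathcal{L}(X)}\leqslant \frac{C_H}{1+|\mu|}\cdot 2\cdot M$, which is \eqref{inversibilty det}. For \eqref{Q2InvLambda} I would write $Q_\lambda^2\Lambda_{\lambda,\mu}^{-1} = \left(Q_\lambda^2 H_\mu^{-1}\right)\left[\left(I-e^{2Q_\lambda}\right)^{-1}\left(I+e^{2Q_\lambda}\right)Q_\lambda H_\mu^{-1}-I\right]^{-1}\left(I-e^{2Q_\lambda}\right)^{-1}$ and use \eqref{Consequence1bis} for the first factor together with the uniform bounds just obtained for the other two. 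The main obstacle is purely the bookkeeping in the first paragraph: one must check that the algebraic factorization is valid as an identity of \emph{unbounded} operators on the common domain $D(H)$ — that $\left(I+e^{2Q_\lambda}\right)Q_\lambda H_\mu^{-1}$ genuinely maps into $D(H_\mu)$ is immediate since it is bounded and $H_\mu^{-1}H_\mu = I$ on $D(H)$ — after which all estimates are routine consequences of the cited lemmas and the geometry of $\Omega_{\varphi_0,\varphi_1,r_0}$.
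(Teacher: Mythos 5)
Your proof is correct and follows essentially the same route as the paper: the factorization $\Lambda_{\lambda,\mu}=\left(I-e^{2Q_{\lambda}}\right)\left[\left(I-e^{2Q_{\lambda}}\right)^{-1}\left(I+e^{2Q_{\lambda}}\right)Q_{\lambda}H_{\mu}^{-1}-I\right]H_{\mu}$, smallness of the perturbation on $\Omega_{\varphi_{0},\varphi_{1},r}$ via \eqref{Consequence2} and the constraints $|\lambda|\geqslant r$, $|\mu|^{2}/|\lambda|\geqslant r$, a Neumann series for \eqref{first inv}, and then \eqref{Consequence1bis} for \eqref{Q2InvLambda}. The only cosmetic point is that the constant term $1$ in the numerator of \eqref{Consequence2} should also be noted as bounded by $r^{-1/2}$ (exactly as your $|\mu|$ term), which changes nothing.
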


Note that $Q_{\lambda }^{2}\Lambda _{\lambda ,\mu }^{-1}$ has the same
behaviour as $Q_{\lambda }H_{\mu }^{-1}$, see (\ref{Consequence2}) and (\ref%
{Est Q2 SLambda}).

\begin{proof}
Let $\left( \lambda ,\mu \right) \in \Omega _{\varphi _{0},\varphi _{1},r}$
for some $r>0$. From (\ref{D(H)Principal}), we have $Q_{\lambda }H_{\mu }^{-1}\in 
\mathcal{L}(X)$, hence $\left( I-e^{2Q_{\lambda }}\right) ^{-1}\left(
I+e^{2Q_{\lambda }}\right) Q_{\lambda }H_{\mu }^{-1}-I$ $\in \mathcal{L}(X)$%
; moreover, from (\ref{estim (I - expQ)}) and (\ref%
{Consequence2}), we obtain
$$\begin{array}{lll}
\dis \left \Vert \left( I-e^{2Q_{\lambda }}\right) ^{-1}\left( I+e^{2Q_{\lambda
}}\right) Q_{\lambda }H_{\mu }^{-1}\right \Vert _{\mathcal{L}(X)} \\ 
\leqslant \dis M\left \Vert Q_{\lambda }H_{\mu }^{-1}\right \Vert _{\mathcal{L}(X)} \\
\leqslant \dis M \left(\frac{1 +\left \vert \mu \right \vert }{\left( 1+\left \vert \mu \right \vert \right) \left( 1+\left \vert \lambda\right \vert \right) ^{1/2}} + \frac{\left \vert \lambda \right \vert}{\left( 1+\left \vert \mu \right \vert \right) \left( 1+\left \vert \lambda\right \vert \right) ^{1/2}}\right) \\ 
\leqslant \dis M\left( \frac{1}{\left \vert \lambda \right \vert ^{1/2}}+\frac{%
\left \vert \lambda \right \vert ^{1/2}}{\left \vert \mu \right \vert }%
\right) \\
\leqslant \dis \frac{2M}{r^{1/2}}.
\end{array}$$
So there exists $r_{0}>0$ such that for all $\left( \lambda ,\mu \right) \in
\Omega _{\varphi _{0},\varphi _{1},r_{0}}$ we have%
\begin{equation}
\left \Vert \left( I-e^{2Q_{\lambda }}\right) ^{-1}\left( I+e^{2Q_{\lambda
}}\right) Q_{\lambda }H_{\mu }^{-1}\right \Vert _{\mathcal{L}(X)}\leqslant
1/2.  \label{est}
\end{equation}%
Let $\left( \lambda ,\mu \right) \in \Omega _{\varphi _{0},\varphi
_{1},r_{0}}$. Then (\ref{est}) proves (\ref{first inv}). We deduce that 
\begin{equation*}
L_{\lambda ,\mu }:=\left( I-e^{2Q_{\lambda }}\right) \left[ \left(
I-e^{2Q_{\lambda }}\right) ^{-1}\left( I+e^{2Q_{\lambda }}\right) Q_{\lambda
}H_{\mu }^{-1}-I\right] \in \mathcal{L}(X),
\end{equation*}%
is boundedly invertible. Moreover 
\begin{equation*}
L_{\lambda ,\mu }^{-1}=\left[ \left( I-e^{2Q_{\lambda }}\right) ^{-1}\left(
I+e^{2Q_{\lambda }}\right) Q_{\lambda }H_{\mu }^{-1}-I\right] ^{-1}\left(
I-e^{2Q_{\lambda }}\right) ^{-1},
\end{equation*}%
satisfies%
\begin{equation*}
\left \Vert L_{\lambda ,\mu }^{-1}\right \Vert _{\mathcal{L}(X)}\leqslant 2M.
\end{equation*}%
Now, we write $\Lambda _{\lambda ,\mu }=\left( I+e^{2Q_{\lambda }}\right)
Q_{\lambda }-\left( I-e^{2Q_{\lambda }}\right) H_{\mu }=L_{\lambda ,\mu
}H_{\mu }$, so $\Lambda _{\lambda ,\mu }$ is boundedly invertible with 
\begin{equation*}
\Lambda _{\lambda ,\mu }^{-1}=H_{\mu }^{-1}L_{\lambda ,\mu }^{-1};
\end{equation*}
this furnishes (\ref{inversibilty det}). Finally, $\left \Vert L_{\lambda ,\mu
}^{-1}\right \Vert _{\mathcal{L}(X)}\leqslant 2M$ and (\ref{Consequence1bis}%
) gives%
\begin{equation*}
\left \Vert Q_{\lambda }^{2}\Lambda _{\lambda ,\mu }^{-1}\right \Vert _{%
\mathcal{L}(X)}=\left \Vert Q_{\lambda }^{2}H_{\mu }^{-1}\right \Vert _{%
\mathcal{L}(X)}\left \Vert L_{\lambda ,\mu }^{-1}\right \Vert _{\mathcal{L}%
(X)}\leqslant M\frac{1+\left \vert \lambda \right \vert +\left \vert \mu
\right \vert }{1+\left \vert \mu \right \vert }.
\end{equation*}
\end{proof}

\begin{lemma}
\label{estim omega}Assume (\ref{NewH1})$\sim $(\ref{MoinsABip}), let $f\in
L^{p}\left( 0,1;X\right) $ with $1<p<+\infty $ and set for $x\in \left[ 0,1%
\right] $%
\begin{equation}\label{def I et J}
\dis I_{\lambda ,f}\left( x\right) = \displaystyle\dfrac{1}{2}Q_{\lambda}^{-1}\int_{0}^{x}e^{(x-s)Q_{\lambda }}f(s)ds \quad \text{and} \quad \dis J_{\lambda ,f}\left( x\right) = \displaystyle\dfrac{1}{2}Q_{\lambda}^{-1}\int_{x}^{1}e^{(s-x)Q_{\lambda }}f(s)ds;
\end{equation}%
then, there exists $M\geqslant 0$ (independent of $\lambda $ and $f$) such
that%
\begin{equation}
\left \Vert Q_{\lambda }I_{\lambda ,f}\left( 1\right) \right \Vert \leqslant
M\left \Vert f\right \Vert _{_{L^{p}(0,1;X)}}\quad\text{and}\quad\left \Vert
Q_{\lambda }J_{\lambda ,f}\left( 0\right) \right \Vert \leqslant M\left
\Vert f\right \Vert _{_{L^{p}(0,1;X)}},  \label{Est QI et QJ}
\end{equation}%
moreover $I_{\lambda ,f},J_{\lambda ,f}\in W^{2,p}\left( 0,1;X\right) \cap
L^{p}\left( 0,1;D\left( A\right) \right) $ with 
\begin{equation*}
\left \Vert Q_{\lambda }^{2}I_{\lambda ,f}\right \Vert
_{L^{p}(0,1;X)}\leqslant M\left \Vert f\right \Vert _{_{L^{p}(0,1;X)}}\quad\text{and}\quad\left \Vert Q_{\lambda }^{2}J_{\lambda ,f}\right \Vert
_{L^{p}(0,1;X)}\leqslant M\left \Vert f\right \Vert _{_{L^{p}(0,1;X)}}.
\end{equation*}
\end{lemma}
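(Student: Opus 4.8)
The plan is to recognise $I_{\lambda,f}$ and $J_{\lambda,f}$ as the operators $U_{\lambda,f}$, $V_{\lambda,f}$ of Section~\ref{Dore Yaku} composed with $\tfrac12\,Q_\lambda^{-1}$, and then to invoke Lemmas~\ref{estim U V} and~\ref{Reg max}. Concretely, I would apply the results of that section with $E=X$, $L=-A$ (which is densely defined and of type $\varphi_0$ with bound $C_A$ by \eqref{NewH1} and \eqref{NewH2}), $\varphi=\varphi_0$ and $[a,b]=[0,1]$; then $D_\lambda=-A+\lambda I$, $G_\lambda=-D_\lambda^{1/2}=Q_\lambda$, and, directly from \eqref{def I et J} and \eqref{def U V lambda f}, $I_{\lambda,f}=\tfrac12\,Q_\lambda^{-1}U_{\lambda,f}$ and $J_{\lambda,f}=\tfrac12\,Q_\lambda^{-1}V_{\lambda,f}$. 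To use Lemma~\ref{Reg max} one first needs that $G=-L^{1/2}=Q=-\sqrt{-A}$ has the $L^p$ regularity property on $[0,1]$; this is the content of the Dore--Venni statement recorded in Lemma~\ref{Dore Venni} (together with the trivial uniqueness for $v'=Qv$, $v(0)=0$), and it is here that \eqref{NewH1} ($X$ is $UMD$) and \eqref{MoinsABip} (which makes $(-A)^{1/2}$ have bounded imaginary powers of angle $<\pi/2$) enter. Granting this, Lemma~\ref{Reg max} gives $U_{\lambda,f},V_{\lambda,f}\in W^{1,p}(0,1;X)\cap L^p(0,1;D(Q_\lambda))$, with $U_{\lambda,f}'=Q_\lambda U_{\lambda,f}+f$ and $V_{\lambda,f}'=-Q_\lambda V_{\lambda,f}-f$ a.e., and the uniform bounds $\sqrt{|\lambda|+1}\,\|U_{\lambda,f}\|_{L^p}+\|U_{\lambda,f}'\|_{L^p}+\|Q_\lambda U_{\lambda,f}\|_{L^p}\le\widetilde{M}_L\|f\|_{L^p}$ and the analogous one for $V_{\lambda,f}$; Lemma~\ref{estim U V} adds $\|U_{\lambda,f}\|_{L^p},\|V_{\lambda,f}\|_{L^p}\le M_L(|\lambda|+1)^{-1/2}\|f\|_{L^p}$.

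The two pointwise bounds \eqref{Est QI et QJ} then come for free: since $U_{\lambda,f}(x)\in D(Q_\lambda)$ a.e., one has $Q_\lambda I_{\lambda,f}(1)=\tfrac12\,U_{\lambda,f}(1)=\tfrac12\int_0^1 e^{(1-s)Q_\lambda}f(s)\,ds$ and $Q_\lambda J_{\lambda,f}(0)=\tfrac12\,V_{\lambda,f}(0)=\tfrac12\int_0^1 e^{sQ_\lambda}f(s)\,ds$; bounding the integrands by $\|e^{tQ_\lambda}\|_{\mathcal{L}(X)}\le K_1e^{-\delta t}$ from \eqref{estim exp xqlambda} and applying H\"{o}lder's inequality with exponents $p,p'$ gives $\|Q_\lambda I_{\lambda,f}(1)\|\le\tfrac{K_1}{2}\,\|e^{-\delta(1-\cdot)}\|_{L^{p'}(0,1)}\|f\|_{L^p}$ and likewise for $Q_\lambda J_{\lambda,f}(0)$, with a constant depending only on $K_1,\delta,p$.

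For the $Q_\lambda^2$-estimates and the regularity, I would argue as follows. From $U_{\lambda,f}(x)\in D(Q_\lambda)$ a.e.\ one gets $I_{\lambda,f}(x)=\tfrac12\,Q_\lambda^{-1}U_{\lambda,f}(x)\in D(Q_\lambda^2)$ a.e.\ with $Q_\lambda^2 I_{\lambda,f}=\tfrac12\,Q_\lambda U_{\lambda,f}$, hence $\|Q_\lambda^2 I_{\lambda,f}\|_{L^p}=\tfrac12\|Q_\lambda U_{\lambda,f}\|_{L^p}\le\tfrac12\widetilde{M}_L\|f\|_{L^p}$, and the same for $J_{\lambda,f}$. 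Since $Q_\lambda^2=-A+\lambda I$, this yields $AI_{\lambda,f}=\lambda I_{\lambda,f}-Q_\lambda^2 I_{\lambda,f}\in L^p(0,1;X)$ (with norm $\le M\|f\|_{L^p}$, using $|\lambda|\,\|I_{\lambda,f}\|_{L^p}\le\tfrac12|\lambda|\,\|Q_\lambda^{-1}\|_{\mathcal{L}(X)}\|U_{\lambda,f}\|_{L^p}\le M\|f\|_{L^p}$ by \eqref{EstimationQmoins1} and Lemma~\ref{estim U V}), so $I_{\lambda,f},J_{\lambda,f}\in L^p(0,1;D(A))$. Moreover $I_{\lambda,f}'=\tfrac12\,Q_\lambda^{-1}U_{\lambda,f}'=\tfrac12(U_{\lambda,f}+Q_\lambda^{-1}f)\in L^p$ and $J_{\lambda,f}'=-\tfrac12(V_{\lambda,f}+Q_\lambda^{-1}f)\in L^p$, so $I_{\lambda,f},J_{\lambda,f}\in W^{1,p}(0,1;X)$; adding, the $Q_\lambda^{-1}f$-terms cancel, so $(I_{\lambda,f}+J_{\lambda,f})'=\tfrac12(U_{\lambda,f}-V_{\lambda,f})\in W^{1,p}(0,1;X)$, whence $I_{\lambda,f}+J_{\lambda,f}\in W^{2,p}(0,1;X)$ with $(I_{\lambda,f}+J_{\lambda,f})''=f+Q_\lambda^2(I_{\lambda,f}+J_{\lambda,f})$; this is the second-order $L^p$-regularity that is actually used when these terms are inserted into \eqref{repre}.

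The step I expect to be the real obstacle is not in this lemma itself but in what it rests on: the $L^p$ regularity property of $Q=-\sqrt{-A}$ on $[0,1]$, i.e.\ the Dore--Venni theorem, which is exactly what upgrades the bare convolution estimate of Lemma~\ref{estim U V} to the maximal-regularity bound $\|Q_\lambda U_{\lambda,f}\|_{L^p}\le M\|f\|_{L^p}$ uniformly in $\lambda$, through the bounded perturbation $Q_\lambda=Q-T_\lambda$ of Lemmas~\ref{Compar racine} and~\ref{Reg max}. A second, more cosmetic point is that the genuine second-order $L^p$-regularity is available for the combination $I_{\lambda,f}+J_{\lambda,f}$ rather than for each term in isolation, which is harmless since they always occur together in \eqref{repre}. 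Everything else is bookkeeping of constants, all of which come out uniform in $(\lambda,\mu)$.
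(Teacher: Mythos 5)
Your proposal is correct and follows the paper's own route: the endpoint bounds \eqref{Est QI et QJ} come from the uniform semigroup estimate \eqref{estim exp xqlambda} together with H\"older's inequality, and the $Q_{\lambda}^{2}$-estimates from writing $I_{\lambda ,f}=\tfrac{1}{2}Q_{\lambda }^{-1}U_{\lambda ,f}$, $J_{\lambda ,f}=\tfrac{1}{2}Q_{\lambda }^{-1}V_{\lambda ,f}$ and invoking Lemma \ref{Reg max} with $E=X$, $L=-A$, $a=0$, $b=1$, whose $L^{p}$-regularity hypothesis is indeed supplied by the Dore--Venni statement of Lemma \ref{Dore Venni} under \eqref{NewH1} and \eqref{MoinsABip}. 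Your extra bookkeeping of the regularity assertions, in particular the remark that genuine $W^{2,p}$-regularity is really available for the combination $I_{\lambda ,f}+J_{\lambda ,f}$ (each term separately giving $W^{1,p}(0,1;X)\cap L^{p}(0,1;D(A))$ with $Q_{\lambda }^{2}I_{\lambda ,f},Q_{\lambda }^{2}J_{\lambda ,f}\in L^{p}(0,1;X)$), is accurate and goes slightly beyond the paper's two-line proof, which leaves these points implicit.
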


\begin{proof}
From (\ref{estim exp xqlambda}), we have%
\begin{equation*}
\left \{ 
\begin{array}{l}
\dis \left \Vert Q_{\lambda }I_{\lambda ,f}\left( 1\right) \right \Vert \leqslant
\dis \int \nolimits_{0}^{1}\left \Vert e^{\left( 1-s\right) Q_{\lambda }}f\left(s\right) \right \Vert ds\leqslant M\int \nolimits_{0}^{1}\left \Vert
f\left( s\right) \right \Vert ds\leqslant M\left \Vert f\right \Vert_{_{L^{p}(0,1;X)}} \\ \ecart 
\dis \left \Vert Q_{\lambda }J_{\lambda ,f}\left( 0\right) \right \Vert \leqslant
\int \nolimits_{0}^{1}\left \Vert e^{sQ_{\lambda }}f\left( s\right) \right
\Vert ds\leqslant M\int \nolimits_{0}^{1}\left \Vert f\left( s\right)
\right \Vert ds\leqslant M\left \Vert f\right \Vert _{_{L^{p}(0,1;X)}}.%
\end{array}%
\right.
\end{equation*}

We apply Lemma \ref{Reg max} with $E=X,L=-A$, $G_{\lambda }=Q_{\lambda }$, $%
a=0$, $b=1$ so that 
\begin{equation*}
I_{\lambda ,f}=\dfrac{1}{2}Q_{\lambda }^{-1}U_{\lambda ,f}\quad \text{and}\quad
J_{\lambda ,f}=\dfrac{1}{2}Q_{\lambda }^{-1}V_{\lambda ,f};
\end{equation*}%
then $Q_{\lambda }^{2}I_{\lambda ,f}=\dfrac{1}{2}Q_{\lambda }U_{\lambda ,f}$
and $Q_{\lambda }^{2}J_{\lambda ,f}=\dfrac{1}{2}Q_{\lambda }V_{\lambda ,f}$
have the desired estimates.
\end{proof}

\begin{lemma}
\label{estim vj}Assume (\ref{NewH1})$\sim $(\ref{MoinsABip}) and let $f\in
L^{p}\left( 0,1;X\right) $ with $1<p<+\infty $. Set for $x\in \left[ 0,1\right] $ 
\begin{equation*}
v_{0}\left( x\right) =e^{xQ_{\lambda }}J_{\lambda ,f}\left( 0\right) \quad\text{and}\quad v_{1}\left( x\right) =e^{xQ_{\lambda }}I_{\lambda ,f}\left(
1\right) .
\end{equation*}
where $I_{\lambda,f}$ and $J_{\lambda,f}$ are given by \eqref{def I et J}. Moreover, there exists $M > 0$ (independent of $\lambda $ and $f$)
such that 
\begin{equation}
\left \Vert Q_{\lambda }^{2}v_{j}\right \Vert _{L^{p}(0,1;X)}\leqslant
M\left \Vert f\right \Vert _{_{L^{p}(0,1;X)}},~j=0,1.  \label{est vj}
\end{equation}
\end{lemma}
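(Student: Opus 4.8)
The plan is to observe first that the naive route fails: from Lemma~\ref{estim omega} we control $\|Q_{\lambda}J_{\lambda ,f}(0)\|$ and $\|Q_{\lambda}I_{\lambda ,f}(1)\|$ by $M\|f\|_{L^p(0,1;X)}$, but $\|Q_{\lambda}^{2}v_{j}\|_{L^p}$ cannot be deduced from this, since $\|Q_{\lambda}^{2}e^{xQ_{\lambda}}\|_{\mathcal L(X)}$ blows up like $x^{-2}$ near $0$ and is not $L^{p}$-integrable. Instead I would exploit that $J_{\lambda ,f}(0)$ and $I_{\lambda ,f}(1)$ are themselves semigroup convolutions of $f$, and rewrite $Q_{\lambda}^{2}v_{j}$ as a maximal-regularity integral operator on an enlarged interval. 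Concretely, using \eqref{def I et J} and \eqref{def U V lambda f}, $I_{\lambda ,f}(1)=\tfrac12 Q_{\lambda}^{-1}\int_{0}^{1}e^{(1-s)Q_{\lambda}}f(s)\,ds$ and $J_{\lambda ,f}(0)=\tfrac12 Q_{\lambda}^{-1}\int_{0}^{1}e^{sQ_{\lambda}}f(s)\,ds$; since for $x>0$ the operator $Q_{\lambda}e^{xQ_{\lambda}}$ is bounded and commutes with $Q_{\lambda}^{-1}$ and $e^{sQ_{\lambda}}$ on the relevant ranges, one gets, for a.e. $x\in(0,1)$,
\begin{equation*}
Q_{\lambda}^{2}v_{0}(x)=\dfrac12\int_{0}^{1}Q_{\lambda}e^{(x+s)Q_{\lambda}}f(s)\,ds,\qquad
Q_{\lambda}^{2}v_{1}(x)=\dfrac12\int_{0}^{1}Q_{\lambda}e^{(x+1-s)Q_{\lambda}}f(s)\,ds .
\end{equation*}

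Next I would handle each integral by an extension argument. For $v_{0}$, introduce $\bar f\in L^{p}(-1,1;X)$ with $\bar f(s)=f(-s)$ for $s\in(-1,0)$ and $\bar f(s)=0$ for $s\in(0,1)$; then for $x\in(0,1)$ the function $U_{\lambda ,\bar f}$ of \eqref{def U V lambda f} on $(a,b)=(-1,1)$ satisfies $U_{\lambda ,\bar f}(x)=\int_{0}^{1}e^{(x+\sigma)Q_{\lambda}}f(\sigma)\,d\sigma$, so $Q_{\lambda}^{2}v_{0}(x)=\tfrac12 Q_{\lambda}U_{\lambda ,\bar f}(x)$ on $(0,1)$. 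Since $Q=-\sqrt{-A}$ has the $L^{p}$ regularity property on bounded intervals under \eqref{NewH1}$\sim$\eqref{MoinsABip} (this is essentially Lemma~\ref{Dore Venni}), Lemma~\ref{Reg max}, statement~3, applied with $E=X$, $L=-A$, $G_{\lambda}=Q_{\lambda}$, $a=-1$, $b=1$, yields $\|Q_{\lambda}U_{\lambda ,\bar f}\|_{L^{p}(-1,1;X)}\leqslant \widetilde M_{L}\|\bar f\|_{L^{p}(-1,1;X)}=\widetilde M_{L}\|f\|_{L^{p}(0,1;X)}$, with $\widetilde M_{L}$ independent of $\lambda$; restricting to $(0,1)$ gives \eqref{est vj} for $j=0$. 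For $v_{1}$ I would substitute $\sigma=1-s$ and set $\bar g\in L^{p}(-1,1;X)$ with $\bar g(s)=f(1+s)$ for $s\in(-1,0)$ and $\bar g(s)=0$ for $s\in(0,1)$, so that $Q_{\lambda}^{2}v_{1}(x)=\tfrac12 Q_{\lambda}U_{\lambda ,\bar g}(x)$ on $(0,1)$ with $\|\bar g\|_{L^{p}(-1,1;X)}=\|f\|_{L^{p}(0,1;X)}$, and conclude identically.

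The routine but slightly delicate step is the justification of the displayed rewritings: one must check, for $x>0$, that $e^{xQ_{\lambda}}I_{\lambda ,f}(1)\in D(Q_{\lambda}^{2})$ and that $Q_{\lambda}e^{xQ_{\lambda}}$ may be moved under the integral sign, which follows from analyticity of $(e^{tQ_{\lambda}})_{t\geqslant0}$ together with the boundedness of $Q_{\lambda}e^{xQ_{\lambda}}$ for each fixed $x>0$; the identities then hold for a.e.\ $x$, which suffices for the $L^{p}(0,1;X)$-norm. The only genuine obstacle is the one mentioned at the outset — recognizing that the pointwise bounds of Lemma~\ref{estim omega} are useless here and that one must instead read $Q_{\lambda}^{2}v_{j}$ as a maximal-$L^{p}$-regularity convolution and invoke Lemma~\ref{Reg max} to obtain a bound uniform in $\lambda$. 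Taking $M=\tfrac12\widetilde M_{L}$ finishes the proof.

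\begin{proof}
From Lemma~\ref{estim omega} and \eqref{estim exp xqlambda}, the pointwise bound $\|Q_{\lambda}^{2}e^{xQ_{\lambda}}\|_{\mathcal L(X)}$ is not $L^{p}$-integrable near $x=0$, so we exploit the structure of $I_{\lambda ,f}(1)$ and $J_{\lambda ,f}(0)$. By \eqref{def I et J},
\begin{equation*}
J_{\lambda ,f}(0)=\dfrac12 Q_{\lambda}^{-1}\int_{0}^{1}e^{sQ_{\lambda}}f(s)\,ds,\qquad
I_{\lambda ,f}(1)=\dfrac12 Q_{\lambda}^{-1}\int_{0}^{1}e^{(1-s)Q_{\lambda}}f(s)\,ds .
\end{equation*}
Fix $\lambda\in S_{\varphi_{0}}$. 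For $x>0$, the semigroup being analytic, $e^{xQ_{\lambda}}J_{\lambda ,f}(0)\in D(Q_{\lambda}^{2})$ and $Q_{\lambda}e^{xQ_{\lambda}}\in\mathcal L(X)$; hence, for a.e.\ $x\in(0,1)$,
\begin{equation*}
Q_{\lambda}^{2}v_{0}(x)=Q_{\lambda}^{2}e^{xQ_{\lambda}}J_{\lambda ,f}(0)=\dfrac12\,Q_{\lambda}e^{xQ_{\lambda}}\int_{0}^{1}e^{sQ_{\lambda}}f(s)\,ds=\dfrac12\int_{0}^{1}Q_{\lambda}e^{(x+s)Q_{\lambda}}f(s)\,ds,
\end{equation*}
and similarly
\begin{equation*}
Q_{\lambda}^{2}v_{1}(x)=\dfrac12\int_{0}^{1}Q_{\lambda}e^{(x+1-s)Q_{\lambda}}f(s)\,ds=\dfrac12\int_{0}^{1}Q_{\lambda}e^{(x+\sigma)Q_{\lambda}}f(1-\sigma)\,d\sigma .
\end{equation*}

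Define $\bar f,\bar g\in L^{p}(-1,1;X)$ by
\begin{equation*}
\bar f(s)=\left\{\begin{array}{ll} f(-s) & \text{if }s\in(-1,0)\\ 0 & \text{if }s\in(0,1),\end{array}\right.\qquad
\bar g(s)=\left\{\begin{array}{ll} f(1+s) & \text{if }s\in(-1,0)\\ 0 & \text{if }s\in(0,1).\end{array}\right.
\end{equation*}
Then $\|\bar f\|_{L^{p}(-1,1;X)}=\|\bar g\|_{L^{p}(-1,1;X)}=\|f\|_{L^{p}(0,1;X)}$, and the functions $U_{\lambda ,\bar f}$, $U_{\lambda ,\bar g}$ of \eqref{def U V lambda f} on $(a,b)=(-1,1)$ satisfy, for $x\in(0,1)$,
\begin{equation*}
U_{\lambda ,\bar f}(x)=\int_{0}^{1}e^{(x+\sigma)Q_{\lambda}}f(\sigma)\,d\sigma,\qquad U_{\lambda ,\bar g}(x)=\int_{0}^{1}e^{(x+\sigma)Q_{\lambda}}f(1-\sigma)\,d\sigma,
\end{equation*}
so that $Q_{\lambda}^{2}v_{0}=\tfrac12\,Q_{\lambda}U_{\lambda ,\bar f}$ and $Q_{\lambda}^{2}v_{1}=\tfrac12\,Q_{\lambda}U_{\lambda ,\bar g}$ on $(0,1)$.

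Under \eqref{NewH1}$\sim$\eqref{MoinsABip}, $G=Q=-\sqrt{-A}$ has the $L^{p}$ regularity property on $[-1,1]$ (see Lemma~\ref{Dore Venni}); applying Lemma~\ref{Reg max}, statement~3, with $E=X$, $L=-A$, $G_{\lambda}=Q_{\lambda}$, $a=-1$, $b=1$, there exists $\widetilde M_{L}>0$, independent of $\lambda$, such that
\begin{equation*}
\|Q_{\lambda}U_{\lambda ,\bar f}\|_{L^{p}(-1,1;X)}\leqslant \widetilde M_{L}\|\bar f\|_{L^{p}(-1,1;X)},\qquad \|Q_{\lambda}U_{\lambda ,\bar g}\|_{L^{p}(-1,1;X)}\leqslant \widetilde M_{L}\|\bar g\|_{L^{p}(-1,1;X)}.
\end{equation*}
Restricting the $L^{p}$-norms to $(0,1)$ and recalling the norms of $\bar f,\bar g$, we obtain
\begin{equation*}
\|Q_{\lambda}^{2}v_{j}\|_{L^{p}(0,1;X)}\leqslant \dfrac{\widetilde M_{L}}{2}\,\|f\|_{L^{p}(0,1;X)},\qquad j=0,1,
\end{equation*}
which is \eqref{est vj} with $M=\widetilde M_{L}/2$, independent of $\lambda$ and $f$.
\end{proof}
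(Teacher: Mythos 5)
Your proof is correct, but it follows a genuinely different route from the paper's. The paper keeps everything on the interval $(0,1)$: it writes $Q_{\lambda}^{2}v_{0}(x)=Q_{\lambda}\int_{0}^{1}e^{(x+s)Q_{\lambda}}f(s)\,ds$ (up to the harmless factor $\tfrac12$) and splits the kernel at $s=x$, namely $e^{(x+s)Q_{\lambda}}=e^{(x-s)Q_{\lambda}}e^{2sQ_{\lambda}}$ for $s\leqslant x$ and $e^{(x+s)Q_{\lambda}}=e^{2xQ_{\lambda}}e^{(s-x)Q_{\lambda}}$ for $s\geqslant x$, so that $Q_{\lambda}^{2}v_{0}=2Q_{\lambda}^{2}I_{\lambda,g}+2e^{2x Q_{\lambda}}Q_{\lambda}^{2}J_{\lambda,f}$ with $g=e^{2\cdot Q_{\lambda}}f$; it then recycles Lemma~\ref{estim omega} together with \eqref{estim exp xqlambda}, and treats $v_{1}$ via $v_{1}=e^{\cdot Q_{\lambda}}J_{\lambda,f(1-\cdot)}(0)$. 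You instead reflect $f$ onto $(-1,0)$ and read $Q_{\lambda}^{2}v_{j}$ as the restriction to $(0,1)$ of the maximal-regularity convolution $\tfrac12 Q_{\lambda}U_{\lambda,\bar f}$ (resp.\ $\tfrac12 Q_{\lambda}U_{\lambda,\bar g}$) on the enlarged interval $(-1,1)$, and apply Lemma~\ref{Reg max}, statement~3, there; this is clean and avoids introducing the auxiliary datum $g=e^{2\cdot Q_{\lambda}}f$. Both arguments ultimately rest on the same $\lambda$-uniform Dore--Venni machinery of Lemma~\ref{Reg max}. The only point where your write-up is slightly loose is the assertion that $Q$ has the $L^{p}$ regularity property on $[-1,1]$ "by Lemma~\ref{Dore Venni}": that lemma is stated for convolution integrals on $(0,1)$ only, so you are implicitly using that the maximal $L^{p}$ regularity of the autonomous Cauchy problem is translation invariant and holds on every bounded interval under \eqref{NewH1}$\sim$\eqref{MoinsABip} (Dore--Venni). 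This is true and standard, and the paper makes the analogous implicit step on $[0,1]$ in Lemma~\ref{estim omega}, but it deserves a sentence; alternatively you could invoke the Dore--Venni theorem directly on $(-1,1)$. Your justification of moving $Q_{\lambda}e^{xQ_{\lambda}}$ under the integral and of the identities holding pointwise for $x>0$ is adequate (the paper instead quotes the interpolation fact $J_{\lambda,f}(0)\in (D(Q_{\lambda}^{2}),X)_{\frac{1}{2p},p}$ from \cite{Hammou}, which your argument does not need).
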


\begin{proof}
From \cite{Hammou}, Proposition 3.5, p. 1676, we have
$$J_{\lambda,f}(0) \in (D(Q_\lambda^2),X)_{\frac{1}{2p},p} = (D(Q_\lambda),X)_{1+\frac{1}{p},p};$$ 
so for $x\in (0,1]$, we can write
\begin{eqnarray*}
Q_{\lambda }^{2}v_{0}\left( x\right) &=& \dis e^{xQ_{\lambda }}Q_{\lambda }\int
\nolimits_{0}^{1}e^{sQ_{\lambda }}f\left( s\right) ds \vspace{0.1cm} \\
&=&\dis Q_{\lambda }\int \nolimits_{0}^{x}e^{\left( x-s\right)Q_{\lambda }} e^{2sQ_{\lambda }} f\left( s\right) ds+ e^{2xQ_{\lambda }} Q_{\lambda }\int
\nolimits_{x}^{1}e^{\left( s-x\right) Q_{\lambda }} f\left(
s\right) ds \vspace{0.1cm}\\
&=&2Q_{\lambda }^{2}I_{\lambda ,g}\left( x\right)
+2e^{2xQ_{\lambda }}Q_{\lambda }^{2}J_{\lambda ,f}\left( x\right) ,
\end{eqnarray*}%
with $g=e^{2\cdot Q_{\lambda }}f\left( \cdot \right) $. From Lemma \ref%
{estim omega}, statement 1. and (\ref{estim exp xqlambda}), we have 
\begin{equation*}
\left \{ 
\begin{array}{l}
\left \Vert e^{2\cdot Q_{\lambda }}Q_{\lambda }^{2}J_{\lambda ,f}\left(
\cdot \right) \right \Vert _{L^{p}(0,1;X)}\leqslant M\left \Vert Q_{\lambda
}^{2}J_{\lambda ,f}\left( \cdot \right) \right \Vert_{L^{p}(0,1;X)}\leqslant M\left \Vert f\right \Vert _{_{L^{p}(0,1;X)}} \vspace{0.1cm}\\ 
\left \Vert Q_{\lambda }^{2}I_{\lambda ,g}\right \Vert
_{L^{p}(0,1;X)}\leqslant M\left \Vert g\right \Vert
_{_{L^{p}(0,1;X)}}\leqslant M\left \Vert f\right \Vert _{_{L^{p}(0,1;X)}},%
\end{array}%
\right.
\end{equation*}%
from which we deduce $\left \Vert Q_{\lambda }^{2}v_{0}\right \Vert_{L^{p}(0,1;X)}\leqslant M\left \Vert f\right \Vert _{_{L^{p}(0,1;X)}}$.

The same estimate runs for $v_{1}$ since 
\begin{equation*}
v_{1}=e^{\cdot Q_{\lambda }}J_{\lambda ,f\left( 1-\cdot \right) }\left(
0\right) \quad\text{and}\quad\left \Vert f\left( 1-\cdot \right) \right \Vert
_{_{L^{p}(0,1;X)}}=\left \Vert f\right \Vert _{_{L^{p}(0,1;X)}}.
\end{equation*}
\end{proof}

\subsection{Proofs to Theorem \ref{Main1} and Theorem \ref{Main2}}

Let $r_{0}$ be fixed as in Lemma \ref{Grand Lamba inversible}.

\subsubsection{Proof to Theorem \ref{Main1}}

We apply Proposition \ref{Prop without lambda}, with $A,H,Q$ and $\Lambda $
replaced by 
\begin{equation*}
A-\lambda I,H+\mu I,Q_{\lambda }\text{ and }\Lambda _{\lambda ,\mu },
\end{equation*}
since in this case Problem (\ref{Problem}) becomes Problem (\ref{NewEquation}%
)-(\ref{RobinSpectralConditions}). So, it is enough to verify that (\ref%
{NewH1})$\sim $(\ref{NewH3}) imply $(H_{1})\sim (H_{5}).$

It is clear that (\ref{NewH1}), (\ref{NewH2}), (\ref{MoinsABip}) imply $%
(H_{1}),\left( H_{2}\right) ,(H_{3})$ mentioned in section \ref{sans
parametre}. Moreover, due to (\ref{inversibilty det}), assumptions (\ref%
{NewH1})$\sim $(\ref{D(H)Principal}) imply $(H_{4})$. Finally, under (\ref%
{NewH1})$\sim $(\ref{D(H)Principal}) 
\begin{equation*}
\Lambda _{\lambda ,\mu }^{-1}\left( X\right) \subset D\left( Q\right) \cap
D(H)\subset D\left( Q^{2}\right) ,
\end{equation*}%
so that $Q\Lambda _{\lambda ,\mu }^{-1}\left( X\right) \subset D\left(
Q\right) $ and then $(H_{5})$ is satisfied.

Note that here, the condition $\Lambda _{\lambda ,\mu }^{-1}d_{0}\in \left(
D\left( A\right) ,X\right) _{\frac{1}{2p},p}$ is automatically realized
since for any $d_{0} \in X$, we have $\Lambda _{\lambda ,\mu }^{-1}d_{0}\in D\left(
Q\right) \cap D(H)\subset D\left( Q^{2}\right) .$

\subsubsection{Proof to Theorem \ref{Main2}}

Let $\left( \lambda ,\mu \right) \in \Omega _{\varphi _{0},\varphi
_{1},r_{0}}$ and $f\in L^{p}\left( 0,1;X\right) $. We recall that, taking
into account the notations (\ref{def I et J}), we have, for $x\in \left[ 0,1%
\right] $%
\begin{equation*}
u(x)=S_{\lambda }\left( x\right) \mu _{0}+S_{\lambda }\left( 1-x\right) \mu
_{1}+I_{\lambda ,f}\left( x\right) +J_{\lambda ,f}\left( x\right) ,
\end{equation*}%
where%
\begin{equation*}
\left \{ 
\begin{array}{l}
\mu _{1}=u_{1}-I_{\lambda ,f}\left( 1\right) \vspace{0.1cm}\\ 
\mu _{0}=\Lambda _{\lambda ,\mu }^{-1}\left[ \left( I-e^{2Q_{\lambda
}}\right) d_{0}+2Q_{\lambda }e^{Q_{\lambda }}\mu _{1}+2Q_{\lambda
}J_{\lambda ,f}\left( 0\right) \right] -J_{\lambda ,f}\left( 0\right) \vspace{0.1cm}\\ 
S_{\lambda }\left( x\right) =\left( I-e^{2Q_{\lambda }}\right) ^{-1}\left(
e^{xQ_{\lambda }}-e^{\left( 1-x\right) Q_{\lambda }}e^{Q_{\lambda }}\right)
\in \mathcal{L}\left( X\right) .%
\end{array}%
\right.
\end{equation*}%
So we can write $u=h_{0}+h_{1}-h_{2}+h_{3}+h_{4}$ with 
\begin{equation}
\left \{ 
\begin{array}{l}
h_{0}\left( x\right) =S_{\lambda }\left( x\right) \Lambda _{\lambda ,\mu
}^{-1}\left[ \left( I-e^{2Q_{\lambda }}\right) d_{0}+2Q_{\lambda }\left(
J_{\lambda ,f}\left( 0\right) -e^{Q_{\lambda }}I_{\lambda ,f}\left( 1\right)
\right) \right] \vspace{0.1cm}\\ 
h_{1}\left( x\right) =2S_{\lambda }\left( x\right) \Lambda _{\lambda ,\mu
}^{-1}Q_{\lambda }e^{Q_{\lambda }}u_{1} \vspace{0.1cm}\\ 
h_{2}\left( x\right) =S_{\lambda }\left( x\right) J_{\lambda ,f}\left(
0\right) +S_{\lambda }\left( 1-x\right) I_{\lambda ,f}\left( 1\right)\vspace{0.1cm} \\ 
h_{3}\left( x\right) =S_{\lambda }\left( 1-x\right) u_{1} \vspace{0.1cm}\\ 
h_{4}\left( x\right) =I_{\lambda ,f}\left( x\right) +J_{\lambda ,f}\left(
x\right) .%
\end{array}%
\right.  \label{def h}
\end{equation}

\begin{description}
\item[Estimate of $Q_{\protect\lambda }^{2}h_{0}.$] For $\xi \in X$ and $%
x\in \left( 0,1\right) $, we have%
\begin{eqnarray*}
\left \Vert Q_{\lambda }^{2}S_{\lambda }\left( x\right) \Lambda _{\lambda
,\mu }^{-1}\xi \right \Vert &\hspace{-0.27cm}=&\hspace{-0.27cm}\left \Vert \left( I-e^{2Q_{\lambda }}\right) ^{-1}\left( I-e^{2\left(1-x\right) Q_{\lambda }}\right)e^{xQ_{\lambda }}Q_{\lambda }^{2}\Lambda_{\lambda ,\mu }^{-1}\xi \right \Vert \\
&\hspace{-0.27cm}\leqslant &\hspace{-0.27cm}\left \Vert \left( I-e^{2Q_{\lambda }}\right) ^{-1}\left(I-e^{2\left( 1-x\right) Q_{\lambda }}\right) \right \Vert _{\mathcal{L}%
(X)}\left \Vert e^{xQ_{\lambda }}\right \Vert _{\mathcal{L}(X)}\left \Vert
Q_{\lambda }^{2}\Lambda _{\lambda ,\mu }^{-1}\right \Vert _{\mathcal{L}%
(X)}\left \Vert \xi \right \Vert \\
&\hspace{-0.27cm}\leqslant &\hspace{-0.27cm}M\dfrac{1+\left \vert \lambda \right \vert +\left \vert \mu
\right \vert }{1+\left \vert \mu \right \vert }\left \Vert \xi \right \Vert ,
\end{eqnarray*}%
so, from (\ref{Q2InvLambda}) and (\ref{Est QI et QJ}), we deduce 
\begin{eqnarray*}
\left \Vert Q_{\lambda }^{2}h_{0}\left( x\right) \right \Vert &\leqslant &M\dfrac{1+\left \vert \lambda \right \vert +\left \vert \mu
\right \vert }{1+\left \vert \mu \right \vert }\left( \left \Vert
d_{0}\right \Vert +2\left \Vert Q_{\lambda }J_{\lambda ,f}\left( 0\right)
\right \Vert +2\left \Vert e^{Q_{\lambda }}\right \Vert _{\mathcal{L}%
(X)}\left \Vert Q_{\lambda }I_{\lambda ,f}\left( 1\right) \right \Vert
\right) \\
&\leqslant &M\dfrac{1+\left \vert \lambda \right \vert +\left \vert \mu
\right \vert }{1+\left \vert \mu \right \vert }\left( \left \Vert
d_{0}\right \Vert +\left \Vert f\right \Vert _{L^{p}(0,1;X)}\right) .
\end{eqnarray*}%
Then%
\begin{equation*}
\left \Vert Q_{\lambda }^{2}h_{0}\right \Vert _{L^{p}(0,1;X)}\leqslant M%
\dfrac{1+\left \vert \lambda \right \vert +\left \vert \mu \right \vert }{%
1+\left \vert \mu \right \vert }\left( \left \Vert d_{0}\right \Vert +\left
\Vert f\right \Vert _{L^{p}(0,1;X)}\right) .
\end{equation*}

\item[Estimate of $Q_{\protect\lambda }^{2}h_{1}.$] As above, we have for $%
\xi \in X$ and $x\in \left( 0,1\right) $%
\begin{equation*}
\left \Vert Q_{\lambda }^{2}h_{1}\left( x\right) \right \Vert \leqslant M%
\dfrac{1+\left \vert \lambda \right \vert +\left \vert \mu \right \vert }{%
1+\left \vert \mu \right \vert }\left( \left \Vert Q_{\lambda }e^{Q_{\lambda
}}\right \Vert _{\mathcal{L}(X)}\left \Vert u_{1}\right \Vert \right),
\end{equation*}%
and from (\ref{estim exp Q}), we deduce that $\left \Vert Q_{\lambda }^{2}h_{1}\left( x\right) \right \Vert \leqslant
M\left \Vert u_{1}\right \Vert$, hence
\begin{equation*}
\left \Vert Q_{\lambda }^{2}h_{1}\right \Vert _{L^{p}(0,1;X)}\leqslant
M\left \Vert u_{1}\right \Vert .
\end{equation*}

\item[Estimate of $Q_{\protect\lambda }^{2}h_{2}.$] For $\xi \in X$ and $%
x\in (0,1]$, we have%
\begin{eqnarray}
\left \Vert Q_{\lambda }^{2}S_{\lambda }\left( x\right) \xi \right \Vert
&=&\left \Vert \left( I-e^{2Q_{\lambda }}\right) ^{-1}\left( I-e^{2\left(
1-x\right) Q_{\lambda }}\right) Q_{\lambda }^{2}e^{xQ_{\lambda }}\xi \right
\Vert  \notag \\
&\leqslant &\left \Vert \left( I-e^{2Q_{\lambda }}\right) ^{-1}\left(
I-e^{2\left( 1-x\right) Q_{\lambda }}\right) \right \Vert _{\mathcal{L}%
(X)}\left \Vert Q_{\lambda }^{2}e^{xQ_{\lambda }}\xi \right \Vert  \notag \\
&\leqslant &M\left \Vert Q_{\lambda }^{2}e^{xQ_{\lambda }}\xi \right \Vert ,
\label{Est Q2 SLambda}
\end{eqnarray}%
so%
\begin{equation*}
\left \Vert Q_{\lambda }^{2}h_{2}\left( x\right) \right \Vert \leqslant
M\left( \left \Vert Q_{\lambda }^{2}e^{xQ_{\lambda }}J_{\lambda ,f}\left(
0\right) \right \Vert +\left \Vert Q_{\lambda }^{2}e^{\left( 1-x\right)
Q_{\lambda }}I_{\lambda ,f}\left( 1\right) \right \Vert \right) ,
\end{equation*}%
and\ then, from (\ref{est vj})%
$$\left \Vert Q_{\lambda }^{2}h_{2}\right \Vert _{L^{p}(0,1;X)} \leqslant
M\left( \left \Vert Q^{2}v_{0}\right \Vert _{L^{p}(0,1;X)}+\left \Vert
Q^{2}v_{1}\left( 1-\cdot \right) \right \Vert _{L^{p}(0,1;X)}\right) \leqslant M\left \Vert f\right \Vert _{L^{p}(0,1;X)}.$$

\item[Estimate of $Q_{\protect\lambda }^{2}h_{3}.$] Due to (\ref{Est Q2
SLambda}), we have, for $x\in [ 0,1)$
\begin{equation*}
\left \Vert Q_{\lambda }^{2}h_{3}\left( x\right) \right \Vert \leqslant
M\left \Vert Q_{\lambda }^{2}e^{\left( 1-x\right) Q_{\lambda }}u_{1}\right
\Vert .
\end{equation*}%
From Theorem 2.1 in \cite{Dore Yakubov}, since $u_{1}\in (D(A),X)_{\frac{1}{2p},p}=(X,D(A))_{1-\frac{1}{2p},p},$ we get%
$$\left \Vert Q_{\lambda }^{2}h_{3}\right \Vert _{_{L^{p}(0,1;X)}} \leqslant M\left \Vert Q_{\lambda }^{2}e^{\left( 1-x\right) Q_{\lambda }}u_{1}\right
\Vert _{_{L^{p}(0,1;X)}} \leqslant M\left( \left \Vert u_{1}\right \Vert _{(D(A),X)_{\frac{1}{2p},p}}+\left \vert \lambda \right \vert ^{1-\frac{1}{2p}}\left \Vert u_{1}\right \Vert \right) .$$

\item[Estimate of $Q_{\protect\lambda }^{2}h_{4}.$] From Lemma \ref{estim
omega}, we get%
\begin{equation*}
\left \Vert Q_{\lambda }^{2}h_{4}\right \Vert _{L^{p}(0,1;X)}\leqslant
M\left \Vert f\right \Vert _{_{L^{p}(0,1;X)}}.
\end{equation*}
\end{description}
Summarizing the previous study we obtain that 
\begin{equation}
\left \Vert Q_{\lambda }^{2}u\right \Vert _{_{L^{p}(0,1;X)}}\leqslant
M\alpha \left( d_{0},u_{1},\lambda ,\mu ,f\right) .  \label{estim final}
\end{equation}%
Moreover, since $u$ satisfies (\ref{NewEquation}), that is,
\begin{equation*}
u^{\prime \prime }(x)-Q_{\lambda }^{2}u(x)=f(x),\text{ \ a.e. }x\in (0,1),
\end{equation*}%
we deduce that%
\begin{equation*}
\left \Vert u^{\prime \prime }\right \Vert _{_{L^{p}(0,1;X)}}\leqslant
M\alpha \left( d_{0},u_{1},\lambda ,\mu ,f\right) .
\end{equation*}%
Writing $u=Q_{\lambda }^{-2}Q_{\lambda }^{2}u$ and $Q_{\lambda }u=Q_{\lambda
}^{-1}Q_{\lambda }^{2}u$, we obtain the estimates concerning $u$ and $%
Q_{\lambda }u$. Setting, for $x\in \left[ 0,1\right] $%
\begin{equation*}
\widetilde{S}_{\lambda }\left( x\right) =\left( I-e^{2Q_{\lambda }}\right)
^{-1}\left( e^{xQ_{\lambda }}+e^{\left( 1-x\right) Q_{\lambda
}}e^{Q_{\lambda }}\right) \in \mathcal{L}\left( X\right) ,
\end{equation*}%
we have 
\begin{equation*}
u^{\prime }(x)=Q_{\lambda }\widetilde{S}_{\lambda }\left( x\right) \mu
_{0}-Q_{\lambda }\widetilde{S}_{\lambda }\left( 1-x\right) \mu
_{1}+QI_{\lambda ,f}\left( x\right) -QJ_{\lambda ,f}\left( x\right)
=Q_{\lambda }^{-1}Q_{\lambda }^{2}\omega \left( x\right) ,
\end{equation*}%
the terms in $\omega \left( x\right) =\widetilde{S}_{\lambda }\left(
x\right) \mu _{0}-\widetilde{S}_{\lambda }\left( 1-x\right) \mu
_{1}+I_{\lambda ,f}\left( x\right) -J_{\lambda ,f}\left( x\right) $ are (in
absolute value) those of $u(x),$ so (\ref{estim final}) runs when we replace 
$u$ by $\omega $ , this furnishes the estimate for $u^{\prime }.$

From Lemma 2.6 a) p. 103 in \cite{Dore Yakubov} we have
\begin{equation}\label{est Q invQLambda}
\left \Vert QQ_{\lambda }^{-1}\right \Vert _{\mathcal{L}(X)}=\left \Vert
(-A)^{1/2}(-A+\lambda I)^{-1/2}\right \Vert _{\mathcal{L}(X)}\leqslant M;
\end{equation}
so writing $Qu=QQ_{\lambda }^{-1}Q_{\lambda }u,Q^{2}u=\left( QQ_{\lambda
}^{-1}\right) ^{2}Q_{\lambda }^{2}u$, we deduce the estimates of $\left
\Vert Qu\right \Vert _{_{L^{p}(0,1;X)}}$ and $\left \Vert Q^{2}u\right\Vert_{_{L^{p}(0,1;X)}}$ from those of $\left \Vert Q_{\lambda }u\right\Vert _{L^{p}(0,1;X)}$ and $\left \Vert Q_{\lambda}^{2}u\right \Vert _{L^{p}(0,1;X)}$.

\begin{remark}
Under the assumptions of the previous theorem, we obtain moreover that%
\begin{equation}
\left\Vert u(0)\right\Vert \leqslant \dfrac{M}{1+\left\vert \mu \right\vert }%
\left( \left\Vert d_{0}\right\Vert +Me^{-2c_{0}\left\vert \lambda
\right\vert ^{1/2}}\left\Vert u_{1}\right\Vert +\left\Vert f\right\Vert
_{_{L^{p}(0,1;X)}}\right) .  \label{estim u(0)}
\end{equation}%
Indeed%
\begin{equation*}
u(0)=\Lambda _{\lambda ,\mu }^{-1}\left[ \left( I-e^{2Q_{\lambda }}\right)
d_{0}+2Q_{\lambda }e^{Q_{\lambda }}\left( u_{1}-I_{\lambda ,f}\left(
1\right) \right) +2Q_{\lambda }J_{\lambda ,f}\left( 0\right) \right] ,
\end{equation*}%
so%
\begin{eqnarray*}
\left\Vert u(0)\right\Vert &\leqslant &\left\Vert \Lambda _{\lambda ,\mu
}^{-1}\right\Vert _{\mathcal{L}(X)}\left\Vert I-e^{2Q_{\lambda }}\right\Vert
_{\mathcal{L}(X)}\left\Vert d_{0}\right\Vert +\left\Vert \Lambda _{\lambda
,\mu }^{-1}\right\Vert _{\mathcal{L}(X)}\left\Vert 2Q_{\lambda
}e^{Q_{\lambda }}\right\Vert _{\mathcal{L}(X)}\left\Vert u_{1}\right\Vert \\
&&+2\left\Vert \Lambda _{\lambda ,\mu }^{-1}\right\Vert _{\mathcal{L}%
(X)}\left( \left\Vert e^{Q_{\lambda }}\right\Vert _{\mathcal{L}%
(X)}\left\Vert Q_{\lambda }I_{\lambda ,f}\left( 1\right) \right\Vert
+\left\Vert Q_{\lambda }J_{\lambda ,f}\left( 0\right) \right\Vert \right) \\
&\leqslant &\dfrac{M}{1+\left\vert \mu \right\vert }\left( \left\Vert
d_{0}\right\Vert +Me^{-2c_{0}\left\vert \lambda \right\vert
^{1/2}}\left\Vert u_{1}\right\Vert +\left\Vert f\right\Vert
_{_{L^{p}(0,1;X)}}\right) .
\end{eqnarray*}
\end{remark}

\subsubsection{Proof to Theorem \ref{gen sg first case}}\label{espace produit}

\begin{remark}
Let $\left( f,\tau \right) \in Z$. We consider the following problem%
\begin{equation}\label{Pb l=m}
\left\{ \begin{array}{l}
u^{\prime \prime }+\mathcal{A}u-\lambda u=f \vspace{0.1cm} \\ 
u^{\prime }(0)-Hu(0)-\left( \lambda +\mu \right) u(0)=\tau \vspace{0.1cm}\\ 
u\left( 1\right) =0;
\end{array}\right.  
\end{equation}%
then the two following assertions are equivalent:

\begin{enumerate}
\item $\left( u,v\right) \in D\left( \mathcal{P}_{A,H,\mu }\right) $ and $%
\left( \mathcal{P}_{A,H,\mu }-\lambda I\right) \left( u,v\right) =\left(
f,\tau \right) $.

\item $u\in W^{2,p}(0,1;X)\cap L^{p}(0,1;D(A))$ is a classical solution of (%
\ref{Pb l=m}) together with $v=u(0)$.
\end{enumerate}
So to study $\mathcal{P}_{A,H,\mu }$, it remains to solve (\ref{Pb l=m}).
\end{remark}

We set $\varphi _{2}:=\min \left\{ \varphi _{0},\varphi _{1}\right\} $ and
define for $\varphi _{3}\in \left( 0,\pi -\varphi _{2}\right) $, $r_{\varphi
_{3}}\in \left( r_{0},+\infty \right) $ by 
\begin{equation*}
r_{\varphi _{3}}:=\dfrac{r_{0}}{\cos ^{2}\left( \frac{\varphi _{2}+\varphi
_{3}}{2}\right) },
\end{equation*}
for $r_0 > 0$, see Theorem \ref{Main1}. 
\begin{proposition}
\label{Est L first case}Let $\varphi _{3}\in \left( 0,\pi -\varphi
_{2}\right) .$

\begin{enumerate}
\item If $\lambda \in S_{\varphi _{2}},\mu \in S_{\varphi _{3}}$ with $%
\left\vert \lambda \right\vert \geqslant r_{\varphi _{3}}$, then $\left(
\lambda ,\lambda +\mu \right) \in \Omega _{\varphi _{0},\varphi _{1},r_{0}}$.

\item Let $\mu \in S_{\varphi _{3}}$. Then $\mathcal{P}_{A,H,\mu }$ is a
closed linear operator on $Z$ with%
\begin{equation*}
S_{\varphi _{2}}\backslash B\left( 0,r_{\varphi _{3}}\right) \subset \rho
\left( \mathcal{P}_{A,H,\mu }\right) .
\end{equation*}

Moreover, let $\lambda \in S_{\varphi _{2}}\backslash B\left( 0,r_{\varphi
_{3}}\right) $ and $\left( f,\tau \right) \in Z$; then 
\begin{equation*}
\left( u,v\right) =(\mathcal{P}_{A,H,\mu }-\lambda I)^{-1}\left( f,\tau
\right) ,
\end{equation*}%
satisfies, for $x\in \lbrack 0,1]$%
\begin{equation*}
\left\{ 
\begin{array}{lll}
u(x) & =& S_{\lambda }\left( x\right) \Lambda _{\lambda ,\lambda +\mu
}^{-1}\left( I-e^{2Q_{\lambda }}\right) \tau \vspace{0.1cm} \\ 
&& +S_{\lambda }\left( x\right) \left[ 2\Lambda _{\lambda ,\lambda +\mu}^{-1}Q_{\lambda }\left[ J_{\lambda ,f}\left( 0\right) -e^{Q_{\lambda}}I_{\lambda ,f}\left( 1\right) \right] -J_{\lambda ,f}\left( 0\right) \right] \vspace{0.1cm} \\ 
&& -S_{\lambda }\left( 1-x\right) I_{\lambda ,f}\left( 1\right) +I_{\lambda
,f}\left( x\right) +J_{\lambda ,f}\left( x\right) \\ \ecart 
v(x) & = & u\left( 0\right) ,%
\end{array}%
\right.
\end{equation*}%
where $S_{\lambda }\left( x\right) =\left( I-e^{2Q_{\lambda }}\right)
^{-1}\left( e^{xQ_{\lambda }}-e^{\left( 1-x\right) Q_{\lambda
}}e^{Q_{\lambda }}\right) \in \mathcal{L}\left( X\right) $.

\item There exists $M_{A,H,\varphi _{3}}>0$ such that for $\lambda \in
S_{\varphi _{2}}\backslash B\left( 0,r_{\varphi _{3}}\right) $ and $\mu \in
S_{\varphi _{3}}$ we have%
\begin{equation*}
\left\Vert (\mathcal{P}_{A,H,\mu }-\lambda I)^{-1}\right\Vert _{\mathcal{L}%
\left( Z\right) }\leqslant \dfrac{M_{A,H,\varphi _{3}}}{1+|\lambda |}.
\end{equation*}
\end{enumerate}
\end{proposition}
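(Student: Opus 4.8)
\textbf{Proof plan for Proposition \ref{Est L first case}.}

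The plan is to reduce everything to the already-developed machinery for Problem \eqref{NewEquation}-\eqref{RobinSpectralConditions}, via the elementary observation that solving $(\mathcal{P}_{A,H,\mu}-\lambda I)(u,v)=(f,\tau)$ amounts to solving \eqref{Pb l=m}, and that \eqref{Pb l=m} is nothing but \eqref{NewEquation}-\eqref{RobinSpectralConditions} with spectral parameters $(\lambda,\mu)$ replaced by $(\lambda,\lambda+\mu)$, $d_0$ replaced by $\tau$, and $u_1=0$. First I would prove statement 1: for $\lambda\in S_{\varphi_2}$, $\mu\in S_{\varphi_3}$ with $|\lambda|\geqslant r_{\varphi_3}$, one has $\lambda+\mu\in S_{\varphi_1}$ (since $\varphi_3<\pi-\varphi_2\leqslant\pi-\varphi_1$, the sum $\lambda+\mu$ stays in the sector $S_{\varphi_1}$ once $|\lambda|$ dominates; more precisely, a standard sector-addition estimate using $|\arg\lambda|\leqslant\varphi_2\leqslant\varphi_1$ and the definition of $r_{\varphi_3}$ via $\cos((\varphi_2+\varphi_3)/2)$), that $|\lambda|\geqslant r_{\varphi_3}\geqslant r_0$, and that $|\lambda+\mu|^2/|\lambda|\geqslant r_0$ (here one uses $|\lambda+\mu|\gtrsim|\lambda|$ for $|\lambda|$ large, so $|\lambda+\mu|^2/|\lambda|\gtrsim|\lambda|\geqslant r_{\varphi_3}\geqslant r_0$ after adjusting the constant defining $r_{\varphi_3}$). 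Hence $(\lambda,\lambda+\mu)\in\Omega_{\varphi_0,\varphi_1,r_0}$.

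Next, for statement 2, given $(\lambda,\mu)$ as above and $(f,\tau)\in Z$, Theorem \ref{Main1} applied with data $(d_0,u_1,f)=(\tau,0,f)$ and parameters $(\lambda,\lambda+\mu)\in\Omega_{\varphi_0,\varphi_1,r_0}$ gives a unique classical solution $u$ of \eqref{Pb l=m} (the interpolation condition $u_1=0\in(D(A),X)_{\frac1{2p},p}$ is trivially satisfied, and in the first case the condition on $\Lambda_{\lambda,\lambda+\mu}^{-1}\tau$ is automatic as noted after the proof of Theorem \ref{Main1}). Setting $v=u(0)\in D(H)$, the remark preceding the proposition shows $(u,v)\in D(\mathcal{P}_{A,H,\mu})$ and $(\mathcal{P}_{A,H,\mu}-\lambda I)(u,v)=(f,\tau)$; uniqueness of $u$ gives injectivity of $\mathcal{P}_{A,H,\mu}-\lambda I$. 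This proves $S_{\varphi_2}\setminus B(0,r_{\varphi_3})\subset\rho(\mathcal{P}_{A,H,\mu})$. The explicit formula for $(u,v)$ is just \eqref{repre}-\eqref{repre suite} with $Q,\Lambda$ replaced by $Q_\lambda,\Lambda_{\lambda,\lambda+\mu}$, $d_0$ by $\tau$, $u_1$ by $0$; substituting $\mu_1=-I_{\lambda,f}(1)$ and $\mu_0=\Lambda_{\lambda,\lambda+\mu}^{-1}[(I-e^{2Q_\lambda})\tau+2Q_\lambda e^{Q_\lambda}\mu_1+2Q_\lambda J_{\lambda,f}(0)]-J_{\lambda,f}(0)$ and expanding $S_\lambda(x)\mu_0+S_\lambda(1-x)\mu_1+I_{\lambda,f}(x)+J_{\lambda,f}(x)$ yields the displayed expression. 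Closedness of $\mathcal{P}_{A,H,\mu}$ then follows because its resolvent set is nonempty.

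For statement 3, the resolvent estimate $\|(\mathcal{P}_{A,H,\mu}-\lambda I)^{-1}\|_{\mathcal{L}(Z)}\leqslant M_{A,H,\varphi_3}/(1+|\lambda|)$ is obtained from Theorem \ref{Main2}, again with $(d_0,u_1)=(\tau,0)$ and parameters $(\lambda,\lambda+\mu)$: that theorem bounds $(1+|\lambda|)\|u\|_{L^p(0,1;X)}$ by $M\,\alpha(\tau,0,\lambda,\lambda+\mu,f)$, and with $u_1=0$ one has $\alpha(\tau,0,\lambda,\lambda+\mu,f)=\frac{1+|\lambda|+|\lambda+\mu|}{1+|\lambda+\mu|}(\|\tau\|+\|f\|_{L^p(0,1;X)})$; since $|\lambda+\mu|\gtrsim|\lambda|$ on the relevant set, the prefactor $\frac{1+|\lambda|+|\lambda+\mu|}{1+|\lambda+\mu|}$ is bounded by a constant depending only on $\varphi_3$, giving $(1+|\lambda|)\|u\|_{L^p}\leqslant M_{A,H,\varphi_3}(\|\tau\|+\|f\|)$. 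Combining this with the estimate \eqref{estim u(0)} for $\|v\|=\|u(0)\|\leqslant\frac{M}{1+|\lambda+\mu|}(\|\tau\|+\|f\|)\leqslant\frac{M}{1+|\lambda|}(\|\tau\|+\|f\|)$ (using $1+|\lambda+\mu|\gtrsim 1+|\lambda|$) controls both components of $(u,v)$ in the norm of $Z$ by $\frac{M_{A,H,\varphi_3}}{1+|\lambda|}\|(f,\tau)\|_Z$. The main obstacle is purely the sector-geometry bookkeeping in statement 1 — verifying that $\lambda+\mu\in S_{\varphi_1}$ and that the quantities $|\lambda|$, $|\lambda+\mu|^2/|\lambda|$ exceed $r_0$ uniformly, with the constant $r_{\varphi_3}$ chosen exactly so that the cosine factor $\cos((\varphi_2+\varphi_3)/2)$ (which measures how far $\mu$ can rotate $\lambda$ out of the sector) is absorbed; everything after that is a transcription of Theorems \ref{Main1} and \ref{Main2}.
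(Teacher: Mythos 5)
Your proposal is correct and takes essentially the same route as the paper: statement 1 via the Dore--Yakubov sector estimate $|\lambda+\mu|\geqslant\cos\left(\tfrac{\varphi_{2}+\varphi_{3}}{2}\right)\left(|\lambda|+|\mu|\right)$ combined with the choice $r_{\varphi_{3}}=r_{0}/\cos^{2}\left(\tfrac{\varphi_{2}+\varphi_{3}}{2}\right)$, statement 2 by applying Theorem \ref{Main1} to \eqref{Pb l=m} with data $(d_{0},u_{1},f)=(\tau,0,f)$ and parameters $(\lambda,\lambda+\mu)$ (the interpolation condition being automatic), and statement 3 from Theorem \ref{Main2} together with \eqref{estim u(0)} and the bound $\tfrac{1+|\lambda|+|\lambda+\mu|}{1+|\lambda+\mu|}\leqslant C_{\varphi_{3}}$. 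The one point you gloss over, the membership $\lambda+\mu\in S_{\varphi_{1}}$ (your phrase ``once $|\lambda|$ dominates'' is not the actual reason, since $|\mu|$ may be arbitrarily large), is handled with exactly the same brevity in the paper, which simply asserts $(\lambda,\lambda+\mu)\in S_{\varphi_{2}}\times S_{\varphi_{2}}\subset S_{\varphi_{0}}\times S_{\varphi_{1}}$, so your write-up is at the paper's level of detail.
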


\begin{proof}\hfill

\begin{enumerate}
\item We have $\left( \lambda ,\lambda +\mu \right) \in S_{\varphi
_{2}}\times S_{\varphi _{2}}\subset S_{\varphi _{0}}\times S_{\varphi _{1}}$%
, moreover $\left\vert \lambda \right\vert \geqslant r_{\varphi _{3}}>r_{0}$
and, due to \cite{Dore Yakubov}, Lemma 2.3, p. 98, we have
\begin{equation*}
\frac{\left\vert \lambda +\mu \right\vert ^{2}}{\left\vert \lambda
\right\vert }\geqslant \cos ^{2}\left( \tfrac{\varphi _{2}+\varphi _{3}}{2}%
\right) \frac{\left( \left\vert \lambda \right\vert +\left\vert \mu
\right\vert \right) ^{2}}{\left\vert \lambda \right\vert }\geqslant \cos
^{2}\left( \tfrac{\varphi _{2}+\varphi _{3}}{2}\right) \times \left\vert
\lambda \right\vert \geqslant r_{0}.
\end{equation*}

\item It is a consequence of statement 1. and Theorem \ref{Main1}.

\item As in statement 1., we have, $\left\vert \lambda +\mu \right\vert
\geqslant \cos \left( \tfrac{\varphi _{2}+\varphi _{3}}{2}\right) \times
\left\vert \lambda \right\vert $, so setting 
\begin{equation*}
C_{\varphi _{3}}:=\frac{1}{\cos \left( \frac{\varphi _{2}+\varphi _{3}}{2}%
\right) }+1,
\end{equation*}%
we have 
\begin{equation*}
\dfrac{1+\left\vert \lambda \right\vert +\left\vert \lambda +\mu \right\vert 
}{1+\left\vert \lambda +\mu \right\vert }\leqslant \dfrac{\left\vert \lambda
\right\vert }{1+\left\vert \lambda +\mu \right\vert }+\dfrac{1+\left\vert
\lambda +\mu \right\vert }{1+\left\vert \lambda +\mu \right\vert }\leqslant
C_{\varphi _{3}}.
\end{equation*}%
Let $\left( f,\tau \right) \in Z$, then Theorem \ref{Main2} and (\ref{estim u(0)}) imply that $\left( u,v\right) =(\mathcal{P}_{A,H,\mu }-\lambda I)^{-1}\left( f,\tau\right) ,$ satisfies
\begin{equation*}
\left\Vert u\right\Vert _{L^{p}(0,1;X)}\leqslant \dfrac{MC_{\varphi _{3}}}{%
1+|\lambda |}\left( \left\Vert \tau \right\Vert +\left\Vert f\right\Vert
_{Y}\right)  \quad \text{and} \quad v=\left\Vert u(0)\right\Vert \leqslant \dfrac{MC_{\varphi _{3}}}{1+|\lambda |}\left( \left\Vert \tau \right\Vert +\left\Vert f\right\Vert _{Y}\right) ,
\end{equation*}%
that is%
\begin{equation*}
\left\Vert (\mathcal{P}_{A,H,\mu }-\lambda I)^{-1}\left( f,\tau \right)
\right\Vert _{Z}\leqslant \dfrac{MC_{\varphi _{3}}}{1+|\lambda |}\left\Vert
\left( f,\tau \right) \right\Vert _{Z}.
\end{equation*}
\end{enumerate}
\end{proof}
The proof of Theorem \ref{gen sg first case} is given by Statement $3$ of the previous proposition.

\section{Spectral problem (\protect\ref{NewEquation})-(\protect\ref{RobinSpectralConditions}): second case\label{Section 2eme Cas}}

In all this section we suppose that $X,A,H$ satisfy (\ref{NewH1})$\sim $(\ref%
{MoinsABip}) and (\ref{D(Q)Principal})$\sim $(\ref{inv Lamb and interp}).

Note that the results of the previous section obtained under assumption (\ref%
{NewH1})$\sim $(\ref{MoinsABip}) can be used here, in particular results of
subsection \ref{spect 1}, Lemma \ref{estim omega}, Lemma \ref{estim vj} and
also estimate (\ref{est Q invQLambda}).

\subsection{Spectral estimates}

Let $\lambda \in S_{\varphi _{0}},\mu \in \mathbb{C}$. Recall that $H_{\mu }=H+\mu I$ and $Q_{\lambda }=-(-A+\lambda I)^{1/2}$.
We first furnish estimates concerning operators $Q_{\lambda },H_{\mu }$
which are easy consequences of our assumptions.

Again, in the following $M$ denotes various constants, independent of $%
\lambda ,\mu $, which can vary from one line to another.

\begin{lemma}
Let $\lambda \in S_{\varphi _{0}},\mu \in \mathbb{C}$. Then $H_{\mu }Q_{\lambda }^{-1}$ $\in \mathcal{L}\left( X\right) $,
moreover there exists a constant $M > 0$ independent of $\lambda$ and $\mu$ such that%
\begin{equation}
\left \Vert H_{\mu }Q_{\lambda }^{-1}\right \Vert _{\mathcal{L}(X)}\leqslant
M\frac{1+\left \vert \mu \right \vert }{\left( 1+\left \vert \lambda \right
\vert \right) ^{\varepsilon }}.  \label{Consequence3}
\end{equation}
\end{lemma}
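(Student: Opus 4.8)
The plan is to bound $\|H_{\mu}Q_{\lambda}^{-1}\|_{\mathcal{L}(X)}$ by splitting $H_{\mu}=H+\mu I$ and estimating each piece separately. First I would observe that, by assumption \eqref{D(Q)Principal}, namely $D(Q)\subset D(H)$, and since $Q_{\lambda}^{-1}\in\mathcal{L}(X)$ maps $X$ into $D(Q_{\lambda})=D(Q)\subset D(H)$, the operator $HQ_{\lambda}^{-1}$ is everywhere defined; it is bounded because $H$ is closed and $HQ_{\lambda}^{-1}$ is closed on all of $X$, hence bounded by the closed graph theorem. This already gives $H_{\mu}Q_{\lambda}^{-1}=HQ_{\lambda}^{-1}+\mu Q_{\lambda}^{-1}\in\mathcal{L}(X)$.

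Next I would produce the quantitative estimate. For the term $\mu Q_{\lambda}^{-1}$, I would use \eqref{EstimationQmoins1}, which gives $\|\mu Q_{\lambda}^{-1}\|_{\mathcal{L}(X)}\leqslant K|\mu|/(1+|\lambda|)^{1/2}$; since $\varepsilon\in(0,1/2]$ we have $(1+|\lambda|)^{1/2}\geqslant(1+|\lambda|)^{\varepsilon}$, so this is bounded by $K|\mu|/(1+|\lambda|)^{\varepsilon}\leqslant K(1+|\mu|)/(1+|\lambda|)^{\varepsilon}$. For the term $HQ_{\lambda}^{-1}$, the natural route is to compare $Q_{\lambda}$ with $Q_{t}$ for a well-chosen real $t$. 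Recall $Q_{\lambda}=-(-A+\lambda I)^{1/2}$ and $Q_{t}=-(-A+tI)^{1/2}$; writing $HQ_{\lambda}^{-1}=HQ_{t}^{-1}\,(Q_{t}Q_{\lambda}^{-1})$ and invoking assumption \eqref{NewH3bis}, which gives $\|HQ_{t}^{-1}\|_{\mathcal{L}(X)}\leqslant C_{H,Q}(1+t)^{-\varepsilon}$, it remains to control $\|Q_{t}Q_{\lambda}^{-1}\|_{\mathcal{L}(X)}=\|(-A+tI)^{1/2}(-A+\lambda I)^{-1/2}\|_{\mathcal{L}(X)}$. Choosing $t=|\lambda|$, this last factor is bounded by a constant independent of $\lambda$ by the same resolvent-comparison argument already used for \eqref{est Q invQLambda} (Lemma~2.6~a) p.~103 in \cite{Dore Yakubov}), and then $(1+t)^{-\varepsilon}=(1+|\lambda|)^{-\varepsilon}$ yields $\|HQ_{\lambda}^{-1}\|_{\mathcal{L}(X)}\leqslant M(1+|\lambda|)^{-\varepsilon}\leqslant M(1+|\mu|)(1+|\lambda|)^{-\varepsilon}$.

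Combining the two bounds gives \eqref{Consequence3}. The only mildly delicate point is the uniform control of $\|(-A+|\lambda|I)^{1/2}(-A+\lambda I)^{-1/2}\|_{\mathcal{L}(X)}$ over $\lambda\in S_{\varphi_{0}}$; this is where \eqref{NewH2} (the sectoriality of $-A$) enters, through the representation of fractional powers and the fact that $|\lambda|$ and $\lambda$ give comparable resolvent estimates on the sector, exactly as in the cited Dore--Yakubov lemma. Everything else is bookkeeping with the already-established estimates \eqref{EstimationQmoins1} and \eqref{est Q invQLambda}, and the elementary inequality $(1+|\lambda|)^{\varepsilon}\leqslant(1+|\lambda|)^{1/2}$ valid since $\varepsilon\leqslant 1/2$.
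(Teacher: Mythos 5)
Your proof follows the same route as the paper's: split $H_{\mu}Q_{\lambda}^{-1}=HQ_{\lambda}^{-1}+\mu Q_{\lambda}^{-1}$, bound the second term by \eqref{EstimationQmoins1} together with $\varepsilon\leqslant 1/2$, and factor the first term as $HQ_{\left\vert \lambda\right\vert}^{-1}\,Q_{\left\vert \lambda\right\vert}Q_{\lambda}^{-1}$ so that \eqref{NewH3bis} with $t=\left\vert \lambda\right\vert$ produces the factor $(1+\left\vert \lambda\right\vert)^{-\varepsilon}$. The one step you do not actually carry out is the uniform bound $\Vert Q_{\left\vert \lambda\right\vert}Q_{\lambda}^{-1}\Vert_{\mathcal{L}(X)}\leqslant M$, and the citation you offer does not literally cover it: Lemma 2.6 a) of \cite{Dore Yakubov}, as used for \eqref{est Q invQLambda}, estimates $(-A)^{\omega}(-A+tI)^{-1/2}$, i.e. a fractional power of $-A$ itself against a shifted square root, and says nothing directly about $(-A+\left\vert \lambda\right\vert I)^{1/2}(-A+\lambda I)^{-1/2}$. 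Note also that the naive reduction $Q_{\left\vert \lambda\right\vert}Q_{\lambda}^{-1}=\bigl(Q_{\left\vert \lambda\right\vert}Q^{-1}\bigr)\bigl(QQ_{\lambda}^{-1}\bigr)$ fails, since $\Vert Q_{\left\vert \lambda\right\vert}Q^{-1}\Vert_{\mathcal{L}(X)}$ grows like $\left\vert \lambda\right\vert^{1/2}$; so this point is genuinely where the work lies, not mere bookkeeping.

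The paper closes exactly this gap with a short explicit computation: using that $Q^{-1}$, $Q_{\left\vert \lambda\right\vert}^{-1}$, $Q_{\lambda}^{-1}$ commute and $Q^{2}=-A$, one writes
\begin{equation*}
Q_{\left\vert \lambda\right\vert}Q_{\lambda}^{-1}=\left( -A+\left\vert \lambda\right\vert I\right) Q^{-1}Q_{\left\vert \lambda\right\vert}^{-1}\,QQ_{\lambda}^{-1},
\end{equation*}
and splits $-A+\left\vert \lambda\right\vert I$ into its two summands. The $A$-part is bounded by $\Vert QQ_{\left\vert \lambda\right\vert}^{-1}\Vert_{\mathcal{L}(X)}\Vert QQ_{\lambda}^{-1}\Vert_{\mathcal{L}(X)}\leqslant M$, i.e. two applications of the Dore--Yakubov estimate \eqref{est Q invQLambda} (once with $\lambda$, once with $\left\vert \lambda\right\vert$), while in the $\left\vert \lambda\right\vert$-part the factor $\left\vert \lambda\right\vert$ is absorbed by the product of the two bounds \eqref{EstimationQmoins1}, since $\left\vert \lambda\right\vert\,\Vert Q_{\left\vert \lambda\right\vert}^{-1}\Vert_{\mathcal{L}(X)}\Vert Q_{\lambda}^{-1}\Vert_{\mathcal{L}(X)}\leqslant K^{2}\left\vert \lambda\right\vert/(1+\left\vert \lambda\right\vert)\leqslant K^{2}$. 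This is the content that your appeal to ``the representation of fractional powers and comparable resolvent estimates'' leaves unproved; once it is supplied, your argument coincides with the paper's proof (the initial boundedness of $HQ_{\lambda}^{-1}$ also follows at once from this factorization, without invoking the closed graph theorem).
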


\begin{proof}
From (\ref{est Q invQLambda}), we deduce%
\begin{eqnarray*}
\left \Vert Q_{\left \vert \lambda \right \vert }Q_{\lambda }^{-1}\right
\Vert _{\mathcal{L}(X)} &=&\left \Vert \left( -A+\left \vert \lambda \right
\vert I\right) Q^{-1}Q_{\left \vert \lambda \right \vert }^{-1}QQ_{\lambda
}^{-1}\right \Vert _{\mathcal{L}(X)} \vspace{0.1cm} \\
&\leqslant &\left \Vert AQ^{-1}Q_{\left \vert \lambda \right \vert
}^{-1}QQ_{\lambda }^{-1}\right \Vert _{\mathcal{L}(X)}+\left \Vert \left
\vert \lambda \right \vert Q^{-1}Q_{\left \vert \lambda \right \vert
}^{-1}QQ_{\lambda }^{-1}\right \Vert _{\mathcal{L}(X)} \vspace{0.1cm} \\
&\leqslant &\left \Vert QQ_{\left \vert \lambda \right \vert }^{-1}\right
\Vert _{\mathcal{L}(X)}\left \Vert QQ_{\lambda }^{-1}\right \Vert _{\mathcal{%
L}(X)}+\left \vert \lambda \right \vert \left \Vert Q_{\left \vert \lambda
\right \vert }^{-1}\right \Vert _{\mathcal{L}(X)}\left \Vert Q_{\lambda
}^{-1}\right \Vert _{\mathcal{L}(X)} \vspace{0.1cm} \\
&\leqslant &M,
\end{eqnarray*}%
and, from (\ref{NewH3bis})%
\begin{equation*}
\parallel HQ_{\lambda }^{-1}\parallel _{\mathcal{L}\left( X\right)
}\leqslant \parallel HQ_{\left \vert \lambda \right \vert }^{-1}\parallel _{%
\mathcal{L}\left( X\right) }\parallel Q_{\left \vert \lambda \right \vert
}Q_{\lambda }^{-1}\parallel _{\mathcal{L}\left( X\right) }\leqslant \dfrac{M%
}{\left( 1+\left \vert \lambda \right \vert \right) ^{\varepsilon }}.
\end{equation*}%
Moreover%
\begin{eqnarray*}
\left \Vert H_{\mu }Q_{\lambda }^{-1}\right \Vert _{\mathcal{L}(X)}
&\leqslant &\left \Vert HQ_{\lambda }^{-1}\right \Vert _{\mathcal{L}%
(X)}+\left \vert \mu \right \vert \left \Vert Q_{\lambda }^{-1}\right \Vert
_{\mathcal{L}(X)} \\
&\leqslant &\dfrac{M}{\left( 1+\left \vert \lambda \right \vert \right)
^{\varepsilon }}+\frac{1+\left \vert \mu \right \vert }{\left( 1+\left \vert
\lambda \right \vert \right) ^{1/2}}\leqslant M\frac{1+\left \vert \mu
\right \vert }{\left( 1+\left \vert \lambda \right \vert \right)
^{\varepsilon }}.
\end{eqnarray*}
\end{proof}

\begin{remark}
From \eqref{Consequence3}, assumption \eqref{NewH3bis} can be written as
\begin{equation*}
\exists \,\varepsilon \in (0,1/2],~\exists \,C_{H,Q}>0,~\forall \lambda \in
S_{\varphi _{0}} \,: ~\left\| HQ_{\lambda }^{-1}\right\|_{\mathcal{L}\left(
X\right) }\leqslant \dfrac{C_{H,Q}}{\left( 1+\left \vert \lambda \right
\vert \right) ^{\varepsilon }}.
\end{equation*}
\end{remark}

We now introduce the notation : for $\rho >0$%
\begin{equation*}
\Pi _{\varphi _{0},\rho }=\left\{ \left( \lambda ,\mu \right) \in S_{\varphi
_{0}}\times \mathbb{C}:\left\vert \lambda \right\vert \geqslant \rho \text{ \ and \ }\frac{\left\vert \lambda \right\vert }{\left\vert \mu \right\vert ^{1/\varepsilon }%
}\geqslant \rho \right\} ,
\end{equation*}%
where we have set $\dfrac{\left\vert \lambda \right\vert }{\left\vert \mu
\right\vert ^{1/\varepsilon }}=+\infty $ for $\mu =0$ and furnished results (see below) on 
\begin{equation*}
\Lambda _{\lambda ,\mu }=\left( Q_{\lambda }-H_{\mu }\right) +e^{2Q_{\lambda
}}\left( Q_{\lambda }+H_{\mu }\right) ,
\end{equation*}%
where $\lambda \in S_{\varphi _{0}},\mu \in \mathbb{C}.$

\begin{lemma}
\label{Grand Lamba inversible 2}There exist $\rho _{0}>0$ and $M > 0$
such that for all $\left( \lambda ,\mu \right) \in \Pi _{\varphi _{0},\rho
_{0}}:$%
\begin{equation}
\max \left \{ \left \Vert H_{\mu }Q_{\lambda }^{-1}\right \Vert _{\mathcal{L}%
(X)},\left \Vert \left( I+e^{2Q_{\lambda }}\right) ^{-1}\left(
I-e^{2Q_{\lambda }}\right) H_{\mu }Q_{\lambda }^{-1}\right \Vert _{\mathcal{L%
}(X)}\right \} \leqslant 1/2,  \label{est2}
\end{equation}%
\begin{equation}
\left \{ 
\begin{array}{l}
0 \in \rho\left(I-\left( I+e^{2Q_{\lambda }}\right) ^{-1}\left( I-e^{2Q_{\lambda }}\right)
H_{\mu }Q_{\lambda }^{-1}\right) \vspace{0.1cm} \\ 
\left \Vert \left[ I-\left( I+e^{2Q_{\lambda }}\right) ^{-1}\left(
I-e^{2Q_{\lambda }}\right) H_{\mu }Q_{\lambda }^{-1}\right] ^{-1}\right
\Vert _{\mathcal{L}(X)}\leqslant 2,%
\end{array}
\right.  \label{first inv 2}
\end{equation}
\begin{equation}
0\in \rho\left(\Lambda _{\lambda ,\mu }\right) \quad \text{and} \quad \left \Vert
\Lambda _{\lambda ,\mu }^{-1}\right \Vert _{\mathcal{L}(X)}\leqslant \dfrac{M%
}{\left( 1+\left \vert \lambda \right \vert \right) ^{1/2}},
\label{inversibilty det 2}
\end{equation}%
\begin{equation}
\left \Vert Q_{\lambda }\Lambda _{\lambda ,\mu }^{-1}\right \Vert _{\mathcal{%
L}(X)}\leqslant M,  \label{Q2InvLambda 2}
\end{equation}%
and%
\begin{equation}
\left \{ 
\begin{array}{l}
0 \in \rho\left(Q_{\lambda }-H_{\mu }\right) \vspace{0.1cm} \\ 
\left \Vert \left( Q_{\lambda }-H_{\mu }\right) ^{-1}\right \Vert _{\mathcal{%
L}(X)}\leqslant \dfrac{M}{\left( 1+\left \vert \lambda \right \vert \right)
^{1/2}} \vspace{0.1cm} \\ 
\left \Vert Q_{\lambda }\left( Q_{\lambda }-H_{\mu }\right) ^{-1}\right
\Vert _{\mathcal{L}(X)}\leqslant M \vspace{0.1cm} \\ 
\left \Vert \left( Q_{\lambda }+H_{\mu }\right) \left( Q_{\lambda }-H_{\mu
}\right) ^{-1}\right \Vert _{\mathcal{L}(X)}\leqslant M \vspace{0.1cm} \\ 
\left \Vert e^{2Q_{\lambda }}\left( Q_{\lambda }+H_{\mu }\right) \left(
Q_{\lambda }-H_{\mu }\right) ^{-1}\right \Vert _{\mathcal{L}(X)}\leqslant
1/2.%
\end{array}%
\right.  \label{est Q-H}
\end{equation}
\end{lemma}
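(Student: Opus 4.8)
The plan is to establish the five bulleted estimates in turn, exploiting the smallness gained from the region $\Pi_{\varphi_0,\rho_0}$: on this set, $\left|\mu\right|/(1+\left|\lambda\right|)^{\varepsilon}$ is small, which is exactly the quantity controlling $\left\Vert H_\mu Q_\lambda^{-1}\right\Vert_{\mathcal{L}(X)}$ via \eqref{Consequence3}. So first I would fix $\rho_0>0$ large enough that \eqref{Consequence3} together with the uniform bound \eqref{estim (I - expQ)} on $(I\pm e^{2Q_\lambda})^{-1}$ forces both quantities in \eqref{est2} below $1/2$; concretely, \eqref{Consequence3} gives $\left\Vert H_\mu Q_\lambda^{-1}\right\Vert_{\mathcal{L}(X)}\leqslant M(1+\left|\mu\right|)/(1+\left|\lambda\right|)^{\varepsilon}\leqslant M'(1+\left|\mu\right|^{1/\varepsilon}/(1+\left|\lambda\right|))^{\varepsilon}\leqslant M'(1+1/\rho_0)^{\varepsilon}$, which can be made $\leqslant 1/2$, and the second term in \eqref{est2} then follows by multiplying by the bounded operator $(I+e^{2Q_\lambda})^{-1}(I-e^{2Q_\lambda})$ whose norm is $\leqslant 2M$ from Lemma \ref{Inv (I-exp2Q)} (one may need to slightly enlarge $\rho_0$).

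Once \eqref{est2} is in hand, \eqref{first inv 2} is an immediate Neumann-series argument: since $\left\Vert(I+e^{2Q_\lambda})^{-1}(I-e^{2Q_\lambda})H_\mu Q_\lambda^{-1}\right\Vert_{\mathcal{L}(X)}\leqslant 1/2<1$, the operator $I-(I+e^{2Q_\lambda})^{-1}(I-e^{2Q_\lambda})H_\mu Q_\lambda^{-1}$ is boundedly invertible with inverse of norm $\leqslant 2$. For \eqref{inversibilty det 2} and \eqref{Q2InvLambda 2} I would factor $\Lambda_{\lambda,\mu}$ the ``$Q_\lambda$-principal'' way, mirroring the first-case proof of Lemma \ref{Grand Lamba inversible} but swapping the roles of $Q_\lambda$ and $H_\mu$: write
\begin{equation*}
\Lambda_{\lambda,\mu}=(I+e^{2Q_\lambda})Q_\lambda-(I-e^{2Q_\lambda})H_\mu=(I+e^{2Q_\lambda})\bigl[I-(I+e^{2Q_\lambda})^{-1}(I-e^{2Q_\lambda})H_\mu Q_\lambda^{-1}\bigr]Q_\lambda,
\end{equation*}
so that $\Lambda_{\lambda,\mu}^{-1}=Q_\lambda^{-1}\bigl[I-(I+e^{2Q_\lambda})^{-1}(I-e^{2Q_\lambda})H_\mu Q_\lambda^{-1}\bigr]^{-1}(I+e^{2Q_\lambda})^{-1}$. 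Then \eqref{inversibilty det 2} follows from \eqref{EstimationQmoins1}, \eqref{first inv 2} and \eqref{estim (I - expQ)}, while \eqref{Q2InvLambda 2} follows by noting $Q_\lambda\Lambda_{\lambda,\mu}^{-1}=\bigl[I-(I+e^{2Q_\lambda})^{-1}(I-e^{2Q_\lambda})H_\mu Q_\lambda^{-1}\bigr]^{-1}(I+e^{2Q_\lambda})^{-1}$, a product of operators of norm $\leqslant 2\cdot M$.

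For the block \eqref{est Q-H} I would proceed analogously on $Q_\lambda-H_\mu=\bigl(I-H_\mu Q_\lambda^{-1}\bigr)Q_\lambda$: by \eqref{est2} the factor $I-H_\mu Q_\lambda^{-1}$ is boundedly invertible with inverse of norm $\leqslant 2$, giving $0\in\rho(Q_\lambda-H_\mu)$, the bound on $\left\Vert(Q_\lambda-H_\mu)^{-1}\right\Vert$ (via \eqref{EstimationQmoins1}) and on $\left\Vert Q_\lambda(Q_\lambda-H_\mu)^{-1}\right\Vert=\left\Vert(I-H_\mu Q_\lambda^{-1})^{-1}\right\Vert\leqslant 2$. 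For the fourth line write $(Q_\lambda+H_\mu)(Q_\lambda-H_\mu)^{-1}=(I+H_\mu Q_\lambda^{-1})(I-H_\mu Q_\lambda^{-1})^{-1}$, whose norm is $\leqslant (1+1/2)\cdot 2=3=:M$; and for the last line multiply by $e^{2Q_\lambda}$ and use \eqref{estim exp Q} (or simply $\left\Vert e^{2Q_\lambda}\right\Vert_{\mathcal{L}(X)}\leqslant K_1 e^{-2\delta}$ from \eqref{estim exp xqlambda}, enlarging $\rho_0$ if the resulting constant exceeds $1/2$). The main obstacle is purely bookkeeping: one must choose a single $\rho_0$ that simultaneously makes all the relevant quantities small, and keep track that the constant $M$ in each line is genuinely independent of $(\lambda,\mu)\in\Pi_{\varphi_0,\rho_0}$ — here the key leverage is always the estimate $\left\Vert H_\mu Q_\lambda^{-1}\right\Vert_{\mathcal{L}(X)}\leqslant M(1+\left|\mu\right|)/(1+\left|\lambda\right|)^{\varepsilon}$ of \eqref{Consequence3} combined with the definition of $\Pi_{\varphi_0,\rho_0}$, and the uniform-in-$\lambda$ bounds on $(I\pm e^{2Q_\lambda})^{-1}$ and $Q_\lambda^{-1}$ from subsection \ref{spect 1}.
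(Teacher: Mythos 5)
Your overall route is the paper's: control $\left\Vert H_{\mu }Q_{\lambda }^{-1}\right\Vert _{\mathcal{L}(X)}$ via \eqref{Consequence3}, force it below $1/2$ on $\Pi _{\varphi _{0},\rho _{0}}$, invert by a Neumann series to get \eqref{first inv 2}, then factor $\Lambda _{\lambda ,\mu }=(I+e^{2Q_{\lambda }})\bigl[I-(I+e^{2Q_{\lambda }})^{-1}(I-e^{2Q_{\lambda }})H_{\mu }Q_{\lambda }^{-1}\bigr]Q_{\lambda }$ and $Q_{\lambda }-H_{\mu }=(I-H_{\mu }Q_{\lambda }^{-1})Q_{\lambda }$, exactly as in the paper's proof. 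However, the one step that actually produces the smallness is written incorrectly: your chain ends with $\left\Vert H_{\mu }Q_{\lambda }^{-1}\right\Vert _{\mathcal{L}(X)}\leqslant M^{\prime }(1+1/\rho _{0})^{\varepsilon }$, and this quantity does not tend to $0$ as $\rho _{0}\rightarrow +\infty $ (it is always $\geqslant M^{\prime }$), so it cannot be ``made $\leqslant 1/2$'' by enlarging $\rho _{0}$ when the constant $M^{\prime }$ inherited from \eqref{Consequence3} is large. The slip comes from throwing away the decay of $1/(1+\left\vert \lambda \right\vert )$; the correct estimate keeps it and splits, as the paper does: $\dfrac{1+\left\vert \mu \right\vert }{(1+\left\vert \lambda \right\vert )^{\varepsilon }}\leqslant \left( \dfrac{1}{\left\vert \lambda \right\vert }\right) ^{\varepsilon }+\left( \dfrac{\left\vert \mu \right\vert ^{1/\varepsilon }}{\left\vert \lambda \right\vert }\right) ^{\varepsilon }\leqslant 2\rho _{0}^{-\varepsilon }$ on $\Pi _{\varphi _{0},\rho _{0}}$, which does go to $0$. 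With this correction \eqref{est2} holds for $\rho _{0}$ large, and everything downstream goes through as you describe (your variant $(Q_{\lambda }+H_{\mu })(Q_{\lambda }-H_{\mu })^{-1}=(I+H_{\mu }Q_{\lambda }^{-1})(I-H_{\mu }Q_{\lambda }^{-1})^{-1}$ is a harmless alternative to the paper's $I+2Q_{\lambda }(Q_{\lambda }-H_{\mu })^{-1}$).

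One further small point, on the last line of \eqref{est Q-H}: your parenthetical alternative, using only the $\lambda$-uniform bound $\left\Vert e^{2Q_{\lambda }}\right\Vert _{\mathcal{L}(X)}\leqslant K_{1}e^{-2\delta }$ from \eqref{estim exp xqlambda} and ``enlarging $\rho _{0}$'', does not work: that bound is independent of $\lambda $ and of $\rho _{0}$, so enlarging $\rho _{0}$ changes nothing if $K_{1}e^{-2\delta }M>1/2$. You need the decay in $\left\vert \lambda \right\vert $ from \eqref{estim exp Q} combined with $\left\vert \lambda \right\vert \geqslant \rho _{0}$, which is your primary suggestion and is what the paper uses.
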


\begin{proof}
Let $\rho >0$ and $\left( \lambda ,\mu \right) \in \Pi _{\varphi _{0},\rho }$%
. Then 
\begin{equation*}
I-\left( I+e^{2Q_{\lambda }}\right) ^{-1}\left( I-e^{2Q_{\lambda }}\right)
H_{\mu }Q_{\lambda }^{-1}\in \mathcal{L}(X),
\end{equation*}
and from (\ref{Consequence3}) together with Lemma \ref{Inv (I-exp2Q)}%
\begin{eqnarray*}
\max \left \{ \left \Vert H_{\mu }Q_{\lambda }^{-1}\right \Vert _{\mathcal{%
L}(X)},\left \Vert \left( I+e^{2Q_{\lambda }}\right) ^{-1}\left(
I-e^{2Q_{\lambda }}\right) H_{\mu }Q_{\lambda }^{-1}\right \Vert _{\mathcal{L%
}(X)}\right \} &\hspace{-0.27cm}\leqslant &\hspace{-0.27cm} M\left \Vert H_{\mu }Q_{\lambda }^{-1}\right \Vert _{\mathcal{L}(X)} \\
&\hspace{-0.27cm}\leqslant & \hspace{-0.27cm} M\frac{1+\left \vert \mu \right \vert }{\left( 1+\left \vert
\lambda \right \vert \right) ^{\varepsilon }} \\
&\hspace{-0.27cm}\leqslant & \hspace{-0.27cm} M\left( \left( \frac{1}{\left \vert \lambda \right \vert }\right) ^{\varepsilon }\hspace{-0.1cm}+\hspace{-0.05cm}\left( \frac{\left \vert \mu \right \vert
^{1/\varepsilon }}{\left \vert \lambda \right \vert }\right) ^{\varepsilon
}\right) .
\end{eqnarray*}%
So there exists $\rho _{0}>0$ such that for all $\left( \lambda ,\mu \right)
\in \Pi _{\varphi _{0},\rho _{0}}$: (\ref{est2}) and (\ref{first inv 2})
hold. Now, let $\left( \lambda ,\mu \right) \in \Pi _{\varphi _{0},\rho
_{0}} $. We deduce that 
\begin{eqnarray*}
\Lambda _{\lambda ,\mu } &=&\left( I+e^{2Q_{\lambda }}\right) Q_{\lambda
}-\left( I-e^{2Q_{\lambda }}\right) H_{\mu } \\
&=&\left( I+e^{2Q_{\lambda }}\right) \left[ I-\left( I+e^{2Q_{\lambda
}}\right) ^{-1}\left( I-e^{2Q_{\lambda }}\right) H_{\mu }Q_{\lambda }^{-1}%
\right] Q_{\lambda },
\end{eqnarray*}%
is boundedly invertible with%
\begin{equation*}
\left \{ 
\begin{array}{c}
\Lambda _{\lambda ,\mu }^{-1}=Q_{\lambda }^{-1}\left[ I-\left(
I+e^{2Q_{\lambda }}\right) ^{-1}\left( I-e^{2Q_{\lambda }}\right) H_{\mu
}Q_{\lambda }^{-1}\right] ^{-1}\left( I+e^{2Q_{\lambda }}\right) ^{-1} \vspace{0.1cm} \\ 
Q_{\lambda }\Lambda _{\lambda ,\mu }^{-1}=\left[ I-\left( I+e^{2Q_{\lambda
}}\right) ^{-1}\left( I-e^{2Q_{\lambda }}\right) H_{\mu }Q_{\lambda }^{-1}%
\right] ^{-1}\left( I+e^{2Q_{\lambda }}\right) ^{-1},%
\end{array}%
\right.
\end{equation*}%
so%
\begin{eqnarray*}
\left \Vert \Lambda _{\lambda ,\mu }^{-1}\right \Vert _{\mathcal{L}(X)} &\leqslant &M\left \Vert Q_{\lambda }^{-1}\right \Vert _{\mathcal{L}%
(X)}\left \Vert I-\left( I+e^{2Q_{\lambda }}\right) ^{-1}\left(
I-e^{2Q_{\lambda }}\right) H_{\mu }Q_{\lambda }^{-1}\right \Vert _{\mathcal{L%
}(X)}\left \Vert I+e^{2Q_{\lambda }}\right \Vert _{\mathcal{L}(X)} \\
&\leqslant &\dfrac{M}{\left( 1+\left \vert \lambda \right \vert \right)
^{1/2}},
\end{eqnarray*}%
and $\left \Vert Q_{\lambda }\Lambda _{\lambda ,\mu }^{-1}\right \Vert _{%
\mathcal{L}(X)}\leqslant M$. Moreover, from (\ref{est2}),{\Large \ }$Q_{\lambda }-H_{\mu }=\left(
I-H_{\mu }Q_{\lambda }^{-1}\right) Q_{\lambda }$ is boundedly invertible with%
\begin{equation*}
\left( Q_{\lambda }-H_{\mu }\right) ^{-1}=Q_{\lambda }^{-1}\left( I-H_{\mu
}Q_{\lambda }^{-1}\right) ^{-1},
\end{equation*}%
so%
\begin{equation*}
\left \{ 
\begin{array}{l}
\left \Vert \left( Q_{\lambda }-H_{\mu }\right) ^{-1}\right \Vert _{\mathcal{%
L}(X)}\leqslant \left \Vert Q_{\lambda }^{-1}\right \Vert _{\mathcal{L}%
(X)}\left \Vert \left( I-H_{\mu }Q_{\lambda }^{-1}\right) ^{-1}\right \Vert
_{\mathcal{L}(X)}\leqslant \dfrac{M}{\left( 1+\left \vert \lambda \right
\vert \right) ^{1/2}} \\ 
\left \Vert Q_{\lambda }\left( Q_{\lambda }-H_{\mu }\right) ^{-1}\right
\Vert _{\mathcal{L}(X)}=\left \Vert \left( I-H_{\mu }Q_{\lambda
}^{-1}\right) ^{-1}\right \Vert _{\mathcal{L}(X)}\leqslant M,%
\end{array}%
\right.
\end{equation*}%
and%
\begin{eqnarray*}
\left \Vert \left( Q_{\lambda }+H_{\mu }\right) \left( Q_{\lambda }-H_{\mu
}\right) ^{-1}\right \Vert _{\mathcal{L}(X)} &=&\left \Vert \left(
-Q_{\lambda }+H_{\mu }+2Q_{\lambda }\right) \left( Q_{\lambda }-H_{\mu
}\right) ^{-1}\right \Vert _{\mathcal{L}(X)} \\
&\leqslant &1+2\left \Vert Q_{\lambda }\left( Q_{\lambda }-H_{\mu }\right)
^{-1}\right \Vert _{\mathcal{L}(X)} \\
&\leqslant &M.
\end{eqnarray*}%
Finally 
\begin{eqnarray*}
\left \Vert e^{2Q_{\lambda }}\left( Q_{\lambda }+H_{\mu }\right) \left(
Q_{\lambda }-H_{\mu }\right) ^{-1}\right \Vert _{\mathcal{L}(X)} &\leqslant &\left \Vert e^{2Q_{\lambda }}\right \Vert _{\mathcal{L}(X)}\left
\Vert \left( Q_{\lambda }+H_{\mu }\right) \left( Q_{\lambda }-H_{\mu
}\right) ^{-1}\right \Vert _{\mathcal{L}(X)} \\
&\leqslant &M\left \Vert e^{2Q_{\lambda }}\right \Vert _{\mathcal{L}(X)},
\end{eqnarray*}%
and due to (\ref{estim exp Q}), we can eventually increase $\rho _{0}$, for $\left( \lambda ,\mu \right) \in \Pi _{\varphi _{0},\rho_{0}}$, which implies that $\left \vert \lambda \right \vert \geqslant \rho _{0}$, in order to have $M\left \Vert e^{2Q_{\lambda }}\right \Vert _{\mathcal{L}(X)}\leqslant 1/2.$
\end{proof}
The proof of the following Lemma will use (\ref{Q-H stability bis}), wich is equivalent to (\ref{Q-H stability}) from Remark~\ref{Rem equivalence hyp}, to study, for a given 
$\left( \lambda ,\mu \right) \in \Pi _{\varphi _{0},\rho _{0}}$, operator $Q_{\lambda }^{2}\left( Q_{\lambda }-H_{\mu }\right)
^{-1}Q_{\lambda }^{-1}$.
\begin{lemma}\label{Stab Op}
Assume (\ref{NewH2}), (\ref{D(Q)Principal}), (\ref{NewH3bis})
and (\ref{Q-H stability}). Fix $\left( \lambda _{1},\mu _{1}\right) \in \Pi
_{\varphi _{0},\rho _{0}}$. Then, there exists $M > 0$ such that for
any $\left( \lambda ,\mu \right) \in \Pi _{\varphi _{0},\rho _{0}}$, we have

\begin{enumerate}
\item $\left( Q_{\lambda }-H_{\mu }\right) ^{-1}Q_{\lambda }^{-1}=\left(
Q_{\lambda _{1}}-H_{\mu _{1}}\right) ^{-1}Q^{-1}P_{\lambda ,\mu }$, where $%
P_{\lambda ,\mu }\in \mathcal{L}(X)$ with 
$$\left \Vert P_{\lambda ,\mu}\right \Vert _{\mathcal{L}(X)}\leqslant M.$$

\item $Q_{\lambda }^{2}\left( Q_{\lambda }-H_{\mu }\right) ^{-1}Q_{\lambda
}^{-1}\in \mathcal{L}(X)$ with 
\begin{equation}
\left \Vert Q_{\lambda }^{2}\left( Q_{\lambda }-H_{\mu }\right)
^{-1}Q_{\lambda }^{-1}\right \Vert _{\mathcal{L}(X)}\leqslant M.
\label{est Q2Q-HQmoins1}
\end{equation}

\item There exists $W_{\lambda ,\mu }\in \mathcal{L}(X)$ such that%
\begin{equation}
\Lambda _{\lambda ,\mu }^{-1}=\left( Q_{\lambda }-H_{\mu }\right)
^{-1}\left( I+e^{2Q_{\lambda }}W_{\lambda ,\mu }\right) ,
\label{Rel Lambda Q-H}
\end{equation}%
with
\begin{equation}
\left \Vert W_{\lambda ,\mu }\right \Vert _{\mathcal{L}(X)}\leqslant M\text{
\ and \ }\left \Vert Q_{\lambda }^{2}\Lambda _{\lambda ,\mu }^{-1}Q_{\lambda
}^{-1}\right \Vert _{\mathcal{L}(X)}\leqslant M.  \label{est Lambda}
\end{equation}
\end{enumerate}
\end{lemma}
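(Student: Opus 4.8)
The plan is to exploit the fundamental comparison $Q_\lambda = Q + T_\lambda$ with $T_\lambda \in \mathcal{L}(X)$ and $Q^{-1}T_\lambda = T_\lambda Q^{-1}$ (Lemma \ref{Compar racine}, applied to $E=X$, $L=-A$, so $D_\lambda = -A+\lambda I$ and $D_\lambda^{1/2} = -Q_\lambda$), together with the key structural hypothesis \eqref{Q-H stability bis} (equivalent to \eqref{Q-H stability} by Remark \ref{Rem equivalence hyp}), which says that $\left(Q_\lambda-H_\mu\right)^{-1}$ maps $D(Q)$ into $D(Q^2)$ for every $(\lambda,\mu)\in S_{\varphi_0}\times\mathbb{C}$. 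First I would fix the reference pair $(\lambda_1,\mu_1)\in\Pi_{\varphi_0,\rho_0}$. For statement 1, the idea is to write, for an arbitrary $(\lambda,\mu)\in\Pi_{\varphi_0,\rho_0}$,
\begin{equation*}
\left(Q_\lambda - H_\mu\right)\left(Q_{\lambda_1}-H_{\mu_1}\right)^{-1} = I + \left[\left(Q_\lambda-H_\mu\right)-\left(Q_{\lambda_1}-H_{\mu_1}\right)\right]\left(Q_{\lambda_1}-H_{\mu_1}\right)^{-1},
\end{equation*}
and to observe that the bracketed difference equals $\left(T_\lambda - T_{\lambda_1}\right) + \left(\mu_1-\mu\right)I - \left(H_\mu - H_{\mu_1}\right)$ acting after $\left(Q_{\lambda_1}-H_{\mu_1}\right)^{-1}$; since $\left(Q_{\lambda_1}-H_{\mu_1}\right)^{-1}$ is bounded and the last term is handled through $HQ_{\lambda_1}^{-1}\cdot Q_{\lambda_1}\left(Q_{\lambda_1}-H_{\mu_1}\right)^{-1}$ by \eqref{Consequence3} and \eqref{est Q-H}, one gets a bounded operator. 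Composing on the right with $Q_\lambda^{-1} = Q^{-1} - Q^{-1}T_\lambda Q_\lambda^{-1}$ and rearranging, I would define $P_{\lambda,\mu}$ so that $\left(Q_\lambda-H_\mu\right)^{-1}Q_\lambda^{-1} = \left(Q_{\lambda_1}-H_{\mu_1}\right)^{-1}Q^{-1}P_{\lambda,\mu}$; the uniform bound $\|P_{\lambda,\mu}\|_{\mathcal{L}(X)}\leqslant M$ follows from the estimates of Lemma \ref{Grand Lamba inversible 2}, Lemma \ref{Compar racine} statement 3 ($\|T_\lambda\|\leqslant C_0C_L|\lambda|^{1/2}$, compensated by the gain in $Q_\lambda^{-1}$), and \eqref{EstimationQmoins1}. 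Care is needed because $|\lambda|^{1/2}$ grows; it is absorbed by $\|Q_\lambda^{-1}\|\leqslant K(1+|\lambda|)^{-1/2}$ and by the fact that $|\mu|^{1/\varepsilon}\leqslant |\lambda|/\rho_0$ on $\Pi_{\varphi_0,\rho_0}$.

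For statement 2, I would write $Q_\lambda^2\left(Q_\lambda-H_\mu\right)^{-1}Q_\lambda^{-1}$ and push $Q_\lambda^2$ through using statement 1: from $\left(Q_\lambda-H_\mu\right)^{-1}Q_\lambda^{-1} = \left(Q_{\lambda_1}-H_{\mu_1}\right)^{-1}Q^{-1}P_{\lambda,\mu}$, one needs that $Q^{-1}$ and then $\left(Q_{\lambda_1}-H_{\mu_1}\right)^{-1}$ land inside $D(Q_\lambda^2)$. The hypothesis \eqref{Q-H stability bis} (with the reference pair) gives precisely $\left(Q_{\lambda_1}-H_{\mu_1}\right)^{-1}\left(D(Q)\right)\subset D(Q^2) = D(Q_\lambda^2)$, so $Q_\lambda^2\left(Q_{\lambda_1}-H_{\mu_1}\right)^{-1}Q^{-1}$ is a well-defined closed operator defined on all of $X$, hence bounded by the closed graph theorem; a quantitative bound independent of $\lambda$ comes from writing $Q_\lambda^2 = Q^2 + 2QT_\lambda + T_\lambda^2 + \lambda$-terms and using $\|Q^2\left(Q_{\lambda_1}-H_{\mu_1}\right)^{-1}Q^{-1}\|<\infty$ (fixed), $\|QT_\lambda(\,\cdot\,)Q^{-1}\|\sim|\lambda|^{1/2}$ against a compensating $Q_\lambda^{-1}$ hidden in the overall composition — here I would instead be slightly cleverer and keep one factor $Q_\lambda^{-1}$ uncancelled, so $\|Q_\lambda^2\left(Q_\lambda-H_\mu\right)^{-1}Q_\lambda^{-1}\| = \|Q_\lambda\cdot Q_\lambda\left(Q_\lambda-H_\mu\right)^{-1}\cdot Q_\lambda^{-1}\|$ where the middle factor is bounded by \eqref{est Q-H} and the outer $Q_\lambda$ and $Q_\lambda^{-1}$ are balanced after inserting the factorization of statement 1. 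This is the step I expect to be the main obstacle: getting a bound \emph{uniform} in $(\lambda,\mu)$ out of the qualitative inclusion \eqref{Q-H stability bis}, which a priori only gives boundedness for each fixed pair. The resolution is that only the \emph{single} reference pair $(\lambda_1,\mu_1)$ carries the unbounded "$D(Q^2)$-regularizing" constant, and all the $\lambda$-dependence is shuffled into the explicit bounded operators $T_\lambda$, $Q_\lambda^{-1}$, $P_{\lambda,\mu}$ whose norms are controlled on $\Pi_{\varphi_0,\rho_0}$.

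For statement 3, I would start from the factorization $\Lambda_{\lambda,\mu} = \left(I+e^{2Q_\lambda}\right)Q_\lambda - \left(I-e^{2Q_\lambda}\right)H_\mu$ and rewrite it as $\Lambda_{\lambda,\mu} = \left(Q_\lambda - H_\mu\right) + e^{2Q_\lambda}\left(Q_\lambda + H_\mu\right) = \left[I + e^{2Q_\lambda}\left(Q_\lambda+H_\mu\right)\left(Q_\lambda-H_\mu\right)^{-1}\right]\left(Q_\lambda - H_\mu\right)$, which is legitimate since $0\in\rho\left(Q_\lambda-H_\mu\right)$ by \eqref{est Q-H}. The last estimate in \eqref{est Q-H} gives $\|e^{2Q_\lambda}\left(Q_\lambda+H_\mu\right)\left(Q_\lambda-H_\mu\right)^{-1}\|\leqslant 1/2$, so the bracketed factor is invertible with norm $\leqslant 2$ by Neumann series, and inverting yields
\begin{equation*}
\Lambda_{\lambda,\mu}^{-1} = \left(Q_\lambda - H_\mu\right)^{-1}\left[I + e^{2Q_\lambda}\left(Q_\lambda+H_\mu\right)\left(Q_\lambda-H_\mu\right)^{-1}\right]^{-1}.
\end{equation*}
Then I would define $W_{\lambda,\mu}$ by pulling the $e^{2Q_\lambda}$ to the front of the correction: expanding the Neumann series, $\left[I+e^{2Q_\lambda}R_{\lambda,\mu}\right]^{-1} = I - e^{2Q_\lambda}R_{\lambda,\mu}\left[I+e^{2Q_\lambda}R_{\lambda,\mu}\right]^{-1}$ where $R_{\lambda,\mu} = \left(Q_\lambda+H_\mu\right)\left(Q_\lambda-H_\mu\right)^{-1}$, so $W_{\lambda,\mu} := -R_{\lambda,\mu}\left[I+e^{2Q_\lambda}R_{\lambda,\mu}\right]^{-1}$, giving \eqref{Rel Lambda Q-H} with $\|W_{\lambda,\mu}\|_{\mathcal{L}(X)}\leqslant 2M$ by \eqref{est Q-H}. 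Finally, $Q_\lambda^2\Lambda_{\lambda,\mu}^{-1}Q_\lambda^{-1} = Q_\lambda^2\left(Q_\lambda-H_\mu\right)^{-1}Q_\lambda^{-1}\cdot Q_\lambda\left(I+e^{2Q_\lambda}W_{\lambda,\mu}\right)Q_\lambda^{-1}$; since $e^{2Q_\lambda}$ maps into $D(Q_\lambda^\infty)$ and $\|Q_\lambda^2 e^{2Q_\lambda}\|$ is bounded on $\Pi_{\varphi_0,\rho_0}$ by \eqref{estim exp Q}, while the first factor is bounded by \eqref{est Q2Q-HQmoins1}, one obtains $\|Q_\lambda^2\Lambda_{\lambda,\mu}^{-1}Q_\lambda^{-1}\|_{\mathcal{L}(X)}\leqslant M$, completing \eqref{est Lambda}.
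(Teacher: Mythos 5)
Your statement 3 is essentially the paper's argument (factor $\Lambda_{\lambda,\mu}=\left(I+e^{2Q_{\lambda}}R_{\lambda,\mu}\right)\left(Q_{\lambda}-H_{\mu}\right)$ with $R_{\lambda,\mu}=\left(Q_{\lambda}+H_{\mu}\right)\left(Q_{\lambda}-H_{\mu}\right)^{-1}$, invert via the last bound in \eqref{est Q-H}, then use \eqref{est Q2Q-HQmoins1} and \eqref{estim exp Q}), and your final splitting $Q_{\lambda}^{2}\Lambda_{\lambda,\mu}^{-1}Q_{\lambda}^{-1}=\bigl[Q_{\lambda}^{2}\left(Q_{\lambda}-H_{\mu}\right)^{-1}Q_{\lambda}^{-1}\bigr]\bigl[I+Q_{\lambda}e^{2Q_{\lambda}}W_{\lambda,\mu}Q_{\lambda}^{-1}\bigr]$ is in fact more carefully justified than the paper's displayed inequality. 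The genuine gap is in statement 1, on which everything else rests. You start from $\left(Q_{\lambda}-H_{\mu}\right)\left(Q_{\lambda_{1}}-H_{\mu_{1}}\right)^{-1}=I+\bigl[\left(Q_{\lambda}-H_{\mu}\right)-\left(Q_{\lambda_{1}}-H_{\mu_{1}}\right)\bigr]\left(Q_{\lambda_{1}}-H_{\mu_{1}}\right)^{-1}$ and conclude "one gets a bounded operator"; but this is oriented the wrong way. The difference in the bracket is $\left(Q_{\lambda}-Q_{\lambda_{1}}\right)+\left(\mu_{1}-\mu\right)I$ (note $H$ cancels, since $H_{\mu}-H_{\mu_{1}}=\left(\mu-\mu_{1}\right)I$ — your list double counts the $\mu$-term and invokes a nonexistent $HQ_{\lambda_{1}}^{-1}$ contribution), and its norm grows like $\left\vert\lambda\right\vert^{1/2}$ while the \emph{fixed} resolvent $\left(Q_{\lambda_{1}}-H_{\mu_{1}}\right)^{-1}$ provides no decay in $\lambda$. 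So boundedness of this operator is trivial and useless: what is needed is a $\left(\lambda,\mu\right)$-uniform bound on the \emph{inverse} direction, i.e.\ on $\left(Q_{\lambda_{1}}-H_{\mu_{1}}\right)\left(Q_{\lambda}-H_{\mu}\right)^{-1}Q_{\lambda}^{-1}$, where the decay of $\left(Q_{\lambda}-H_{\mu}\right)^{-1}$ and $Q_{\lambda}^{-1}$ (each of order $\left(1+\left\vert\lambda\right\vert\right)^{-1/2}$, by \eqref{est Q-H} and \eqref{EstimationQmoins1}) is available to absorb $\left\Vert T_{\lambda,\lambda_{1}}\right\Vert\lesssim 1+\left\vert\lambda\right\vert^{1/2}$ and $\left\vert\mu-\mu_{1}\right\vert\lesssim\left\vert\lambda\right\vert^{\varepsilon}$; a Neumann-series inversion of your identity is impossible since the perturbation is large.

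Moreover, the step "composing on the right with $Q_{\lambda}^{-1}=Q^{-1}-Q^{-1}T_{\lambda}Q_{\lambda}^{-1}$ and rearranging" does not by itself produce the required form $\left(Q_{\lambda_{1}}-H_{\mu_{1}}\right)^{-1}Q^{-1}P_{\lambda,\mu}$: the factor $Q^{-1}$ must land \emph{immediately to the right} of $\left(Q_{\lambda_{1}}-H_{\mu_{1}}\right)^{-1}$, and it does not commute with the $H$-containing correction factor you would have to move it past — this is exactly the noncommutativity the whole paper is about. The paper's proof avoids any such commutation: it writes $\left(Q_{\lambda}-H_{\mu}\right)^{-1}Q_{\lambda}^{-1}=\left(Q_{\lambda_{1}}-H_{\mu_{1}}\right)^{-1}\bigl[Q_{\lambda}^{-1}+\left(\mu-\mu_{1}\right)\left(Q_{\lambda}-H_{\mu}\right)^{-1}Q_{\lambda}^{-1}+T_{\lambda,\lambda_{1}}\left(Q_{\lambda}-H_{\mu}\right)^{-1}Q_{\lambda}^{-1}\bigr]$, keeping $Q_{\lambda}^{-1}$ glued on the right of every term, and defines $P_{\lambda,\mu}=QQ_{\lambda}^{-1}\bigl[I+\left(\mu_{1}-\mu\right)Q_{\lambda}\left(Q_{\lambda}-H_{\mu}\right)^{-1}Q_{\lambda}^{-1}+T_{\lambda,\lambda_{1}}Q_{\lambda}\left(Q_{\lambda}-H_{\mu}\right)^{-1}Q_{\lambda}^{-1}\bigr]$, so that $Q^{-1}$ cancels against the leading $Q$ (only the harmless commutation of $T_{\lambda,\lambda_{1}}$ with $Q^{-1},Q_{\lambda}^{-1}$ from Lemma \ref{Compar racine} is used). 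Relatedly, in statement 2 your concrete manipulation is off: conjugating the bounded operator $Q_{\lambda}\left(Q_{\lambda}-H_{\mu}\right)^{-1}$ by $Q_{\lambda}$ and $Q_{\lambda}^{-1}$ does not stay bounded; the correct route (and the paper's) is the exact split $Q_{\lambda}^{2}=-A+\lambda I$, bounding $\left\vert\lambda\right\vert\left\Vert\left(Q_{\lambda}-H_{\mu}\right)^{-1}\right\Vert\left\Vert Q_{\lambda}^{-1}\right\Vert\leqslant M$ and $\left\Vert A\left(Q_{\lambda}-H_{\mu}\right)^{-1}Q_{\lambda}^{-1}\right\Vert\leqslant\left\Vert A\left(Q_{\lambda_{1}}-H_{\mu_{1}}\right)^{-1}Q^{-1}\right\Vert\left\Vert P_{\lambda,\mu}\right\Vert$, where only the fixed operator $A\left(Q_{\lambda_{1}}-H_{\mu_{1}}\right)^{-1}Q^{-1}$ carries the closed-graph constant coming from \eqref{Q-H stability bis} — the high-level idea you state correctly, but it cannot be executed without the correct statement 1.
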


\begin{proof}
Let $\left( \lambda ,\mu \right) \in \Pi _{\varphi _{0},\rho _{0}}$.

\begin{enumerate}
\item We have%
\begin{eqnarray*}
\left( Q_{\lambda }-H_{\mu }\right) ^{-1}Q_{\lambda }^{-1} &=&\left( Q_{\lambda _{1}}-H_{\mu _{1}}\right) ^{-1}\left( Q_{\lambda
_{1}}-H_{\mu _{1}}\right) \left( Q_{\lambda }-H_{\mu }\right)
^{-1}Q_{\lambda }^{-1} \\ \ecart
&=&\left( Q_{\lambda _{1}}-H_{\mu _{1}}\right) ^{-1}\left[ Q_{\lambda
}-H_{\mu }+\left( \mu -\mu _{1}\right) I\right]\left( Q_{\lambda }-H_{\mu }\right) ^{-1}Q_{\lambda }^{-1} \vspace{0.1cm}\\
&&+\left( Q_{\lambda _{1}}-H_{\mu _{1}}\right) ^{-1}\left( Q_{\lambda _{1}}-Q_{\lambda
}\right) \left( Q_{\lambda }-H_{\mu }\right) ^{-1}Q_{\lambda }^{-1}
\\ \ecart
&=&\left( Q_{\lambda _{1}}-H_{\mu _{1}}\right) ^{-1}\left[ Q_{\lambda
}^{-1}+\left( \mu -\mu _{1}\right) \left( Q_{\lambda }-H_{\mu }\right)
^{-1}Q_{\lambda }^{-1}\right] \vspace{0.1cm}\\
&& + \left( Q_{\lambda _{1}}-H_{\mu _{1}}\right) ^{-1}\left( Q_{\lambda _{1}}-Q_{\lambda }\right) \left( Q_{\lambda}-H_{\mu }\right) ^{-1}Q_{\lambda }^{-1} ,
\end{eqnarray*}%
but, $Q_{\lambda _{1}}-Q_{\lambda }=\left( Q-Q_{\lambda }\right) -\left(
Q-Q_{\lambda _{1}}\right) $ and from Lemma \ref{Compar racine}, there exists 
$T_{\lambda ,\lambda _{1}}\in \mathcal{L}(X)$ such that $Q_{\lambda
_{1}}=Q_{\lambda }+T_{_{\lambda,\lambda_1 }}$ 
\begin{equation}\label{est T}
\left \Vert T_{\lambda ,\lambda _{1}}\right \Vert _{\mathcal{L}(X)}\leqslant
M\left( 1+\sqrt{\left \vert \lambda \right \vert }\right) \quad \text{and} \quad Q^{-1}T_{_{\lambda ,\lambda _{1}}}=T_{_{\lambda ,\lambda _{1}}}Q^{-1},
\end{equation}%
so%
\begin{equation*}
\left( Q_{\lambda }-H_{\mu }\right) ^{-1}Q_{\lambda }^{-1}=\left( Q_{\lambda
_{1}}-H_{\mu _{1}}\right) ^{-1}Q^{-1}P_{\lambda ,\mu },
\end{equation*}%
where $P_{\lambda ,\mu }\in \mathcal{L}(X)$ is defined by%
\begin{eqnarray*}
P_{\lambda ,\mu } &=&QQ_{\lambda }^{-1}\left[ I+\left( \mu _{1}-\mu \right) Q_{\lambda }\left(
Q_{\lambda }-H_{\mu }\right) ^{-1}Q_{\lambda }^{-1}+T_{_{\lambda ,\lambda
_{1}}}Q_{\lambda }\left( Q_{\lambda }-H_{\mu }\right) ^{-1}Q_{\lambda }^{-1}%
\right].
\end{eqnarray*}%
Moreover, using (\ref{EstimationQmoins1}), (\ref{est Q invQLambda}), (\ref%
{est T}) and (\ref{est Q-H})%
\begin{eqnarray*}
\left \Vert P_{\lambda ,\mu }\right \Vert _{\mathcal{L}(X)} &\leqslant &M\left[ 1+\left( \left \vert \mu -\mu _{1}\right \vert \right)\left \Vert Q_{\lambda }\left( Q_{\lambda }-H_{\mu }\right) ^{-1}\right\Vert _{\mathcal{L}(X)}\left \Vert Q_{\lambda }^{-1}\right \Vert _{\mathcal{L}(X)}\right] \\
&& + M \left[\left \Vert T_{\lambda ,\lambda _{1}}\right \Vert _{\mathcal{L}%
(X)}\left \Vert Q_{\lambda }\left( Q_{\lambda }-H_{\mu }\right) ^{-1}\right
\Vert _{\mathcal{L}(X)}\left \Vert Q_{\lambda }^{-1}\right \Vert _{\mathcal{L%
}(X)}\right] \\
&\leqslant &M\left[ 1+\dfrac{\left \vert \mu -\mu _{1}\right \vert }{\left(
1+\left \vert \lambda \right \vert \right) ^{1/2}}+\dfrac{1+\sqrt{\left
\vert \lambda \right \vert }}{\left( 1+\left \vert \lambda \right \vert
\right) ^{1/2}}\right];
\end{eqnarray*}%
but since $\left( \lambda ,\mu \right) \in \Pi _{\varphi _{0},\rho _{0}}$ we
have $1+\left \vert \lambda \right \vert \geqslant 1+\rho _{0}\left \vert \mu
\right \vert ^{1/\varepsilon }\geqslant 1+\rho _{0}\left \vert \mu \right
\vert ^{2},$ thus we obtain $\left \Vert P_{\lambda ,\mu }\right \Vert _{\mathcal{L}(X)}\leqslant M.$

\item Since $Q_{\lambda }^{2}$ is closed then, from (\ref{Q-H stability bis}) and
the closed graph theorem, we obtain that $Q_{\lambda }^{2}\left( Q_{\lambda
}-H_{\mu }\right) ^{-1}Q_{\lambda }^{-1}\in \mathcal{L}(X)$. Moreover we have
\begin{eqnarray*}
\left \Vert Q_{\lambda }^{2}\left( Q_{\lambda }-H_{\mu }\right)
^{-1}Q_{\lambda }^{-1}\right \Vert _{\mathcal{L}(X)} &=&\left \Vert \left( -A+\lambda I\right) \left( Q_{\lambda }-H_{\mu
}\right) ^{-1}Q_{\lambda }^{-1}\right \Vert _{\mathcal{L}(X)} \\
&\leqslant &\left \Vert A\left( Q_{\lambda }-H_{\mu }\right) ^{-1}Q_{\lambda
}^{-1}\right \Vert _{\mathcal{L}(X)} \\
&& +\left \vert \lambda \right \vert \left
\Vert \left( Q_{\lambda }-H_{\mu }\right) ^{-1}\right \Vert _{\mathcal{L}%
(X)}\left \Vert Q_{\lambda }^{-1}\right \Vert _{\mathcal{L}(X)} \\
&\leqslant &M.
\end{eqnarray*}%
The last inequality is obtained, from statement 1, which gives%
\begin{eqnarray*}
\left \Vert -A\left( Q_{\lambda }-H_{\mu }\right) ^{-1}Q_{\lambda
}^{-1}\right \Vert _{\mathcal{L}(X)} &=&\left \Vert A\left( Q_{\lambda
_{1}}-H_{\mu _{1}}\right) ^{-1}Q^{-1}P_{\lambda ,\mu }\right \Vert _{%
\mathcal{L}(X)} \\
&\leqslant &\left \Vert -A\left( Q_{\lambda _{1}}-H_{\mu _{1}}\right)
^{-1}Q^{-1}\right \Vert _{\mathcal{L}(X)}\left \Vert P_{\lambda ,\mu }\right
\Vert _{\mathcal{L}(X)} \leqslant M,
\end{eqnarray*}%
and, from (\ref{est Q-H}), (\ref{EstimationQmoins1}), which furnishes%
\begin{equation*}
\left \vert \lambda \right \vert \left \Vert \left( Q_{\lambda }-H_{\mu
}\right) ^{-1}\right \Vert _{\mathcal{L}(X)}\left \Vert Q_{\lambda
}^{-1}\right \Vert _{\mathcal{L}(X)}\leqslant M.
\end{equation*}

\item We set $R_{\lambda ,\mu }=\left( Q_{\lambda }+H_{\mu }\right) \left(
Q_{\lambda }-H_{\mu }\right) ^{-1}\in \mathcal{L}(X)$ and write%
\begin{equation*}
\Lambda _{\lambda ,\mu }=\left( Q_{\lambda }-H_{\mu }\right) +e^{2Q_{\lambda
}}\left( Q_{\lambda }+H_{\mu }\right) =\left( I+e^{2Q_{\lambda }}R_{\lambda
,\mu }\right) \left( Q_{\lambda }-H_{\mu }\right) ;
\end{equation*}%
but $\Lambda _{\lambda ,\mu },\left( Q_{\lambda }-H_{\mu }\right) $ are
boundedly invertible, so $I+e^{2Q_{\lambda }}R_{\lambda ,\mu }$ is boundedly
invertible with%
\begin{equation*}
\left( I+e^{2Q_{\lambda }}R_{\lambda ,\mu }\right) ^{-1}=I-e^{2Q_{\lambda
}}R_{\lambda ,\mu }\left( I+e^{2Q_{\lambda }}R_{\lambda ,\mu }\right) ^{-1}.
\end{equation*}%
Now setting $W_{\lambda ,\mu }=R_{\lambda ,\mu }\left( I+e^{2Q_{\lambda
}}R_{\lambda ,\mu }\right) ^{-1}\in \mathcal{L}(X)$ we have%
\begin{equation*}
\Lambda _{\lambda ,\mu }^{-1}=\left( Q_{\lambda }-H_{\mu }\right)
^{-1}\left( I+e^{2Q_{\lambda }}R_{\lambda ,\mu }\right) ^{-1}=\left(
Q_{\lambda }-H_{\mu }\right) ^{-1}\left( I-e^{2Q_{\lambda }}W_{\lambda ,\mu
}\right) ,
\end{equation*}%
and, due to (\ref{est Q-H}), we have%
\begin{eqnarray*}
\left \Vert W_{\lambda ,\mu }\right \Vert _{\mathcal{L}(X)} &\leqslant
&\left \Vert R_{\lambda ,\mu }\right \Vert _{\mathcal{L}(X)}\left \Vert
\left( I+e^{2Q_{\lambda }}R_{\lambda ,\mu }\right) ^{-1}\right \Vert _{%
\mathcal{L}(X)} \\
&\leqslant &\frac{\left \Vert R_{\lambda ,\mu }\right \Vert _{\mathcal{L}(X)}%
}{1-\left \Vert e^{2Q_{\lambda }}R_{\lambda ,\mu }\right \Vert _{\mathcal{L}%
(X)}} \leqslant M.
\end{eqnarray*}

Finally%
\begin{eqnarray*}
\left \Vert Q_{\lambda }^{2}\Lambda _{\lambda ,\mu }^{-1}Q_{\lambda}^{-1}\right \Vert _{\mathcal{L}(X)} &=&\left \Vert Q_{\lambda }^{2}\left( Q_{\lambda }-H_{\mu }\right)
^{-1}\left( I-e^{2Q_{\lambda }}W_{\lambda ,\mu }\right) Q_{\lambda
}^{-1}\right \Vert _{\mathcal{L}(X)} \\
&\leqslant &\left \Vert Q_{\lambda }^{2}\left( Q_{\lambda }-H_{\mu }\right)
^{-1}Q_{\lambda }^{-1}\right \Vert _{\mathcal{L}(X)} \\
&&+\left \Vert Q_{\lambda }^{2}\left( Q_{\lambda }-H_{\mu }\right)
^{-1}Q_{\lambda }^{-1}\right \Vert _{\mathcal{L}(X)}\left \Vert
e^{2Q_{\lambda }}\right \Vert _{\mathcal{L}(X)}\left \Vert W_{\lambda ,\mu
}\right \Vert _{\mathcal{L}(X)} \\
&\leqslant &M.
\end{eqnarray*}
\end{enumerate}
\end{proof}

\subsection{Proofs to Theorem \ref{Main1Bis} and Theorem \ref{Main2bis}}

Let $\rho _{0}$ fixed as in Lemma \ref{Grand Lamba inversible 2}.

\subsubsection{Proof to Theorem \ref{Main1Bis}}

As in the proof to Theorem \ref{Main1}, we want to apply Proposition \ref%
{Prop without lambda}, with $A,H,Q,\Lambda $ replaced by $A-\lambda I,H+\mu
I,Q_{\lambda },\Lambda _{\lambda ,\mu }$. Assumptions $(H_{1})\sim(H_{4})$
are easily deduced from (\ref{NewH1})$\sim $(\ref{MoinsABip}), (\ref%
{D(Q)Principal}) and Lemma \ref{Grand Lamba inversible 2}. To obtain $%
(H_{5}) $, it is enough to prove $(H'_5)$ given by (\ref{Equiv Interp}). So, for $\xi \in \left( D\left( Q_{\lambda }\right)
,X\right) _{1/p,p}=\left( D\left( Q\right) ,X\right) _{1/p,p}$, we just have
to show that 
\begin{equation*}
\eta =Q_{\lambda }\left( Q_{\lambda }-H_{\mu }\right) ^{-1}\xi \in \left(
D\left( Q\right) ,X\right) _{1/p,p},
\end{equation*}%
but, from Lemma \ref{estim racine L}, statement 5. we have $Q_{\lambda
}=Q+\lambda \left( Q_{\lambda }+Q\right) ^{-1}$, thus%
\begin{equation*}
\left( Q_{\lambda }-H_{\mu }\right) Q_{\lambda }^{-1}=\left( Q-H\right)
Q_{\lambda }^{-1}+\lambda \left( Q_{\lambda }+Q\right) ^{-1}Q_{\lambda
}^{-1}-\mu Q_{\lambda }^{-1},
\end{equation*}%
so%
\begin{equation*}
\xi =\left( Q_{\lambda }-H_{\mu }\right) Q_{\lambda }^{-1}\eta =\left(
Q-H\right) Q_{\lambda }^{-1}\eta +\lambda \left( Q_{\lambda }+Q\right)
^{-1}Q_{\lambda }^{-1}\eta -\mu Q_{\lambda }^{-1}\eta ,
\end{equation*}%
and%
\begin{equation*}
\left( Q-H\right) Q_{\lambda }^{-1}\eta =\xi -\lambda \left( Q_{\lambda
}+Q\right) ^{-1}Q_{\lambda }^{-1}\eta +\mu Q_{\lambda }^{-1}\eta \in \left(
D\left( Q\right) ,X\right) _{1/p,p},
\end{equation*}%
which means that $Q_{\lambda }^{-1}\eta \in \left( Q-H\right) ^{-1}\left(
\left( D\left( Q\right) ,X\right) _{1/p,p}\right) $ and, from (\ref{inv Lamb
and interp}), we get%
\begin{equation*}
Q_{\lambda }^{-1}\eta \in Q^{-1}\left( \left( D\left( Q\right) ,X\right)
_{1/p,p}\right) ,
\end{equation*}%
and then $\eta \in \left( D\left( Q\right) ,X\right) _{1/p,p}$.

Here the condition $\left( Q_{\lambda }-H_{\mu }\right) ^{-1}d_{0}\in \left(
D\left( A\right) ,X\right) _{\frac{1}{2p},p}$ which is, from  Remark \ref{simplif}, equivalent to $\Lambda _{\lambda ,\mu }^{-1}d_{0}\in \left( D\left( A\right) ,X\right) _{\frac{1}{2p},p}$, appears naturally, since we have
not, as in Theorem \ref{Main1}, $\Lambda _{\lambda ,\mu }^{-1}\left(
X\right) \subset D\left( Q^{2}\right) .$

\subsubsection{Proof to Theorem \ref{Main2bis}}

Assume (\ref{NewH1})$\sim $(\ref{MoinsABip}) and (\ref{D(Q)Principal}), (\ref{NewH3bis}), (\ref{Q-H stability}). Let $\left( \lambda ,\mu \right) \in \Pi
_{\varphi _{0},\rho _{0}}$. From Lemma~\ref{Stab Op}, statement 2., we
have%
\begin{equation*}
\forall \xi \in D(Q_{\lambda }), \quad Q_{\lambda }\left( Q_{\lambda }-H_{\mu
}\right) ^{-1}\xi \in D(Q_{\lambda }),
\end{equation*}
then (\ref{inv Lamb and interp}) is satisfied, see Remark \ref{discuss H5}
statement 5, and we can apply Theorem \ref{Main1Bis}.

Again we adapt the proof of Theorem \ref{Main2} and write $%
u=k_{1}+k_{2}+k_{3}-h_{2}+h_{3}+h_{4}$ with 
\begin{equation*}
\left \{ 
\begin{array}{l}
k_{1}\left( x\right) =S_{\lambda }\left( x\right) \Lambda _{\lambda ,\mu
}^{-1}Q_{\lambda }^{-1}\left[ -Q_{\lambda }e^{2Q_{\lambda}}d_{0}+2Q_{\lambda }^{2}e^{Q_{\lambda }}u_{1}-2Q_{\lambda }e^{Q_{\lambda}}I_{\lambda ,f}\left( 1\right) \right] \vspace{0.1cm}\\ 
k_{2}\left( x\right) =2S_{\lambda }\left( x\right) \Lambda _{\lambda ,\mu
}^{-1}Q_{\lambda }J_{\lambda ,f}\left( 0\right)\vspace{0.1cm} \\ 
k_{3}\left( x\right) =S_{\lambda }\left( x\right) \Lambda _{\lambda ,\mu
}^{-1}d_{0} \vspace{0.1cm}\\ 
h_{2}\left( x\right) =S_{\lambda }\left( x\right) J_{\lambda ,f}\left(
0\right) +S_{\lambda }\left( 1-x\right) I_{\lambda ,f}\left( 1\right)\vspace{0.1cm} \\ 
h_{3}\left( x\right) =S_{\lambda }\left( 1-x\right) u_{1}\vspace{0.1cm} \\ 
h_{4}\left( x\right) =I_{\lambda ,f}\left( x\right) +J_{\lambda ,f}\left(
x\right) .
\end{array}
\right.
\end{equation*}

\begin{description}
\item[Estimate of $Q_{\protect\lambda }^{2}k_{1}.$] Due to (\ref{est Lambda}%
), we have for $\xi \in X$ and $x\in \lbrack 0,1]$%
\begin{eqnarray*}
\left \Vert Q_{\lambda }^{2}S_{\lambda }\left( x\right) \Lambda _{\lambda
,\mu }^{-1}Q_{\lambda }^{-1}\xi \right \Vert \\ \ecart
&\hspace{-4cm}=&\hspace{-2cm}\left \Vert \left( I-e^{2Q_{\lambda }}\right) ^{-1}\left( I-e^{2\left(
1-x\right) Q_{\lambda }}\right) e^{xQ_{\lambda }}Q_{\lambda }^{2}\Lambda
_{\lambda ,\mu }^{-1}Q_{\lambda }^{-1}\xi \right \Vert \\
&\hspace{-4cm}\leqslant &\hspace{-2cm}\left \Vert \left( I-e^{2Q_{\lambda }}\right) ^{-1}\left(
I-e^{2\left( 1-x\right) Q_{\lambda }}\right) \right \Vert _{\mathcal{L}%
(X)}\left \Vert e^{xQ_{\lambda }}\right \Vert _{\mathcal{L}(X)}\left \Vert
Q_{\lambda }^{2}\Lambda _{\lambda ,\mu }^{-1}Q_{\lambda }^{-1}\right \Vert _{%
\mathcal{L}(X)}\left \Vert \xi \right \Vert \\
&\hspace{-4cm}\leqslant &\hspace{-2cm} M\left \Vert \xi \right \Vert ,
\end{eqnarray*}%
then%
\begin{equation*}
\left \Vert Q_{\lambda }^{2}k_{1}\left( x\right) \right \Vert \leqslant
M\left( \left \Vert d_{0}\right \Vert +\left \Vert u_{1}\right \Vert +\left
\Vert f\right \Vert _{L^{p}(0,1;X)}\right) ,
\end{equation*}%
and%
\begin{equation*}
\left \Vert Q_{\lambda }^{2}k_{1}\right \Vert _{L^{p}(0,1;X)}\leqslant
M\left( \left \Vert d_{0}\right \Vert +\left \Vert u_{1}\right \Vert +\left
\Vert f\right \Vert _{L^{p}(0,1;X)}\right) .
\end{equation*}

\item[Estimate of $Q_{\protect\lambda }^{2}k_{2}.$] We write, for $x\in (0,1]$
\begin{eqnarray*}
Q_{\lambda }^{2}e^{xQ_{\lambda }}\Lambda _{\lambda ,\mu }^{-1}Q_{\lambda
}J_{\lambda ,f}\left( 0\right) &=&Q_{\lambda }^{2}e^{xQ_{\lambda }}\Lambda
_{\lambda ,\mu }^{-1}\int_{0}^{x}e^{sQ_{\lambda }}f(s)ds \\
&&+Q_{\lambda }^{2}e^{xQ_{\lambda }}\Lambda _{\lambda ,\mu
}^{-1}\int_{x}^{1}e^{sQ_{\lambda }}f(s)ds \\
&=&Q_{\lambda }\int_{0}^{x}e^{\left( x-s\right) Q_{\lambda }}e^{sQ_{\lambda
}}Q_{\lambda }\Lambda _{\lambda ,\mu }^{-1}e^{sQ_{\lambda }}f(s)ds \\
&&+e^{xQ_{\lambda }}Q_{\lambda }^{2}\Lambda _{\lambda ,\mu }^{-1}Q_{\lambda
}^{-1}e^{xQ_{\lambda }}Q_{\lambda }\int_{x}^{1}e^{\left( s-x\right)
Q_{\lambda }}f(s)ds \\
&=&Q_{\lambda }\int_{0}^{x}e^{\left( x-s\right) Q_{\lambda }}F_{\lambda
}\left( s\right) ds \\
&&+e^{xQ_{\lambda }}Q_{\lambda }^{2}\Lambda _{\lambda ,\mu }^{-1}Q_{\lambda
}^{-1}e^{xQ_{\lambda }}Q_{\lambda }\int_{x}^{1}e^{\left( s-x\right)
Q_{\lambda }}f(s)ds,
\end{eqnarray*}%
where $F_{\lambda }\left( s\right) =e^{sQ_{\lambda }}Q_{\lambda }\Lambda
_{\lambda ,\mu }^{-1}e^{sQ_{\lambda }}f(s)$. So%
\begin{eqnarray*}
\left \Vert Q_{\lambda }^{2}k_{2}\right \Vert _{L^{p}(0,1;X)} &\leqslant
&\dis M\left \Vert Q_{\lambda }\int_{0}^{\cdot }e^{\left( \cdot -s\right)
Q_{\lambda }}F_{\lambda }\left( s\right) ds\right \Vert _{_{L^{p}(0,1;X)}} \\
&&\dis +M\left \Vert Q_{\lambda }^{2}\Lambda _{\lambda ,\mu }^{-1}Q_{\lambda}^{-1}\right \Vert _{\mathcal{L}(X)}\left \Vert Q_{\lambda }\int_{\cdot
}^{1}e^{\left( s-\cdot \right) Q_{\lambda }}f\left( s\right) ds\right \Vert
_{_{L^{p}(0,1;X)}};
\end{eqnarray*}
but, from Lemma \ref{estim omega}, (\ref{estim exp xqlambda}), (\ref{est Q-H}%
), (\ref{est Q2Q-HQmoins1}) and (\ref{est Lambda}), we deduce%
\begin{equation*}
\left \{ 
\begin{array}{l}
\dis \left \Vert Q_{\lambda }\int_{0}^{\cdot }e^{\left( \cdot -s\right) Q_{\lambda }}F_{\lambda }\left( s\right) ds\right \Vert_{_{L^{p}(0,1;X)}}\leqslant M\left \Vert F_{\lambda }\right \Vert_{L^{p}(0,1;X)}\leqslant M\left \Vert f\right \Vert _{L^{p}(0,1;X)} \vspace{0.1cm}\\ 
\dis \left \Vert Q_{\lambda }^{2}\Lambda _{\lambda ,\mu }^{-1}Q_{\lambda}^{-1}\right \Vert_{\mathcal{L}(X)}\left \Vert Q_{\lambda }\int_{\cdot}^{1}e^{\left( s-\cdot \right) Q_{\lambda }}f\left( s\right) ds\right \Vert
\leqslant M\left \Vert f\right \Vert _{L^{p}(0,1;X)};
\end{array}\right.
\end{equation*}
therefore $\left \Vert Q_{\lambda }^{2}k_{2}\right \Vert _{L^{p}(0,1;X)}\leqslant
M\left \Vert f\right \Vert _{L^{p}(0,1;X)}.$

\item[Estimate of $Q_{\protect\lambda }^{2}k_{3}.$] Due to (\ref{Rel Lambda
Q-H}) we write $k_{3}=\widetilde{k_{3}}+\overline{k_{3}}$ with 
\begin{equation*}
\widetilde{k_{3}}\left( x\right) =S_{\lambda }\left( x\right) \left(
Q_{\lambda }-H_{\mu }\right) ^{-1}d_{0} \quad \text{and} \quad \overline{k_{3}}\left( x\right) =S_{\lambda }\left( x\right) \left(Q_{\lambda }-H_{\mu }\right) ^{-1}e^{2Q_{\lambda }}W_{\lambda ,\mu }d_{0}.
\end{equation*}%
Due to (\ref{Est Q2 SLambda}), we have for $x\in (0,1]$%
\begin{eqnarray*}
\left \Vert Q_{\lambda }^{2}\widetilde{k_{3}}\left( x\right) \right \Vert
&=&\left \Vert Q_{\lambda }^{2}\left( I-e^{2Q_{\lambda }}\right) ^{-1}\left(
I-e^{2\left( 1-x\right) Q_{\lambda }}\right) e^{xQ_{\lambda }}\left(
Q_{\lambda }-H_{\mu }\right) ^{-1}d_{0}\right \Vert \\
&\leqslant &M\left \Vert Q_{\lambda }^{2}e^{xQ_{\lambda }}\left( Q_{\lambda
}-H_{\mu }\right) ^{-1}d_{0}\right \Vert .
\end{eqnarray*}%
From Theorem 2.1 in \cite{Dore Yakubov}, since $\left( Q_{\lambda }-H_{\mu }\right) ^{-1}d_{0}\in (D(A),X)_{\frac{1}{2p},p},$ we get
\begin{eqnarray*}
\left \Vert Q_{\lambda }^{2}\widetilde{k_{3}}\right \Vert
_{_{L^{p}(0,1;X)}} &\leqslant &M\left( \left \Vert \left( Q_{\lambda }-H_{\mu }\right)
^{-1}d_{0}\right \Vert _{(D(A),X)_{\frac{1}{2p},p}}+\left \vert \lambda
\right \vert ^{1-\frac{1}{2p}}\left \Vert \left( Q_{\lambda }-H_{\mu
}\right) ^{-1}d_{0}\right \Vert \right) .
\end{eqnarray*}%
We have also, taking into account (\ref{estim exp xqlambda}), (\ref{est
Q2Q-HQmoins1}), (\ref{estim exp Q}) and (\ref{est Lambda}),
\begin{eqnarray*}
\left \Vert Q_{\lambda }^{2}\overline{k_{3}}\left( x\right) \right \Vert &=&\left \Vert Q_{\lambda }^{2}\left( I-e^{2Q_{\lambda }}\right) ^{-1}\left(
I-e^{2\left( 1-x\right) Q_{\lambda }}\right) e^{xQ_{\lambda }}\left(
Q_{\lambda }-H_{\mu }\right) ^{-1}e^{2Q_{\lambda }}W_{\lambda ,\mu
}d_{0}\right \Vert \\
&\leqslant &M\left \Vert e^{xQ_{\lambda }}Q_{\lambda }^{2}\left( Q_{\lambda
}-H_{\mu }\right) ^{-1}Q_{\lambda }^{-1}Q_{\lambda }e^{2Q_{\lambda
}}W_{\lambda ,\mu }d_{0}\right \Vert \\
&\leqslant &M\left \Vert e^{xQ_{\lambda }}\right \Vert \left \Vert
Q_{\lambda }^{2}\left( Q_{\lambda }-H_{\mu }\right) ^{-1}Q_{\lambda
}^{-1}\right \Vert \left \Vert Q_{\lambda }e^{2Q_{\lambda }}\right \Vert
\left \Vert W_{\lambda ,\mu }\right \Vert \left \Vert d_{0}\right \Vert \\
&\leqslant & M \left \Vert d_{0}\right \Vert .
\end{eqnarray*}%
Finally 
$$
\left \Vert Q_{\lambda }^{2}\widetilde{k_{3}}\left( x\right) \right \Vert
\leqslant M \left(\left \Vert \left( Q_{\lambda }-H_{\mu }\right)
^{-1}d_{0}\right \Vert _{(D(A),X)_{\frac{1}{2p},p}}\hspace{-0.31cm} + \left \vert \lambda \right \vert ^{1-\frac{1}{2p}}\left \Vert \left( Q_{\lambda }-H_{\mu }\right) ^{-1}d_{0}\right \Vert _{X}+\left \Vert d_{0}\right \Vert \right).
$$

\item[Estimates of $Q_{\protect\lambda }^{2}h_{2},Q_{\protect\lambda }^{2}h_{3},Q_{\protect\lambda }^{2}h_{4}\,.$] In these terms, $\Lambda_{\lambda ,\mu }^{-1}$ does not appear, so the estimates are the same as in Theorem \ref{Main2}.
\end{description}

\subsubsection{Proof to Theorem \ref{Gen second case}}

Assume that (\ref{NewH1})$\sim $(\ref{MoinsABip}) and (\ref{D(Q)Principal}),
(\ref{NewH3bis}), (\ref{Q-H stability}) hold. From Theorems \ref{Main1Bis} and Theorem \ref{Main2bis}%
, there exists $M\geqslant 0$ such that for any $\mu \in \mathbb{C}$

\begin{enumerate}
\item $\mathcal{L}_{A,H,\mu }$ is a closed linear operator on $Y.$

\item $S_{\varphi _{0}}\backslash B\left( 0,\rho _{\mu }\right) \subset \rho
\left( \mathcal{L}_{A,H,\mu }\right) $ where $\rho _{\mu }:=\max \left\{
\rho _{0},\rho _{0}\left\vert \mu \right\vert ^{1/\varepsilon }\right\} >0.$

\item $\forall \lambda \in S_{\varphi _{0}}\backslash B\left( 0,\rho _{\mu
}\right) ,~\forall f\in Y,~\forall x\in \lbrack 0,1]$%
\begin{eqnarray*}
\left( \mathcal{L}_{A,H,\mu }-\lambda I)^{-1}f\right) \left( x\right) &=&S_{\lambda }\left( x\right) \left[ 2\Lambda _{\lambda ,\mu }^{-1}Q_{\lambda }\left[ J_{\lambda ,f}\left( 0\right) -e^{Q_{\lambda}}I_{\lambda ,f}\left( 1\right) \right] -J_{\lambda ,f}\left( 0\right) \right] \vspace{0.1cm} \\
&&-S_{\lambda }\left( 1-x\right) I_{\lambda ,f}\left( 1\right) +I_{\lambda
,f}\left( x\right) +J_{\lambda ,f}\left( x\right) ,
\end{eqnarray*}%
where $S_{\lambda }\left( x\right) =\left( I-e^{2Q_{\lambda }}\right)
^{-1}\left( e^{xQ_{\lambda }}-e^{\left( 1-x\right) Q_{\lambda
}}e^{Q_{\lambda }}\right) .$

\item $\forall \lambda \in S_{\varphi _{0}}\backslash B\left( 0,\rho _{\mu
}\right) :\quad \left\Vert (\mathcal{L}_{A,H,\mu }-\lambda I)^{-1}\right\Vert _{%
\mathcal{L}\left( Y\right) }\leqslant \dfrac{M}{1+|\lambda |}.$
\end{enumerate}
The proof to Theorem \ref{Gen second case} is given by Statement $4$.

\section{Results for Dirichlet boundary conditions}

We can find, in \cite{Favini et al1} and \cite{Favin et al2}, the study of the following problem
\begin{equation}
\left \{ 
\begin{array}{l}
u^{\prime \prime }(x)+Au(x)=f(x),\quad x\in (0,1) \vspace{0.1cm} \\ 
u(0)=u_{0},\text{ }u(1)=u_{1}.%
\end{array}%
\right.  \label{Dirichlet Pb}
\end{equation}

A classical solution of this problem is a function $u\in W^{2,p}(0,1;X)\cap
L^{p}(0,1;D(A))$, satisfying (\ref{Dirichlet Pb}). 

\subsection{Proof to Theorem \ref{Main3}}

The authors obtain the following result (see Theorem 4, p. 200 in \cite{Favini
et al1} and Theorem 5 p.~173 (with $A=L=M$) in \cite{Favin et al2}).

\begin{proposition}[\cite{Favini et al1},\cite{Favin et al2}]
Let $f\in L^{p}(0,1;X)$ with $1<p<\mathbb{+}\infty $ and assume that (\ref{NewH1})$\sim $(\ref{MoinsABip}) are satisfied. Then the following assertions are
equivalent:

\begin{enumerate}
\item Problem (\ref{Dirichlet Pb}) admits a classical solution $u$.

\item $u_{1},u_{0}\in \left( D\left( A\right) ,X\right) _{\frac{1}{2p},p}$

Moreover in this case $u$ is unique and given by 
\begin{eqnarray}\label{repre Dirichlet}
u(x) &=&S\left( x\right) u_{0}+S\left( 1-x\right) u_{1}-S\left( x\right)
J\left( 0\right)   \vspace{0.1cm} \\
&&-S\left( 1-x\right) I\left( 1\right) +I\left( x\right) +J\left( x\right) ,%
\text{ \ }x\in (0,1).  \notag
\end{eqnarray}
\end{enumerate}
\end{proposition}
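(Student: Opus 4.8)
The plan is to derive the representation formula \eqref{repre Dirichlet} exactly as in Section~\ref{Sect Repr}, but with the Robin condition at $0$ replaced by the Dirichlet one. First I would note that, under \eqref{NewH2}, any classical solution $u$ of \eqref{Dirichlet Pb} can be written as in \eqref{repr initiale},
\begin{equation*}
u(x)=e^{xQ}\xi_0+e^{(1-x)Q}\xi_1+I(x)+J(x),\qquad x\in[0,1],
\end{equation*}
with $I,J$ given by \eqref{def I et J sans parametre} and $Q=-\sqrt{-A}$; this is the same computation as in \cite{Cheggag 1}, p.~989, and does not use the boundary conditions. Passing to $\mu_0=\xi_0+e^{Q}\xi_1$, $\mu_1=e^{Q}\xi_0+\xi_1$ and to $S(x)=(I-e^{2Q})^{-1}(e^{xQ}-e^{(1-x)Q}e^Q)\in\mathcal{L}(X)$ (which is licit since $I-e^{2Q}$ is boundedly invertible under \eqref{NewH2}), one gets
\begin{equation*}
u(x)=S(x)\mu_0+S(1-x)\mu_1+I(x)+J(x),
\end{equation*}
and, evaluating at the endpoints, $u(0)=\mu_0+J(0)$, $u(1)=\mu_1+I(1)$. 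Imposing $u(0)=u_0$, $u(1)=u_1$ yields immediately $\mu_0=u_0-J(0)$ and $\mu_1=u_1-I(1)$, whence \eqref{repre Dirichlet}. No inversion of a determinant operator $\Lambda$ is needed here, which is what makes the Dirichlet case simpler than \eqref{Problem}.

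Next I would prove the equivalence of the two statements. For the implication $1.\Rightarrow 2.$: if $u$ is a classical solution then $u\in W^{2,p}(0,1;X)\cap L^p(0,1;D(Q^2))$, so by the trace theorem (\cite{P. Grisvard}, Teorema 2', p.~678) $u(0),u(1)\in (D(Q^2),X)_{\frac{1}{2p},p}=(D(A),X)_{\frac{1}{2p},p}$. For the converse, assume $u_0,u_1\in(D(A),X)_{\frac{1}{2p},p}$ and take $u$ defined by \eqref{repre Dirichlet}. Since the boundary terms involve no unbounded inverse, one checks directly that $u(0)=u_0$ and $u(1)=u_1$, so the boundary conditions hold and it remains only to show $Q^2u(\cdot)\in L^p(0,1;X)$. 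By Lemma~\ref{Dore Venni}, $Q^2I(\cdot),Q^2J(\cdot)\in L^p(0,1;X)$, and $J(0),I(1)\in(D(A),X)_{\frac{1}{2p},p}$ by \cite{Hammou}, Proposition~3.5, p.~1676; hence Lemma~\ref{REG S} gives $Q^2S(\cdot)\psi+Q^2S(1-\cdot)\chi\in L^p(0,1;X)$ for $\psi=u_0-J(0)$ and $\chi=u_1-I(1)$, both in $(D(A),X)_{\frac{1}{2p},p}$. Summing, $Q^2u(\cdot)\in L^p(0,1;X)$, so $u\in L^p(0,1;D(A))$ and, using the equation $u''=Q^2u+f$, $u\in W^{2,p}(0,1;X)$; thus $u$ is a classical solution. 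Uniqueness follows because any classical solution must be of the form \eqref{repr initiale} with $\mu_0,\mu_1$ uniquely determined by the endpoint conditions.

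Since the statement is quoted from \cite{Favini et al1} and \cite{Favin et al2}, the cleanest route is simply to invoke Theorem~4, p.~200 in \cite{Favini et al1} and Theorem~5, p.~173 (with $A=L=M$) in \cite{Favin et al2}, after observing that \eqref{NewH1}$\sim$\eqref{MoinsABip} are precisely the hypotheses required there. The only point needing a word of justification is the identification of the representation formula: one must check that formula \eqref{repre Dirichlet} coincides, after the change of unknowns $\mu_0=\xi_0+e^Q\xi_1$, $\mu_1=e^Q\xi_0+\xi_1$, with the formula appearing in those references. I expect the main (and essentially only) obstacle to be bookkeeping: matching the sign conventions and the precise form of $S(x)$, $I(x)$, $J(x)$ used in \cite{Favini et al1,Favin et al2} with those fixed in Section~\ref{Sect Repr}; everything else is a direct transcription of the arguments already carried out for Problem~\eqref{Problem} with the determinant $\Lambda$ removed.
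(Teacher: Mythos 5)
Your proposal is correct. The only thing to note is that it is more self-contained than what the paper actually does: for this proposition the paper gives no argument at all, it simply quotes Theorem~4, p.~200 of \cite{Favini et al1} and Theorem~5, p.~173 of \cite{Favin et al2} (with $A=L=M$), which is the ``cleanest route'' you mention at the end. Your reconstruction is a faithful transposition of the Section~\ref{Sect Repr} machinery: starting from \eqref{repr initiale}, passing to $\mu_0,\mu_1$ and $S(x)$, and using $S(0)=I$, $S(1)=0$, $I(0)=J(1)=0$ so that the Dirichlet data give $\mu_0=u_0-J(0)$, $\mu_1=u_1-I(1)$ directly, with no determinant $\Lambda$ to invert — which is exactly why hypotheses $(H_4)$, $(H_5)$ disappear and only \eqref{NewH1}$\sim$\eqref{MoinsABip} remain. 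The equivalence then follows, as you say, from the Grisvard trace theorem for $1.\Rightarrow 2.$ and from Lemma~\ref{Dore Venni}, Lemma~\ref{REG S} and \cite{Hammou} for $2.\Rightarrow 1.$, mirroring the proof of Proposition~\ref{Prop without lambda}. The one step you pass over quickly is the verification that the function defined by \eqref{repre Dirichlet} does satisfy the differential equation (you invoke $u''=Q^2u+f$ before having established it); this is the standard computation from \cite{Cheggag 1}, p.~989, and the paper is equally implicit about it in Section~\ref{sans parametre}, so it is a matter of bookkeeping rather than a gap. In short: your direct proof buys self-containedness, the paper's citation buys brevity; both are sound.
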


Note that $S(\cdot ),I\left( \cdot \right) ,J\left( \cdot \right) 
$ are precised in (\ref{def I et J sans parametre}) and (\ref{repre suite})
with $Q=-\sqrt{-A}$.

Now we are in a position to study, as in Sections \ref{Section 1er Cas} and %
\ref{Section 2eme Cas}, the corresponding spectral problem 
\begin{equation}
\left \{ 
\begin{array}{l}
u^{\prime \prime }(x)+Au(x)-\lambda u(x)=f(x),\text{ \ }x\in (0,1)  \vspace{0.1cm} \\ 
u(0)=u_{0},\text{ }u(1)=u_{1}.%
\end{array}%
\right.  \label{Dirichlet spectral}
\end{equation}

Applying the previous Proposition with $A$ replaced by $A-\lambda I$ we obtain Theorem \ref{Main3}.

\subsection{Proof to Theorem \ref{main Dirichlet}}

Let $\lambda \in S_{\varphi _{0}}$ and $f\in L^{p}\left( 0,1;X\right) $.
Taking into account (\ref{def I et J}) and (\ref{repre Dirichlet}) with $%
Q_{\lambda }$ replacing $Q$, for $x\in \left[ 0,1\right] $, we have
$$
u(x) = S_{\lambda }\left( x\right) u_{0}+S_{\lambda }\left( 1-x\right)
u_{1}-S\left( x\right) J_{\lambda ,f}\left( 0\right) -S\left( 1-x\right)
I_{\lambda ,f} + I_{\lambda ,f}\left( x\right) +J_{\lambda ,f}\left( x\right) .
$$
So we can write $u=-h_{2}+g_{3}+h_{3}+h_{4}$ with 
\begin{equation*}
\left \{ 
\begin{array}{l}
h_{2}\left( x\right) =S_{\lambda }\left( x\right) J_{\lambda ,f}\left(
0\right) +S_{\lambda }\left( 1-x\right) I_{\lambda ,f}\left( 1\right) \vspace{0.1cm} \\ 
g_{3}\left( x\right) =S_{\lambda }\left( x\right) u_{0} \vspace{0.1cm} \\ 
h_{3}\left( x\right) =S_{\lambda }\left( 1-x\right) u_{1} \vspace{0.1cm} \\ 
h_{4}\left( x\right) =I_{\lambda ,f}\left( x\right) +J_{\lambda ,f}\left(x\right).
\end{array}\right.
\end{equation*}%
As in the proof to Theorem \ref{Main2} we get
\begin{equation*}
\left \Vert Q_{\lambda }^{2}h_{2}\right \Vert _{L^{p}(0,1;X)}\leqslant
M\left \Vert f\right \Vert _{L^{p}(0,1;X)},\text{ \ }\left \Vert Q_{\lambda
}^{2}h_{4}\right \Vert _{L^{p}(0,1;X)}\leqslant M\left \Vert f\right \Vert
_{_{L^{p}(0,1;X)}},
\end{equation*}%
and also%
\begin{equation*}
\left \Vert Q_{\lambda }^{2}h_{3}\right \Vert _{_{L^{p}(0,1;X)}}\leqslant
M\left( \left \Vert u_{1}\right \Vert _{(D(A),X)_{\frac{1}{2p},p}}+\left
\vert \lambda \right \vert ^{1-\frac{1}{2p}}\left \Vert u_{1}\right \Vert
_{X}\right).
\end{equation*}
Moreover, $Q_{\lambda }^{2}g_{3}$ is treated like $Q_{\lambda }^{2}h_{3}$, so 
\begin{equation*}
\left \Vert Q_{\lambda }^{2}g_{3}\right \Vert _{_{L^{p}(0,1;X)}}\leqslant
M\left( \left \Vert u_{0}\right \Vert _{(D(A),X)_{\frac{1}{2p},p}}+\left
\vert \lambda \right \vert ^{1-\frac{1}{2p}}\left \Vert u_{0}\right \Vert
_{X}\right) .
\end{equation*}%
We finish as in the proof of Theorem \ref{Main2}.

\subsection{Proof to Theorem \ref{Gen Dirichlet}}

Assume $\eqref{NewH1}\sim \eqref{MoinsABip}$. From Theorems \ref{Main3} and Theorem \ref{main Dirichlet}, there exists $M\geqslant 0$ such that

\begin{enumerate}
\item $\mathcal{L}_{A}$ is a closed linear operator on $Y$ and $S_{\varphi _{0}}\subset \rho \left( \mathcal{L}_{A}\right) .$

\item $\forall \lambda \in S_{\varphi _{0}}$
$$\left( \mathcal{L}_{A}-\lambda I)^{-1}f\right) \left( x\right) = -S_{\lambda }\left( x\right) J_{\lambda ,f}\left( 0\right) -S_{\lambda}\left( 1-x\right) I_{\lambda ,f}\left( 1\right) + I_{\lambda ,f}\left( x\right) +J_{\lambda ,f}\left( x\right) ,$$

where $S_{\lambda }\left( x\right) =\left( I-e^{2Q_{\lambda }}\right)
^{-1}\left( e^{xQ_{\lambda }}-e^{\left( 1-x\right) Q_{\lambda
}}e^{Q_{\lambda }}\right) \in \mathcal{L}\left( X\right) .$

\item $\forall \lambda \in S_{\varphi _{0}}:\quad \left\Vert (\mathcal{L}%
_{A}-\lambda I)^{-1}\right\Vert _{\mathcal{L}\left( Y\right) }\leqslant 
\dfrac{M}{1+|\lambda |}.$
\end{enumerate}
The proof to Theorem \ref{Gen Dirichlet} is given by Statement $4$.

\section{Applications}

\subsection{A model example for the first case\label{applic first case}}

In view to illustrate the results obtained in this work, we will consider
the concrete problem of the heat equation in the square domain $\Omega
=(0,1)\times (0,1)$ with a dynamical-Wentzell condition in one of its
lateral boundaries

\begin{equation*}
(P)\text{ }\left\{ 
\begin{array}{llll}
\dfrac{\partial u}{\partial t}(t,x,y) &=& \Delta _{x,y} u(t,x,y), &(t,x,y)\in (0,+\infty )\times \Omega\medskip \\ 
\dfrac{\partial u}{\partial t}(t,0,y)&=&\dfrac{\partial u}{\partial x}(t,0,y)+\dfrac{\partial ^{2}u}{\partial y^{2}}(t,0,y), & (t,0,y)\in
(0,+\infty )\times \Gamma _{0}\medskip \medskip \\ 
u(t,1,y)&=&0, & (t,1,y)\in (0,+\infty )\times \Gamma _{1}\medskip \\ 
u(t,x,0)&=&u(t,x,1)=0, & x\in (0,1)\medskip \\ 
u(0,x,y)&=&u_{0}(x,y) &(x,y)\in (0,1)\times (0,1),%
\end{array}%
\right.
\end{equation*}%
where 
\begin{equation*}
\left\{ 
\begin{array}{ll}
\Gamma _{0}=\left\{ 0\right\} \times (0,1), & \Gamma _{1}=\left\{ 1\right\}
\times (0,1), \vspace{0.1cm}\\ 
\gamma _{0}=(0,1)\times \left\{ 0\right\} , & \gamma _{1}=(0,1)\times
\left\{ 1\right\} .%
\end{array}%
\right.
\end{equation*}%
Here $\dfrac{\partial ^{2}}{\partial y^{2}}$ is the Laplace-Beltrami
operator on $\Gamma _{0}$. Physically, $-\dfrac{\partial u}{\partial x}$ and 
$\dfrac{\partial u}{\partial x}$ represent the interaction between the
domain $\Omega $ and the lateral boundaries while $\dfrac{\partial ^{2}u}{%
\partial y^{2}}$ is the boundary diffusion.

Set $\mathcal{E}=L^{p}(\Omega )\times L^{p}(\Gamma _{0})$; this Banach space
is well defined and endowed with its natural norm. Define operator $\mathcal{%
P}$ by
$$\left\{\begin{array}{ccl}
D(\mathcal{P})&=&\left\{ w=(u,v_{0}):u,\Delta_{x,y} u\in L^{p}(\Omega ),v_{0}\in
W^{2,p}(\Gamma _{0}),u_{|\Gamma _{0}}=v_{0},\right. \\
&&\left. \hspace{1.5cm}\left( \Delta_{x,y} u\right) _{|\Gamma
_{0}}=\left( \dfrac{\partial u}{\partial x}\right) _{|\Gamma _{0}}+\dfrac{\partial
^{2}v_{0}}{\partial y^{2}}\text{~~and~~}u_{|\gamma _{0}\cup \gamma _{1}\cup
\Gamma _{1}}=0\right\}, \\ \\

\mathcal{P}w & = &\left( \Delta_{x,y} u,\left( \dfrac{\partial u}{\partial x}\right) _{|\Gamma
_{0}}+\dfrac{\partial ^{2}v_{0}}{\partial y^{2}}\right), \quad \text{ for } w=(u,v_{0})\in D(\mathcal{P}).
\end{array}\right.$$
The boundary conditions are defined in $L^{p}(\Gamma _{0})$ and $\mathcal{P}w\in \mathcal{E}$. On the other hand it is not difficult to see that this operator is closed in $\mathcal{E}$. When we integrate the time variable $t$, the following Cauchy problem%
\begin{equation*}
\left\{ 
\begin{array}{l}
\dfrac{\partial w}{\partial t}=\dfrac{\partial }{\partial t}(u,v_{0})=\left( 
\dfrac{\partial u}{\partial t},\dfrac{\partial v_{0}}{\partial t}\right) =%
\mathcal{P}w=\mathcal{P}(u,v_{0}) \\ \ecart
w(0)=\left( u(0,.),v_{0}(0,.)\right) \text{ given,}%
\end{array}%
\right.
\end{equation*}%
writes%
\begin{equation*}
\left\{ 
\begin{array}{l}
\dfrac{\partial u}{\partial t}=\Delta u\medskip \\ 
\dfrac{\partial v_{0}}{\partial t}=\left( \dfrac{\partial u}{\partial x}\right)
_{|\Gamma _{0}}+\dfrac{\partial ^{2}v_{0}}{\partial y^{2}}\medskip \\ 
u_{|\gamma _{0}\cup \gamma _{1}\cup \Gamma _{1}}=0 \\ \ecart 
\left( u(0,.),v_{0}(0,.)\right) \text{ given;}%
\end{array}%
\right.
\end{equation*}%
since $(u,v_{0})\in D(\mathcal{P})$ and $\dfrac{\partial u}{\partial t}%
=\Delta u,$ we obtain 
\begin{equation*}
\left( \Delta u\right) _{|\Gamma _{0}}=\left( \frac{\partial u}{\partial x}\right)
_{|\Gamma _{0}}+\left( \frac{\partial ^{2}v_{0}}{\partial y^{2}}\right) _{_{|\Gamma
_{0}}}=\left( \frac{\partial u}{\partial t}\right) _{_{|\Gamma _{0}}},
\end{equation*}%
and since $u_{|\Gamma _{0}}=v_{0}$, by using the tangential derivative, we
obtain 
\begin{equation*}
\left( \frac{\partial ^{2}v_{0}}{\partial y^{2}}\right) _{_{|\Gamma _{0}}}=\left(
\frac{\partial ^{2}u}{\partial y^{2}}\right) _{_{|\Gamma _{0}}};
\end{equation*}%
summarizing up, we deduce the same equation as in Problem $(P):$%
\begin{equation*}
\left\{ 
\begin{array}{l}
\dfrac{\partial u}{\partial t}=\Delta u \vspace{0.1cm}\\ 
\dis \left( \frac{\partial u}{\partial t}\right) _{_{|\Gamma _{0}}}=\left( \frac{\partial
u}{\partial x}\right) _{|\Gamma _{0}}+\left( \frac{\partial ^{2}u}{\partial
y^{2}}\right) _{_{|\Gamma _{0}}} \vspace{0.1cm} \\ 
u(0,.)\text{ is given} \vspace{0.1cm} \\ 
u_{|\gamma _{0}\cup \gamma _{1}\cup \Gamma _{1}}=0.%
\end{array}%
\right.
\end{equation*}%
The study of the evolution equation above is based on the study of the following spectral equation 
\begin{equation}
\left\{ 
\begin{array}{l}
\mathcal{P}(u,v_{0})-\lambda (u,v_{0})=(h,d_{0}) \vspace{0.1cm} \\ 
(u,v_{0})\in D(\mathcal{P}),(h,d_{0})\in \mathcal{E},%
\end{array}%
\right.  \label{Final Pb}
\end{equation}%
and since $u_{|\Gamma _{0}}=v_{0}$, (\ref{Final Pb}) is equivalent to%
\begin{equation}
\left\{ 
\begin{array}{l}
\Delta u-\lambda u=h \vspace{0.1cm}\\ 
\dis \left( \frac{\partial u}{\partial x}\right) _{|\Gamma _{0}}+\left( \frac{\partial
^{2}u}{\partial y^{2}}\right) _{_{|\Gamma _{0}}}-\lambda u_{|\Gamma _{0}}=d_{0}
\vspace{0.1cm}\\ 
u_{|\gamma _{0}\cup \gamma _{1}\cup \Gamma _{0}}=0,%
\end{array}%
\right.  \label{Final eq}
\end{equation}%
which is an elliptic partial differential equation with the same spectral
parameter in the equation and in the boundary condition on $\Gamma _{0}$. We will write (\ref{Final eq}) in an operational differential form. We
consider the Banach space $X=L^{p}(0,1)$ and identify $\mathcal{E}$ with $%
L^{p}(0,1;X)$ by writing as usual, for $g\in \mathcal{E}$, $g\left( x,y\right) =\left( g(x)\right) \left( y\right)$, $x,y\in
\left( 0,1\right)$. We define operator $A$ on $X$ by 
\begin{equation}
\left\{ 
\begin{array}{l}
D(A)=\left\{ \psi \in W^{2,p}(0,1):\psi \left( 0\right) =\psi \left(
1\right) =0\right\} \vspace{0.1cm}\\ 
A\psi (y)=\psi ^{\prime \prime }(y),%
\end{array}%
\right.  \label{def A}
\end{equation}%
and operator $H:=-A$. So, equation $\Delta u(x,y)-\lambda u(x,y)=h(x,y),$ takes the following form in space $X$%
\begin{equation*}
u^{\prime \prime }(x)+Au(x)-\lambda u(x)=h(x),\ x\in (0,1),
\end{equation*}%
while the boundary condition%
\begin{equation*}
\left( \dfrac{\partial u}{\partial x}\right) _{|\Gamma _{0}}+\left( \frac{\partial
^{2}u}{\partial y^{2}}\right) _{_{|\Gamma _{0}}}-\lambda u_{|\Gamma
_{0}}=d_{0},
\end{equation*}%
becomes $u'(0)-Hu(0)-\lambda u(0)=d_{0}$; the condition $u_{|\gamma _{0}\cup \gamma _{1}}=0$ (which means that $u(0,y)$
and $u(1,y)$ vanish in $y=0$ and $y=1$) is implicitly included in the fact
that $u(0):=u(0,.)$ and $u(1):=u(1,.)$ are in $D(H).$

Therefore (\ref{Final eq}) or equivalently (\ref{Final Pb}), takes the
following abstract form
\begin{equation}
\left\{ 
\begin{array}{l}
u^{\prime \prime }(x)+Au(x)-\lambda u(x)=h(x),\ x\in (0,1) \vspace{0.1cm}\\ 
u^{\prime }(0)-Hu(0)-\lambda u(0)=d_{0} \vspace{0.1cm}\\ 
u(1)=0,%
\end{array}%
\right.  \label{Abstract form}
\end{equation}%
where $\left( h,d_{0}\right) \in \mathcal{E}\equiv L^{p}(0,1;X)\times
L^{p}(X)$, and we are in the situation of Subsection \ref{espace produit}
with $\mu =0$.

Let $u$ be the classical solution of (\ref{Abstract form}); then $u\in
W^{2,p}(0,1;X)\cap L^{p}(0,1;D(A))$ and%
\begin{equation*}
(u,u\left( 0\right) )\in D\left( \mathcal{P}\right) ;
\end{equation*}%
so that $(u,u\left( 0\right) )=(\mathcal{P}-\lambda I)^{-1}\left(
h,d_{0}\right) .$

Taking into account the fact that, here, we can take $\varphi _{0}=\pi
-\varepsilon $ ($\varepsilon >0$ as close to $0$ as we want), we can use
Proposition \ref{Est L first case} and Theorem \ref{gen sg first case}, to
obtain : 
\begin{equation*}
\exists M>0,~\forall \lambda \in S_{\varphi _{0}}:\forall \left(
h,d_{0}\right) \in \mathcal{E}, \quad \left\Vert (\mathcal{P}-\lambda I)^{-1}\left(
h,d_{0}\right) \right\Vert _{\mathcal{E}}\leqslant \dfrac{M}{1+|\lambda|}%
\left\Vert \left( h,d_{0}\right) \right\Vert _{\mathcal{E}},
\end{equation*}%
and deduce that our operator $\mathcal{P}$ defined above generates an
analytic semigroup in $\mathcal{E}$.

This example can be extended to the following problem%
\begin{equation*}
\left\{ 
\begin{array}{l}
\Delta u-\lambda u=h \\ 
a_{0}\left( \dfrac{\partial u}{\partial x}\right) _{|\Gamma _{0}}+b_{0}\dfrac{%
\partial ^{2}v_{0}}{\partial y^{2}}-\lambda v_{0}=d_{0} \\ 
a_{1}\left( \dfrac{\partial u}{\partial x}\right) _{|\Gamma _{1}}+b_{1}\dfrac{%
\partial ^{2}v_{1}}{\partial y^{2}}-\lambda v_{1}=d_{1} \\ 
u_{|\gamma _{0}\cup \gamma _{1}}=0.%
\end{array}%
\right.
\end{equation*}

\subsection{Some concrete examples for the second case}

\subsubsection{Example 1}

Here, we set $\Omega = (0,1)\times(0,1)$. Our concrete spectral partial differential problem is 
\begin{equation*}
(P1)\left\{ 
\begin{array}{ll}
\dfrac{\partial ^{2}u}{\partial x^{2}}(x,y)+\dfrac{\partial ^{2}u}{\partial
y^{2}}(x,y)-\lambda u\left( x,y\right) =f\left( x,y\right) , & \left(
x,y\right) \in \Omega \vspace{0.1cm}\\ 
u\left( 1,y\right) =0, & y\in (0,1)\vspace{0.1cm} \\ 
\dis \dfrac{\partial u}{\partial x}\left( 0,y\right) - \int_{0}^{y}\phi \left(
y,\xi \right) u\left( 0,\xi \right) d\xi =0, & y\in (0,1) \vspace{0.1cm}\\ 
u(x,0)=u(x,1)=0, & x\in (0,1),%
\end{array}%
\right.
\end{equation*}%
where we can take $\lambda \in S_{\varphi _{0}}$with $\varphi _{0}$ fixed in 
$(\pi /2,\pi )$.

Define operator $A$ on $X:=L^{p}(0,1)$, with $1<p<+\infty $, as in (\ref{def A});
then the square root of the negative of this operator is well defined and 
\begin{equation*}
W_{0}^{1,p}(0,1)\subset D((-A)^{1/2})\subset W^{1,p}(0,1) \quad \text{and} \quad \left\Vert (-A)^{1/2}\psi \right\Vert \approx \left\Vert \psi ^{\prime}\right\Vert _{L^{p}(0,1)}+\left\Vert \psi \right\Vert _{L^{p}(0,1)},
\end{equation*}%
see \cite{Auscher}. We know also that $Q=-\sqrt{-A}$ generates an analytic
semigroup in $X$; on the other hand $Q_{\lambda }=-\sqrt{-A+\lambda I}$ is
well defined and generates an analytic semigroup in $X$ for all $\lambda \in
S_{\varphi _{0}}$.

Now let us define operator $H$ by%
\begin{equation}
H\psi (y)=\int_{0}^{y}\phi (y,\xi )\psi (\xi )d\xi ,\quad \psi \in X,
\label{def H}
\end{equation}%
with an appropiate function $\phi $ having the following properties. Let $%
q\in (1,+\infty )$ such that $1/q+1/p=1$. We then assume that

\begin{equation}
\left\{ 
\begin{array}{l}
\phi \left( y,\cdot \right) ,\dfrac{\partial \phi }{\partial y}\left(
y,\cdot \right) \in L^{q}(0,1)\text{, for a.e. }y\in (0,1) \vspace{0.1cm}\\ 
\phi \left( 1,\cdot \right) =0 \vspace{0.1cm}\\ 
\Phi _{j}:y\longmapsto \dfrac{\partial ^{j}\phi }{\partial y^{j}}\left(
y,\cdot \right) \in L^{p}(0,1;L^{q}(0,1))\text{, for }j=0,1\vspace{0.1cm} \\ 
\phi _{1}:y\longmapsto \phi \left( y,y\right) \in L^{p}(0,1).%
\end{array}%
\right.  \label{cond phi}
\end{equation}%
We can build a simple example of a function $\phi $ satisfying (\ref{cond phi}%
), setting, for a fixed $n\in \mathbb{N}\backslash \left\{ 0\right\} $%
\begin{equation*}
\phi \left( y,\xi \right) =\left( 1-y\right) ^{n}\widetilde{\psi }\left( \xi
\right) ,~\xi ,y\in (0,1),
\end{equation*}%
where $\widetilde{\psi }\in W^{1,q}(0,1)\cap W^{1,p}(0,1)$. We have%
\begin{eqnarray*}
\left\Vert H(\psi )\right\Vert _{X} &=&\left( \int_{0}^{1}\left\vert
\int_{0}^{y}\phi \left( y,\xi \right) \psi (\xi )d\xi \right\vert
^{p}dy\right) ^{1/p} \\
&\leqslant &\left( \int_{0}^{1}\left[ \left( \int_{0}^{1}\left\vert \phi
\left( y,\xi \right) \right\vert ^{q}d\xi \right) ^{1/q}\left(
\int_{0}^{1}\left\vert \psi (\xi )\right\vert ^{p}d\xi \right) ^{1/p}\right]
^{p}dy\right) ^{1/p} \\
&\leqslant &\left( \int_{0}^{1}\left\Vert \phi \left( y,\cdot \right)
\right\Vert _{L^{q}(0,1)}^{p}dy\right) ^{1/p}\left\Vert \psi \right\Vert _{X}
\\
&\leqslant &\left\Vert \Phi \right\Vert _{L^{p}(0,1;L^{q}(0,1))}\times
\left\Vert \psi \right\Vert _{X},
\end{eqnarray*}

so $H\in \mathcal{L}(X)$.

Our concrete problem $(P1)$ writes in the following abstract form%
\begin{equation*}
\left\{ 
\begin{array}{l}
u^{\prime \prime }\left( x\right) +Au\left( x\right) -\lambda u\left(
x\right) =f\left( x\right) ,\text{~~a.e. }x\in \left( 0,1\right) \vspace{0.1cm} \\ 
u\left( 1\right) =0\text{, \ }u^{\prime }(0)-Hu(0)=0.
\end{array}\right.
\end{equation*}

The following assumptions are satisfied:

\begin{enumerate}
\item $X$ is a UMD space and operator $A$ verifies 
\begin{equation*}
\left\{ 
\begin{array}{l}
\exists \text{ }\varphi _{0}\in \left( 0,\pi \right) :\text{ }S_{\varphi
_{0}}\subset \rho \left( A\right) \text{ and }\exists C_{A}>0: \vspace{0.1cm} \\ 
\forall \lambda \in S_{\varphi _{0}},\quad \left\Vert \left( A-\lambda
I\right) ^{-1}\right\Vert _{\mathcal{L}(X)}\leqslant \dfrac{C_{A}}{%
1+\left\vert \lambda \right\vert },%
\end{array}%
\right.
\end{equation*}%
\begin{equation*}
\left\{ 
\begin{array}{l}
\forall s\in \mathbb{R},\text{ }\left( -A\right) ^{is}\in \mathcal{L}\left( X\right) ,\text{ }\exists \theta _{A}\in \left( 0,\pi \right) \text{:} \vspace{0.1cm} \\ 
\underset{s\in \mathbb{R}}{\sup }\left\Vert e^{-\theta _{A}\left\vert s\right\vert
}(-A)^{is}\right\Vert _{\mathcal{L}\left( X\right) }<+\infty.
\end{array}\right.
\end{equation*}
This last property is proved explicitely in \cite{Labbas Moussaoui}.

\item Since $H$ is bounded, from Remark \ref{second case} statement 1, we
get $D(Q)\subset D\left( H\right) $ and
\begin{equation*}
\exists \,C_{H,Q}>0,\quad \underset{t\in \lbrack 0,+\infty )}{\sup }\left(
1+t\right) ^{1/2}\parallel HQ_{t}^{-1}\parallel _{\mathcal{L}\left( X\right)
}\leqslant C_{H,Q}.
\end{equation*}

\item We verifiy that $\left( Q-H\right) ^{-1}\left( D\left( Q\right) \right)
\subset D\left( Q^{2}\right)$.

Let $\psi \in D\left( Q\right) $ such that $\left( Q-H\right) \left( \psi
\right) \in D\left( Q\right) $; then $Q\psi -H\psi =g\in D\left( Q\right) ,$ with%
\begin{equation*}
W_{0}^{1,p}(0,1)\subset D\left( Q\right) \subset W^{1,p}(0,1).
\end{equation*}%
To obtain $\psi \in D\left( Q^{2}\right) $, it suffices to have $H\psi \in
W_{0}^{1,p}(0,1)$ for $\psi \in D\left( Q\right) \subset W^{1,p}(0,1)$. We
have
\begin{equation*}
H\psi (y)=\int_{0}^{y}\phi \left( y,\xi \right) \psi (\xi )d\xi;
\end{equation*}
then $H\psi (0)=0,$ and $H\psi (1)=0$ due to (\ref{cond phi}) and 
\begin{equation*}
\left( H\psi \right) ^{\prime }(y)=\phi \left( y,y\right) \psi
(y)+\int_{0}^{y}\dfrac{\partial \phi }{\partial y}\left( y,\xi \right) \psi
(\xi )d\xi .
\end{equation*}%
In virtue of the assumptions verified by $\phi $, we then get $H\psi \in
W_{0}^{1,p}(0,1)$. Therefore $\psi \in D\left( Q^{2}\right) .$
\end{enumerate}

Now, we set $Y=L^{p}(0,1;X)=L^{p}(\Omega )$ and considering $A$, $H$ defined
by (\ref{def A}) and (\ref{def H}), we build, as in (\ref{Def L A H Mu}) 
\begin{equation*}
\begin{array}{llll}
\mathcal{L}_{A,H,0}: & D\left( \mathcal{L}_{A,H,0}\right) \subset Y & 
\longrightarrow & Y \\ 
& u & \longmapsto & u^{\prime \prime }+A(u(.)).
\end{array}
\end{equation*}%
Note that in this example, in general, operators $Q$ and $H$ do not commute. We can apply Theorem \ref{Gen second case} (with $\mu =0$), to obtain that $\mathcal{L}_{A,H,0}$ is the infinitesimal generator of an analytic semigroup. This result allows us to consider and solve the corresponding Cauchy problem
with respect to $(P1).$

\subsubsection{Example 2}

Here, we are considering a quasi-elliptic problem under an oblique derivative
boundary condition. Let $\Omega=(0,1)^2$ and consider the following spectral problem%
\begin{equation*}
(P2)\left\{ 
\begin{array}{ll}
\dfrac{\partial ^{2}u}{\partial x^{2}}(x,y)-\dfrac{\partial ^{4}u}{\partial
y^{4}}(x,y)-\lambda u\left( x,y\right) =f\left( x,y\right) , & \left(
x,y\right) \in \Omega\smallskip \\ 
u\left( 1,y\right) =0, & y\in (0,1) \vspace{0.1cm}\\ 
\dfrac{\partial u}{\partial x}\left( 0,y\right) +c(y)\dfrac{\partial u}{%
\partial y}\left( 0,y\right) =0, & y\in (0,1) \vspace{0.1cm}\\ 
u(x,0)=u(x,1)=\dfrac{\partial ^{2}u}{\partial y^{2}}(x,0)=\dfrac{\partial
^{2}u}{\partial y^{2}}(x,0)=0, & x\in (0,1).%
\end{array}%
\right.
\end{equation*}%
We will assume that $c\in \mathcal{C}^{2}[0,1]$ and $c\left( 0\right) =c\left( 1\right) =0.$ Here the boundary condition on $\Gamma =\left\{ 0\right\} \times (0,1)$
\begin{equation*}
\dfrac{\partial u}{\partial x}\left( 0,y\right) +c(y)\dfrac{\partial u}{%
\partial y}\left( 0,y\right) =0,
\end{equation*}%
can be written as%
\begin{equation}
\nabla u(\sigma )\cdot \alpha (\sigma )=0\text{ in }\Gamma ,
\label{OblicDerivative}
\end{equation}%
with $\alpha (\sigma )$ a vector on $\Gamma $ equal to $\left( 1,c(y)\right)$ which is pointing inwardly of $\Omega$. It is known that (\ref{OblicDerivative}) is called oblique derivative boundary condition on $\Gamma $. We set, in space $X=L^{p}(0,1),$ as above 
\begin{equation} \label{New def A}
\left\{ \begin{array}{l}
D(A)=\left\{ \psi \in W^{4,p}(0,1):\psi \left( 0\right) =\psi \left(
1\right) =\psi ^{\prime \prime }\left( 0\right) =\psi ^{\prime \prime
}\left( 1\right) =0\right\} \vspace{0.1cm}\\ 
A\psi (y)=-\psi ^{(4)}(y);
\end{array}\right. 
\end{equation}
so, as we have seen 
\begin{equation*}
\left\{ 
\begin{array}{l}
D(\sqrt{-A})=\left\{ \psi \in W^{2,p}(0,1):\psi \left( 0\right) =\psi \left(
1\right) =0\right\} \vspace{0.1cm}\\ 
\sqrt{-A}\psi (y)=-\psi ^{\prime \prime }(y),%
\end{array}%
\right.
\end{equation*}%
and clearly $Q=-\sqrt{-A}$ and $Q_{\lambda }=-\sqrt{-A+\lambda I}$, for all $%
\lambda \in S_{\varphi }$ generate analytic semigroups in $X$. We note also
that $\sqrt{-Q}=(-A)^{1/4}$ is well defined and%
\begin{equation*}
W_{0}^{1,p}(0,1)\subset D((-A)^{1/4})\subset W^{1,p}(0,1)\quad \text{and} \quad \left\Vert (-A)^{1/4}\psi \right\Vert \approx \left\Vert \psi ^{\prime}\right\Vert _{L^{p}(0,1)}+\left\Vert \psi \right\Vert _{L^{p}(0,1)},
\end{equation*}%
see \cite{Auscher}. Now, define operator $H$ by setting  
\begin{equation}
\left\{ 
\begin{array}{l}
D(H)=W^{1,p}(0,1) \vspace{0.1cm}\\ 
\left[ H\psi \right] (y)=-c(y)\psi ^{\prime }(y).%
\end{array}%
\right.  \label{New def H}
\end{equation}%
We then have $D(\left( -A\right) ^{1/4})\subset D\left( H\right)$;
therefore, see Remark \ref{second case}, statement 1, with $\omega=1/4$, there exists $C>0$ such that, for $t\geqslant 0$, we have
\begin{equation*}
\parallel HQ_{t}^{-1}\parallel _{\mathcal{L}\left( X\right) } \leqslant \dfrac{C}{\left( 1+t\right) ^{1/4}}.
\end{equation*}%
Now, we will prove that $\left( Q-H\right) ^{-1}\left( D\left( Q\right) \right)
\subset D\left( Q^{2}\right) $. To this end, let $\psi \in D\left( Q\right) $ such that $%
\left( Q-H\right) \left( \psi \right) \in D\left( Q\right) $; then 
\begin{equation*}
\psi ^{\prime \prime }-c\psi ^{\prime }=g\in D\left( Q\right)
=W^{2,p}(0,1)\cap W_{0}^{1,p}(0,1);
\end{equation*}
so $\psi \in W^{4,p}\left( 0,1\right) $. We have $\psi \in D\left( Q\right) $; then $\psi \left( 0\right) =\psi \left( 1\right) =0$. But $g\in D\left(
Q\right) $ thus $g(0)=g\left( 1\right) =0$ and 
\begin{equation*}
\psi ^{\prime \prime }\left( j\right) =\left( c\psi ^{\prime }\right) \left(
j\right) +g\left( j\right) =0,\quad j=0,1,
\end{equation*}%
that is $\psi ^{\prime \prime }\left( 0\right) =\psi ^{\prime \prime }\left(
1\right) =0$, therefore $\psi \in D\left( Q^{2}\right) $. Note that in this example $Q-H$ is boundedly invertible and from equation 
$Q\psi -H\psi =g,$ it follows that%
\begin{equation*}
\left\{ 
\begin{array}{l}
\psi ^{\prime \prime }(y)-c(y)\psi ^{\prime }(y)=g(y) \vspace{0.1cm}\\ 
\psi (0)=\psi (1)=0.%
\end{array}%
\right.
\end{equation*}%
Let $\psi _{1}$ and $\psi _{2}$ two linearly independent solutions to equation $\psi ^{\prime \prime }(y)-c(y)\psi ^{\prime }(y)=0,$ such that $\psi _{1}(0)=0$ and $\psi _{2}(1)=0$. Then we have 
$$\psi (y) = -\psi _{2}(y)\int_{0}^{y}\frac{\psi _{1}(s)}{W(s)}g(s)ds-\psi
_{1}(y)\int_{y}^{1}\frac{\psi _{2}(s)}{W(s)}g(s)ds = \left[ \left( Q-H\right) ^{-1}g\right] (y),$$ 
where the wronskian $W$ is given by
\begin{equation*}
W(s)=\psi _{1}(s)\psi _{2}^{\prime }(s)-\psi _{2}(s)\psi _{1}^{\prime }(s).
\end{equation*}%
We have%
\begin{equation*}
\psi ^{\prime }(y)=-\psi _{2}^{\prime }(y)\int_{0}^{y}\frac{\psi _{1}(s)}{%
W(s)}g(s)ds-\psi _{1}^{\prime }(y)\int_{y}^{1}\frac{\psi _{2}(s)}{W(s)}g(s)ds,
\end{equation*}%
and 
$$
\psi ^{\prime \prime }(y) = -\psi _{2}^{\prime \prime }(y)\int_{0}^{y}\frac{%
\psi _{1}(s)}{W(s)}g(s)ds-\psi _{1}^{\prime \prime }(y)\int_{y}^{1}\frac{%
\psi _{2}(s)}{W(s)}g(s)ds +g(y).
$$
If $g\in D\left( Q\right) =W^{2,p}(0,1)\cap W_{0}^{1,p}(0,1)$, it is clear
that $\psi \in W^{4,p}(0,1)$ and 
$$\psi''(0) = g(0)-\psi _{1}^{\prime \prime }(0)\int_{0}^{1}%
\frac{\psi _{2}(s)}{W(s)}g(s)ds = 0-\left[ c(0)\psi _{1}^{\prime }(0)\right] \int_{0}^{1}\frac{\psi _{2}(s)}{W(s)}g(s)ds = 0;$$
similarly we obtain $\psi ^{\prime \prime }(1)=0.$

Again, our concrete problem $(P3)$ writes in the abstract form%
\begin{equation*}
\left\{ 
\begin{array}{l}
u^{\prime \prime }\left( x\right) +Au\left( x\right) -\lambda u\left(
x\right) =f\left( x\right) ,\quad \text{for a.e. }x\in \left( 0,1\right) \vspace{0.1cm}\\ 
u\left( 1\right) =0\text{, \ }u^{\prime }(0)-Hu(0)=0,
\end{array}\right.
\end{equation*}%
with $A$ and $H$ defined by (\ref{New def A}), (\ref{New def H}) and setting 
\begin{equation*}
\begin{array}{llll}
\mathcal{L}_{A,H,0}: & D\left( \mathcal{L}_{A,H,0}\right) \subset Y & 
\longrightarrow & Y \\ 
& u & \longmapsto & u^{\prime \prime }+A(u(.)).
\end{array}
\end{equation*}%
We can apply Theorem \ref{Gen second case} (with $\mu =0$),{\Large \ }to
obtain that $\mathcal{L}_{A,H,0}$ is the infinitesimal generator of an
analytic semigroup.

\subsubsection{Example 3}

In \cite{Krasnoschok} the authors have considered and studied the following problem 
\begin{equation*}
\left \{ 
\begin{array}{ll}
\dfrac{\partial ^{2}u}{\partial x^{2}}(x,y,t)+\dfrac{\partial ^{2}u}{\partial y^{2}}(x,y,t)=f\left( x,y,t\right) , & (x,y,t) \in \mathbb{R}_{+}\times \mathbb{R}\times (0,T)\smallskip \\ 
u\left( 0,y,0\right) =f_{1}\left( y\right) , & y\in \mathbb{R}\vspace{0.1cm}\\ 
\dfrac{\partial u}{\partial x}\left( 0,y,t\right) -D_{t}^{\nu
}u(0,y,t)=f_{2}\left( y,t\right), & (y,t)\in \mathbb{R}\times (0,T)\smallskip ,%
\end{array}
\right.
\end{equation*}%
where $D_{t}^{\nu }$, for $\nu \in (0,1)$, denotes the fractional time
derivative (or Caputo Derivative) defined, for instance, by
\begin{equation*}
D_{t}^{\nu }g(.,t)=\frac{1}{\Gamma (1-\nu )}\int_{0}^{t}\frac{1}{(t-\tau
)^{\nu }}\frac{\partial g}{\partial \tau }(.,\tau )d\tau ,
\end{equation*}%
for functions $g$ of classe $C^{1}$ with respect to the second variable; for
this derivative, see for instance \cite{Chen}. This derivative has been
extended to functions in $L_{loc}^{1}\left( \mathbb{R}\right) $ verifying some integrability condition, see \cite{Lei}.

Analysis of the above problem is useful to study the free boundary problem
for the Laplace equation in the case of subdiffusion as illustrated by the
fractional derivative, see \cite{Voller}. We recall that this subdiffusion
expressed by this Caputo Derivative means that the square displacement of
the diffusing species has a behaviour as $t^{\nu }$ for some real number $%
\nu $. When $\nu \in (0,1)$, we are in the presence of a subdiffusion.

Our objective is not to study this problem, but it helps us to consider a
class of similar problems illustrating our theory of the second case. So, setting $\Omega_T= (0,1)\times (0,1)\times (0,T)$, we will take inspiration from this example to consider the following spectral
elliptic problem:
\begin{equation*}
(P4)\left\{ 
\begin{array}{ll}
\dfrac{\partial ^{2}u}{\partial x^{2}}(x,y,t)+\dfrac{\partial ^{2}u}{\partial y^{2}}(x,y,t)-\lambda u\left( x,y,t\right) =f\left( x,y,t\right), & (x,y,t) \in \Omega_T \smallskip \\ 
u\left( 1,y,0\right) =f_{1}\left( y\right) , & y\in (0,1) \vspace{0.1cm} \\ 
\dfrac{\partial u}{\partial x}\left( 0,y,t\right) -D_{t}^{\nu}u(0,y,t)=f_{2}\left( y,t\right) , & (y,t)\in (0,1)\times(0,T),%
\end{array}%
\right.
\end{equation*}%
for $\lambda \in S_{\varphi _{0}}$with $\varphi _{0}\in (\pi /2,\pi )$.

In view to write this problem in an abstract form, we will hide the variable 
$(y,t)$ by considering the following anisotropic Sobolev Banach space $X=W_{p}^{0,1}((0,1)\times (0,T)),$ consisting of all functions $(y,t)\longmapsto w(y,t)$ which are in $L^{p}((0,1)\times (0,T))$ such that we have $\dfrac{\partial w}{\partial t}\in
L^{p}((0,1)\times (0,T));$ it is endowed with the following natural norm%
\begin{equation*}
\left\Vert w\right\Vert _{X}=\left\Vert w\right\Vert _{L^{p}((0,1)\times
(0,T))}+\left\Vert \dfrac{\partial w}{\partial t}\right\Vert
_{L^{p}((0,1)\times (0,T))}.
\end{equation*}
Now, define operator $A$ in $X$ by%
\begin{equation*}
\left\{ 
\begin{array}{l}
D(A)=\left\{ w\in X:\dfrac{\partial w}{\partial y},\dfrac{\partial ^{2}w}{%
\partial y^{2}}\in L^{p}(\mathbb{R}\times (0,T))\text{ and } w(0,t)=w(1,t)=0\text{ for }t\in (0,T)\right\} \\ 
\left[ Aw\right] (y,t)=\dfrac{\partial ^{2}w}{\partial y^{2}}(y,t).
\end{array}
\right.
\end{equation*}%
We also define $H$ by%
\begin{equation*}
\left\{ 
\begin{array}{l}
D(H)=W_{p}^{0,1}(\mathbb{R}\times (0,T))=X \vspace{0.1cm}\\ 
\left[ Hw\right] (y,t)=D_{t}^{\nu }w(y,t).
\end{array}
\right.
\end{equation*}%
This problem can be written in the following abstract form:%
\begin{equation*}
\left\{ 
\begin{array}{l}
u^{\prime \prime }\left( x\right) +Au\left( x\right) -\lambda u\left(x\right) =f\left( x\right) ,\quad \text{for a.e. }x\in \left( 0,1\right) \vspace{0.1cm} \\ 
u\left( 1\right) =f_{1} \vspace{0.1cm} \\ 
u^{\prime }(0)-Hu(0)=f_{2},%
\end{array}%
\right.
\end{equation*}%
where we have used the usual writting $u(x,y,t)=u(x)(y,t)$ and $f(x,y,t)=f\left( x\right) (y,t)$. Now we must verify the following statements.

\begin{enumerate}
\item $X$ has the UMD property.

In fact, consider the application%
\begin{equation*}
\begin{array}{ccccc}
\mathcal{T} & : & W_{p}^{0,1}((0,1)\times (0,T)) & \longrightarrow & Z= 
\left[ L^{p}((0,1)\times (0,T))\right] ^{2}\vspace{0.1cm} \\ 
&  & w & \longmapsto & \left( w,\dfrac{\partial w}{\partial t}\right) ,%
\end{array}%
\end{equation*}%
then $\mathcal{T}\left( W_{p}^{0,1}((0,1)\times (0,T))\right) $ is a closed
subspace of $Z$ and thus has a UMD property. Since it is isometric to $X$ ,
we deduce that $X$ is a UMD space.

\item Operator $A$ verifies 
\begin{equation*}
\left\{ 
\begin{array}{l}
S_{\varphi _{0}}\subset \rho \left( A\right) \text{ and }\exists C_{A}>0: \vspace{0.1cm} \\ 
\forall \lambda \in S_{\varphi _{0}},\text{ }\left\Vert \left( A-\lambda
I\right) ^{-1}\right\Vert _{\mathcal{L}(X)}\leqslant \dfrac{C_{A}}{%
1+\left\vert \lambda \right\vert },%
\end{array}%
\right.
\end{equation*}
and
\begin{equation*}
\left\{ 
\begin{array}{l}
\forall s\in \mathbb{R},\text{ }\left( -A\right) ^{is}\in \mathcal{L}\left( X\right) ,\text{ }%
\exists \theta _{A}\in \left( 0,\pi \right) \text{:} \vspace{0.1cm} \\ 
\underset{s\in \mathbb{R}}{\sup }\left\Vert e^{-\theta _{A}\left\vert s\right\vert
}(-A)^{is}\right\Vert _{\mathcal{L}\left( X\right) }<+\infty .
\end{array}\right.
\end{equation*}
For the first property we note that the spectral properties of operator $A$
are based on the equation%
\begin{equation*}
\left\{ 
\begin{array}{l}
\dfrac{\partial ^{2}w}{\partial y^{2}}(y,t)-\lambda w(y,t)=h(y,t) \vspace{0.1cm} \\ 
w(0,t)=w(1,t)=0\text{ for }t\in (0,T),%
\end{array}%
\right.
\end{equation*}%
where $h\in W_{p}^{0,1}((0,1)\times (0,T))$. Then, for all $\lambda \in
S_{\varphi _{0}}$, we have 
\begin{equation*}
\forall (y,t)\in (0,1)\times (0,T),\text{ \ }w(y,t)=\int_{0}^{1}K_{\sqrt{%
\lambda }}(y,s)h(s,t)ds,
\end{equation*}%
where the kernel $K_{\sqrt{\lambda }}(y,s)$ is well known. Using the Schur
Lemma, for all $t\in (0,1)$, we obtain
\begin{equation*}
\int_{0}^{1}\left\vert w(y,t)\right\vert ^{p}dy\leqslant \left[ \frac{C}{1+\left\vert \lambda \right\vert }\right] ^{p}\int_{0}^{1}\left\vert
h(s,t)\right\vert ^{p}ds;
\end{equation*}%
then%
\begin{equation*}
\int_{0}^{T}\int_{0}^{1}\left\vert w(y,t)\right\vert ^{p}dydt\leqslant \left[
\frac{C}{1+\left\vert \lambda \right\vert }\right] ^{p}\int_{0}^{T}%
\int_{0}^{1}\left\vert h(s,t)\right\vert ^{p}dsdt,
\end{equation*}%
that is 
\begin{equation*}
\left\Vert w\right\Vert _{L^{p}((0,1)\times (0,T))}\leqslant \frac{C}{%
1+\left\vert \lambda \right\vert }\left\Vert h\right\Vert
_{L^{p}((0,1)\times (0,T))}.
\end{equation*}%
Since we have%
\begin{equation*}
\forall (y,t)\in (0,1)\times (0,T),\quad \dfrac{\partial w}{\partial t}%
(y,t)=\int_{0}^{1}K_{\sqrt{\lambda }}(y,s)\dfrac{\partial h}{\partial t}%
(s,t)ds,
\end{equation*}%
we deduce%
\begin{equation*}
\left\Vert \dfrac{\partial w}{\partial t}\right\Vert _{L^{p}((0,1)\times
(0,T))}\leqslant \frac{C}{1+\left\vert \lambda \right\vert }\left\Vert 
\dfrac{\partial h}{\partial t}\right\Vert _{L^{p}((0,1)\times (0,T))},
\end{equation*}%
and then%
\begin{equation*}
\left\Vert w\right\Vert _{X}\leqslant \frac{C}{1+\left\vert \lambda
\right\vert }\left\Vert h\right\Vert _{X}.
\end{equation*}%
The second property is proved explicitely in \cite{Labbas Moussaoui}.

\item Since $H$ is bounded then from Remark \ref{second case}, statement 1, $%
D(Q)\subset D\left( H\right) $ and
\begin{equation*}
\exists C_{H,Q}>0, \quad \underset{t\in \lbrack 0,+\infty )}{\sup }\left(
1+t\right) ^{1/2}\parallel HQ_{t}^{-1}\parallel _{\mathcal{L}\left( X\right)
}\leqslant C_{H,Q}.
\end{equation*}

\item Now, we must verifiy that $\left( Q-H\right) ^{-1}\left( D\left( Q\right)
\right) \subset D\left( Q^{2}\right)$. It is enough to verify that $D_{t}^{\nu }A^{-1}=A^{-1}D_{t}^{\nu }$ on $X$.
We have%
\begin{equation*}
\forall (y,t)\in (0,1)\times (0,T),\quad \left[ A^{-1}w\right]
(y,t)=\int_{0}^{1}G(y,s)w(s,t)ds,
\end{equation*}%
where the kernel $G$ is well known. So, for any $(y,t)\in (0,1)\times (0,T)$ 
\begin{equation*}
\left[ D_{t}^{\nu }A^{-1}w\right] (y,t)=\int_{0}^{1}G(y,s)D_{t}^{\nu
}w(s,t)ds=\left[ A^{-1}D_{t}^{\nu }\right] w(y,t).
\end{equation*}
\end{enumerate}

Again, as in the previous examples , we get that $\mathcal{L}_{A,H,0}$ is
the infinitesimal generator of an analytic semigroup.

\begin{remark}
We can generalize the above examples by considering operator $A$ defined in an open bounded regular set $\omega$ of $\mathbb{R}^{n-1}$. 
\end{remark}

%

\section*{Declarations}

\textbf{Ethical Approval:} Not applicable.
 
\noindent\textbf{Competing interests:} Not applicable.

\noindent\textbf{Authors' contributions:} All the authors have written and reviewed this manuscript.

\noindent\textbf{Funding:} Not applicable.

\noindent\textbf{Availability of data and materials:} Not applicable.

\end{document}